\numberwithin{equation}{section}
\theoremstyle{plain}
\newtheorem{thm}{Theorem}[section]
\newtheorem{conj}[thm]{Conjecture}
\newtheorem{lemma}[thm]{Lemma}
\newtheorem{prop}[thm]{Proposition}
\newtheorem{corollary}[thm]{Corollary}
\theoremstyle{definition}
\newtheorem{rmk}[thm]{Remark}
\def\Gal{\operatorname{Gal}}
\newtheorem*{hypothesis*}{Hypothesis}
\author{SHIH-YU CHEN}
\title{Algebraicity of the central critical values of twisted triple product $L$-functions}
\address{Institute of Mathematics~\\Academia Sinica~\\ 6F, Astronomy-Mathematics Building, No.\,1, Sec.\,4, Roosevelt Road, Taipei 10617, Taiwan, ROC}
\email{sychen0626@gate.sinica.edu.tw}
\def\GL{{\rm{GL}}}
\def\PGL{{\rm PGL}}
\def\o{\frak{o}}
\def\A{{\mathbb A}}
\def\C{{\mathbb C}}
\def\E{{\mathbb E}}
\def\F{{\mathbb F}}
\def\K{{\mathbb K}}
\def\R{{\mathbb R}}
\def\Q{{\mathbb Q}}
\def\Z{{\mathbb Z}}
\def\<{\langle}
\def\>{\rangle}
\def\G{{G}}
\def\bp{\begin{pmatrix}}
\def\ep{\end{pmatrix}}
\def\<{\langle}
\def\>{\rangle}
\def\GL{\operatorname{GL}}
\def\1{\mathbf{1}}
\def\itPi{\mathit{\Pi}}
\def\itPhi{\mathit{\Phi}}
\begin{document}

\begin{abstract}
We study the algebraicity of the central critical values of twisted triple product $L$-functions associated to motivic Hilbert cusp forms over a totally real \'etale cubic algebra in the totally unbalanced case. 
The algebraicity is expressed in terms of the cohomological period constructed via the theory of coherent cohomology on quaternionic Shimura varieties developed by Harris \cite{Harris1990}.
As an application, we generalize our previous result \cite{CC2017} on Deligne's conjecture for certain automorphic $L$-functions for $\GL_3 \times \GL_2$.
We also establish a relation for the cohomological periods under twisting by algebraic Hecke characters.
\end{abstract}

\maketitle
\section{Introduction}

Let $\E$ be a totally real cubic extension over $\Q$ and $\Sigma_\E=\{\infty_1, \infty_2, \infty_3\}$ be the set of embeddings of $\E$ into $\R$.
Let $\itPi = \bigotimes_v \pi_v$ be an irreducible unitary cuspidal automorphic representation of $({\rm R}_{\E/\Q}\GL_{2/\E})(\A_\Q) = \GL_2(\A_\E)$ with central character $\omega_\itPi$,
where $v$ runs through the places of $\Q$ and ${\rm R}_{\E/\Q}$ denotes Weil's restriction of scalars.
We assume $\itPi$ is motivic of weight $(\kappa_1,\kappa_2,\kappa_3) \in \Z_{\geq 2}^3$, that is, $\pi_\infty$ is a discrete series representation of weight $(\kappa_1,\kappa_2,\kappa_3)$ with $\kappa_i$ corresponding to $\infty_i$ and $\kappa_i 
\equiv \kappa_j \,({\rm mod}\,2)$ for $1 \leq i ,j \leq 3$.
We assume further that $\kappa_1+\kappa_2+\kappa_3 \equiv 0 \,({\rm mod}\,2)$ and $\kappa_1 \geq \kappa_2 \geq \kappa_3$. Consider the (finite) Asai cube $L$-function
\[
L^{(\infty)}(s,\itPi,{\rm As}) = \prod_p L(s,\pi_p,{\rm As})
\]
of $\itPi$. 
A critical point for $L(s,\itPi,{\rm As})$ is a half-integer $m+\tfrac{1}{2}$ which is neither a pole nor a zero of the archimedean local factors $L(s,\pi_\infty,{\rm As})$ and $L(1-s,\pi_\infty,{\rm As})$.
We have the following conjecture on the algebraicity of the critical values of $L(s,\itPi,{\rm As})$ in terms of Shimura's $Q$-invariant \cite[\S\,7]{Shimura1988} which we shall now recall.
Let $I\subseteq \Sigma_\E$. Suppose there exists a quaternion algebra $B$ over $\E$ such that
\begin{itemize}
\item $B$ is unramified at places in $I$ and ramified at places in $\Sigma_\E \smallsetminus I$;
\item there exists an irreducible cuspidal automorphic representation $\itPi^B$ of $B^{\times}(\A_\E)$ associated with $\itPi$ by the Jacquet--Langlands correspondence.
\end{itemize}
Then the period invariant $Q(\itPi,I) \in \C^\times / \overline{\Q}^\times$ is define to be the class represented by the Petersson norm of a non-zero $\overline{\Q}$-rational vector-valued cusp form on $B^\times(\A_\E)$ associated to $\itPi^B$ (we refer to \cite[\S\,2]{Shimura1981} for the notion of $\overline{\Q}$-rational automorphic forms). 
Note that $Q(\itPi,I)$ dose not depend on the choice of $B$ and $\overline{\Q}$-rational automorphic forms (cf.\,\cite[Theorem 6.6]{Yoshida1994}) and is always defined when ${}^\sharp I=1,3$.

\begin{conj}\label{C:Deligne Asai}
Let $m+\tfrac{1}{2}$ be critical for $L(s,\itPi,{\rm As})$.
We have
\[
\frac{L^{(\infty)}(m+\tfrac{1}{2},\itPi,{\rm As})}{(2\pi\sqrt{-1})^{4m}\cdot q(\itPi,{\rm As})} \in \overline{\Q},
\]
where
\[
q(\itPi,{\rm As}) = \begin{cases}
\pi^{\kappa_1+\kappa_2+\kappa_3+2}Q(\itPi,\Sigma_\E) & \mbox{ if $\kappa_1<\kappa_2+\kappa_3$},\\
\pi^{2\kappa_1+2}Q(\itPi,\{\infty_1\})^2 & \mbox{ if $\kappa_1 \geq \kappa_2+\kappa_3$}.
\end{cases}
\]
\end{conj}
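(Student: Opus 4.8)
The plan is to establish the algebraicity, in the totally unbalanced range $\kappa_1\geq\kappa_2+\kappa_3$, by combining an integral representation of the Asai cube $L$-function with Harris's rationality theory for coherent cohomology of quaternionic Shimura varieties. First I fix a quaternion algebra $B$ over $\E$ unramified at $\infty_1$ and ramified exactly at $\{\infty_2,\infty_3\}$ — the ramification set having even cardinality, so such a $B$ exists — and let $\itPi^B$ be the Jacquet--Langlands transfer of $\itPi$ to $B^\times(\A_\E)$, which exists since the $\pi_{\infty_i}$ are discrete series. The group $\mathrm{R}_{\E/\Q}B^\times$ has an associated Shimura curve $X_B$ over the reflex field $\infty_1(\E)$. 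By \cite{Harris1990} the automorphic vector bundle $\mathcal W$ on $X_B$ whose global sections realize holomorphic cusp forms of infinity type $(\kappa_1;\Sym^{\kappa_2-2}\boxtimes\Sym^{\kappa_3-2})$ has a model over a number field, and $H^0(X_B,\mathcal W)$ carries a canonical $\overline{\Q}$-structure stable under $\mathrm{Aut}(\C/\Q)$ and compatible with the action on automorphic representations; I choose a $\overline{\Q}$-rational generator $\varphi$ of the $\itPi^B_f$-isotypic line there, normalized so that $\langle\varphi,\varphi\rangle$ represents $Q(\itPi,\{\infty_1\})$ in $\C^\times/\overline{\Q}^\times$.

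Next I invoke Garrett's pullback of a Siegel--Eisenstein series, set up so as to exploit the totally unbalanced hypothesis: there is a $\Q$-group $G^\ast$ of symplectic similitudes containing $\mathrm{R}_{\E/\Q}B^\times$, a Siegel-type Eisenstein series $E(\,\cdot\,,s)$ on $G^\ast$, and, at the arithmetic point $s_m$ attached to $m+\tfrac12$, an identity
\[
\mathcal Z(s_m)\;=\;\mathcal Z_\infty(s_m)\cdot\Big(\textstyle\prod_{p\in S}\mathcal Z_p(s_m)\Big)\cdot L^{(\infty)}(m+\tfrac12,\itPi,{\rm As}),
\]
where $\mathcal Z(s_m)$ is a period integral of the pulled-back Eisenstein series against a test form built from $\varphi$, $\mathcal Z_\infty$ is an archimedean zeta integral, and the $\mathcal Z_p$ are local integrals at a finite set $S$ of ramified places. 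The archimedean section of $E$ is chosen so that, after an iterated Maass--Shimura operator followed by holomorphic projection, its pullback to $X_B$ is holomorphic of weight $\kappa_1$ and valued in $(\Sym^{\kappa_2-2}\boxtimes\Sym^{\kappa_3-2})^{\vee}$ in the $\infty_1$-variable and an explicit $\overline{\Q}$-rational branching vector at $\infty_2,\infty_3$; such a section exists precisely because $\kappa_1\geq\kappa_2+\kappa_3$ (this is where the hypothesis enters, matching the value $-1$ of Prasad's archimedean trilinear sign and the ramification of $B$ at $\infty_2,\infty_3$), and $\mathcal Z_\infty(s_m)$ reduces to a classical Rankin--Selberg integral of a weight-$\kappa_1$ holomorphic cusp form on $\mathfrak H$ against the restriction of a nearly-holomorphic Eisenstein series, which one evaluates to a nonzero algebraic multiple of a power of $\pi$ times a power of $2\pi\sqrt{-1}$, the nonvanishing being guaranteed by the criticality of $m+\tfrac12$.

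The rationality then runs as follows. The Siegel--Eisenstein series at $s_m$, after holomorphic projection, has a $\overline{\Q}$-rational Fourier expansion up to an explicit power of $2\pi\sqrt{-1}$, so its pullback represents a $\overline{\Q}$-rational class in the coherent cohomology of $X_B$ up to that power of $2\pi\sqrt{-1}$, and by \cite{Harris1990} the cup-product-and-trace pairing against $\varphi$ is $\overline{\Q}$-rational and $\mathrm{Aut}(\C/\Q)$-equivariant. The only transcendence enters through the Hodge-theoretic identification of the anti-holomorphic form $\overline{\varphi}$ with a $\overline{\Q}$-rational class in the complementary coherent cohomology group $H^1(X_B,\mathcal W^{\vee}\otimes\Omega^1_{X_B})$, which is governed by the Petersson norm $\langle\varphi,\varphi\rangle=Q(\itPi,\{\infty_1\})$; because the test form in $\mathcal Z(s_m)$ is built so that $\overline{\varphi}$ intervenes twice, one gets $\mathcal Z(s_m)\in(2\pi\sqrt{-1})^{b}\,Q(\itPi,\{\infty_1\})^2\,\overline{\Q}^\times$ for an explicit $b$. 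Choosing $\overline{\Q}$-rational local vectors makes $\mathcal Z_p(s_m)\in\overline{\Q}^\times$, and substituting into the displayed identity and solving for $L^{(\infty)}(m+\tfrac12,\itPi,{\rm As})$ places it in $(2\pi\sqrt{-1})^{c}\pi^{d}\,Q(\itPi,\{\infty_1\})^2\,\overline{\Q}^\times$; bookkeeping all powers of $2\pi\sqrt{-1}$ and $\pi$ through the Eisenstein series, the de Rham--Betti comparison, and the Tate twist to the point $m$ shows $(2\pi\sqrt{-1})^{c}\pi^{d}=(2\pi\sqrt{-1})^{4m}\pi^{2\kappa_1+2}$ modulo $\overline{\Q}^\times$, which is the asserted membership. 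Finally, $\mathrm{Aut}(\C/\Q)$-equivariance of every rational structure used upgrades this to the reciprocity $\sigma\big(L^{(\infty)}(m+\tfrac12,\itPi,{\rm As})/q(\itPi,{\rm As})\big)=L^{(\infty)}(m+\tfrac12,{}^{\sigma}\!\itPi,{\rm As})/q({}^{\sigma}\!\itPi,{\rm As})$ for $\sigma\in\mathrm{Aut}(\C/\Q)$.

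I expect two steps to be the genuine obstacles. The first is the archimedean analysis described above: selecting the correct nearly-holomorphic Siegel section, carrying out the holomorphic projection in the unbalanced range, and evaluating the resulting Rankin--Selberg integral with exact control on the power of $\pi$ and on nonvanishing — delicate precisely because this is where $\kappa_1\geq\kappa_2+\kappa_3$ is used. The second, which I expect to be the principal one, is the period matching: setting up mutually compatible $\overline{\Q}$-structures on the automorphic vector bundles on $X_B$ and on the ambient Shimura variety of $G^\ast$ over their common reflex field, proving $\overline{\Q}$-rationality of the cup-product-and-trace pairing in Harris's normalization, transporting the rational structure through the pullback embedding compatibly with that of the Eisenstein class, and — most delicately — verifying that complex conjugation on holomorphic forms is implemented by exactly the Petersson norm $Q(\itPi,\{\infty_1\})$, so that the power of $Q(\itPi,\{\infty_1\})$ in the final answer comes out as $2$. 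By comparison, the $\overline{\Q}$-rationality of the local zeta integrals $\mathcal Z_p$ is routine, if lengthy.
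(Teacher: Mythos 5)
Your proposal takes a genuinely different route from the paper's, and it also has a gap. What the paper actually proves is only the special case Theorem~\ref{T:introduction} (i.e.\ $m=0$, with the extra hypotheses that $\omega_{\itPi}\vert_{\A_\Q^\times}$ is trivial and $\Hom_{\GL_2(\Q_p)}(\pi_p,\C)\neq 0$ for all $p$), and it does so by: (i) applying Ichino's central-value formula (Theorem~\ref{T:Ichino formula}) to express $L(\tfrac12,\itPi,{\rm As})$ via a global trilinear period on a \emph{totally indefinite} quaternion algebra $D$ over $\Q$; (ii) making that period rational through the trilinear differential operator $[\delta(\underline{\kappa})]$ on the $3$-dimensional quaternionic Shimura variety for $D^\times$ (§\ref{S:3}) and the trace map of Lemma~\ref{L:trace}; and (iii) converting the resulting coherent period $\Omega^{I_{\underline{\kappa}}}(\itPi)$ into $Q(\itPi,\{\infty_1\})$ via \cite[Theorem~1]{Harris1994II} and Lemma~\ref{L:decomposition} (see Remark~\ref{R:Q-invariant}), with $L(1,\itPi,{\rm Ad})$ matched against the Petersson pairing through Lemma~\ref{L:Petersson norm} and Corollary~\ref{C:Petersson norm}. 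You instead propose a Garrett-type Rankin--Selberg integral pulled back to the Shimura \emph{curve} attached to a quaternion algebra $B$ over $\E$ ramified at $\{\infty_2,\infty_3\}$, valid in principle for every critical $m$ rather than only $m=0$.

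The gap is that the ``pulled-back Eisenstein series'' on $X_B$ does not exist in the sense you need. When $B$ is ramified at $\infty_2,\infty_3$ there is no embedding of Shimura data realizing $X_B$ inside the Siegel-type variety for $G^\ast$ that is compatible with Garrett's diagonal, so the Eisenstein series cannot be restricted to $X_B$ as a coherent class; what exists instead is the Harris--Kudla see-saw via the theta correspondence between $\GSp$ and $\GSO(B)$, which is precisely what yields Ichino's formula and is available only at the center $s=\tfrac12$. For $m\neq 0$ in the totally unbalanced range the archimedean theory you sketch (a nearly-holomorphic Siegel section that becomes, after holomorphic projection, a holomorphic form of weight $\kappa_1$ on $X_B$ with a nonvanishing zeta integral) is not established anywhere and is exactly where a proof would have to do new work. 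Relatedly, your assertion that ``$\overline{\varphi}$ intervenes twice,'' producing $Q(\itPi,\{\infty_1\})^2$, is not a consequence of a Garrett pullback—where the cusp form enters once—but of the Gan--Gross--Prasad structure of the central-value formula; making the squared period come out correctly, and identifying the anti-holomorphic class in coherent cohomology with the Petersson norm in an $\mathrm{Aut}(\C)$-equivariant way, requires precisely the comparison carried out in Lemma~\ref{L:coherent structure}, Proposition~\ref{P:Harris' period}, Lemma~\ref{L:Petersson norm}, and \cite{Harris1994II}, none of which your outline reproduces.
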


Similar conjecture was proposed by Blasius \cite{Blasius1987} if we replace $\E$ by $\Q \times \Q \times \Q$.  One can also propose a conjecture if we replace $\E$ by $\K \times \Q$ for some real quadratic extension $\K$ over $\Q$.
We remark that the conjecture is compatible with Deligne's conjecture \cite{Deligne1979} by Yoshida's calculation of the motivic Asai periods in \cite[(5.11)]{Yoshida1994} (see also Remark \ref{R:motivic period}).
When $\kappa_1 < \kappa_2+\kappa_3$, the conjecture was proved by Garrett and Harris in \cite[Theorem 4.6]{GH1993} for $|m|>1$ and by the author and Cheng in \cite[Corollary 6.4]{CC2017} for $m=0$.
When $\kappa_1 \geq \kappa_2 + \kappa_3$, we have the following result which is a special case of our main result Theorem \ref{T: main thm} (see Remark \ref{R:Q-invariant}).

\begin{thm}\label{T:introduction}
Assume $\kappa_1 \geq \kappa_2 + \kappa_3$, $\omega_\itPi \vert_{\A_\Q^\times}$ is trivial, and ${\rm Hom}_{\GL_2(\Q_p)}(\pi_p,\C) \neq 0$ for all rational primes $p$. Then Conjecture \ref{C:Deligne Asai} holds for $m=0$.
\end{thm}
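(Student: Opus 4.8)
The plan is to deduce Theorem \ref{T:introduction} from the main theorem (Theorem \ref{T: main thm}) by specializing the general twisted triple product setup to the Asai situation. The starting point is the standard observation that the Asai cube $L$-function of $\itPi$ is, up to shifts and elementary factors, a twisted triple product $L$-function: if one writes $\E \otimes_\Q \E = \E \times (\text{quadratic étale algebra over } \E)$ in the Galois-closure picture, the Asai transfer ${\rm As}(\itPi)$ decomposes so that $L(s,\itPi,{\rm As})$ becomes the triple product $L$-function $L(s, \itPi \times \itPi' \times \chi)$ attached to the base-changed data, where $\chi$ is an explicit finite-order (quadratic) Hecke character measuring the difference between $\E$ and $\Q\times\Q\times\Q$. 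The hypothesis $\omega_\itPi|_{\A_\Q^\times} = 1$ is exactly what is needed to make this twisting character trivial on the relevant center, so that the twisted triple product $L$-function in Theorem \ref{T: main thm} specializes to $L(s,\itPi,{\rm As})$ at the appropriate point. The hypothesis ${\rm Hom}_{\GL_2(\Q_p)}(\pi_p,\C) \neq 0$ for all $p$ is the local condition guaranteeing nonvanishing of the archimedean and non-archimedean trilinear forms (Prasad's dichotomy / local root number condition), so that the global period integral computing $L(\tfrac12,\itPi,{\rm As})$ is not forced to vanish and the main theorem applies nontrivially.

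First I would record precisely the identification of $L(s,\itPi,{\rm As})$ with the twisted triple product $L$-function governed by Theorem \ref{T: main thm}, matching the weight data: the hypothesis $\kappa_1 \geq \kappa_2 + \kappa_3$ places us in the ``totally unbalanced'' range where the relevant cohomological period is the one built from coherent cohomology on the quaternionic Shimura variety attached to a $B$ unramified only at $\infty_1$, i.e. with ${}^\sharp I = 1$; this is what produces the factor $Q(\itPi,\{\infty_1\})^2$ rather than $Q(\itPi,\Sigma_\E)$. Second, I would invoke Theorem \ref{T: main thm} to obtain the algebraicity of $L^{(\infty)}(\tfrac12,\itPi,{\rm As})$ divided by the cohomological period $p(\itPi)$ (in the notation of the main theorem) times the appropriate power of $2\pi\sqrt{-1}$, namely $(2\pi\sqrt{-1})^{4m}$ with $m=0$, so the power is zero and only the period and the $\pi$-powers hidden inside it matter. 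Third, I would carry out the translation, via Remark \ref{R:Q-invariant}, between the cohomological period $p(\itPi)$ coming from coherent cohomology and Shimura's $Q$-invariant $Q(\itPi,\{\infty_1\})$: the coherent-cohomological period differs from the Petersson-norm period $Q(\itPi,\{\infty_1\})^2$ by an explicit power of $\pi$ and an algebraic number, and assembling these yields exactly $q(\itPi,{\rm As}) = \pi^{2\kappa_1+2} Q(\itPi,\{\infty_1\})^2$ in the unbalanced case.

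The main obstacle is the third step: matching the cohomologically normalized period appearing in Theorem \ref{T: main thm} with Shimura's arithmetically normalized $Q$-invariant, including pinning down the exact power of $\pi$ (here $2\kappa_1 + 2$) and checking that no transcendental ambiguity survives. This requires a careful comparison of the two rational structures — the one coming from de Rham / coherent cohomology on the quaternionic Shimura variety (Harris \cite{Harris1990}) and the one coming from $\overline{\Q}$-rational vector-valued automorphic forms in the sense of Shimura \cite{Shimura1981} — together with a bookkeeping of the archimedean zeta integral and the normalization of the trilinear form at $\infty_1$. The potential-nonvanishing input (the hypothesis on ${\rm Hom}_{\GL_2(\Q_p)}(\pi_p,\C)$) must also be threaded through to ensure the period that appears is nonzero, so that ``algebraicity of a ratio'' is a meaningful statement; this is handled by the local trilinear-form theory already in place for the main theorem. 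Finally one checks that $s = \tfrac12$ (i.e. $m=0$) is genuinely critical for $L(s,\itPi,{\rm As})$ under $\kappa_1 \geq \kappa_2+\kappa_3$, which is a direct computation with the archimedean $L$-factors, so that Conjecture \ref{C:Deligne Asai} for $m=0$ is exactly the content delivered by the specialization.
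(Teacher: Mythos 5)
Your high-level plan — deduce the theorem from Theorem \ref{T: main thm} with $\F=\Q$, then translate the coherent-cohomology period into Shimura's $Q$-invariant via Remark \ref{R:Q-invariant} — is the one the paper follows. But your first step is built on a misconception that needs correcting. In this paper the ``twisted triple product $L$-function'' and the Asai cube $L$-function of $\itPi$ on $\GL_2(\A_\E)$ are \emph{by definition} the same object: $L(s,\itPi,{\rm As})=\prod_v L(s,\pi_v,{\rm As})$ for the Asai cube representation ${\rm As}:{}^L({\rm R}_{\E/\F}\GL_{2/\E})\to\GL(\C^2\otimes\C^2\otimes\C^2)$, and Theorem \ref{T: main thm} addresses exactly this $L$-function for arbitrary totally real cubic \'etale $\E/\F$. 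No decomposition $\E\otimes_\Q\E=\E\times(\cdots)$, no auxiliary $\itPi'$, and no twisting character $\chi$ enter; the Rankin--Selberg $L(s,\itPi\times\itPi'\times\chi)$ you invoke is what appears in \S\,\ref{S:4} and serves the proof of Theorem \ref{T: main thm 2}, not Theorem \ref{T:introduction}. Relatedly, the hypothesis that $\omega_\itPi|_{\A_\Q^\times}$ is trivial is not there to ``kill a twisting character'': it is literally a hypothesis of Theorem \ref{T: main thm}, needed so that $\varepsilon(\tfrac12,\pi_v,{\rm As})\in\{\pm1\}$ and Ichino's formula applies as stated.

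With that corrected, the actual argument is short. Taking $\F=\Q$ and $\E$ the cubic field, $\kappa_1\geq\kappa_2+\kappa_3$ gives the totally unbalanced condition and $I_{\underline{\kappa}}=\{\infty_1\}$. The totally unbalanced condition ensures $D$ is unramified at $\infty$, and the hypothesis ${\rm Hom}_{\GL_2(\Q_p)}(\pi_p,\C)\neq 0$ for all $p$ forces the $D$ of Theorem \ref{T: main thm} to be the matrix algebra. Then the last sentence of Theorem \ref{T: main thm}(2) lets you replace $\Omega^{I_{\underline{\kappa}}}((\itPi^D)^\vee)$ by $\Omega^{I_{\underline{\kappa}}}(\itPi)$, and the Harris period relation recorded in Remark \ref{R:Q-invariant} gives $\Omega^{\{\infty_1\}}(\itPi)=(2\pi\sqrt{-1})^{\kappa_1}Q(\itPi,\{\infty_1\})\pmod{\overline{\Q}^\times}$. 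Substituting into the formula and discarding $D_\E^{1/2}$ and roots of unity, the denominator becomes $\pi^{2\kappa_1+2}Q(\itPi,\{\infty_1\})^2=q(\itPi,{\rm As})$, as required. You correctly flag the period comparison as the substantive input, and indeed the paper outsources it to Harris \cite{Harris1994II} plus Lemma \ref{L:decomposition}, so that ``bookkeeping'' is not reproven here.
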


\begin{rmk}
By the results of Prasad \cite{Prasad1990} and \cite{Prasad1992}, ${\rm Hom}_{\GL_2(\Q_p)}(\pi_p,\C) \neq 0$ whenever $\pi_p$ is a principal series representation.
\end{rmk}

\subsection{Main results}
Let $\E$ be a totally real \'etale cubic algebra over a totally real number field $\F$. Let $\itPi = \bigotimes_v \pi_v$ be an irreducible cuspidal automorphic representation of $({\rm R}_{\E/\F}\GL_{2/\E})(\A_\F) = \GL_2(\A_\E)$ with central character $\omega_\itPi$, where $v$ runs through the places of $\F$.
We have the Asai cube representation
\[
{\rm As} : {}^L({\rm R}_{\E/\F}\GL_{2/\E}) \longrightarrow \GL(\C^2\otimes\C^2\otimes\C^2)
\]
of the $L$-group ${}^L({\rm R}_{\E/\F}\GL_{2/\E})$ of ${\rm R}_{\E/\F}\GL_{2/\E}$.
The associated automorphic $L$-function
\[
L(s,\itPi,{\rm As}) = \prod_v L(s,\pi_v,{\rm As})
\]
is called the twisted triple product $L$-function of $\itPi$.
We denote by $L^{(\infty)}(s,\itPi,{\rm As})$ the $L$-function obtained by excluding the archimedean $L$-factors.
Let $\Sigma_\E$ (resp.\,$\Sigma_\F$) be the set of non-zero algebra homomorphisms from $\E$ (resp.\,$\F$) into $\R$. 
We assume that $\omega_\itPi \vert_{\A_\F^\times}$ is trivial and $\itPi$ is motivic (cf.\,\S\,\ref{SS:2.2}) of weight 
\[
\underline{\kappa} = \sum_{w \in \Sigma_\E}\kappa_w w \in \Z_{\geq 1}[\Sigma_\E].
\]
We say $\itPi$ is totally unbalanced (resp.\,totally balanced) if for all $v \in \Sigma_\F$ we have
\[
2\max_{w \mid v}\{\kappa_w\} - \sum_{w \mid v}\kappa_w \geq 0 \quad ({\rm resp.}\,<0).
\]
In the totally balanced case, the algebraicity of $L(s,\itPi,{\rm As})$ at the critical points (except for $s=-\tfrac{1}{2},\tfrac{3}{2}$) were proved by Garrett--Harris \cite{GH1993} and C.--Cheng \cite{CC2017} in terms of the Petersson norm of the normalized newform of $\itPi$ and the result is compatible with Deligne's conjecture \cite{Deligne1979}.
The aim of this paper is to prove, in the totally unbalanced case, the algebraicity of the central critical value 
$L(\tfrac{1}{2},\itPi,{\rm As})$ in terms of Harris' cohomological period which we shall now describe.
Our result is compatible with Deligne's conjecture which predict that the algebraicity can be expressed in terms of the (conjectural) motivic periods in \cite{Yoshida1994} (cf.\,Remark \ref{R:motivic period}).
Suppose the global root number $\varepsilon(\itPi,{\rm As})$ of $\itPi$ with respect to the Asai cube representation is equal to $1$.
By the results of Prasad \cite{Prasad1990} and \cite{Prasad1992} and Loke \cite{Loke2001}, there exists a unique quaternion algebra $D$ over $\F$ so that there exists an irreducible cuspidal automorphic representation $\itPi^D = \bigotimes_v \pi_v^D$ of $D^\times(\A_\E)$ associated to $\itPi$ by the Jacquet--Langlands correspondence and such that
\[
{\rm Hom}_{D^\times(\F_v)}(\pi_v^D,\C) \neq 0
\]
for all places $v$ of $\F$.
Note that $D$ is totally indefinite if and only if $\itPi$ is totally unbalanced. 
In this case, for each subset $I$ of $\Sigma_\E$, we denote by $\Omega^I(\itPi^D)\in \C^\times$ (resp.\,$\Omega^I((\itPi^D)^\vee)\in \C^\times$) the period obtained by comparing the rational structures on $\pi_f^D = \bigotimes_{v \nmid \infty} \pi_v^D$ (resp.\,$(\pi_f^D)^\vee = \bigotimes_{v \nmid \infty} (\pi_v^D)^\vee$) via the zeroth and ${}^\sharp{} I$-th coherent cohomology of certain automorphic line bundles on toroidal compactification of the quaternionic Shimura variety associated to $D^\times$ (see \S\,\ref{S:2} for the precise definition).

For $\sigma \in {\rm Aut}(\C)$, there exists a unique motivic irreducible cuspidal automorphic representation ${}^\sigma\!\itPi$ of $\GL_2(\A_\E)$ such that its finite part is isomorphic to the $\sigma$-conjugate of the finite part of $\itPi$. We denote by ${}^\sigma\!\underline{\kappa}$ the weight of ${}^\sigma\!\itPi$.
The rationality field $\Q(\mathit{\Pi})$ of $\mathit{\Pi}$ is define to be the fixed field of
$\left\{\sigma \in {\rm Aut}(\C) \, \vert \, {}^\sigma\!\itPi = \itPi \right\}$ and is a number field.
For each $v \in \Sigma_\F$, let $ \tilde{v}(\underline{\kappa})\in \Sigma_\E$ be the homomorphism such that $\max_{w \mid v}\{\kappa_w\} = \kappa_{\tilde{v}(\underline{\kappa})}$. Put
\[
I_{\underline{\kappa}} = \{ \tilde{v}(\underline{\kappa}) \, \vert \, v \in \Sigma_\F\} \subset \Sigma_\E.
\]
Following is our main result for totally unbalanced $\itPi$. When $\E = \F \times \F \times \F$ and $D$ is the matrix algebra, the theorem was proposed and proved by Harris \cite{Harris1989I}. Following the ideas in \cite{Harris1989I}, we generalize the result of Harris to arbitrary totally real \'etale cubic algebra $\E$ over $\F$.

\begin{thm}\label{T: main thm} 
Let $\itPi$ be a motivic irreducible cuspidal automorphic representation of $\GL_2(\A_\E)$.
Assume that $\omega_{\itPi} \vert_{\A_\F^{\times}}$ is trivial and $\itPi$ is totally unbalanced.
\begin{itemize}
\item[(1)] If $\varepsilon(\itPi,{\rm As})=-1$, then 
\[
L(\tfrac{1}{2},{}^\sigma\!\itPi, {\rm As})=0
\]
for all $\sigma \in {\rm Aut}(\C)$.
\item[(2)] Suppose $\varepsilon(\itPi,{\rm As})=1$.
For $\sigma \in {\rm Aut}(\C)$, we have
\begin{align*}
&\sigma\left(\frac{L^{(\infty)}(\tfrac{1}{2},\itPi, {\rm As}) }{D_\E^{1/2}(2\pi\sqrt{-1})^{2[\F:\Q]}\cdot\Omega^{I_{\underline{\kappa}}}(\itPi^D)\cdot\Omega^{I_{\underline{\kappa}}}((\itPi^D)^\vee)}\right)\\
&= \frac{L^{(\infty)}(\tfrac{1}{2},{}^\sigma\!\itPi, {\rm As}) }{D_\E^{1/2}(2\pi\sqrt{-1})^{2[\F:\Q]}\cdot\Omega^{I_{{}^\sigma\!\underline{\kappa}}}({}^\sigma\!\itPi^D)\cdot\Omega^{I_{{}^\sigma\!\underline{\kappa}}}({}^\sigma\!(\itPi^D)^\vee)}.
\end{align*}
Here $D_\E$ is the absolute discriminant of $\E/\Q$ and $D$ is the unique quaternion algebra over $\F$ such that ${\rm Hom}_{D^\times(\F_v)}(\pi_v^D,\C) \neq 0$ for all places $v$ of $\F$.
In particular, we have
\[
\frac{L^{(\infty)}(\tfrac{1}{2},\itPi, {\rm As}) }{D_\E^{1/2}(2\pi\sqrt{-1})^{2[\F:\Q]}\cdot\Omega^{I_{\underline{\kappa}}}(\itPi^D)\cdot\Omega^{I_{\underline{\kappa}}}((\itPi^D)^\vee)} \in \Q(\itPi).
\]
Moreover, when $D$ is the matrix algebra, we can replace $\Omega^{I_{\underline{\kappa}}}((\itPi^D)^\vee)$ by $\Omega^{I_{\underline{\kappa}}}(\itPi)$.
\end{itemize}
\end{thm}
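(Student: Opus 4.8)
The plan is to follow Harris's treatment of the unbalanced triple product \cite{Harris1989I}, generalized to the diagonal $D^\times(\A_\F)\hookrightarrow D^\times(\A_\E)$: produce a Garrett-type integral representation of $L(\tfrac12,\itPi,{\rm As})$, realize it as a cup product in the coherent cohomology of automorphic line bundles, and then run an ${\rm Aut}(\C)$-equivariance argument. Part (1) is quick. Since $\itPi$ is cuspidal, the completed Asai cube $L$-function $\Lambda(s,\itPi,{\rm As})$ has no pole at $s=\tfrac12$; the hypothesis $\omega_\itPi\vert_{\A_\F^\times}=1$ makes it self-dual, so its functional equation with root number $-1$ gives $\Lambda(\tfrac12,\itPi,{\rm As})=0$, and since the archimedean factor is finite and nonzero at $s=\tfrac12$ (which is critical in the totally unbalanced range) we get $L^{(\infty)}(\tfrac12,\itPi,{\rm As})=0$. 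This persists for every ${}^\sigma\!\itPi$ because the global root number is ${\rm Aut}(\C)$-invariant: the finite local root numbers are signs, hence $\sigma$-fixed, while the archimedean one depends only on the unordered weights $\{\kappa_w\}_{w\mid v}$ above each $v\in\Sigma_\F$ and is therefore unchanged by the permutation of $\Sigma_\F$ induced by $\sigma$, so $\varepsilon({}^\sigma\!\itPi,{\rm As})=-1$.

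For (2), assume $\varepsilon(\itPi,{\rm As})=1$. By Prasad \cite{Prasad1990,Prasad1992} and Loke \cite{Loke2001}, for each place $v$ of $\F$ the space $\Hom_{D^\times(\F_v)}(\pi_v^D,\C)$ is one-dimensional and nonzero for exactly the stated totally indefinite $D$; fix generators $\ell_v$, chosen $\overline\Q$-rationally at the finite places (these Hom-spaces inherit $\overline\Q$-structures from the $\pi_v^D$) and normalized at almost all $v$ so that $\ell_v$ computes the local Asai factor on the unramified vectors. The Garrett--Piatetski-Shapiro--Rallis integral, presented as the global trilinear period $\mathcal{P}(\varphi)=\int_{D^\times(\F)\backslash D^\times(\A_\F)}\varphi$ of a factorizable $\varphi=\bigotimes_v\varphi_v\in\itPi^D$ against an auxiliary Eisenstein series on $D^\times(\A_\F)$, then expresses $L^{(\infty)}(\tfrac12,\itPi,{\rm As})$ --- up to $D_\E^{1/2}(2\pi\sqrt{-1})^{2[\F:\Q]}$ and a nonzero element of $\overline\Q^\times$ --- as $\prod_v\ell_v(\varphi_v)$.

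The next step realizes $\mathcal{P}$ cohomologically, following \cite{Harris1989I,Harris1990}. Since $D$ is totally indefinite, the quaternionic Shimura variety $Y$ associated to $D^\times$ admits a canonical model over a number field together with the relevant automorphic line bundles, and $\itPi_f^D$ is realized in their coherent cohomology in degrees $0$ and ${}^\sharp I_{\underline\kappa}=[\F:\Q]$, which are precisely the degrees pinned down by the chamber $2\kappa_{\tilde v(\underline\kappa)}-\sum_{w\mid v}\kappa_w\ge 0$: the weight-maximal component at each $v\mid\infty$ gives a holomorphic class in degree $0$, while the two remaining weights --- together with the auxiliary Eisenstein series, which in this range is nearly holomorphic --- produce, after iterated Maass--Shimura weight-raising (Kodaira--Spencer), a class in degree ${}^\sharp I_{\underline\kappa}$. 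Then $\mathcal{P}$ is computed, up to $\overline\Q^\times$, by the cup product of the degree-$0$ class for $\itPi^D$ with the degree-${}^\sharp I_{\underline\kappa}$ class for $(\itPi^D)^\vee$, restricted along the natural morphism $X_D\to Y$ of Shimura varieties and evaluated by the trace map on a toroidal compactification --- all defined over $\overline\Q$. By construction the periods $\Omega^{I_{\underline\kappa}}(\itPi^D)$ and $\Omega^{I_{\underline\kappa}}((\itPi^D)^\vee)$ are exactly the discrepancies between the $\overline\Q$-rational structures on these isotypic spaces in degrees $0$ and ${}^\sharp I_{\underline\kappa}$, so dividing $\mathcal{P}$ by their product yields a $\overline\Q$-rational, in fact $\Q(\itPi)$-rational, quantity.

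The last analytic input is the archimedean trilinear zeta integral $\ell_\infty(\varphi_\infty)$ for the holomorphic (limit of) discrete series of weight $\underline\kappa$ in the unbalanced range; evaluating it with the weight-raising operators, and tracking the normalizations, produces the factor $(2\pi\sqrt{-1})^{2[\F:\Q]}$ times an explicit element of $\overline\Q^\times$ (the remaining archimedean transcendentals being absorbed into $\Omega^{I_{\underline\kappa}}$), with $D_\E^{1/2}$ bookkept from the conductor in the functional equation and the comparison of measures over $\E$ and $\F$. Applying $\sigma\in{\rm Aut}(\C)$ now gives (2): $\itPi_f^D\mapsto{}^\sigma\!\itPi_f^D$, the rational local functionals and the cup product are $\sigma$-equivariant (rationality of the local $L$- and $\varepsilon$-factors, together with the $\overline\Q$-models of $Y$ and its automorphic bundles \cite{Harris1990}), and the weight of ${}^\sigma\!\itPi$ is ${}^\sigma\!\underline\kappa$ with index set $I_{{}^\sigma\!\underline\kappa}$, so the normalized ratio lies in $\Q(\itPi)$ and transforms as asserted. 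When $D$ is the matrix algebra, $\itPi^D=\itPi$ and $\itPi^\vee\cong\itPi\otimes(\omega_\itPi^{-1}\circ\det)$ with $\omega_\itPi$ an algebraic Hecke character trivial on $\A_\F^\times$, so the twisting relation for the cohomological periods --- the last result announced in the introduction --- identifies $\Omega^{I_{\underline\kappa}}((\itPi^D)^\vee)$ with $\Omega^{I_{\underline\kappa}}(\itPi)$ up to $\Q(\itPi)^\times$. The main obstacle, I expect, is the faithful dictionary between the analytic and cohomological pictures --- proving that the Garrett trilinear integral is computed by a cup product of $\overline\Q$-rational coherent cohomology classes, so that the period appearing in the statement is provably the one produced by the integral representation --- together with the archimedean computation in the genuinely unbalanced range, where the weight-raising differential operators intervene and one must control exactly the powers of $2\pi\sqrt{-1}$ and of $D_\E^{1/2}$; passing from $\F\times\F\times\F$ to a general totally real \'etale cubic $\E/\F$ adds throughout the bookkeeping of how each place of $\F$ splits in $\E$.
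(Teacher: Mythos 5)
Your treatment of part (1) is essentially the paper's: use the functional equation (together with the identity $L=L_{\rm PSR}$ and the Galois invariance of the Asai root number) plus the non-vanishing of the archimedean factor at $s=\tfrac12$. That part is fine.

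For part (2), however, there are several concrete gaps that would sink the argument as written.

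\emph{Missing Ichino's formula and the adjoint $L$-value.} You describe a ``Garrett--Piatetski-Shapiro--Rallis integral'' that expresses $L^{(\infty)}(\tfrac12,\itPi,{\rm As})$ as a product $\prod_v\ell_v(\varphi_v)$ of local linear functionals on $\itPi^D$, with $\mathcal{P}(\varphi)=\int_{D^\times(\F)\backslash D^\times(\A_\F)}\varphi$ appearing as the global object. This is not what the PSR integral representation does (that integral represents $L$ as a pairing of a cusp form on $\GL_2(\A_\E)$ against a Siegel-type Eisenstein series, and the unfolding produces local zeta integrals, not linear functionals on $\itPi^D$). What actually expresses the trilinear period $I^D=\mathcal{P}\otimes\mathcal{P}$ as an Euler product is Ichino's central value formula (Theorem \ref{T:Ichino formula}), a case of the refined Gan--Gross--Prasad conjecture, and it produces the \emph{ratio} $L(\tfrac12,\itPi,{\rm As})/L(1,\itPi,{\rm Ad})$, not $L(\tfrac12,\itPi,{\rm As})$ itself. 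To clear the adjoint $L$-value from the denominator one needs a Galois-equivariant comparison of $L(1,\itPi,{\rm Ad})$ with the Petersson pairing $\<\varphi_3,\varphi_4\>$ (Lemma \ref{L:Petersson norm} and Corollary \ref{C:Petersson norm}), which rests on nontrivial results of Harris--Kudla and Takase. This step is entirely absent from your proposal and cannot be waved away; without it the archimedean transcendence count simply does not close.

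\emph{Cohomological picture.} You describe $\mathcal{P}$ as a cup product of a degree-$0$ class for $\itPi^D$ with a degree-$[\F:\Q]$ class for $(\itPi^D)^\vee$, and you invoke an ``auxiliary Eisenstein series'' that is ``nearly holomorphic.'' Neither is right for the triple product. The relevant integral $\int_{\A_\F^\times D^\times(\F)\backslash D^\times(\A_\F)}\varphi_1\,dg$ involves a single cusp form in $\itPi^D$ with no Eisenstein series, and the paper realizes it by applying the trilinear differential operator $[\delta(\underline{\kappa})]$ (Proposition \ref{P:differential operator}) to a single degree-$[\F:\Q]$ class on the Shimura variety over $\E$, pushing it down to a degree-$[\F:\Q]$ class on the Shimura variety over $\F$, and applying the $\Q$-rational trace map (Lemma \ref{L:trace}). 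Doing this for $\varphi_1\in\itPi^D$ and again for $\varphi_2\in(\itPi^D)^\vee$ is what produces the two periods $\Omega^{I_{\underline\kappa}}(\itPi^D)$ and $\Omega^{I_{\underline\kappa}}((\itPi^D)^\vee)$ in the denominator. The cup product you describe (degree $0$ class against degree $[\E:\Q]$ class on the big variety, followed by trace) instead computes the Petersson pairing and yields the period $\Omega^{\Sigma_\E}$ --- which is precisely the other ingredient, coming from Corollary \ref{C:Petersson norm}, not from the trilinear period. The two constructions must both appear and are not interchangeable. Your proposal collapses them into one, which makes the bookkeeping of periods impossible to verify. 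The Eisenstein series you mention belongs to the Rankin--Selberg argument for Theorem \ref{T:RS} (Section \ref{S:4}), not to the proof of Theorem \ref{T: main thm}.

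\emph{Matrix algebra case.} You propose deducing $\Omega^{I_{\underline\kappa}}(\itPi^\vee)\sim\Omega^{I_{\underline\kappa}}(\itPi)$ from the twisting relation Theorem \ref{T: main thm 2}. That theorem is stated only for weights in $\Z_{\geq 4}[\Sigma_\F]$, while Theorem \ref{T: main thm} allows weight $1$; even the ``two remaining weights'' can be small. The paper instead argues this directly inside the trilinear-period framework, using the $q$-expansion principle (Lemma \ref{L:Galois equiv. cusp form}) and the hypotheses that $\omega_\itPi\vert_{\A_\F^\times}$ is trivial and $\sum_w r_w=0$ to show ${}^\sigma(\varphi\otimes\omega_\itPi^{-1})={}^\sigma\varphi\otimes{}^\sigma\omega_\itPi^{-1}$ on the nose, bypassing any hypothesis on the size of $\underline{\kappa}$.

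\emph{Rationality versus algebraicity.} You claim throughout that the relevant objects are ``defined over $\overline\Q$''; the theorem asserts something stronger, namely $\Q(\itPi)$-rationality with precise ${\rm Aut}(\C)$-equivariance, including the permutation $\underline{\kappa}\mapsto{}^\sigma\!\underline{\kappa}$, $I_{\underline{\kappa}}\mapsto I_{{}^\sigma\!\underline{\kappa}}$. This requires tracking the $\sigma$-linear conjugation maps $T_\sigma$ on the coherent cohomology at every stage (Theorem \ref{T:Harris}, Lemma \ref{L:coherent structure}, Proposition \ref{P:Harris' period}) and the explicit Galois-equivariance lemmas for the local data (Lemmas \ref{L:Galois equiv. 1}, \ref{L:Galois equiv. 2}, and the archimedean computation of Lemma \ref{L:archimedean local period}), none of which are addressed in your sketch.

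In short, the high-level flow (express the central value through a trilinear period, interpret cohomologically, apply Galois equivariance) is the right instinct, but the specific analytic input --- Ichino's formula --- and the two distinct cohomological constructions (differential operator plus trace map for the trilinear period, and Petersson cup product for the adjoint $L$-value) are both missing or misidentified, and filling them in is the bulk of the proof.
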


\begin{rmk}\label{R:Q-invariant}
Assume $\underline{\kappa} \in \Z_{\geq 2}[\Sigma_\E]$. Let $I \subseteq \Sigma_\E$. By the result of Harris \cite[Theorem 1]{Harris1994II} and the period relation in Lemma \ref{L:decomposition} below, suppose that Shimura's period invariant $Q(\itPi,I) \in \C^\times / \overline{\Q}^\times$ is defined, then we have
\[
Q(\itPi,I) = (2\pi\sqrt{-1})^{-\sum_{w \in I}\kappa_w}\cdot\Omega^I(\itPi) \,({\rm mod}\,\overline{\Q}^\times).
\]
Therefore, if $D$ is the matrix algebra and $Q(\itPi,I_{\underline{\kappa}})$ is defined, then we can express the algebraicity of $L^{(\infty)}(\tfrac{1}{2},\itPi,{\rm As})$ in terms of $Q(\itPi,I_{\underline{\kappa}})$. In particular, Theorem \ref{T:introduction} holds.
\end{rmk}

\begin{rmk}
Let $\sigma \in {\rm Aut}(\C)$ and $I$ be a subset of $\Sigma_\E$ which is admissible with respect to $\underline{\kappa}$ (see \S\,\ref{SS:notation} for the notion of admissibility). It was conjectured in \cite[Conjecture 7.1.6]{Harris1990b} that 
\[
\sigma\left( \frac{\Omega^I(\itPi^D)}{\Omega^I(\itPi)}\right) = \frac{\Omega^{{}^\sigma\!I}({}^\sigma\!\itPi^D)}{\Omega^{{}^\sigma\!I}({}^\sigma\!\itPi)}.
\]
Similarly for $\itPi^\vee$. This conjecture holds trivially for $I=\emptyset$ and is known for $I=\Sigma_\E$ (cf.\,Corollary \ref{C:Petersson norm}).
By the period relation in Theorem \ref{T: main thm 2} below, we also expect that 
\[
\sigma\left( \frac{\Omega^I(\itPi^\vee)}{\Omega^I(\itPi)}\right) = \frac{\Omega^{{}^\sigma\!I}({}^\sigma\!\itPi^\vee)}{\Omega^{{}^\sigma\!I}({}^\sigma\!\itPi)}.
\]
\end{rmk}

\begin{rmk}\label{R:motivic period}
We remark that the theorem is compatible with Deligne's conjecture. Assume $\E$ is a field. The other cases can be verified in a similar way.
Suppose $\kappa_w \geq 2$ for all $w \in \Sigma_\E$. Let $M(\itPi)$ be the (conjectural) motive attached to $\itPi$ of rank $2$ over $\E$ with coefficients in $\Q(\itPi)$ satisfying conditions in \cite[Conjecture 4.1]{Yoshida1994} with $k_0$ therein replaced by $0$.
Let $\Sigma_{\Q(\itPi)}$ be the set of embeddings from $\Q(\itPi)$ to $\C$ and identify $\Q(\itPi) \otimes_\Q \C$ with $\C^{\Sigma_{\Q(\itPi)}}$ in a natural way. We also identify $\Q(\itPi)$ as a subfield of $\Q(\itPi) \otimes_\Q \C$ by the diagonal embedding.
For $w \in \Sigma_\E$, let
\[
c^{\pm}_w(M(\itPi)) = (c^{\pm}_w(\sigma,M(\itPi)))_\sigma \in (\Q(\itPi) \otimes_\Q \C)^\times,\quad \delta_w({\rm Art}_{\omega_\itPi^{-1}}) = (\delta_w(\sigma,{\rm Art}_{\omega_\itPi^{-1}}))_\sigma \in(\Q(\itPi) \otimes_\Q \C)^\times
\]
be the covariantly defined $w$-periods in \cite[\S\,2.4]{Yoshida1994}. Here ${\rm Art}_{\omega_\itPi^{-1}}$ is the Artin motive attached to $\omega_\itPi^{-1}$ defined as in \cite[\S\,6]{Deligne1979} and $\sigma \in \Sigma_{\Q(\itPi)}$.
Comparing \cite[(4.14)]{Yoshida1994} with \cite[Theorem 3.5.1]{Harris1989I} (see also Theorem \ref{T:RS} below) on the algebraicity of Rankin--Selberg $L$-functions for $\GL_2 \times \GL_2$, it is natural to expect that
\[
(\Omega^{I_{{}^\sigma\!\underline{\kappa}}}({}^\sigma\!\itPi))_\sigma \equiv (2\pi\sqrt{-1})^{-[\F:\Q]}\left(\prod_{w \in I_{{}^\sigma\!\underline{\kappa}}}c_w^+(\sigma,M(\itPi))c_w^-(\sigma,M(\itPi))\delta_w(\sigma,{\rm Art}_{\omega_\itPi^{-1}})^{-1} \right)_\sigma \,({\rm mod}\,\Q(\itPi)^\times).
\]
On the other hand, let ${\rm As}(M(\itPi))$ be the (conjectural) Asai motive associated to $M(\itPi)$, that is, ${\rm As}(M(\itPi)) = \bigotimes_\Omega {\rm Res}_{\E/\F}\,M(\itPi)$ in the notation of \cite[Conjecture 1.8]{Yoshida1994} with $\Omega = \Gal(\overline{\Q} / \F) / \Gal(\overline{\Q} / \E)$. Then we have $L({\rm R}_{\F/\Q}\,{\rm As}(M(\itPi)),s) = (L^{(\infty)}(s+\tfrac{3}{2},{}^\sigma\!\itPi,{\rm As}))_\sigma$ and Deligne's conjecture predicts that
\[
\frac{L({\rm R}_{\F/\Q}\,{\rm As}(M(\itPi)),m)}{(2\pi\sqrt{-1})^{4[\F:\Q]m}\cdot c^{(-1)^m}({\rm R}_{\F/\Q}\,{\rm As}(M(\itPi)))} \in \Q(\itPi)
\]
for all critical points $m \in \Z$ for ${\rm R}_{\F/\Q}\,{\rm As}(M(\itPi))$. Here $c^\pm({\rm R}_{\F/\Q}\,{\rm As}(M(\itPi)))$ are Deligne's periods attached to ${\rm R}_{\F/\Q}\,{\rm As}(M(\itPi))$.
Now we explicate Yoshida's calculation \cite[(5.9)]{Yoshida1994} in our case, we then deduce from the totally unbalanced condition that
\begin{align*}
&c^{\pm}({\rm R}_{\F/\Q}\,{\rm As}(M(\itPi))) \\
&\in (2\pi\sqrt{-1})^{4[\F:\Q]}\left(G({}^\sigma\!\omega_\itPi)^2\prod_{w \in I_{\underline{\kappa}}}c_w^+(\sigma,M(\itPi))^2c_w^-(\sigma,M(\itPi))^2\delta_w(\sigma,{\rm Art}_{\omega_\itPi^{-1}})^{-2} \right)_\sigma\cdot (\widetilde{\E}^\times)^{\Sigma_{\Q(\itPi)}},
\end{align*}
where $G({}^\sigma\!\omega_\itPi)$ is the Gauss sum of ${}^\sigma\!\omega_\itPi$ and $\widetilde{\E}$ is the Galois closure of $\E$ over $\Q$. Note that the assumption $\omega_\itPi \vert_{\A_\F^\times}$ is trivial and (\ref{E:Galois Gauss sum}) imply that $(G({}^\sigma\!\omega_\itPi))_\sigma \in \Q(\itPi)^\times$. 
Therefore, our main result is compatible with Deligne's conjecture for $m=-1$, at least modulo $\widetilde{\E}^\times$.
\end{rmk}

As an application to Theorem \ref{T: main thm}, we generalize our previous result \cite{CC2017}, which is compatible with Deligne's conjecture, on the algebraicity of the central critical value of certain automorphic $L$-functions for $\GL_3\times\GL_2$. 
Let $\itPi = \bigotimes_v\pi_v$, $\itPi' = \bigotimes_v\pi'_v$ be motivic irreducible cuspidal automorphic representations of $\GL_2(\A_\F)$ with central characters $\omega_\itPi$, $\omega_{\itPi'}$ and of weights $\underline{\ell}, \,\underline{\ell}' \in \Z_{\geq 1}[\Sigma_\F]$, respectively.
For each subset $I$ of $\Sigma_\F$ which is admissible with respect to $\underline{\ell}'$, let $\Omega^I(\itPi') \in \C^\times$ be the Harris' period of $\itPi$ recalled in \S\,\ref{SS: Harris' period}.
Let ${\rm Sym}^2(\itPi)$ be the Gelbart--Jacquet lift \cite{GJ1978} of $\itPi$, which is an isobaric automorphic representation of $\GL_3(\A_\F)$ and is the functorial lift of the symmetric square representation of $\GL_2$ associated to $\itPi$.
Let 
\[
L(s,{\rm Sym}^2(\itPi)\times \itPi') = \prod_v L(s,{\rm Sym}^2(\pi_v)\times \pi'_v)
\]
the Rankin--Selberg automorphic $L$-function for $\GL_3(\A_\F)\times\GL_2(\A_\F)$ associated to ${\rm Sym}^2(\itPi)\times \itPi'$.
We denote by $L^{(\infty)}(s,{\rm Sym}^2(\itPi)\times \itPi')$ the $L$-function obtained by excluding the archimedean $L$-factors.
\begin{thm}\label{C:main}
Suppose the following conditions hold:
\begin{itemize}
\item[(i)] $\omega_\itPi^2\omega_{\itPi'}$ is trivial;
\item[(ii)] $\underline{\ell}' - 2\underline{\ell} \in \Z_{\geq 0}[\Sigma_\F]$;
\item[(iii)] there exists a totally real quadratic extension $\K$ over $\F$ such that
\[
\varepsilon(\itPi' \otimes \omega_\itPi\omega_{\K/\F}) = \varepsilon({\rm Sym}^2(\itPi)\times \itPi'),
\]
where $\varepsilon(\star)$ is the global root number of $\star$ and $\omega_{\K/\F}$ is the quadratic Hecke character of $\A_\F^\times$ associated to $\K/\F$ by class field theory.
\end{itemize}
Then we have
\begin{align*}
&\sigma\left( 
\frac{L^{(\infty)}(\tfrac{1}{2},{\rm Sym}^2(\itPi)\times \itPi')}
{D_\F^{1/2}(2\pi\sqrt{-1})^{[\F:\Q](1+r)}\cdot G(\omega_\itPi)\cdot\Omega^{\Sigma_\F}(\itPi')\cdot p((-1)^{1+r}{\rm sgn}(\omega_\itPi),\itPi')}
\right )\\
&=
\frac{L^{(\infty)}(\tfrac{1}{2},{\rm Sym}^2({}^\sigma\!\itPi)\times {}^\sigma\!\itPi')}
{D_\F^{1/2}(2\pi\sqrt{-1})^{[\F:\Q](1+r)}\cdot G({}^\sigma\!\omega_\itPi)\cdot\Omega^{\Sigma_\F}({}^\sigma\!\itPi')\cdot p((-1)^{1+r}{\rm sgn}({}^\sigma\!\omega_\itPi),{}^\sigma\!\itPi')}
\end{align*}
for all $\sigma \in {\rm Aut}(\C)$.
Here $r \in \Z$ is defined so that $|\omega_\itPi| = |\mbox{ }|_{\A_\F}^r$, $G(\omega_\itPi)$ is the Gauss sum of $\omega_\itPi$, ${\rm sgn}(\omega_\itPi) \in \{\pm 1\}^{\Sigma_\F}$ is the signature of $\omega_\itPi$, and $p(\varepsilon,\itPi')$ are the periods for $\itPi'$ defined for $\varepsilon \in \{\pm 1\}^{\Sigma_\F}$ in \cite{Shimura1978}.
\end{thm}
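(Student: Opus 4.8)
The plan is to deduce Theorem~\ref{C:main} from Theorem~\ref{T: main thm} applied to the totally real \'etale cubic algebra $\E=\K\times\F$, exploiting the decomposition of the twisted triple product $L$-function of a base change. Write $\itPi_\K$ for the base change of $\itPi$ to $\GL_2(\A_\K)$ and set $\itPi^\sharp=\itPi_\K\boxtimes\itPi'$, an irreducible cuspidal automorphic representation of $\GL_2(\A_\E)=\GL_2(\A_\K)\times\GL_2(\A_\F)$; we may assume $\itPi$ is not dihedral with respect to $\K$, so that $\itPi_\K$ is cuspidal, the dihedral case (in which ${\rm Sym}^2(\itPi)$ is not cuspidal, and the left-hand $L$-function splits into standard $\GL_2$-factors over $\F$ and $\K$) being reducible to the classical algebraicity of $\GL_2$ central values and suppressed here. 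Since ${\rm As}_{\K/\F}(\itPi_\K)={\rm Sym}^2(\itPi)\boxplus(\omega_\itPi\omega_{\K/\F})$ as isobaric automorphic representations of $\GL_4(\A_\F)$, the Asai cube $L$-function of $\itPi^\sharp$ factors as
\[
L^{(\infty)}(s,\itPi^\sharp,{\rm As})=L^{(\infty)}(s,{\rm Sym}^2(\itPi)\times\itPi')\cdot L^{(\infty)}(s,\itPi'\otimes\omega_\itPi\omega_{\K/\F}),
\]
with matching analytic normalisation at $s=\tfrac12$, and likewise after any $\sigma\in{\rm Aut}(\C)$ since base change, the symmetric square lift and $\sigma$-conjugation are mutually compatible.

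Next I would check the hypotheses of Theorem~\ref{T: main thm} for $\itPi^\sharp$. Restricting along the diagonal $\A_\F^\times\hookrightarrow\A_\K^\times\times\A_\F^\times$ gives $\omega_{\itPi^\sharp}\vert_{\A_\F^\times}=(\omega_\itPi\circ N_{\K/\F})\cdot\omega_{\itPi'}\vert_{\A_\F^\times}=\omega_\itPi^2\omega_{\itPi'}$, which is trivial by (i). The weights of $\itPi^\sharp$ over $v\in\Sigma_\F$ are $(\ell_v,\ell_v,\ell'_v)$, so the totally unbalanced inequality $2\max_{w\mid v}\kappa_w\geq\sum_{w\mid v}\kappa_w$ reads $2\max\{\ell_v,\ell'_v\}\geq2\ell_v+\ell'_v$, which (using $\ell_v\geq1$) is equivalent to $\ell'_v\geq2\ell_v$, i.e. to (ii); moreover the maximum is then attained uniquely at the $\F$-slot, so $\tilde{v}(\underline{\kappa})$ is well defined and $I_{\underline{\kappa}}=\Sigma_\F\subset\Sigma_\E$. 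Finally the factorisation shows $\varepsilon(\itPi^\sharp,{\rm As})=\varepsilon({\rm Sym}^2(\itPi)\times\itPi')\cdot\varepsilon(\itPi'\otimes\omega_\itPi\omega_{\K/\F})$, which equals $+1$ by (iii), so the vanishing alternative of Theorem~\ref{T: main thm}(1) does not occur; part~(2) therefore applies to $\itPi^\sharp$ and yields the $\sigma$-equivariance of $L^{(\infty)}(\tfrac12,\itPi^\sharp,{\rm As})$ divided by $D_\E^{1/2}(2\pi\sqrt{-1})^{2[\F:\Q]}\,\Omega^{\Sigma_\F}(\itPi^{\sharp,D})\,\Omega^{\Sigma_\F}((\itPi^{\sharp,D})^\vee)$, where $D$ is the associated (totally indefinite) quaternion algebra over $\F$.

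It then remains to identify this denominator with the one in Theorem~\ref{C:main}. By the period decomposition of Lemma~\ref{L:decomposition} applied to $\E=\K\times\F$ and $I=\Sigma_\F$ (which is disjoint from $\Sigma_\K$ inside $\Sigma_\E$), $\Omega^{\Sigma_\F}(\itPi^{\sharp,D})$ is the product of the degree-zero period of the $\K$-component of $\itPi^{\sharp,D}$, which is $\Q$-rational and $\sigma$-equivariantly trivial, with the top coherent cohomology period of the $\F$-component; since $\Sigma_\F$ is the full index for the $\F$-factor, the latter is independent of the quaternion algebra up to $\Q(\itPi')^\times$ and $\sigma$-equivariantly (Corollary~\ref{C:Petersson norm}, i.e. the case $I=\Sigma_\E$ of the conjecture recalled after Theorem~\ref{T: main thm}, equivalently the quaternion-algebra independence of Shimura's $Q$-invariant), hence may be replaced by $\Omega^{\Sigma_\F}(\itPi')$; likewise $\Omega^{\Sigma_\F}((\itPi^{\sharp,D})^\vee)$ by $\Omega^{\Sigma_\F}((\itPi')^\vee)$. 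The twisting relation of Theorem~\ref{T: main thm 2} rewrites $\Omega^{\Sigma_\F}((\itPi')^\vee)=\Omega^{\Sigma_\F}(\itPi'\otimes\omega_{\itPi'}^{-1})$ as $\Omega^{\Sigma_\F}(\itPi')$ times an explicit Gauss sum of $\omega_{\itPi'}$ and a power of $2\pi\sqrt{-1}$; the period relations underlying Theorem~\ref{T:RS} (equivalently Shimura's period relation for the Petersson norm) rewrite $\Omega^{\Sigma_\F}(\itPi')$ in terms of $p(+,\itPi')p(-,\itPi')$, a Gauss sum of $\omega_{\itPi'}$ and a power of $2\pi\sqrt{-1}$; and, since (ii) forces $\ell'_v\geq2\ell_v\geq2$ so that $s=\tfrac12$ is critical for $L(s,\itPi'\otimes\omega_\itPi\omega_{\K/\F})$, Shimura's classical theorem together with the twisting relation for the periods $p(\varepsilon,\cdot)$ evaluates $L^{(\infty)}(\tfrac12,\itPi'\otimes\omega_\itPi\omega_{\K/\F})$, $\sigma$-equivariantly, as $D_\F^{1/2}$ times a power of $2\pi\sqrt{-1}$, a Gauss sum of $\omega_\itPi\omega_{\K/\F}$ and $p(\varepsilon_1,\itPi')$ with $\varepsilon_1=(-1)^{r}{\rm sgn}(\omega_\itPi)$ (the sign being pinned down by the archimedean components and (i)). Dividing the factorisation by this $L$-value and collecting the powers of $2\pi\sqrt{-1}$ into $(2\pi\sqrt{-1})^{[\F:\Q](1+r)}$, the discriminants via $D_\E=D_\K D_\F$ and $D_\K=D_\F^{2}N_{\F/\Q}(\mathfrak{d}_{\K/\F})$, and the Gauss sums (of $\omega_{\itPi'}=\omega_\itPi^{-2}$ and of the quadratic character $\omega_{\K/\F}$) via Jacobi-sum and conductor--discriminant identities into $G(\omega_\itPi)$ together with $\widetilde{\E}$-rational factors cancelling the residual discriminant, one obtains the identity of Theorem~\ref{C:main}, using finally $p(\varepsilon_1,\itPi')\,p((-1)^{1+r}{\rm sgn}(\omega_\itPi),\itPi')=p(+,\itPi')p(-,\itPi')$.

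The main obstacle is precisely this last step: everything must be tracked $\sigma$-equivariantly — as an identity of $\sigma$-orbits, not merely modulo $\overline{\Q}^\times$ — so that the powers of $2\pi\sqrt{-1}$, the two signs $\varepsilon_1$ and $(-1)^{1+r}{\rm sgn}(\omega_\itPi)$, the discriminant factors, and the several Gauss sums produced by the base-change decomposition, by Theorem~\ref{T: main thm 2}, by Theorem~\ref{T:RS} and by Shimura's twisting relation all assemble with no leftover transcendental or irrational factor once the completely understood $\GL_2$-value $L^{(\infty)}(\tfrac12,\itPi'\otimes\omega_\itPi\omega_{\K/\F})$ is divided out; this is where hypothesis (i) enters essentially, forcing $\omega_{\itPi'}$ to be a power of $\omega_\itPi$ so that the a priori unrelated Gauss sums collapse to $G(\omega_\itPi)$, while (ii) is used to fix the archimedean $L$-factors and hence both the exponent $[\F:\Q](1+r)$ and the sign $(-1)^{1+r}{\rm sgn}(\omega_\itPi)$, and (iii) is exactly what makes the global root number of the twisted triple product equal to $+1$. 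A subsidiary point, as noted, is the reduction to the case $\itPi_\K$ cuspidal; the argument for the $\GL_3\times\GL_2$ value then follows the pattern of \cite{CC2017}, the role of the balanced Petersson period there being played here by the cohomological period $\Omega^{\Sigma_\F}(\itPi')$ coming from Theorem~\ref{T: main thm}.
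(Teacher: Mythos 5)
Your overall strategy matches the paper's: take $\E = \K\times\F$, set $\itPi^\sharp = \itPi_\K\boxtimes\itPi'$, check that hypotheses (i)--(ii) make $\itPi^\sharp$ totally unbalanced with $I_{\underline\kappa}=\Sigma_\F$ and $\omega_{\itPi^\sharp}\vert_{\A_\F^\times}$ trivial, factor $L(s,\itPi^\sharp,{\rm As})=L(s,{\rm Sym}^2(\itPi)\times\itPi')\,L(s,\itPi'\otimes\omega_\itPi\omega_{\K/\F})$, apply Theorem~\ref{T: main thm}, and then strip off the $\GL_2$-factor using Shimura's theorem. That is the right skeleton.

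There is, however, a genuine gap at the division step. To extract the $\GL_3\times\GL_2$ value from the factorisation you must divide by $L^{(\infty)}(\tfrac12,\itPi'\otimes\omega_\itPi\omega_{\K/\F})$, and this is only legitimate if that number is nonzero. Condition (iii) merely equates two root numbers; it does not give non-vanishing, and criticality of $s=\tfrac12$ (which you do note) does not either. Moreover, when $\varepsilon({\rm Sym}^2(\itPi)\times\itPi')=-1$, condition (iii) forces $\varepsilon(\itPi'\otimes\omega_\itPi\omega_{\K/\F})=-1$, hence $L^{(\infty)}(\tfrac12,\itPi'\otimes\omega_\itPi\omega_{\K/\F})=0$ unavoidably, and your argument collapses: both sides of the factorisation vanish and nothing can be extracted. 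The paper handles this in two steps you are missing. First, if $\varepsilon({\rm Sym}^2(\itPi)\times\itPi')=-1$ one disposes of the case immediately by the functional equation and the $\sigma$-invariance of the root number (both sides of the asserted identity are $0$). Second, when $\varepsilon({\rm Sym}^2(\itPi)\times\itPi')=+1$, one invokes the Friedberg--Hoffstein non-vanishing theorem \cite{FriedbergHoffstein1995}: among all quadratic twists with the root number prescribed by (iii), there exists one, say $\omega_{\K/\F}$, for which $L(\tfrac12,\itPi'\otimes\omega_\itPi\omega_{\K/\F})\neq0$ \emph{and} $\itPi$ is not $\K$-dihedral so that $\itPi_\K$ is cuspidal. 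Without this non-vanishing input your proof does not close.

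A secondary point: to pass from $\Omega^{\Sigma_\F}((\itPi^{\sharp,D})^\vee)$ back to $\Omega^{\Sigma_\F}(\itPi')$ you reach for Theorem~\ref{T: main thm 2}, which both introduces spurious Gauss-sum and $(2\pi\sqrt{-1})$-factors (it has none in its statement) and requires weight $\geq4$, which hypothesis (ii) does \emph{not} guarantee — it only gives $\underline\ell'\in\Z_{\geq2}[\Sigma_\F]$. The correct tool is Corollary~\ref{C:Petersson norm}, which $\sigma$-equivariantly ties all four of $\Omega^{\Sigma_\F}(\itPi')$, $\Omega^{\Sigma_\F}((\itPi')^\vee)$, $\Omega^{\Sigma_\F}((\itPi')^D)$, $\Omega^{\Sigma_\F}(((\itPi')^\vee)^D)$ to $L(1,\itPi',{\rm Ad})\pi^{-[\F:\Q]}(2\pi\sqrt{-1})^{\sum\ell'_v}$ with no additional factors and no weight restriction beyond what you already have.
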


\begin{rmk}
Condition (iii) is satisfied when either $\pi_v'$ is not a discrete series representation for any finite place $v$ or the conductors of $\itPi$ and $\itPi'$ are square-free.
We also refer to Lemma \ref{L:Petersson norm} for the period relation between Petersson norm and $\Omega^{\Sigma_\F}(\itPi')$.
\end{rmk}

\begin{rmk}
When $\underline{\ell}' - 2 \underline{\ell} \in \Z_{<0}[\Sigma_\F]$, the algebraicity of $L^{(\infty)}(\tfrac{1}{2},{\rm Sym}^2({}^\sigma\!\itPi)\times {}^\sigma\!\itPi')$ was proved by several authors.
Suppose that $\F=\Q$ and the conductors of $\itPi$ and $\itPi'$ are square-free, it is proved in \cite[Corollary 2.6]{Ichino2005}, \cite[Theorem 1.1]{Xue2019}, \cite[Corollary 1.4]{PV-P2019}, \cite[Theorem A]{CC2017}, and \cite[Corollary 1.2]{Chen2017} in terms of Petersson norm and Shimura's period \cite{Shimura1977} and the result is compatible with Deligne's conjecture. The algebraicity is also proved in \cite{Raghuram2016} in terms of certain cohomological period.
\end{rmk}

Another result we would like to present in this paper is on the behavior of the cohomological periods under twisting by algebraic Hecke characters.
More precisely, let $\itPi$ be a motivic irreducible cuspidal automorphic representation of $\GL_2(\A_\F)$ such that its weight belongs to $\Z_{\geq 2}[\Sigma_\F]$. 
Let $M(\itPi)$ be the (conjectural) motive attached to $\itPi$ of rank $2$ over $\F$ with coefficients in $\Q(\itPi)$ satisfying conditions in \cite[Conjecture 4.1]{Yoshida1994} with $k_0$ therein replaced by $-r$, where $|\omega_\itPi| = |\mbox{ }|_{\A_\F}^r$ for some $r \in \Z$. 
Let $I$ be a subset of $\Sigma_\F$. Denote by $\Omega^I(\itPi) \in \C^\times$ the Harris' period of $\itPi$.
As mentioned in Remark \ref{R:motivic period}, we expect that 
\[
(\Omega^{{}^\sigma\!I}({}^\sigma\!\itPi))_\sigma \equiv (2\pi\sqrt{-1})^{-{}^\sharp I}(\sqrt{-1})^{r{}^\sharp I}\left(\prod_{v \in {}^\sigma\!I}c_v^+(\sigma,M(\itPi))c_v^-(\sigma,M(\itPi))\delta_v(\sigma,{\rm Art}_{\omega_\itPi^{-1}})^{-1}\right)_\sigma \,({\rm mod}\,\Q(\itPi,I)^\times).
\]
Here we enlarge the coefficients to $\Q(\itPi,I) = \Q(\itPi)\cdot \Q(I)$, ${\rm Art}_{\omega_\itPi^{-1}}$ is the Artin motive attached to $\omega_\itPi^{-1}$, and 
\[
c^{\pm}_v(M(\itPi)) = (c^{\pm}_v(\sigma,M(\itPi)))_\sigma \in (\Q(\itPi,I) \otimes_\Q \C)^\times,\quad \delta_v({\rm Art}_{\omega_\itPi^{-1}}) = (\delta_v(\sigma,{\rm Art}_{\omega_\itPi^{-1}}))_\sigma \in(\Q(\itPi,I) \otimes_\Q \C)^\times
\]
are the covariantly defined $v$-periods in \cite[\S\,2.4]{Yoshida1994} for each $v \in \Sigma_\F$. 
On the other hand, let $\chi$ be an algebraic Hecke character of $\A_\F^\times$. Enlarging the coefficients to $\Q(\itPi,I,\chi) = \Q(\itPi)\cdot\Q(I)\cdot\Q(\chi)$ and applying \cite[Proposition 3.1]{Yoshida1994} to $M \otimes N = M(\itPi)\otimes {\rm Art}_{\chi^{-1}} = M(\itPi \otimes \chi)$, we have
\[
c_v^+(M(\itPi \otimes \chi))c_v^-(M(\itPi \otimes \chi))\delta_v({\rm Art}_{\omega_\itPi^{-1}\chi^{-2}})^{-1} = c_v^+(M(\itPi))c_v^-(M(\itPi))\delta_v({\rm Art}_{\omega_\itPi^{-1}})^{-1} 
\]
for each $v \in \Sigma_\F$.
Therefore, it is natural to expect that
\[
(\Omega^{{}^\sigma\!I}({}^\sigma\!\itPi\otimes{}^\sigma\!\chi))_\sigma \equiv (\Omega^{{}^\sigma\!I}({}^\sigma\!\itPi))_\sigma \,({\rm mod}\,\Q(\itPi,I,\chi)^\times).
\]
Indeed, we have the following result.

\begin{thm}\label{T: main thm 2}
Let $\itPi$ be a motivic irreducible cuspidal automorphic representation of $\GL_2(\A_\F)$ and $\chi$ an algebraic Hecke character of $\A_\F^\times$.
Suppose the weight of $\itPi$ belongs to $\Z_{\geq 4}[\Sigma_\F]$.
For $I \subseteq \Sigma_\F$ and $\sigma \in {\rm Aut}(\C)$, we have
\[
\sigma\left(\frac{\Omega^I(\itPi)}{\Omega^I(\itPi\otimes\chi)}\right) = \frac{\Omega^{{}^\sigma\!I}({}^\sigma\!\itPi)}{\Omega^{{}^\sigma\!I}({}^\sigma\!\itPi \otimes {}^\sigma\!\chi)}.
\]
\end{thm}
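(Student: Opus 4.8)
The plan is to reduce the period relation to an algebraicity statement for Rankin--Selberg $L$-values twisted by $\chi$, using the functional-equation-type argument that is by now standard in this circle of ideas (and which underlies the expected motivic formula quoted in Remark~\ref{R:motivic period}). Concretely, for a suitable auxiliary motivic cuspidal automorphic representation $\itPi'$ of $\GL_2(\A_\F)$ of sufficiently regular weight, the algebraicity theorem for $L(s,\itPi\times\itPi')$ in terms of Harris' periods (Theorem~\ref{T:RS}, i.e.\ \cite[Theorem 3.5.1]{Harris1989I}) expresses, for each critical $s$, the quantity $L^{(\infty)}(s,\itPi\times\itPi')/\big((2\pi\sqrt{-1})^{\bullet}\Omega^{I}(\itPi)\Omega^{J}(\itPi')\big)$ as an element of the appropriate rationality field, $\sigma$-equivariantly. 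Replacing $\itPi$ by $\itPi\otimes\chi$ gives the same statement with $L^{(\infty)}(s,\itPi\otimes\chi\times\itPi')=L^{(\infty)}(s,\itPi\times(\itPi'\otimes\chi))$ and with $\Omega^{I}(\itPi)$ replaced by $\Omega^{I}(\itPi\otimes\chi)$. The point is that the $L$-value only changes by a twist on the $\itPi'$-side, whose effect on the period $\Omega^{J}(\itPi')$ and on the $2\pi\sqrt{-1}$-power is controlled by the known relation for $\GL_2$-periods of $\itPi'$ under twisting (which, on the $\GL_2$ side with Shimura's periods $p(\varepsilon,\cdot)$ and Gauss sums, is classical --- see the discussion around Theorem~\ref{C:main}). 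Dividing the two displays and using that $\sigma$ commutes with all the elementary factors (powers of $2\pi\sqrt{-1}$, Gauss sums of $\chi$, the rationality-field elements) yields the claimed identity $\sigma(\Omega^{I}(\itPi)/\Omega^{I}(\itPi\otimes\chi)) = \Omega^{{}^\sigma\!I}({}^\sigma\!\itPi)/\Omega^{{}^\sigma\!I}({}^\sigma\!\itPi\otimes{}^\sigma\!\chi)$, once one checks that $I$ and ${}^\sigma\!I$ are admissible for the relevant weights and that the auxiliary $\itPi'$ can be chosen so that there is at least one critical point in the overlap of the two critical strips.

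In more detail, the key steps, in order, are as follows. First I would fix an algebraic Hecke character $\chi$ and, for each subset $J\subseteq\Sigma_\F$ complementary in the relevant sense to $I$, choose an auxiliary $\itPi'$ of large regular weight (this is where the hypothesis that the weight of $\itPi$ lies in $\Z_{\geq 4}[\Sigma_\F]$, rather than merely $\Z_{\geq 2}$, is used: it guarantees room for a critical point with the right parity after the twist, and it guarantees admissibility of both $I$ and $J$ for all the weights in play). Second, I would invoke the $\sigma$-equivariant algebraicity of $L^{(\infty)}(s,\itPi\times\itPi')/((2\pi\sqrt{-1})^{\bullet}\Omega^{I}(\itPi)\Omega^{J}(\itPi'))$ at a chosen critical point, and separately for $\itPi\otimes\chi$, i.e.\ for $L^{(\infty)}(s,\itPi\times(\itPi'\otimes\chi))/((2\pi\sqrt{-1})^{\bullet}\Omega^{I}(\itPi\otimes\chi)\Omega^{J}(\itPi'\otimes\chi))$. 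Third, I would combine these with the twisting relation for the $\GL_2$ period of the \emph{auxiliary} representation, $\sigma(\Omega^{J}(\itPi')/\Omega^{J}(\itPi'\otimes\chi)) = \Omega^{{}^\sigma\!J}({}^\sigma\!\itPi')/\Omega^{{}^\sigma\!J}({}^\sigma\!\itPi'\otimes{}^\sigma\!\chi)$; this is itself provable by the same kind of argument but with the roles of $\itPi$ and $\itPi'$ swapped, so one must arrange the induction/bootstrap so that it closes --- most cleanly by taking $\itPi'$ of weight so regular that its own period-twist relation is already available, e.g.\ from a base case handled directly. Fourth, dividing the two algebraicity relations and cancelling the auxiliary periods isolates $\Omega^{I}(\itPi)/\Omega^{I}(\itPi\otimes\chi)$ up to factors on which $\sigma$ acts through known formulas (powers of $2\pi\sqrt{-1}$ and a Gauss sum of $\chi$ to an explicit power, coming from the $\GL_1$-periods $\delta_v({\rm Art}_{\chi^{-1}})$), and the resulting equation is exactly the assertion of the theorem after one verifies the bookkeeping of these elementary factors matches on both sides.

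The main obstacle I anticipate is precisely this bootstrap/consistency issue: the twisting relation for $\itPi$ is being deduced from an algebraicity statement that itself involves a period $\Omega^{J}(\itPi')$ for which one needs the analogous twisting relation, so the argument is only non-circular if one can start from a genuinely independent base case or if the auxiliary $\itPi'$ can be taken in a range where $\Omega^{J}(\itPi')$ has an unconditional description (for instance, reducible to Petersson norms and classical periods via Lemma~\ref{L:Petersson norm} or the $I=\Sigma_\F$ / $I=\emptyset$ cases noted in the remarks). A secondary technical point is ensuring, for \emph{every} $I\subseteq\Sigma_\F$ simultaneously, that one can find a single $\itPi'$ (or a family of them) realizing the needed critical-value relation with the correct choice of $J$ and with admissibility; the weight restriction $\Z_{\geq 4}[\Sigma_\F]$ is what makes this possible, and checking that no parity or regularity obstruction survives after the $\chi$-twist is the part that requires genuine care rather than routine computation. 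Finally, one must make sure the archimedean factors and Gauss-sum normalizations in Theorem~\ref{T:RS} are tracked with the sign and power conventions of Remark~\ref{R:motivic period}, so that the ``$(\sqrt{-1})^{r{}^\sharp I}$'' and Gauss-sum contributions cancel correctly between numerator and denominator; this is bookkeeping, but it is where an error would most plausibly hide.
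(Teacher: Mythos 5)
You have correctly identified the fatal circularity, but you have not resolved it, and the bootstrap you sketch does not close. In Theorem~\ref{T:RS} the auxiliary exponent set is forced to be $J = \Sigma_\F\smallsetminus I$ (one must take $\ell_v \neq \ell_v'$ for all $v$, and then $I=\{v:\ell_v>\ell_v'\}$, $J=\{v:\ell_v<\ell_v'\}$ partition $\Sigma_\F$). The only cases where the twist relation for $\Omega^{J}(\itPi')$ is available ``for free'' are $J=\emptyset$ (where $\Omega^\emptyset=1$ by definition) and $J=\Sigma_\F$ (via Lemma~\ref{L:Petersson norm} and Corollary~\ref{C:Petersson norm}, because the adjoint $L$-value and the Gauss sums are insensitive to the twist). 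For any $I$ with $0<{}^\sharp I<[\F:\Q]$ the complementary $J$ is again strictly between $\emptyset$ and $\Sigma_\F$, so there is no base case from which an induction on ${}^\sharp I$ or ${}^\sharp J$ can start, and the complementation $I\leftrightarrow\Sigma_\F\smallsetminus I$ does not reduce to a smaller problem. Also note a bookkeeping slip: applying the untwisted algebraicity to $(\itPi\otimes\chi,\itPi')$ produces $\Omega^I(\itPi\otimes\chi)\Omega^J(\itPi')$, not $\Omega^I(\itPi\otimes\chi)\Omega^J(\itPi'\otimes\chi)$ as in your second display.

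The paper's argument sidesteps the circularity by exploiting the \emph{twisted} version of the algebraicity theorem, which is precisely why Theorem~\ref{T:RS} is stated and proved with a nontrivial algebraic Hecke character $\chi$ rather than being taken verbatim from Harris. One fixes $\itPi'$ (again via \cite{Weinstein2009}, using the weight hypothesis $\underline{\ell}\in\Z_{\geq 4}[\Sigma_\F]$ to ensure $\min_v|\ell_v-\ell_v'|\geq 2$ and the desired $I=\{v:\ell_v>\ell_v'\}$) and applies Theorem~\ref{T:RS} to the two triplets $(\itPi,\itPi',\chi)$ and $(\itPi\otimes\chi,\itPi',\mathbf{1})$. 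These two triplets have the \emph{same} $L$-function $L(s,\itPi\times\itPi'\times\chi)=L(s,(\itPi\otimes\chi)\times\itPi'\times\mathbf{1})$, the same critical range and the same $(2\pi\sqrt{-1})$-power (because $|\chi|=|\cdot|^{r_0}$ shifts $r\mapsto r+2r_0$ exactly as $r_0\mapsto 0$ shifts the other parameter), and the same Gauss sum $G(\chi^2\omega_\itPi\omega_{\itPi'})=G(\mathbf{1}^2\omega_{\itPi\otimes\chi}\omega_{\itPi'})$. The auxiliary period appears as $\Omega^J(\itPi')$, \emph{untwisted}, in both displays, so on dividing the two statements (at the rightmost critical point, where the $L$-value is nonzero) everything cancels except $\Omega^I(\itPi)/\Omega^I(\itPi\otimes\chi)$, and the desired $\sigma$-equivariance falls out with no bootstrap and no need to know anything about the behavior of $\Omega^J(\itPi')$ under twisting.
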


\subsection{An outline of the proof}

There are two main ingredients for the proof of Theorem \ref{T: main thm}:
\begin{itemize}
\item Ichino's central value formula for $L(\tfrac{1}{2},\itPi,{\rm As})$;
\item cohomological interpretation of the global trilinear period integral.
\end{itemize}
We have the global trilinear period integral $I^D \in {\rm Hom}_\C(\itPi^D \otimes (\itPi^D)^\vee,\C)$ in (\ref{E:global period}) defined by integration on $D^\times(\A_\F) \times D^\times(\A_\F)$ of cusp forms in $\itPi^D \otimes (\itPi^D)^\vee$.
In \cite{Ichino2008}, Ichino established a formula which decomposes the global trilinear period integral $I^D$ into a product of $L(\tfrac{1}{2},\itPi,{\rm As})$ and the local trilinear period integrals $I_v^D$ defined in (\ref{E:local period integral}). The formula is a special case of the refined Gan--Gross--Prasad conjecture proposed by Ichino--Ikeda \cite{IchinoIkeda2010}.
Note that our choice of the quaternion algebra $D$ guarantees the non-vanishing of $I_v^D$ (cf.\,Lemma \ref{L:trilinear form}).
The Galois equivariant property of these local trilinear period integrals at non-archimedean places is proved in Lemma \ref{L:Galois equiv. 2}, and the calculation for the archimedean local trilinear periods integral was settled in \cite{CC2017} and is recalled in Lemma \ref{L:archimedean local period}.
On the other hand, consider the automorphic line bundles $\mathcal{L}_{(\underline{\kappa},\underline{r})}$ and $\mathcal{L}_{(\underline{\kappa}(I_{\underline{\kappa}}),\underline{r})}$ associated to the algebraic characters $\rho_{(\underline{\kappa},\underline{r})}$ and $\rho_{(\underline{\kappa}(I_{\underline{\kappa}}),\underline{r})}$ of $(\R^\times\cdot {\rm SO}(2))^{\Sigma_\E}$ in (\ref{E:algebraic rep.}) on the quaternion Shimura variety for the Shimura datum $({\rm R}_{\E/\Q}(D \otimes_\F \E)^\times, (\frak{H}^\pm)^{\Sigma_\E})$.
Let $H^0(\mathcal{L}_{(\underline{\kappa},\underline{r})}^{\rm sub})$ and $H^{[\F:\Q]}(\mathcal{L}_{(\underline{\kappa}(I_{\underline{\kappa}}),\underline{r})}^{\rm sub})$ be the zeroth and $[\F:\Q]$-th coherent cohomology groups of the subcanonical extensions $\mathcal{L}_{(\underline{\kappa},\underline{r})}^{\rm sub}$ and $\mathcal{L}_{(\underline{\kappa}(I_{\underline{\kappa}}),\underline{r})}^{\rm sub}$, respectively, on a toroidal compactification of the quaternion Shimura variety.
These cohomology groups have canonical $\Q(\underline{\kappa})$-rational structures and admit natural action of $D^\times(\A_{\E,f})$.
Inside these coherent cohomology groups, we have the cuspidal cohomology groups $H_{\rm cusp}^0(\mathcal{L}_{(\underline{\kappa},\underline{r})})$ and $H^{[\F:\Q]}_{\rm cusp}(\mathcal{L}_{(\underline{\kappa}(I_{\underline{\kappa}}),\underline{r})})$ which are admissible semisimple $D^\times(\A_{\E,f})$-submodules and consisting of cohomology classes represented by cusp forms on $D^\times(\A_\E)$.
The representation $\pi_f^D$ occurs with multiplicity one in the $\pi_f^D$-isotypic components $H_{\rm cusp}^0(\mathcal{L}_{(\underline{\kappa},\underline{r})})[\pi_f^D]$ and $H^{[\F:\Q]}_{\rm cusp}(\mathcal{L}_{(\underline{\kappa}(I_{\underline{\kappa}}),\underline{r})})[\pi_f^D]$ and these isotypic components have canonical $\Q(\itPi)$-rational structures inherit from that of $\mathcal{L}_{(\underline{\kappa},\underline{r})}^{\rm sub}$ and $\mathcal{L}_{(\underline{\kappa}(I_{\underline{\kappa}}),\underline{r})}^{\rm sub}$ (see Lemmas \ref{L:2.2} and \ref{L:coherent structure}).
Note that each class in $H_{\rm cusp}^0(\mathcal{L}_{(\underline{\kappa},\underline{r})})[\pi_f^D]$ (resp.\,$H^{[\F:\Q]}_{\rm cusp}(\mathcal{L}_{(\underline{\kappa}(I_{\underline{\kappa}}),\underline{r})})[\pi_f^D]$) is uniquely represented by a holomorphic cusp form in $\itPi^D$ (resp.\,by a cusp form in $\itPi^D$ which is anti-holomorphic at $w \in I_{\underline{\kappa}}$ and holomorphic elsewhere).
The period $\Omega^{I_{\underline{\kappa}}}(\itPi^D) \in \C^\times$ is the non-zero complex number, unique up to $\Q(\itPi)^\times$, such that
\[
\frac{\varphi^{I_{\underline{\kappa}}}}{\Omega^{I_{\underline{\kappa}}}(\itPi^D)}
\]
represents a $\Q(\itPi)$-rational cusp form in $H^{[\F:\Q]}_{\rm cusp}(\mathcal{L}_{(\underline{\kappa}(I_{\underline{\kappa}}),\underline{r})})[\pi_f^D]$ for any $\Q(\itPi)$-rational holomorphic cusp form $\varphi \in \itPi^D$. Here $\varphi^{I_{\underline{\kappa}}}$ is the right translation of $\varphi$ by $\tau_{I_{\underline{\kappa}}} \in \GL_2(\R)^{\Sigma_\E} = D^\times(\E_\infty)$ with
\[
(\tau_{I_{\underline{\kappa}}})_w = \begin{cases}
\bp -1 & 0 \\ 0 & 1\ep & \mbox{ if $w \in I_{\underline{\kappa}}$},\\
1 &\mbox{ otherwise}.
\end{cases}
\]
Under the totally unbalanced condition, we constructed in \S\,\ref{S:3} certain trilinear differential operator $[\delta(\underline{\kappa})]$ rational over $\Q(\underline{\kappa})$ from $\mathcal{L}_{(\underline{\kappa}(I_{\underline{\kappa}}),\underline{r})}$ to $\mathcal{L}'_{(\underline{2},\underline{0})}$, the automorphic line bundle associated to the algebraic character $\rho'_{(\underline{2},\underline{0})}$ of $(\R^\times\cdot {\rm SO}(2))^{\Sigma_\F}$ on the quaternion Shimura variety for the Shimura datum $({\rm R}_{\F/\Q}D^\times,(\frak{H}^\pm)^{\Sigma_\F})$. 
It induces a $D^\times(\A_{\F,f})$-module homomorphism
\[
[\delta(\underline{\kappa})] : H^{[\F:\Q]}(\mathcal{L}_{(\underline{\kappa}(I_{\underline{\kappa}}),\underline{r})}^{\rm sub}) \longrightarrow H^{[\F:\Q]}((\mathcal{L}_{(\underline{2},\underline{0})}')^{\rm sub})
\]
which is rational over $\Q(\underline{\kappa})$.
Moreover, if a class in $H^{[\F:\Q]}_{\rm cusp}(\mathcal{L}_{(\underline{\kappa}(I_{\underline{\kappa}}),\underline{r})})[\pi_f^D]$ is represented by a cusp form $\varphi$, then its image under $[\delta(\underline{\kappa})]$ is represented by $({\bf X}(\underline{\kappa})\cdot \varphi )\vert_{D^\times(\A_\F)}$ for some differential operator ${\bf X}(\underline{\kappa}) \in U(\frak{gl}_{2,\C}^{\Sigma_\E})$ defined in (\ref{E:differential operator}).
Composing the trilinear differential operator with the $\Q$-rational trace map 
\[
H^{[\F:\Q]}((\mathcal{L}_{(\underline{2},\underline{0})}')^{\rm sub})\longrightarrow \C
\]
in Lemma \ref{L:trace}, we deduce that
\[
\int_{\A_\F^\times D^\times(\F)\backslash D^\times(\A_\F)}\frac{{\bf X}(\underline{\kappa})\cdot\varphi^{I_{\underline{\kappa}}}(g)}{\Omega^{I_{\underline{\kappa}}}(\itPi^D)}\,dg^{\rm Tam} \in \Q(\itPi)
\]
for any $\Q(\itPi)$-rational holomorphic cusp form $\varphi \in \itPi^D$.
Here $dg^{\rm Tam}$ is the Tamagawa measure.
Similar assertions hold for $(\itPi^D)^\vee$ if we replace $\underline{r}$ by $-\underline{r}$ and $\pi_f^D$ by $(\pi_f^D)^\vee$.
Therefore, we have
\[
\frac{I^D({\bf X}(\underline{\kappa})\cdot\varphi_1^{I_{\underline{\kappa}}} \otimes {\bf X}(\underline{\kappa})\cdot\varphi_2^{I_{\underline{\kappa}}})}{\Omega^{I_{\underline{\kappa}}}(\itPi^D)\cdot\Omega^{I_{\underline{\kappa}}}((\itPi^D)^\vee)} \in \Q(\itPi)
\]
for $\Q(\itPi)$-rational holomorphic cusp forms $\varphi_1 \in \itPi^D$ and $\varphi_2 \in (\itPi^D)^\vee$.
Combining with Ichino's formula, we obtain the algebraicity of $L(\tfrac{1}{2},\itPi,{\rm As})$.
We mention one subtlety in the proof. In order to apply Ichino's formula, it is necessary to compare $L(1,\itPi,{\rm Ad})$, which is the special value of the adjoint $L$-function of $\itPi$ at $s=1$, with the Petersson bilinear pairing (\ref{E:Petersson pairing}) on $\itPi^D \times (\itPi^D)^\vee$. It is known that $L(1,\itPi,{\rm Ad})$ is essentially equal to the Petersson pairing of $\Q(\itPi)$-rational holomorphic cusp form in $\itPi^D \times (\itPi^D)^\vee$ (see Lemma \ref{L:Petersson norm} and Corollary \ref{C:Petersson norm}). On the other hand, the rationality of the global trilinear period integral $I^D$ is related to the rational structure of $H^{[\F:\Q]}_{\rm cusp}(\mathcal{L}_{(\underline{\kappa}(I_{\underline{\kappa}}),\underline{r})})[\pi_f^D]$ as we have explained above. This is the key reason why we need to compare the rational structures on $H_{\rm cusp}^0(\mathcal{L}_{(\underline{\kappa},\underline{r})})[\pi_f^D]$ and $H^{[\F:\Q]}_{\rm cusp}(\mathcal{L}_{(\underline{\kappa}(I_{\underline{\kappa}}),\underline{r})})[\pi_f^D]$.

Theorem \ref{T: main thm 2} is actually a direct consequence of the algebraicity of the rightmost critical value of the twisted Rankin--Selberg $L$-function $L(s,\itPi\times\itPi'\times\chi)$ for suitable motivic irreducible cuspidal automorphic representation $\itPi'$ of $\GL_2(\A_\F)$. The algebraicity is expressed in terms of the cohomological periods $\Omega^I(\itPi), \Omega^{\Sigma_\E \smallsetminus I}(\itPi')$, the Gauss sum $G(\chi^2 \omega_\itPi\omega_{\itPi'})$, and some other elementary factors. The algebraicity was proved by Shimura in \cite[Theorem 4.2]{Shimura1978} when $I=\Sigma_\F$ and by Harris in \cite[Theorem 3.5.1]{Harris1989I} for general $\itPi, \itPi'$, and $\chi = {\bf 1}$. We generalize the result to arbitrary twist by algebraic Hecke character $\chi$ in Theorem \ref{T:RS}. By applying Theorem \ref{T:RS} to the triplets $(\itPi,\itPi',\chi)$ and $(\itPi \otimes \chi,\itPi',{\bf 1})$, the period relation in Theorem \ref{T: main thm 2} follows immediately.

This paper is organized as follows. In \S\,\ref{S:2}, we recall the theory of coherent cohomology on quaternion Shimura varieties based on the general results of Harris \cite{Harris1990}. The cohomological periods $\Omega^I(\itPi)$ for admissible $I \subseteq \Sigma_\E$ are defined in Proposition \ref{P:Harris' period}. 
In \S\,\ref{S:3}, we construct the trilinear differential operator under the totally unbalanced condition. We specialize the results of Harris in \cite[\S\,3]{Harris1985} and \cite[\S\,7]{Harris1986} to the natural inclusion
\[
({\rm R}_{\F/\Q}D^\times,(\frak{H}^\pm)^{\Sigma_\F}) \subset ({\rm R}_{\F/\Q}(D\otimes_\F \E)^\times,(\frak{H}^\pm)^{\Sigma_\E})
\]
of Shimura data and the automorphic line bundles $\mathcal{L}_{(\underline{\kappa}(I_{\underline{\kappa}}),\underline{r})}$ and $\mathcal{L}_{(\underline{2},\underline{0})}'$. 
In \S\,\ref{S:4}, we prove the algebraicity of critical values of twisted Rankin--Selberg $L$-functions for $\GL_2 \times \GL_2$ over $\F$. This section is logically independent of the proof of Theorem \ref{T: main thm} except for \S\,\ref{SS:4.1}, whereas Theorem \ref{T: main thm 2} is a corollary to the main result Theorem \ref{T:RS} of \S\,\ref{S:4}.
In \S\,\ref{S:5}, we prove our main results Theorems \ref{T: main thm}, \ref{C:main}, and \ref{T: main thm 2}. 

\subsection{Notation}\label{SS:notation}

Fix a totally real number field $\F$. Let $\A_\F$ (resp.\,$\A$) be the ring of adeles of $\F$ (resp.\,$\Q$) and $\A_{\F,f}$ (resp.\,$\A_f$) be its finite part. 
Let $\o_\F$ be the ring of integers of $\F$ and $D_\F$ the absolute discriminant of $\F/\Q$.
We denote by $\hat{\o}_\F$ the closure of $\o_\F$ in $\A_{\F,f}$.
Let $\psi_\Q=\bigotimes_v\psi_{\Q_v}$ be the standard additive character of $\Q\backslash \A$ defined so that
\begin{align*}
\psi_{\Q_p}(x) & = e^{-2\pi \sqrt{-1}\,x} \mbox{ for }x \in \Z[p^{-1}],\\
\psi_{\R}(x) & = e^{2\pi \sqrt{-1}\,x} \mbox{ for }x \in \R.
\end{align*}
The standard additive character $\psi_{\F}$ of $\F \backslash \A_\F$ is defined by $\psi_{\F} = \psi_\Q \circ {\rm tr}_{\F /\Q}$. For $\alpha \in \F$, let $\psi_\F^\alpha$ be the additive character defined by $\psi_\F^\alpha(x) = \psi_\F(\alpha x)$. Similarly we define $\psi_{\F_v}^\alpha$ for $\alpha \in \F_v$.


Let $\E$ ba a totally real \'etale algebra over $\F$.
Let $\Sigma_\E$ be the set of non-zero algebra homomorphisms from $\E$ to $\R$. We identify $\E_\infty = \E \otimes_\Q \R$ with $\R^{\Sigma_\E}$ so that the $w$-th coordinate of $\R^{\Sigma_\E}$ corresponds to the completion of $\E$ at $w$. 
Let $\underline{\kappa} = \Sigma_{w \in \Sigma_\E}\kappa_ww \in \Z[\Sigma_\E]$. For $\sigma \in {\rm Aut}(\C)$, define ${}^\sigma\!\underline{\kappa} = \Sigma_{w \in \Sigma_\E}{}^\sigma\kappa_ww \in \Z[\Sigma_\E]$ with ${}^\sigma\!\kappa_w = \kappa_{\sigma^{-1}\circ w}$.
Let $\Q(\underline{\kappa})$ be the fixed field of $\{\sigma \in {\rm Aut}(\C)\,\vert\, {}^\sigma\!\underline{\kappa} = \underline{\kappa}\}$.
For each subset $I$ of $\Sigma_\E$, let $\underline{\kappa}(I) = \Sigma_{w \in \Sigma_\E}\kappa(I)_ww \in \Z[\Sigma_\E]$ defined by
\[
\kappa(I)_w = \begin{cases}
2-\kappa_w & \mbox{ if $w \in I$},\\
\kappa_w   & \mbox{ if $w \notin I$},
\end{cases}
\]
and let $\Q(I)$ be the fixed field of $\{\sigma \in {\rm Aut}(\C) \, \vert \, {}^\sigma\!I=I\}$. Note that $\Q(\underline{\kappa}(I))\subseteq \Q(\underline{\kappa})\cdot\Q(I)$.
Consider the map
\begin{align*}
\mbox{the power set of }\Sigma_\E &\longrightarrow \{0,1,\cdots,[\E:\Q]\} \times \Z[\Sigma_\E],\\
I & \longmapsto ({}^\sharp I , \underline{\kappa}(I)).
\end{align*}
We say a subset $I$ of $\Sigma_\E$ is admissible with respect to $\underline{\kappa}$ if the fiber of $({}^\sharp I , \underline{\kappa}(I))$ under the above map contains only $I$. In this case, we have $\Q(\underline{\kappa}(I)) = \Q(\underline{\kappa})\cdot\Q(I)$. It is clear that the empty set and $\Sigma_\E$ are admissible. We will use the notion of admissibility only when $\underline{\kappa} \in \Z_{\geq1}[\Sigma_\E]$. In this case, any subset $I$ such that $\kappa_w \geq 2$ for all $w \in I$ is admissible.
We assume
\[
\E = \F_1 \times \cdots \times \F_n
\]
for some totally real number fields $\F_1,\cdots,\F_n$ over $\F$.
Let $\widetilde{\E}$ be the composite of the Galois closure of $\F_i$ over $\Q$ for $1 \leq i \leq n$.
We identify $\Sigma_\E$ with the disjoint union of $\Sigma_{\F_i}$ for $1 \leq i \leq n$ in a natural way.
Let $(\underline{\kappa},\underline{r}) \in \Z[\Sigma_\E] \times \Z[\Sigma_\E]$. We say $(\underline{\kappa},\underline{r})$ is motivic if $\kappa_w \equiv r_w \,({\rm mod}\,2)$ for $w \in \Sigma_\E$ and $r_w = r_{w'}$ whenever $w,w'\in \Sigma_{\F_i}$ for some $1 \leq i \leq n$.

In $\GL_2$, let $B$ be the Borel subgroup consisting of upper triangular matrices and $N$ be its unipotent radical, and put
\[
{\bf a}(\nu) = \bp \nu & 0 \\ 0 & 1 \ep,\quad {\bf d}(\nu) =   \bp 1 & 0 \\ 0 & \nu \ep,\quad {\bf m}(t)= \bp t & 0 \\ 0 & t^{-1}\ep,\quad {\bf n}(x) = \bp 1 & x \\ 0 & 1\ep
\]
for $\nu,t \in \GL_1$ and $x \in \mathbb{G}_a$.
Let $\frak{gl}_2$ be the Lie algebra of $\GL_2(\R)$ and $\frak{gl}_{2,\C}$ be its complexification. We have
\[
\frak{gl}_{2,\C} = \C \cdot Z \oplus \C \cdot H \oplus \C \cdot X_+ \oplus \C \cdot X_-, 
\]
where
\[
Z = \bp 1 & 0 \\ 0 & 1 \ep,\quad H = \bp 0 & -\sqrt{-1} \\ \sqrt{-1} & 0 \ep ,\quad X_+ = \bp \sqrt{-1} & -1 \\ -1 & -\sqrt{-1} \ep,\quad X_- = \bp \sqrt{-1} & 1 \\ 1 & -\sqrt{-1} \ep.
\]
We denote by $U(\frak{gl}_{2,\C})$ the universal enveloping algebra of $\frak{gl}_{2,\C}$. 
Let
\[
{\rm SO}(2)  = \left \{ \left . k_{\theta} = \bp \cos \theta & \sin \theta \\ -\sin \theta & \cos \theta \ep  \,\right \vert \,\theta \in \R / 2\pi \Z\right \}.
\]
For $\kappa \in \Z_{\geq 1}$ and $\varepsilon \in \{\pm 1\}$, let $D(\kappa)^\varepsilon$ be the irreducible admissible $(\frak{gl}_2,{\rm SO}(2))$-module characterized so that there exists a non-zero ${\bf v} \in D(\kappa)^\varepsilon$ such that 
\[
Z\cdot {\bf v} =0,\quad H\cdot {\bf v} = \varepsilon \kappa \cdot {\bf v},\quad X_{-\varepsilon}\cdot {\bf v} =0.
\]
Let $D(\kappa) = D(\kappa)^+ \oplus D(\kappa)^-$, which is an irreducible admissible $(\frak{gl}_2,{\rm O}(2))$-module. 
Note that $D(\kappa)$ is the ${\rm O}(2)$-finite part of the (limit of) discrete series representation of $\GL_2(\R)$ with weight $\kappa$.
For $r \in \Z$ such that $\kappa \equiv r\,({\rm mod}\,2)$ and $\alpha \in \R^\times$, let $W_{(\kappa,r),\psi_\R^\alpha}^\pm$ be the Whittaker function for $D(\kappa) \otimes |\mbox{ }|^{r/2}$ with respect to $\psi_\R^\alpha$ of weight $\pm \kappa$ normalized so that
$W_{(\kappa,r),\psi_\R^\alpha}^\pm(1) = e^{-2\pi\alpha}$. The explicit formula is given by
\begin{align}\label{E:archimedean Whittaker}
W_{(\kappa,r),\psi_\R^\alpha}^\pm(z{\bf n}(x){\bf a}(y)k_\theta) = z^r (\pm\alpha y)^{(\kappa+r)/2}e^{2\pi\sqrt{-1}\,\alpha(x\pm\sqrt{-1}\,y)\pm \sqrt{-1}\,\kappa\theta}\cdot\mathbb{I}_{\R_{>0}}(\pm\alpha y)
\end{align}
for $x \in \R$, $y,z \in \R^\times$, and $k_\theta \in {\rm SO}(2)$.

Let $\sigma \in {\rm Aut}(\C)$. Define the $
\sigma$-linear action on $\C(X)$, which is the field of formal Laurent series in variable $X$ over $\C$, as follows:
\[
{}^\sigma\!P(X) = \sum_{n \gg -\infty}^\infty\sigma(a_n) X^n
\]
for $P(X) = \sum_{n \gg -\infty}^\infty a_n X^n \in \C(X)$. 
For a complex representation $\pi$ of a group $G$ on the space $\mathcal{V}_\pi$ of $\pi$, let ${}^\sigma\!\pi$ be the representation of $G$ defined
\begin{align*}\label{E:sigma action}
{}^\sigma\!\pi(g) = t \circ \pi(g) \circ t^{-1},
\end{align*}
where $t:\mathcal{V}_\pi \rightarrow \mathcal{V}_\pi$ is a $\sigma$-linear isomorphism. Note that the isomorphism class of ${}^\sigma\!\pi$ is independent of the choice of $t$. We call ${}^\sigma\!\pi$ the $\sigma$-conjugate of $\pi$. When $v$ is a finite place of $\F$ and $f$ is a complex-valued function on $\F_v^m$ or $(\F_v^\times)^m$ for some $m \in \Z_{\geq 1}$, we define ${}^\sigma\!f$ by ${}^\sigma\!f(x) = \sigma(f(x))$ for $x \in \F_v^m$ or $x \in (\F_v^\times)^m$.

For an algebraic Hecke character $\chi$ of $\A_\F^\times$, the Gauss sum $G(\chi)$ of $\chi$ is defined by
\[
G(\chi) = D_\F^{-1/2}\prod_{v \nmid \infty}\varepsilon(0,\chi_v,\psi_{\F_v}),
\]
where $\varepsilon(s,\chi_v,\psi_{\F_v})$ is the $\varepsilon$-factor of $\chi_v$ with respect to $\psi_{\F_v}$ defined in \cite{Tate1979}.
For $\sigma \in {\rm Aut}(\C)$, define Hecke character ${}^\sigma\!\chi$ of $\A_\F^\times$ by ${}^\sigma\!\chi(x) = \sigma(\chi(x))$.
It is easy to verify that 
\begin{align}\label{E:Galois Gauss sum}
\begin{split}
\sigma(G(\chi)) &= {}^\sigma\!\chi(u)G({}^\sigma\!\chi),\\
\sigma\left(\frac{G(\chi\chi')}{G(\chi)G(\chi')}\right) &= \frac{G({}^\sigma\!\chi{}^\sigma\!\chi')}{G({}^\sigma\!\chi)G({}^\sigma\!\chi')}
\end{split}
\end{align}
for algebraic Hecke characters $\chi,\chi'$ of $\A_\F^\times$,
where $u \in \widehat{\Z}^\times$ is the unique element such that $\sigma(\psi_{\F}(x)) = \psi_{\F}(ux)$ for $x \in \A_{\F,f}$.
Let ${\rm sgn}(\chi) \in \{\pm 1\}^{\Sigma_\F}$ be the signature of $\chi$ defined by ${\rm sgn}(\chi)_v = \chi_v(-1)$ for $v \in \Sigma_\F$.

\section{Periods of motivic quaternionic modular forms}\label{S:2}

\subsection{Coherent cohomology groups on the quaternionic Shimura variety}

Let 
\[
\E = \F_1 \times \cdots \times \F_n
\]
be a totally real \'etale algebra over $\F$, where $\F_1,\cdots,\F_n$ are totally real number fields over $\F$.
For $1 \leq i \leq n$, let $D_i$ be a totally indefinite quaternion algebra over $\F_i$. Put $D = D_1 \times \cdots \times D_n$. 
Let 
\[
G = {\rm R}_{\E/\Q}D^\times = {\rm R}_{\F_1 / \Q}D_1^\times \times \cdots {\rm R}_{\F_n / \Q}D_n^\times
\]
be a connected reductive linear algebraic group over $\Q$. We identify $G(\R)$ with $\GL_2(\R)^{\Sigma_\E}$ via the identification of $\E_\infty$ with $\R^{\Sigma_\E}$. Let $h : {\rm R}_{\C/\R}{\mathbb G}_m \rightarrow G_\R$ be the homomorphism defined by
\begin{align}\label{E:Shimura datum}
h(x+\sqrt{-1}\,y) = \left( \bp x & y \\ -y & x \ep,\cdots , \bp x & y \\ -y & x \ep \right)
\end{align}
on $\R$-points. Let $X$ be the $G(\R)$-conjugacy class containing $h$. Then $(G,X)$ is a Shimura datum and the associated Shimura variety 
\[
{\rm Sh}(G,X) = \varprojlim_{\mathcal{K}} {\rm Sh}_{\mathcal K}(G,X) = \varprojlim_{\mathcal{K}}G(\Q)\backslash X\times G(\A_f) / \mathcal{K}
\]
is called the quaternionic Shimura variety associated to $(G,X)$, where $\mathcal{K}$ runs through neat open compact subgroups of $G(\A_f)$.
It is a pro-algebraic variety over $\C$ with continuous $G(\A_f)$-action and admits canonical model over $\Q$.
Let $K_\infty$ be the stabilizer of $h$ in $G(\R)$. Note that
\[
K_\infty = Z_G(\R)\cdot {\rm SO}(2)^{\Sigma_\E}
\]
and we have isomorphisms
\begin{align}\label{E:hermitian domain}
G(\R)/K_\infty \longrightarrow X \longrightarrow (\frak{H}^{\pm})^{\Sigma_\E},\quad gK_\infty \longmapsto ghg^{-1} \longmapsto g\cdot (\sqrt{-1},\cdots,\sqrt{-1}).
\end{align}
Here $\frak{H}^{\pm} = \C \smallsetminus \R$ is the union of the upper and lower half-planes and $G(\R)$ acts on $(\frak{H}^{\pm})^{\Sigma_\E}$ by the linear fractional transformation.
Let $\frak{g}$ and $\frak{k}$ be the Lie algebras of $G(\R)$ and $K_\infty$, respectively.
The Hodge decomposition on $\frak{g}_\C$ induced by ${\rm Ad}\circ h$ is given by
\begin{align}\label{E:Hodge}
\frak{g}_\C = \frak{p}^+ \oplus \frak{k}_\C \oplus \frak{p}^-
\end{align}
with $\frak{p}^+ = \frak{g}_\C^{(-1,1)}$, $\frak{p}^- = \frak{g}_\C^{(1,-1)}$, and $\frak{k}_\C = \frak{g}_\C^{(0,0)}$. Here
\[
\frak{g}_\C^{(p,q)} = \left.\{X \in \frak{g}_\C \, \right\vert \, h(z)^{-1}Xh(z) = z^{-p}\overline{z}^{-q}X \mbox{ for $z \in \C$}\}.
\]
We identify $\frak{g}$ with $\frak{gl}_2^{\Sigma_\E}$ via the identification of $\E_\infty$ with $\R^{\Sigma_\E}$.

Let $(\underline{\kappa},\underline{r}) \in \Z[\Sigma_\E] \times \Z[\Sigma_\E]$ such that $\kappa_w \equiv r_w \,({\rm mod}\,2)$ for $w \in \Sigma_\E$. We denote by $\C_{(\underline{\kappa},\underline{r})}$ the complex field $\C$ equipped with the action $\rho_{(\underline{\kappa},\underline{r})}$ of $K_\infty$ given by
\begin{align}\label{E:algebraic rep.}
\rho_{(\underline{\kappa},\underline{r})}(\underline{a}\cdot\underline{k}_\theta) z = \prod_{w \in \Sigma_\E} a_w^{-r_w}e^{-\sqrt{-1}\,\kappa_w \theta_w}\cdot z
\end{align}
for $\underline{a} = (a_w)_{w \in \Sigma_\E} \in (\R^\times)^{\Sigma_\E}$ and $ \underline{k}_\theta = (k_{\theta_w} )_{w \in \Sigma_\E}\in {\rm SO}(2)^{\Sigma_\E}$, and $z \in \C$.
Conversely, any one-dimensional algebraic representation of $K_\infty$ over $\C$ is of this form. 
We say $\rho_{(\underline{\kappa},\underline{r})}$ is motivic if $\rho_{(\underline{\kappa},\underline{r})}\vert_{Z_G(\R)}$ is the base change of a $\Q$-rational character of $Z_G$. It is easy to see that $\rho_{(\underline{\kappa},\underline{r})}$ is motivic if and only if $(\underline{\kappa},\underline{r})$ is motivic (cf.\,\S\,\ref{SS:notation}). 
Let $\mathcal{A}_{(2)}(G(\A))$ (resp.\,$\mathcal{A}_{0}(G(\A))$) be the space of essentially square-integrable automorphic forms (resp.\,cusp forms) on $G(\A)$.
Let
\begin{align*}
C_{\rm sia}^\infty(G(\A)) &= \left.\left\{ \varphi \in C^\infty(G(\Q)\backslash G(\A))\,\right\vert\, X\cdot \varphi \mbox{ is slowly increasing for all $X \in U(\frak{g}_\C)$}\right\},\\
C_{\rm rda}^\infty(G(\A)) &= \left.\left\{\varphi \in C^\infty(G(\Q)\backslash G(\A))\,\right\vert\, X\cdot \varphi \mbox{ is rapidly decreasing for all $X \in U(\frak{g}_\C)$}\right\}.
\end{align*}
Let $\frak{P} = \frak{p}^- \oplus \frak{k}_\C$ be a parabolic subalgebra of $\frak{g}_\C$. We have the $(\frak{P},K_\infty)$-modules 
\[
\mathcal{A}_{\star}(G(\A))\otimes_\C \C_{(\underline{\kappa},\underline{r})},\quad C_{\star'}^\infty(G(\A)) \otimes_\C \C_{(\underline{\kappa},\underline{r})}
\]
for $\star \in \{ (2),0 \}$ and $\star' \in \{ {\rm sia},{\rm rda} \}$, where the action of $\frak{P}$ on $\C_{(\underline{\kappa},\underline{r})}$ factors through $\frak{k}_\C$.
Consider the complexes with respect to the Lie algebra differential operator (cf.\,\cite[Chapter I]{BW2000}):
\begin{align}\label{E:complexes}
\begin{split}
C_{(2),(\underline{\kappa},\underline{r})}^q &= \left( \mathcal{A}_{(2)}(G(\A)) \otimes_\C \bigwedge^q \frak{p}^+ \otimes_\C  \C_{(\underline{\kappa},\underline{r})}\right)^{K_\infty},\\
C_{{\rm cusp},(\underline{\kappa},\underline{r})}^q &= \left( \mathcal{A}_{0}(G(\A)) \otimes_\C \bigwedge^q \frak{p}^+ \otimes_\C  \C_{(\underline{\kappa},\underline{r})}\right)^{K_\infty},\\
C_{{\rm sia}, (\underline{\kappa},\underline{r})}^q &= \left( C_{\rm sia}^\infty(G(\A)) \otimes_\C \bigwedge^q \frak{p}^+ \otimes_\C  \C_{(\underline{\kappa},\underline{r})}\right)^{K_\infty},\\
C_{{\rm rda}, (\underline{\kappa},\underline{r})}^q &= \left( C_{\rm rda}^\infty(G(\A)) \otimes_\C \bigwedge^q \frak{p}^+ \otimes_\C  \C_{(\underline{\kappa},\underline{r})}\right)^{K_\infty}
\end{split}
\end{align}
for $q \in \Z_{\geq 0}$. 
The corresponding $q$-th cohomology groups of the above complexes are denoted respectively by 
\[
H_{(2)}^q(\mathcal{L}_{(\underline{\kappa},\underline{r})}),\quad H_{\rm cusp}^q(\mathcal{L}_{(\underline{\kappa},\underline{r})}),\quad H^q(\mathcal{L}_{(\underline{\kappa},\underline{r})}^{\rm can}),\quad H^q(\mathcal{L}_{(\underline{\kappa},\underline{r})}^{\rm sub}).
\]
Note that $G(\A_f)$ acts on the above complexes by right translation. This in turn defines $G(\A_f)$-module structures on the cohomology groups.
It is clear that $H_{\rm cusp}^q(\mathcal{L}_{(\underline{\kappa}, \underline{r})})$ and $H_{(2)}^q(\mathcal{L}_{(\underline{\kappa}, \underline{r})})$ are semisimple $G(\A_f)$-modules.
By the results of Harris \cite{Harris1990b} and \cite{Harris1990}, the relative Lie algebra cohomology groups $H^q(\mathcal{L}_{(\underline{\kappa},\underline{r})}^{\rm can})$ and $H^q(\mathcal{L}_{(\underline{\kappa},\underline{r})}^{\rm sub})$ are isomorphic to the $q$-th coherent cohomology groups of the canonical and subcanonical extension, respectively, of certain automorphic line bundle $\mathcal{L}_{(\underline{\kappa},\underline{r})}$ on  ${\rm Sh}(G,X)$ to its toroidal compactification. Thus the terminology is justified.
The natural inclusions
\[
\begin{tikzcd}
\mathcal{A}_0(G(\A)) \arrow[d, hook] \arrow[r, hook]
    &  \mathcal{A}_{(2)}(G(\A))\arrow[d, hook] \\
  C_{\rm rda}^\infty(G(\A)) \arrow[r, hook]
& C_{\rm sia}^\infty(G(\A))
\end{tikzcd}
\]
induce the following commutative diagram for $G(\A_f)$-module homomorphisms:
\begin{equation}\label{E:commute 1}
\begin{tikzcd}
H_{\rm cusp}^q(\mathcal{L}_{(\underline{\kappa},\underline{r})}) \arrow[d] \arrow[r]
    &  H_{(2)}^q(\mathcal{L}_{(\underline{\kappa},\underline{r})})\arrow[d] \\
  H^q(\mathcal{L}_{(\underline{\kappa},\underline{r})}^{\rm sub}) \arrow[r]
& H^q(\mathcal{L}_{(\underline{\kappa},\underline{r})}^{\rm can}).
\end{tikzcd}
\end{equation}
Let $H_!^q(\mathcal{L}_{(\underline{\kappa},\underline{r})})$ be the image of the homomorphism $H^q(\mathcal{L}_{(\underline{\kappa},\underline{r})}^{\rm sub}) \rightarrow H^q(\mathcal{L}_{(\underline{\kappa},\underline{r})}^{\rm can})$.
By Theorem \ref{T:Harris}-(3) below, the homomorphism $H_{\rm cusp}^q(\mathcal{L}_{(\underline{\kappa},\underline{r})}) \rightarrow H^q(\mathcal{L}_{(\underline{\kappa},\underline{r})}^{\rm sub})$ is injective. We identify $H_{\rm cusp}^q(\mathcal{L}_{(\underline{\kappa},\underline{r})})$ with a $G(\A_f)$-submodule of $H^q(\mathcal{L}_{(\underline{\kappa},\underline{r})}^{\rm sub})$ by this injection.

We recall in the following theorem some results of Harris \cite{Harris1985}, \cite{Harris1990} and Milne \cite{Milne1983} specialized to the Shimura datum $(G,X)$. Let $\mathcal{A}_{(2)}(G(\A),(\underline{\kappa},\underline{r}))$ be the space of essentially square-integrable automorphic forms $\varphi$ on $G(\A)$ such that
\begin{itemize}
\item $\varphi(\underline{a}g) = \prod_{w \in \Sigma_\E}a_w^{r_w}\cdot \varphi(g)$ for $\underline{a}=(a_w)_{w \in \Sigma_\E} \in Z_G(\R)=(\R^\times)^{\Sigma_\E}$ and $g \in G(\A)$;
\item $\varphi$ is an eigenfunction of the Casimir operator of $\frak{g}_\C$ with eigenvalue $\prod_{w \in \Sigma_\E}(\tfrac{1}{2}\kappa_w^2-\kappa_w)$.
\end{itemize}
Let $\mathcal{A}_0(G(\A),(\underline{\kappa},\underline{r}))$ be the subspace of $\mathcal{A}_{(2)}(G(\A),(\underline{\kappa},\underline{r}))$ consisting of cusp forms on $G(\A)$.
We fix a $\Q(\underline{\kappa})$-rational structure on $\C_{(\underline{\kappa},\underline{r})}$ spanned by a non-zero vector ${\bf v}_{(\underline{\kappa},\underline{r})} \in \C_{(\underline{\kappa},\underline{r})}$. We also fix a $\widetilde{\E}$-rational structure on $\frak{p}^\pm$ with $\widetilde{\E}$-basis given by
\[
\left\{ X_{\pm,w}\,\vert\, w \in \Sigma_\E\right\},
\]
where $X_{\pm,w}$ is defined so that its $w$-component is equal to $X_\pm$ and zero otherwise. 
For $\sigma \in {\rm Aut}(\C)$, we have the $\sigma$-linear isomorphisms
\begin{align}\label{E:rational structures}
\begin{split}
\C_{(\underline{\kappa},\underline{r})} &\longrightarrow \C_{({}^\sigma\!\underline{\kappa},\underline{r})},\quad z\cdot {\bf v}_{(\underline{\kappa},\underline{r})} \longmapsto \sigma(z)\cdot {\bf v}_{({}^\sigma\!\underline{\kappa},\underline{r})};\\
\frak{p}^\pm &\longrightarrow \frak{p}^\pm, \quad \sum_{w \in \Sigma_\E} z_w\cdot X_{\pm,w} \longmapsto \sum_{w \in \Sigma_\E} \sigma(z_w)\cdot X_{\pm,\sigma\circ w} .
\end{split}
\end{align}
Note that we obtain a $\Q$-rational structure on $\frak{p}^{\pm}$ by taking the ${\rm Aut}(\C)$-invariants with respect to the above $\sigma$-linear isomorphism.

\begin{thm}\label{T:Harris}
Assume $(\underline{\kappa},\underline{r}) \in \Z[\Sigma_\E] \times \Z[\Sigma_\E]$ is motivic. 
\begin{itemize}
\item[(1)] For $\sigma \in {\rm Aut}(\C)$, with respect to the $\sigma$-linear isomorphisms in (\ref{E:rational structures}), conjugation by $\sigma$ induces natural $\sigma$-linear $G(\A_f)$-module isomorphisms
\[
T_\sigma: H^q(\mathcal{L}_{(\underline{\kappa},\underline{r})}^{\rm sub}) \longrightarrow H^q(\mathcal{L}_{({}^\sigma\!\underline{\kappa},\underline{r})}^{\rm sub}),\quad T_\sigma: H^q(\mathcal{L}_{(\underline{\kappa},\underline{r})}^{\rm can}) \longrightarrow H^q(\mathcal{L}_{({}^\sigma\!\underline{\kappa},\underline{r})}^{\rm can}),
\]
and such that the diagram 
\[
\begin{tikzcd}
H^q(\mathcal{L}_{(\underline{\kappa},\underline{r})}^{\rm sub}) \arrow[r, "T_\sigma"] \arrow[d] & H^q(\mathcal{L}_{({}^\sigma\!\underline{\kappa},\underline{r})}^{\rm sub}) \arrow[d]\\
H^q(\mathcal{L}_{(\underline{\kappa},\underline{r})}^{\rm can}) \arrow[r, "T_\sigma"]  & H^q(\mathcal{L}_{({}^\sigma\!\underline{\kappa},\underline{r})}^{\rm can})
\end{tikzcd}
\]
is commute. 
Moreover, $H^q(\mathcal{L}_{(\underline{\kappa},\underline{r})}^{\rm sub})$ and $H^q(\mathcal{L}_{(\underline{\kappa},\underline{r})}^{\rm can})$ are admissible $G(\A_{f})$-modules and have canonical rational structures over $\Q(\underline{\kappa})$ given by taking the Galois invariants with respect to $T_\sigma$ for $\sigma \in {\rm Aut}(\C / \Q(\underline{\kappa}))$.
\item[(2)] We have
\begin{align*}
H_{\rm cusp}^q(\mathcal{L}_{(\underline{\kappa}, \underline{r})}) &= \left(\mathcal{A}_{0}(G(\A),(\underline{\kappa},\underline{r}))\otimes_\C \bigwedge^q \frak{p}^+ \otimes_\C \C_{(\underline{\kappa},\underline{r})} \right)^{K_\infty},\\
H_{(2)}^q(\mathcal{L}_{(\underline{\kappa}, \underline{r})}) &= \left(\mathcal{A}_{(2)}(G(\A),(\underline{\kappa},\underline{r}))\otimes_\C \bigwedge^q \frak{p}^+ \otimes_\C \C_{(\underline{\kappa},\underline{r})} \right)^{K_\infty}.
\end{align*}
\item[(3)] The composite of the left vertical and lower horizontal homomorphisms in (\ref{E:commute 1}) is an injective $G(\A_f)$-module homomorphism $H_{\rm cusp}^q(\mathcal{L}_{(\underline{\kappa}, \underline{r})})\rightarrow H_!^q(\mathcal{L}_{(\underline{\kappa},\underline{r})})$.
\item[(4)] The image of the homomorphism $H_{(2)}^q(\mathcal{L}_{(\underline{\kappa},\underline{r})}) \rightarrow H^q(\mathcal{L}_{(\underline{\kappa},\underline{r})}^{\rm can})$ in (\ref{E:commute 1}) 
contains $H_!^q(\mathcal{L}_{(\underline{\kappa},\underline{r})})$.
In particular, $H_!^q(\mathcal{L}_{(\underline{\kappa},\underline{r})})$ is a semisimple $G(\A_f)$-module.
\end{itemize}
\end{thm}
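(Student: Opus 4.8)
The plan is to deduce all four assertions from Harris's general theory of coherent cohomology of automorphic vector bundles \cite{Harris1985,Harris1990,Harris1990b} and Milne's theory of canonical models \cite{Milne1983}, by specializing everything to the Shimura datum $(G,X)$. First I would record the two structural inputs. (i) For motivic $(\underline{\kappa},\underline{r})$ the character $\rho_{(\underline{\kappa},\underline{r})}$ of $K_\infty$ is rational over $\Q(\underline{\kappa})$, and the associated automorphic line bundle $\mathcal{L}_{(\underline{\kappa},\underline{r})}$ together with its canonical and subcanonical extensions to a smooth projective toroidal compactification of ${\rm Sh}(G,X)$ descends to the canonical model over $\Q(\underline{\kappa})$; this is Milne \cite{Milne1983} and Harris \cite[\S\,2]{Harris1985}. (ii) A Dolbeault-type resolution with automorphic coefficients identifies $H^q(\mathcal{L}_{(\underline{\kappa},\underline{r})}^{\rm can})$ and $H^q(\mathcal{L}_{(\underline{\kappa},\underline{r})}^{\rm sub})$ with the $q$-th cohomology of the complexes $C^\bullet_{{\rm sia},(\underline{\kappa},\underline{r})}$ and $C^\bullet_{{\rm rda},(\underline{\kappa},\underline{r})}$ of (\ref{E:complexes}); here one uses the $K_\infty$-equivariant Killing-form identification $(\frak{p}^-)^{\ast}\cong\frak{p}^+$, so that $\bigwedge^q\frak{p}^+$ plays the role of the antiholomorphic Dolbeault $(0,q)$-forms. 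This is \cite[\S\S\,2--3]{Harris1990}.

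For (2) I would feed the cuspidal (resp.\ $L^2$) spectral decomposition $\mathcal{A}_0(G(\A)) = \bigoplus_{\itPi}\itPi$ (resp.\ $\mathcal{A}_{(2)}(G(\A))$, where one also allows the residual one-dimensional constituents $\chi\circ{\rm Nrd}$) into the complexes $C^\bullet_{{\rm cusp},(\underline{\kappa},\underline{r})}$ and $C^\bullet_{(2),(\underline{\kappa},\underline{r})}$, reducing the $q$-th cohomology to $\bigoplus_\itPi H^q(\frak{P},K_\infty;\itPi_\infty\otimes_{\C}\C_{(\underline{\kappa},\underline{r})})\otimes\itPi_f$. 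Since $G(\R) = \GL_2(\R)^{\Sigma_\E}$, the K\"unneth formula reduces the computation of $H^q(\frak{P},K_\infty;\itPi_\infty\otimes_{\C}\C_{(\underline{\kappa},\underline{r})})$ to a single $\GL_2(\R)$-factor with coefficients in $\C_{(\kappa_w,r_w)}$, where the relevant relative Lie algebra cohomology is nonzero precisely when $\pi_{\infty,w}$ is the (limit of) discrete series $D(\kappa_w)$ with central character $a\mapsto a^{r_w}$ on $\R^\times$ — the non-unitary finite-dimensional constituents do not occur because $\itPi$ (resp.\ the residual constituents) is unitary, and the trivial representation is excluded by $\kappa_w\geq 1$ — and then it is one-dimensional, concentrated in degree $0$ or $1$ according to whether the extremal weight vector has weight $+\kappa_w$ or $-\kappa_w$. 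Since the Casimir of the $w$-th $\GL_2$-factor acts on $D(\kappa_w)$ by $\tfrac{1}{2}\kappa_w^2-\kappa_w$ (a direct computation with the basis $Z,H,X_\pm$), the archimedean components contributing to $H^q_{\rm cusp}(\mathcal{L}_{(\underline{\kappa},\underline{r})})$ (resp.\ $H^q_{(2)}$) are exactly those satisfying the two bullet conditions defining $\mathcal{A}_0(G(\A),(\underline{\kappa},\underline{r}))$ (resp.\ $\mathcal{A}_{(2)}(G(\A),(\underline{\kappa},\underline{r}))$); and since each extremal weight vector of $D(\kappa_w)$ is annihilated by $X_{\mp,w}$, on the corresponding subspace the complex differential vanishes identically, so the cochain space equals its own cohomology. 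This yields the displayed identifications.

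For (1), flat base change for coherent cohomology of $\mathcal{L}_{(\underline{\kappa},\underline{r})}^{\rm sub}$ and $\mathcal{L}_{(\underline{\kappa},\underline{r})}^{\rm can}$ on the projective toroidal compactification along $\sigma\in{\rm Aut}(\C)$ produces a $\sigma$-linear isomorphism $T_\sigma$; by Milne's and Harris's description of the Galois action on automorphic vector bundles, $\sigma$ carries $\mathcal{L}_{(\underline{\kappa},\underline{r})}^{\rm sub}$ to $\mathcal{L}_{({}^\sigma\!\underline{\kappa},\underline{r})}^{\rm sub}$, compatibly with the $\sigma$-linear isomorphisms (\ref{E:rational structures}) on $\C_{(\underline{\kappa},\underline{r})}$ and on $\frak{p}^\pm$, which pins down the target of $T_\sigma$. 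Compatibility of $T_\sigma$ with $H^q(\mathcal{L}^{\rm sub})\to H^q(\mathcal{L}^{\rm can})$ and with the $G(\A_f)$-action along the tower is functoriality of base change, finiteness of coherent cohomology at each neat level gives admissibility, and the cocycle relation $T_{\sigma\tau}=T_\sigma\circ T_\tau$ together with Galois descent gives the $\Q(\underline{\kappa})$-structure. For (3), the kernel of $H^q(\mathcal{L}^{\rm sub})\to H^q(\mathcal{L}^{\rm can})$ is, by the long exact sequence of $0\to\mathcal{L}^{\rm sub}\to\mathcal{L}^{\rm can}\to\mathcal{L}^{\rm can}\vert_{\partial}\to 0$, a quotient of the coherent cohomology of the boundary, whose $G(\A_f)$-constituents are of Eisenstein type and hence share no irreducible constituent with a cuspidal $\itPi_f$; combined with the injectivity $H^q_{\rm cusp}(\mathcal{L}_{(\underline{\kappa},\underline{r})})\hookrightarrow H^q(\mathcal{L}^{\rm sub})$ of \cite{Harris1990} (cusp forms being rapidly decreasing), this shows the composite $H^q_{\rm cusp}(\mathcal{L}_{(\underline{\kappa},\underline{r})})\to H^q(\mathcal{L}^{\rm sub})\to H^q(\mathcal{L}^{\rm can})$ is injective with image inside $H^q_!(\mathcal{L}_{(\underline{\kappa},\underline{r})})$. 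For (4), the inclusion $\mathcal{A}_0(G(\A))\hookrightarrow\mathcal{A}_{(2)}(G(\A))$ and Harris's comparison theorem \cite{Harris1990b} (the coherent analogue of ``$L^2$-cohomology surjects onto interior cohomology'') show that the image of $H^q_{(2)}(\mathcal{L}_{(\underline{\kappa},\underline{r})})$ in $H^q(\mathcal{L}^{\rm can})$ contains $H^q_!(\mathcal{L}_{(\underline{\kappa},\underline{r})})$; since $\mathcal{A}_{(2)}(G(\A))$ decomposes discretely, $H^q_{(2)}(\mathcal{L}_{(\underline{\kappa},\underline{r})})$ is a semisimple $G(\A_f)$-module, hence so is its image, and therefore so is its $G(\A_f)$-submodule $H^q_!(\mathcal{L}_{(\underline{\kappa},\underline{r})})$.

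I expect the main obstacle to be bookkeeping rather than any new input. In (2) one must verify carefully that the central-character normalization and the Casimir eigenvalue $\prod_{w}(\tfrac{1}{2}\kappa_w^2-\kappa_w)$ written in the statement single out exactly the discrete-series archimedean packets — in particular getting the normalizing constant right, and checking that the finite-dimensional constituents, which share this Casimir eigenvalue, are genuinely excluded. For (1) one must keep the several rational structures — on $\C_{(\underline{\kappa},\underline{r})}$, on $\frak{p}^\pm$, on the bundles $\mathcal{L}_{(\underline{\kappa},\underline{r})}^{\rm sub},\mathcal{L}_{(\underline{\kappa},\underline{r})}^{\rm can}$, and on the toroidal compactification — mutually compatible so that $T_\sigma$ acquires precisely the displayed form, including its commutation with the subcanonical-to-canonical map. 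Once these compatibilities are in place, (3) and (4) follow formally from Harris's comparison theorems.
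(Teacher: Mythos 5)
Your proposal is essentially correct and follows the same route the paper intends: the paper itself does not prove Theorem~\ref{T:Harris} but states it as a specialization of Harris \cite{Harris1985, Harris1990, Harris1990b} and Milne \cite{Milne1983} to the datum $(G,X)$, and your sketch reconstructs exactly the arguments those references supply (descent of the automorphic line bundles and their canonical/subcanonical extensions to the $\Q(\underline{\kappa})$-model, the Dolbeault-to-$(\mathfrak{P},K_\infty)$-cohomology comparison, the spectral decomposition plus K\"unneth reduction to a single $\GL_2(\R)$-factor, the boundary long exact sequence for (3), and the $L^2$-to-interior comparison for (4)). Two places deserve a more careful phrasing, though neither is a fundamental gap. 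First, in (2) you exclude the trivial representation "by $\kappa_w\geq 1$," but the theorem is stated for arbitrary motivic $(\underline{\kappa},\underline{r})\in\Z[\Sigma_\E]\times\Z[\Sigma_\E]$; the label $\underline{\kappa}$ is not a priori the weight of any $\itPi$, and the equality (2) must hold even when some $\kappa_w\leq 0$, in which case the relevant discrete series is $D(2-\kappa_w)$ (the Casimir eigenvalue $\tfrac{1}{2}\kappa_w^2-\kappa_w$ is invariant under $\kappa_w\mapsto 2-\kappa_w$) and one-dimensional archimedean constituents can genuinely contribute when $\kappa_w\in\{0,2\}$; the displayed formula is nonetheless correct because those constituents satisfy the two defining conditions of $\mathcal{A}_{(2)}(G(\A),(\underline{\kappa},\underline{r}))$ and have vanishing differential, so they should not be "excluded" but rather observed to be captured by the right-hand side. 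Second, the assertion that the $(\mathfrak{P},K_\infty)$-cohomology is nonzero \emph{precisely} for $D(\kappa_w)\otimes|\ |^{-r_w/2}$ (and characters) is doing the real work in (2), and it is cleaner to cite \cite[Theorem 4.6.2]{Harris1990} directly (as the paper does in the proof of Lemma~\ref{L:2.2}) together with the discreteness of $\mathcal{A}_{(2)}$, rather than leaning on the matching of Casimir eigenvalue and central character, since those two scalar conditions do not by themselves rule out all non-discrete-series unitary representations without a unitarity/temperedness discussion. With these adjustments your argument matches the intended one.
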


Let $\mathcal{K}$ be a neat open compact subgroup of $G(\A_f)$. For $\star = {\rm sia}$ or $\rm rda$, we have the complexe ${}_{\mathcal{K}}C_{\star,(\underline{\kappa},\underline{r})}^*$ analogous to (\ref{E:complexes}) with $C_\star^\infty(G(\Q)\backslash G(\A))$ replacing by $C_\star^\infty(G(\Q)\backslash G(\A) / \mathcal{K})$.
We denote by $H_{\mathcal{K}}^q(\mathcal{L}_{(\underline{\kappa},\underline{r})}^{\rm can})$ (resp.\,$H_{\mathcal{K}}^q(\mathcal{L}_{(\underline{\kappa},\underline{r})}^{\rm sub})$) the corresponding $q$-th cohomology group if $\star = {\rm sia}$ (resp.\,$\star = {\rm rda}$). Note that $H^q_{\mathcal{K}}(\mathcal{L}_{(\underline{\kappa},\underline{r})}^{\rm can})$ and $H^q_{\mathcal{K}}(\mathcal{L}_{(\underline{\kappa},\underline{r})}^{\rm sub})$ are finite dimensional vector spaces over $\C$ and isomorphic to the $q$-th coherent cohomology groups of the canonical and subcanonical extension, respectively, of certain automorphic line bundle ${}_{\mathcal{K}}\mathcal{L}_{(\underline{\kappa},\underline{r})}$ on  ${\rm Sh}_{\mathcal K}(G,X)$ to its toroidal compactification. 
For each $g \in G(\A_f)$, let $\mathcal{K}^g = g^{-1}\mathcal{K}g$. The natural isomorphism ${\rm Sh}_{\mathcal{K}^g}(G,X) \rightarrow {\rm Sh}_{\mathcal K}(G,X)$ induces isomorphisms
\begin{align}\label{E:level iso.}
H^q_{\mathcal{K}}(\mathcal{L}_{(\underline{\kappa},\underline{r})}^{\rm can}) \longrightarrow H^q_{\mathcal{K}^g}(\mathcal{L}_{(\underline{\kappa},\underline{r})}^{\rm can}),\quad H^q_{\mathcal{K}}(\mathcal{L}_{(\underline{\kappa},\underline{r})}^{\rm sub}) \longrightarrow H^q_{\mathcal{K}^g}(\mathcal{L}_{(\underline{\kappa},\underline{r})}^{\rm sub}).
\end{align}
Then similar assertions as in Theorem \ref{T:Harris}-(1) hold for $H^q_{\mathcal{K}}(\mathcal{L}_{(\underline{\kappa},\underline{r})}^{\rm can})$ and $H^q_{\mathcal{K}}(\mathcal{L}_{(\underline{\kappa},\underline{r})}^{\rm sub})$ so that the corresponding $\sigma$-linear isomorphisms $T_\sigma$ in (1) are compatible with (\ref{E:level iso.}). 
Moreover, the natural morphisms of complexes ${}_{\mathcal{K}}C_{\star,(\underline{\kappa},\underline{r})}^* \rightarrow C_{\star,(\underline{\kappa},\underline{r})}^*$ induce $G(\A_f)$-module isomorphisms
\[
\varinjlim_{\mathcal{K}} H^q_{\mathcal{K}}(\mathcal{L}_{(\underline{\kappa},\underline{r})}^{\rm can}) \longrightarrow H^q(\mathcal{L}_{(\underline{\kappa},\underline{r})}^{\rm can}),\quad \varinjlim_{\mathcal{K}} H^q_{\mathcal{K}}(\mathcal{L}_{(\underline{\kappa},\underline{r})}^{\rm sub}) \longrightarrow H^q(\mathcal{L}_{(\underline{\kappa},\underline{r})}^{\rm sub})
\]
which are compatible with $T_\sigma$ for all $\sigma \in {\rm Aut}(\C)$. Here the $G(\A_f)$-module structure on the direct limits are defined by the isomorphisms (\ref{E:level iso.}).

Let $\underline{2} = (2,\cdots,2)$ and $\underline{0} = (0,\cdots,0)$. The automorphic line bundle ${}_{\mathcal{K}}\mathcal{L}_{(\underline{2},\underline{0})}^{\rm sub}$ is isomorphic to the dualizing sheaf for toroidal compactification of ${\rm Sh}_{\mathcal K}(G,X)$ (cf.\,\cite[Proposition 2.2.6]{Harris1990}) with trace map
\[
H^{[\E:\Q]}_{\mathcal{K}}(\mathcal{L}_{(\underline{2},\underline{0})}^{\rm sub}) \longrightarrow \C,\quad \omega \longmapsto \int_{{\rm Sh}_{\mathcal K}(G,X)}\omega,
\]
where
\[
\int_{{\rm Sh}_{\mathcal K}(G,X)}\omega = \int_{\G(\Q)\backslash X \times G(\A_f) / \mathcal{K}}\varphi\circ\iota_{\mathcal{K}}(x)\,d\mu_{\mathcal{K}}
\]
if $\omega$ is represented by $\varphi \otimes \bigwedge_{w \in \Sigma_\E}X_{+,w} \otimes {\bf v}_{(\underline{2},\underline{0})} \in {}_{\mathcal{K}}C_{{\rm rda},(\underline{2},\underline{0})}^{[\E:\Q]}$. Here 
\[
\iota_{\mathcal{K}}: G(\Q)\backslash X \times G(\A_f) / \mathcal{K} \longrightarrow G(\Q)\backslash G(\A) / K_\infty\mathcal{K}
\]
is the natural isomorphism, and the measure on $G(\Q)\backslash X \times G(\A_f) / \mathcal{K}$ is defined as follows:
\begin{itemize}
\item on $G(\A_f) / \mathcal{K}$, we take the counting measure;
\item on $X$, we take the measure $\displaystyle \left(\frac{dz\wedge d\overline{z}}{2\pi\sqrt{-1}}\right)^{[\E:\Q]}$ on $(\frak{H}^{\pm})^{\Sigma_\E}$ via the isomorphism (\ref{E:hermitian domain}).
\end{itemize}
Under this normalization of measure, we have the following Galois equivariant property:
\begin{align}\label{E:Galois equiv. level}
\sigma \left(\int_{{\rm Sh}_{\mathcal K}(G,X)}\omega \right) = \int_{{\rm Sh}_{\mathcal K}(G,X)}T_\sigma \omega
\end{align}
for $\omega \in H^{[\E:\Q]}_{\mathcal{K}}(\mathcal{L}_{(\underline{2},\underline{0})}^{\rm sub})$ and $\sigma \in {\rm Aut}(\C)$ (cf.\,\cite[(3.8.4)]{Harris1990}).
In the following lemma, we show that the family of trace maps as $\mathcal{K}$ varies can be normalized to define a trace map $H^{[\E:\Q]}(\mathcal{L}_{(\underline{2},\underline{0})}^{\rm sub}) \rightarrow \C$.

\begin{lemma}\label{L:trace}
We have the $G(\A_f)$-equivariant trace map
\[
H^{[\E:\Q]}(\mathcal{L}_{(\underline{2},\underline{0})}^{\rm sub}) \longrightarrow \C,\quad \omega \longmapsto \int_{{\rm Sh}(G,X)}\omega,
\]
where
\[
\int_{{\rm Sh}(G,X)}\omega = [ \hat{\o}_\E^\times : \o_\E^\times \cdot \mathcal{U}]^{-1} \sum_{a \in \E^\times \backslash \A_\E^\times / \E_\infty^\times \mathcal{U}}\int_{Z_G(\A)G(\Q) \backslash G(\A)}\varphi(ag)\,dg^{\rm Tam}
\]
if $\omega$ is represented by $\varphi \otimes \bigwedge_{w \in \Sigma_\E}X_{+,w} \otimes {\bf v}_{(\underline{2},\underline{0})} \in C_{{\rm rda},(\underline{2},\underline{0})}^{[\E:\Q]}$. Here $dg^{\rm Tam}$ is the Tamagawa measure on $Z_G(\A) \backslash G(\A)$ and $\mathcal{U}$ is any open compact subgroup of $\A_{\E,f}^\times$ such that $\varphi$ is right $\mathcal{U}$-invariant.
Moreover, the trace map satisfies the Galois equivariant property:
\[
\sigma \left(\int_{{\rm Sh}(G,X)}\omega \right) = \int_{{\rm Sh}(G,X)}T_\sigma \omega
\]
for $\omega \in H^{[\E:\Q]}(\mathcal{L}_{(\underline{2},\underline{0})}^{\rm sub})$ and $\sigma \in {\rm Aut}(\C)$.
\end{lemma}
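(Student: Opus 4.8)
The plan is to build the trace map $\int_{{\rm Sh}(G,X)}$ by gluing the level-$\mathcal{K}$ trace maps $\int_{{\rm Sh}_{\mathcal{K}}(G,X)}$ and checking that the stated formula is the correct normalization making them compatible with the transition maps $\varinjlim_{\mathcal{K}}H^{[\E:\Q]}_{\mathcal{K}}(\mathcal{L}_{(\underline{2},\underline{0})}^{\rm sub})\to H^{[\E:\Q]}(\mathcal{L}_{(\underline{2},\underline{0})}^{\rm sub})$. First I would fix a class $\omega\in H^{[\E:\Q]}(\mathcal{L}_{(\underline{2},\underline{0})}^{\rm sub})$ represented by $\varphi\otimes\bigwedge_{w}X_{+,w}\otimes{\bf v}_{(\underline{2},\underline{0})}$ with $\varphi$ right-invariant under some neat $\mathcal{K}$; then $\omega$ is the image of a unique class $\omega_{\mathcal{K}}\in H^{[\E:\Q]}_{\mathcal{K}}(\mathcal{L}_{(\underline{2},\underline{0})}^{\rm sub})$, and the only reasonable definition is $\int_{{\rm Sh}(G,X)}\omega := c_{\mathcal{K}}\cdot\int_{{\rm Sh}_{\mathcal{K}}(G,X)}\omega_{\mathcal{K}}$ for a normalizing constant $c_{\mathcal{K}}$ depending only on $\mathcal{K}$. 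The key computation is to determine $c_{\mathcal{K}}$: when one passes from $\mathcal{K}$ to a smaller $\mathcal{K}'\subseteq\mathcal{K}$, the pullback map on coherent cohomology multiplies the integral $\int_{{\rm Sh}_{\mathcal{K}'}}$ by the degree $[\mathcal{K}:\mathcal{K}']$ of the covering (the counting measure on $G(\A_f)/\mathcal{K}$ scales accordingly while the measure on $X$ is unchanged), so $c_{\mathcal{K}}$ must scale by $[\mathcal{K}:\mathcal{K}']^{-1}$; one normalizes by $\vol$ relative to the Tamagawa measure on $Z_G(\A)\backslash G(\A)$, which converts the sum $\sum_{a}\int_{Z_G(\A)G(\Q)\backslash G(\A)}\varphi(ag)\,dg^{\rm Tam}$ over the finite class set $\E^\times\backslash\A_\E^\times/\E_\infty^\times\mathcal{U}$ into exactly the level-$\mathcal{K}$ integral up to the index $[\hat{\o}_\E^\times:\o_\E^\times\cdot\mathcal{U}]^{-1}$ that removes the dependence on $\mathcal{U}$. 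I would verify directly that the right-hand side of the displayed formula is independent of the auxiliary $\mathcal{U}$ (replacing $\mathcal{U}$ by $\mathcal{U}'\subseteq\mathcal{U}$ multiplies both the index and the number of terms in the class-set sum by $[\mathcal{U}:\mathcal{U}']$, while each integral is unchanged since $\varphi$ is already $\mathcal{U}'$-invariant) and independent of $\mathcal{K}$ with the same sort of bookkeeping, so the map is well-defined on the direct limit.

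**$G(\A_f)$-equivariance.** Next I would check that $\int_{{\rm Sh}(G,X)}$ intertwines the right-translation action of $G(\A_f)$ with the trivial action on $\C$, i.e.\ $\int_{{\rm Sh}(G,X)}(R_h\omega)=\int_{{\rm Sh}(G,X)}\omega$ for $h\in G(\A_f)$. For a single level this is the statement that the isomorphism ${\rm Sh}_{\mathcal{K}^h}(G,X)\to{\rm Sh}_{\mathcal{K}}(G,X)$ of \eqref{E:level iso.} is measure-preserving for the chosen measures — true because it is a $\C$-algebraic isomorphism compatible with the complex-analytic uniformization $G(\Q)\backslash X\times G(\A_f)/\mathcal{K}$ and the Haar-type measures used are intrinsic. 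At the level of the adelic formula the class-set sum $\sum_{a}\int_{Z_G(\A)G(\Q)\backslash G(\A)}\varphi(ah g)\,dg^{\rm Tam}$ is visibly unchanged by the substitution $g\mapsto h^{-1}g$ together with the invariance of $dg^{\rm Tam}$ under right translation, and the index factor does not involve $h$, so equivariance is immediate from the formula itself.

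**Galois equivariance.** Finally, the Galois equivariant property $\sigma(\int_{{\rm Sh}(G,X)}\omega)=\int_{{\rm Sh}(G,X)}T_\sigma\omega$ is inherited from the level-$\mathcal{K}$ statement \eqref{E:Galois equiv. level}, which the excerpt records (citing \cite[(3.8.4)]{Harris1990}). Here I need that the $\sigma$-linear isomorphisms $T_\sigma$ are compatible with the transition maps $H^q_{\mathcal{K}}(\mathcal{L}^{\rm sub})\to H^q(\mathcal{L}^{\rm sub})$ — stated in the excerpt just before the lemma — and that the normalizing constant $c_{\mathcal{K}}\in\Q^\times$ (being an index times a rational volume factor) is fixed by every $\sigma\in{\rm Aut}(\C)$, so that conjugating the level-$\mathcal{K}$ identity by $\sigma$ and dividing by $c_{\mathcal{K}}$ gives the claim at infinite level.

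**Main obstacle.** The one genuinely delicate point is pinning down the exact normalizing constant relating the geometric (counting $\times$ Kähler) measure at finite level to the Tamagawa measure on $Z_G(\A)\backslash G(\A)$, including the correct power of $2\pi\sqrt{-1}$ already built into the Kähler form $(dz\wedge d\overline z/2\pi\sqrt{-1})^{[\E:\Q]}$ and the contribution of the center. Everything else is formal bookkeeping about class sets and covering degrees; the arithmetic of the constant is where I expect to have to be careful, and it is presumably here that the factor $[\hat{\o}_\E^\times:\o_\E^\times\cdot\mathcal{U}]^{-1}$ and the choice to sum over $\E^\times\backslash\A_\E^\times/\E_\infty^\times\mathcal{U}$ rather than over $G$-class groups directly enters — one is effectively trading the $G(\A_f)/\mathcal{K}$-counting measure for a $Z_G(\A_f)$-quotient, and the center of $G$ is $\mathrm{R}_{\E/\Q}\mathbb{G}_m$, which is why the index is taken inside $\hat{\o}_\E^\times$.
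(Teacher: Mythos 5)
Your strategy matches the paper's proof essentially step for step: build the trace from the finite-level trace maps, verify independence of the auxiliary data by index bookkeeping, and deduce Galois equivariance from the rationality of the normalizing constant together with the level-$\mathcal{K}$ identity (\ref{E:Galois equiv. level}). You correctly identify the one genuinely nontrivial input — relating the geometric integral $\int_{{\rm Sh}_{\mathcal{K}}(G,X)}\omega_{\mathcal{K}}$ over $G(\Q)\backslash X\times G(\A_f)/\mathcal{K}$ to the Tamagawa-measure integral on $Z_G(\A)\backslash G(\A)$ with an explicit constant — but you leave it as "the arithmetic I expect to have to be careful with" rather than closing it. The paper closes it by citing Ichino--Prasanna \cite[Lemmas 6.1 and 6.3]{IP2018}, which give
\[
\int_{{\rm Sh}_{\mathcal K}(G,X)} \omega_{\mathcal{K}}
= C \cdot [\mathcal{K}_0:\mathcal{K}]\cdot [\hat{\o}_\E^\times : \mathcal{U}_{\mathcal{K}}]^{-1}\cdot \int_{G(\Q) \backslash G(\A) / Z_G(\R)\mathcal{U}_{\mathcal{K}}}\varphi(g)\,dg^{\rm Tam}
\]
with $C\in\Q^\times$ depending only on $G$; this is precisely what lets the constants be absorbed and, because $C\in\Q^\times$, lets Galois equivariance descend from the finite-level statement. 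So the proposal is the right plan, and the missing piece is this citation rather than a new idea; you would also want to note, as the paper does, that the cited formula is stated in \cite{IP2018} only for $\mathcal{U}_{\mathcal K}=\hat{\o}_\E^\times$ and requires the (routine) extension to general $\mathcal{U}_{\mathcal K}$.
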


\begin{proof}

We identify $Z_G(\A)$ with $\A_\E^\times$. 
Let $\omega \in H^{[\E:\Q]}(\mathcal{L}_{(\underline{2},\underline{0})}^{\rm sub})$ be a class represented by $\varphi \otimes \bigwedge_{w \in \Sigma_\E}X_{+,w} \otimes {\bf v}_{(\underline{2},\underline{0})} \in C_{{\rm rda},(\underline{2},\underline{0})}^{[\E:\Q]}$. 
For any neat open compact subgroup $\mathcal{K}$ of $G(\A_f)$ such that $\varphi$ is right $\mathcal{K}$-invariant, we let $\mathcal{U}_{\mathcal{K}} = Z_G(\A_f)\cap \mathcal{K}$ and $\omega_{\mathcal{K}} \in H_{\mathcal{K}}^{[\E:\Q]}(\mathcal{L}_{(\underline{2},\underline{0})}^{\rm sub})$ be the class represented by $\varphi \otimes \bigwedge_{w \in \Sigma_\E}X_{+,w} \otimes {\bf v}_{(\underline{2},\underline{0})}$ considered as an element in ${}_{\mathcal K}C_{{\rm rda},(\underline{2},\underline{0})}^{[\E:\Q]}$.
By \cite[Lemmas 6.1 and 6.3]{IP2018}, there exists a non-zero rational number $C$ depending only on $G$ such that
\begin{align*}
\int_{{\rm Sh}_{\mathcal K}(G,X)} \omega_{\mathcal{K}}
&= \int_{\G(\Q)\backslash X\times G(\A_f) / \mathcal{K}} \varphi\circ\iota_{\mathcal{K}}(x)\,d\mu_{\mathcal K}\\
&= C \cdot [\mathcal{K}_0:\mathcal{K}]\cdot [\hat{\o}_\E^\times : \mathcal{U}_{\mathcal{K}}]^{-1}\cdot \int_{G(\Q) \backslash G(\A) / Z_G(\R)\mathcal{U}_{\mathcal{K}}}\varphi(g)\,dg^{\rm Tam}.
\end{align*}
Here $\mathcal{K}_0$ is any maximal open compact subgroup of $G(\A_f)$ containing $\mathcal{K}$.
We remark that the above formula was proved in \cite[Lemmas 6.3]{IP2018} for $\mathcal{U}_{\mathcal K} = \hat{\o}_\E^\times$. The general case can be proved in a similar way.
We rewrite the formula as
\begin{align*}
&C^{-1}\cdot [\mathcal{K}_0:\mathcal{K}]^{-1} \cdot [\o_\E^\times\cdot\mathcal{U}_\mathcal{K}: \mathcal{U}_\mathcal{K}]\cdot \int_{{\rm Sh}_{\mathcal K}(G,X)} \omega_{\mathcal{K}}\\
&=  [\hat{\o}_\E^\times : \o_\E^\times\cdot\mathcal{U}_{\mathcal{K}}]^{-1}\cdot \int_{G(\Q) \backslash G(\A) / Z_G(\R)\mathcal{U}_{\mathcal{K}}}\varphi(g)\,dg^{\rm Tam}\\
& = [\hat{\o}_\E^\times : \o_\E^\times\cdot\mathcal{U}_{\mathcal{K}}]^{-1}\cdot \sum_{a \in \E^\times\backslash \A_\E^\times / \E_\infty^\times \mathcal{U}_\K}\int_{Z_G(\A)G(\Q) \backslash G(\A) }\varphi(ag)\,dg^{\rm Tam}
\end{align*}
Note that the constant $C^{-1}\cdot [\mathcal{K}_0:\mathcal{K}]^{-1} \cdot [\o_\E^\times\cdot\mathcal{U}_\mathcal{K}: \mathcal{U}_\mathcal{K}]$ depends only on $G$ and $\mathcal{K}$.
Let $\mathcal{U}$ be any open compact subgroup of $\A_{\E,f}^\times$ such that $\varphi$ is right $\mathcal{U}$-invariant.
Then we have
\begin{align*}
&[\hat\o_\E^\times : \o_\E^\times\cdot(\mathcal{U}\cap\mathcal{U}_{\mathcal K})]^{-1}\sum_{a \in \E^\times \backslash \A_\E^\times / \E_\infty^\times (\mathcal{U}\cap\mathcal{U}_{\mathcal K})}\int_{Z_G(\A)G(\Q) \backslash G(\A)}\varphi(ag)\,dg^{\rm Tam}\\
& = [\hat\o_\E^\times : \o_\E^\times\cdot(\mathcal{U}\cap\mathcal{U}_{\mathcal K})]^{-1}\cdot [\E^\times \E_\infty^\times \mathcal{U} : \E^\times \E_\infty^\times (\mathcal{U}\cap\mathcal{U}_{\mathcal K})]\sum_{a \in \E^\times \backslash \A_\E^\times / \E_\infty^\times \mathcal{U}}\int_{Z_G(\A)G(\Q) \backslash G(\A)}\varphi(ag)\,dg^{\rm Tam}\\
& = [\hat\o_\E^\times : \o_\E^\times\cdot\mathcal{U}]^{-1} \sum_{a \in \E^\times \backslash \A_\E^\times / \E_\infty^\times \mathcal{U}}\int_{Z_G(\A)G(\Q) \backslash G(\A)}\varphi(ag)\,dg^{\rm Tam}.
\end{align*}
We conclude that the trace map $\omega \mapsto \int_{{\rm Sh}(G,X)}\omega$ is well-defined. Finally, the Galois equivariance property follows from $C \in \Q^\times$ and (\ref{E:Galois equiv. level}). This completes the proof.
\end{proof}

\subsection{Rational structures via the coherent cohomology}\label{SS:2.2}

Let $\itPi = \bigotimes_v \pi_v$ be an irreducible cuspidal automorphic representation of $G(\A)$. 
Let $\pi_f = \bigotimes_p \pi_p$ be the finite part of $\itPi$.
We assume that $\itPi$ is motivic, that is, there exists motivic $(\underline{\kappa},\underline{r}) \in \Z_{\geq 1}[\Sigma_\E] \times \Z[\Sigma_\E]$ such that
\[
\pi_\infty  = \large{\boxtimes}_{w \in \Sigma_\E}(D(\kappa_w)\otimes |\mbox{ }|^{-r_w/2}).
\]
We call $\underline{\kappa}$ (resp.\,$(\underline{\kappa},\underline{r})$) the weight (resp.\,motivic weight) of $\itPi$. When $\E$ is a field, we necessary have $\underline{r} = (r,\cdots,r)$ for some $r \in \Z$ and also call $(\underline{\kappa},r) \in \Z_{\geq 1}[\Sigma_\E]\times\Z$ the motivic weight of $\itPi$.
Note that $\itPi$ occurs in $\mathcal{A}_0(G(\A), (\underline{\kappa},\underline{r}))$.
For each motivic $(\underline{\kappa}',\underline{r}') \in \Z[\Sigma_\E] \times \Z[\Sigma_\E]$ and $\star \in \{{\rm cusp}, (2), !\}$, we denote by $H_\star^q(\mathcal{L}_{(\underline{\kappa}',\underline{r}')})[\pi_f]$ the $\pi_f$-isotypic component of $\pi_f$ in $H_\star^q(\mathcal{L}_{(\underline{\kappa}',\underline{r}')})$.

\begin{lemma}\label{L:2.2}
Let $(\underline{\kappa}',\underline{r}') \in \Z[\Sigma_\E] \times \Z[\Sigma_\E]$ be motivic. 
\begin{itemize}
\item[(1)] If $\underline{r} \neq \underline{r}'$ or $(\kappa_w-\kappa_w')(\kappa_w+\kappa_w'-2) \neq 0$ for some $w \in \Sigma_\E$, then $H_\star^q(\mathcal{L}_{(\underline{\kappa}',\underline{r}')})[\pi_f]=0$ for all $q$ and $\star \in \{{\rm cusp}, (2), !\}$.
\item[(2)] If $\underline{r} = \underline{r}'$ and $(\kappa_w-\kappa_w')(\kappa_w+\kappa_w'-2) = 0$ for all $w \in \Sigma_\E$, then the homomorphism in Theorem \ref{T:Harris}-(3) induces an isomorphism of $G(\A_f)$-modules
$ H_{\rm cusp}^q(\mathcal{L}_{(\underline{\kappa}',\underline{r})})[\pi_f] \rightarrow H_!^q(\mathcal{L}_{(\underline{\kappa}',\underline{r})})[\pi_f]$.
Moreover, the multiplicity of $\pi_f$ in $H_{\rm cusp}^q(\mathcal{L}_{(\underline{\kappa}',\underline{r})})[\pi_f]$ is equal to the number of subsets $I$ of $\Sigma_\E$ such that
\[
\underline{\kappa}' = \underline{\kappa}(I),\quad q = {}^\sharp I.
\]
\end{itemize}
\end{lemma}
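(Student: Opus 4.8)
The plan is to deduce everything from Theorem~\ref{T:Harris}-(2), which identifies $H_{\rm cusp}^q$ and $H_{(2)}^q$ with the corresponding spaces of $K_\infty$-invariants (so the Lie algebra differential vanishes on the Casimir-restricted cochains), together with the multiplicity-one theorem for $\itPi$ and a place-by-place ${\rm SO}(2)$-weight count at infinity. Since $\itPi$ is cuspidal and $G$ is an inner form of a product of $\GL_2$'s, Jacquet--Langlands and strong multiplicity one for $\GL_2$ imply that $\itPi$ is the unique cuspidal automorphic representation of $G(\A)$ with finite part $\pi_f$, occurring in $\mathcal{A}_0(G(\A))$ with multiplicity one, and that no other essentially square-integrable automorphic representation of $G(\A)$ has finite part $\pi_f$. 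Hence, for any motivic $(\underline{\kappa}',\underline{r}')$, the $\pi_f$-isotypic subspaces of $\mathcal{A}_0(G(\A),(\underline{\kappa}',\underline{r}'))$ and of $\mathcal{A}_{(2)}(G(\A),(\underline{\kappa}',\underline{r}'))$ coincide, and this common subspace is either $0$ or isomorphic to $\pi_\infty\otimes\pi_f$ as a $(\frak{g},K_\infty)\times G(\A_f)$-module; it is nonzero precisely when $\pi_\infty$ satisfies the defining conditions of that space. The $Z_G(\R)$-central-character condition determines $\omega_\itPi\vert_{Z_G(\R)}$, which is already fixed by the fact that $\itPi$ occurs in $\mathcal{A}_0(G(\A),(\underline{\kappa},\underline{r}))$; comparing characters of $(\R^\times)^{\Sigma_\E}$ forces $\underline{r}'=\underline{r}$. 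The Casimir condition at $w$ amounts to $\pi_{\infty,w}=D(\kappa_w)\otimes|\mbox{ }|^{-r_w/2}$ and $D(\kappa_w')\otimes|\mbox{ }|^{-r_w/2}$ having the same Casimir eigenvalue, i.e.\ to $(\kappa_w-\kappa_w')(\kappa_w+\kappa_w'-2)=0$.

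For part (1): under the hypothesis, one of these two conditions fails for $(\underline{\kappa}',\underline{r}')$, so $\mathcal{A}_0(G(\A),(\underline{\kappa}',\underline{r}'))[\pi_f]=\mathcal{A}_{(2)}(G(\A),(\underline{\kappa}',\underline{r}'))[\pi_f]=0$, and Theorem~\ref{T:Harris}-(2) gives $H_{\rm cusp}^q(\mathcal{L}_{(\underline{\kappa}',\underline{r}')})[\pi_f]=H_{(2)}^q(\mathcal{L}_{(\underline{\kappa}',\underline{r}')})[\pi_f]=0$ for all $q$. By Theorem~\ref{T:Harris}-(4), $H_!^q(\mathcal{L}_{(\underline{\kappa}',\underline{r}')})$ is a $G(\A_f)$-subquotient of the semisimple module $H_{(2)}^q(\mathcal{L}_{(\underline{\kappa}',\underline{r}')})$, so its $\pi_f$-isotypic component vanishes as well.

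For part (2): assume $\underline{r}'=\underline{r}$ and $(\kappa_w-\kappa_w')(\kappa_w+\kappa_w'-2)=0$ for all $w$. Then $\mathcal{A}_0(G(\A),(\underline{\kappa}',\underline{r}))[\pi_f]=\mathcal{A}_{(2)}(G(\A),(\underline{\kappa}',\underline{r}))[\pi_f]\cong\pi_\infty\otimes\pi_f$, so by Theorem~\ref{T:Harris}-(2) the inclusion of cochain complexes is an isomorphism on $\pi_f$-isotypic cohomology, $H_{\rm cusp}^q(\mathcal{L}_{(\underline{\kappa}',\underline{r})})[\pi_f]\xrightarrow{\ \sim\ }H_{(2)}^q(\mathcal{L}_{(\underline{\kappa}',\underline{r})})[\pi_f]$, both being $\bigl(\pi_\infty\otimes\bigwedge^q\frak{p}^+\otimes\C_{(\underline{\kappa}',\underline{r})}\bigr)^{K_\infty}\otimes\pi_f$ since $K_\infty$ acts trivially on $\pi_f$. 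Theorem~\ref{T:Harris}-(3) provides an injection of $G(\A_f)$-modules $H_{\rm cusp}^q(\mathcal{L}_{(\underline{\kappa}',\underline{r})})\hookrightarrow H_!^q(\mathcal{L}_{(\underline{\kappa}',\underline{r})})$; since $\pi_f$-isotypic components are exact on the semisimple modules involved, and since by Theorem~\ref{T:Harris}-(4) the $\pi_f$-multiplicity of $H_!^q$ is at most that of $H_{(2)}^q$, hence equal to that of $H_{\rm cusp}^q$, this injection is an isomorphism on $\pi_f$-isotypic parts --- the first assertion of (2). For the multiplicity, I compute $\dim_\C\bigl(\pi_\infty\otimes\bigwedge^q\frak{p}^+\otimes\C_{(\underline{\kappa}',\underline{r})}\bigr)^{K_\infty}$: everything factors over $\Sigma_\E$, so writing $\bigwedge^q\frak{p}^+=\bigoplus_{S\subseteq\Sigma_\E,\,{}^\sharp S=q}\bigwedge_{w\in S}\C X_{+,w}$ and $K_\infty=\prod_w(\R^\times\cdot{\rm SO}(2))$, the dimension is $\sum_{{}^\sharp S=q}\prod_w d_{w,S}$, where $d_{w,S}=\dim_\C\bigl(D(\kappa_w)\otimes|\mbox{ }|^{-r_w/2}\otimes\Lambda_{w,S}\otimes\C_{(\kappa_w',r_w)}\bigr)^{\R^\times\cdot{\rm SO}(2)}$ and $\Lambda_{w,S}=\C X_{+,w}$ if $w\in S$ and $=\C$ otherwise. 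Using that $D(\kappa_w)^+$ (resp.\ $D(\kappa_w)^-$) has lowest (resp.\ highest) ${\rm SO}(2)$-type of weight $\kappa_w$ (resp.\ $-\kappa_w$), that $\frak{p}^+=\C X_{+,w}$ has ${\rm SO}(2)$-weight $2$, and that $\R^\times$-invariance is automatic once $\underline{r}'=\underline{r}$, an ${\rm SO}(2)$-weight count gives $d_{w,S}=1$ when ($w\notin S$ and $\kappa_w'=\kappa_w$) or ($w\in S$ and $\kappa_w'=2-\kappa_w$), and $d_{w,S}=0$ otherwise. Hence $\prod_w d_{w,S}$ is $1$ exactly when $\underline{\kappa}(S)=\underline{\kappa}'$ and $0$ otherwise, so the multiplicity equals $\#\{S\subseteq\Sigma_\E:{}^\sharp S=q,\ \underline{\kappa}(S)=\underline{\kappa}'\}$, which is precisely the number of subsets $I$ with $\underline{\kappa}'=\underline{\kappa}(I)$ and $q={}^\sharp I$.

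The main obstacle is the local ${\rm SO}(2)$-weight count $d_{w,S}$, in particular the limit-of-discrete-series places where $\kappa_w=1$: there $\kappa_w=2-\kappa_w$, so $D(1)$ contributes $d_{w,S}=1$ both for $w\notin S$ (via $D(1)^+$, degree $0$) and for $w\in S$ (via $D(1)^-$, degree $1$), which is exactly what makes the multiplicity exceed $1$ and matches the fact that, when $\kappa_w=1$, $\underline{\kappa}(I)$ does not distinguish $I$ from the subset obtained by adding or deleting $w$. The remaining work is bookkeeping --- matching $q$-subsets $S$ with $\underline{\kappa}(S)=\underline{\kappa}'$ to the subsets $I$ in the statement, and tracking $G(\A_f)$-equivariance, which is automatic because all maps in Theorem~\ref{T:Harris} and in (\ref{E:commute 1}) are $G(\A_f)$-module homomorphisms.
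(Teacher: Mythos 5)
Your proof is correct and follows the same strategy as the paper: reduce to the $K_\infty$-invariants of the explicit complex via Theorem~\ref{T:Harris}-(2), use strong multiplicity one to identify the $\pi_f$-isotypic part of $\mathcal{A}_0$ and $\mathcal{A}_{(2)}$ (both are $0$ or all of $\itPi$), and count dimensions to force the injection $H_{\rm cusp}^q[\pi_f]\hookrightarrow H_!^q[\pi_f]$ to be an isomorphism. The one point where you deviate is in obtaining the key dimension identity: the paper decomposes $\pi_\infty=\bigoplus_I\pi_{\infty,I}$ into the $2^{[\E:\Q]}$ limits-of-discrete-series constituents and cites Harris's general formula \cite[Theorem 4.6.2]{Harris1990} for $\dim(\pi_{\infty,I}\otimes\bigwedge^q\frak{p}^+\otimes\C_{(\underline{\kappa}',\underline{r}')})^{K_\infty}$, whereas you compute the same number directly by a place-by-place ${\rm SO}(2)$-weight count; the two computations agree (your $d_{w,S}$ is exactly the contribution of $\pi_{\infty,S}$ at $w$, since the weights of $D(\kappa_w)^+$ and $D(\kappa_w)^-$ are disjoint). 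This makes your argument slightly more self-contained but is not a genuinely different route.
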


\begin{proof} 

For each subset $I$ of $\Sigma_\E$, let 
\[
\varepsilon(I)_w = \begin{cases}
- & \mbox{ if $w \in I$},\\
+  & \mbox{ if $w \notin I$}
\end{cases}
\]
for $w \in \Sigma_\E$ and
\[
\pi_{\infty,I} = \boxtimes_{w \in \Sigma_\E}(D(\kappa_w)^{\varepsilon(I)_w}\otimes |\mbox{ }|^{-r_w/2})
\]
be an irreducible admissible $(\frak{g},K_\infty)$-module. Note that we have a $(\frak{g},K_\infty)$-module isomorphism
\[
\pi_\infty \simeq \bigoplus_{I \subseteq \Sigma_\E}\pi_{\infty,I}.
\]
Specializing \cite[Theorem 4.6.2]{Harris1990} to the (limit of) discrete series representation $\pi_{\infty,I}$, we have
\begin{align}\label{E:d.s. cohomology}
{\rm dim}\,\left(\pi_{\infty,I} \otimes_\C \bigwedge^q \frak{p}^+ \otimes_\C \C_{(\underline{\kappa}',\underline{r}')}\right)^{K_\infty} = \begin{cases}
0 & \mbox{ if $(\underline{\kappa}',\underline{r}') \neq (\underline{\kappa}(I),\underline{r})$ or $q \neq {}^\sharp I$},\\
1 & \mbox{ if $(\underline{\kappa}',\underline{r}') = (\underline{\kappa}(I),\underline{r})$ and $q = {}^\sharp I$}.
\end{cases}
\end{align}
On the other hand, by the strong multiplicity one theorem, we have
\[
\mathcal{A}_0(G(\A),(\underline{\kappa},\underline{r}))[\pi_f] = \mathcal{A}_{(2)}(G(\A),(\underline{\kappa},\underline{r}))[\pi_f] = \itPi.
\]
The assertions then follow from Theorem \ref{T:Harris}-(2)-(4) and (\ref{E:d.s. cohomology}).
This completes the proof.
\end{proof}

For $\sigma \in {\rm Aut}(\C)$, let ${}^\sigma\!\itPi$ be the irreducible admissible representation of $G(\A)$ defined by
\[
{}^\sigma\!\itPi = {}^\sigma\!\pi_{\infty} \otimes {}^\sigma\!\pi_f,
\]
where ${}^\sigma\!\pi_f$ is the $\sigma$-conjugate of $\pi_f$ and ${}^\sigma\!\pi_{\infty}$ is the representation of $G(\R) = \GL_2(\R)^{\Sigma_\E}$ so that its $w$-component is equal to the $\sigma^{-1} \circ w$-component of $\pi_\infty$. 
The following lemma is well-known and can be deduced from the result of Shimura \cite[Proposition 1.6]{Shimura1978}. When $\kappa_w \geq 2$ for all $w \in \Sigma_\E$, the lemma was also proved by Waldspurger \cite{Wald1985B} and Harder \cite{Harder1987} and is based on the study of $(\frak{g},K_\infty)$-cohomology. We provide another proof based on the results of Harris \cite{Harris1990}, which is $(\frak{P},K_\infty)$-cohomological in natural.

\begin{lemma}
For $\sigma \in {\rm Aut}(\C)$, the representation ${}^\sigma\!\itPi$ is a motivic irreducible cuspidal automorphic representation of $G(\A)$ of motivic weight $({}^\sigma\!\underline{\kappa},\underline{r})$.
Moreover, the rationality field $\Q(\itPi)$ of $\itPi$ is equal to the fixed field of $\left\{\sigma \in {\rm Aut}(\C) \, \vert \, {}^\sigma\!\itPi = \itPi \right\}$.
\end{lemma}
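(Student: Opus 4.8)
The plan is to realize $\pi_f$ inside a degree-zero coherent cohomology group, transport it by the $\sigma$-linear isomorphism $T_\sigma$ of Theorem~\ref{T:Harris}, and then read off both assertions; the essential input is the vanishing statement in Lemma~\ref{L:2.2}. First I would record that $\pi_f$ occurs in $H^0_{\rm cusp}(\mathcal{L}_{(\underline{\kappa},\underline{r})})$: this is the $q=0$ case of Lemma~\ref{L:2.2}-(2) (the only $I\subseteq\Sigma_\E$ with $\underline{\kappa}(I)=\underline{\kappa}$ and ${}^\sharp I=0$ is $I=\emptyset$), which at bottom amounts to the fact that $\itPi$, being motivic, occurs in $\mathcal{A}_0(G(\A),(\underline{\kappa},\underline{r}))$ with $\pi_\infty$ containing the holomorphic weight-$\underline{\kappa}$ vector, together with (\ref{E:d.s. cohomology}) for $I=\emptyset$. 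I will use the standard fact that the degree-zero cohomology of the subcanonical extension is spanned by holomorphic cusp forms --- equivalently, a holomorphic automorphic form that is rapidly decreasing is cuspidal --- so that $H^0(\mathcal{L}_{(\underline{\kappa},\underline{r})}^{\rm sub})=H^0_{\rm cusp}(\mathcal{L}_{(\underline{\kappa},\underline{r})})$, and likewise for every motivic weight; in particular $\pi_f$ occurs in $H^0(\mathcal{L}_{(\underline{\kappa},\underline{r})}^{\rm sub})$.

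Next one checks that any $\sigma\in{\rm Aut}(\C)$ permutes $\Sigma_\E$ while preserving each block $\Sigma_{\F_i}$ (since $\sigma$ is injective, $\sigma\circ w$ annihilates exactly the factors of $\E$ annihilated by $w$); as $\underline{r}$ is constant on each block this gives ${}^\sigma\underline{r}=\underline{r}$, and $({}^\sigma\underline{\kappa},\underline{r})$ is again motivic. By Theorem~\ref{T:Harris}-(1) the map $T_\sigma\colon H^0(\mathcal{L}_{(\underline{\kappa},\underline{r})}^{\rm sub})\to H^0(\mathcal{L}_{({}^\sigma\underline{\kappa},\underline{r})}^{\rm sub})$ is a $\sigma$-linear $G(\A_f)$-module isomorphism, hence carries the $\pi_f$-isotypic part onto the ${}^\sigma\pi_f$-isotypic part. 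Therefore ${}^\sigma\pi_f$ occurs in $H^0(\mathcal{L}_{({}^\sigma\underline{\kappa},\underline{r})}^{\rm sub})=H^0_{\rm cusp}(\mathcal{L}_{({}^\sigma\underline{\kappa},\underline{r})})$, so there is a nonzero holomorphic cusp form $\varphi'$ on $G(\A)$ of weight $({}^\sigma\underline{\kappa},\underline{r})$ lying in the ${}^\sigma\pi_f$-isotypic component. Any irreducible cuspidal constituent of the $G(\A)$-subrepresentation generated by $\varphi'$ has finite part ${}^\sigma\pi_f$ and archimedean component $\boxtimes_{w\in\Sigma_\E}(D({}^\sigma\kappa_w)\otimes|\mbox{ }|^{-r_w/2})={}^\sigma\pi_\infty$, hence is isomorphic to ${}^\sigma\itPi$. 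This yields the first assertion: ${}^\sigma\itPi$ is a motivic irreducible cuspidal automorphic representation of $G(\A)$ of motivic weight $({}^\sigma\underline{\kappa},\underline{r})$.

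For the statement on rationality fields I would show that $\{\sigma\in{\rm Aut}(\C)\mid{}^\sigma\itPi\simeq\itPi\}$ coincides with $\{\sigma\in{\rm Aut}(\C)\mid{}^\sigma\pi_f\simeq\pi_f\}$, the group cutting out $\Q(\itPi)$; one inclusion is immediate. Conversely, if ${}^\sigma\pi_f\simeq\pi_f$, then by the previous step $\pi_f$ occurs in $H^0_{\rm cusp}(\mathcal{L}_{({}^\sigma\underline{\kappa},\underline{r})})$, while it also occurs in $H^0_{\rm cusp}(\mathcal{L}_{(\underline{\kappa},\underline{r})})$; Lemma~\ref{L:2.2}-(1) then forces $(\kappa_w-{}^\sigma\kappa_w)(\kappa_w+{}^\sigma\kappa_w-2)=0$ for every $w\in\Sigma_\E$, and since ${}^\sigma\kappa_w\geq1$ this gives ${}^\sigma\kappa_w=\kappa_w$, i.e. ${}^\sigma\underline{\kappa}=\underline{\kappa}$ and ${}^\sigma\pi_\infty\simeq\pi_\infty$, whence ${}^\sigma\itPi\simeq\itPi$. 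Thus the two groups agree and have the same fixed field.

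The genuinely delicate point is the step above: one must know that the $\sigma$-conjugate $\pi_f$ arises from a \emph{cuspidal} automorphic representation with the correct archimedean component, rather than from a residual or Eisenstein class. Working in degree zero with the subcanonical extension makes this transparent, since there the cohomology is literally spanned by holomorphic cusp forms; in higher degrees one would instead have to disentangle the cuspidal part from the remainder of $H^q_!$, which is why the degree-zero realization is the right tool here.
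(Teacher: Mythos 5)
Your proof is correct and follows essentially the same strategy as the paper: realize $\pi_f$ in a degree-zero coherent cohomology group, transport it by the $\sigma$-linear isomorphism $T_\sigma$, and read off the weight and cuspidality of the conjugate (the paper routes through $H^0_!$ and \cite[Proposition~5.4.2]{Harris1990} where you use the equality $H^0(\mathcal{L}^{\mathrm{sub}})=H^0_{\mathrm{cusp}}$ directly, which comes to the same thing). The one place the paper is more explicit than you is in pinning down the archimedean component: it uses the Casimir eigenvalue built into $\mathcal{A}_0(G(\A),({}^\sigma\!\underline{\kappa},\underline{r}))$ via Theorem~\ref{T:Harris}-(2) together with \cite[Lemma~5.6]{JLbook} to force $\pi_\infty' = \boxtimes_w D({}^\sigma\!\kappa_w)\otimes|\ |^{-r_w/2}$, whereas you leave the classification of unitary lowest-weight $(\mathfrak{gl}_2,\mathrm{O}(2))$-modules implicit; for the second assertion the paper appeals to strong multiplicity one directly rather than deducing ${}^\sigma\!\underline{\kappa}=\underline{\kappa}$ from Lemma~\ref{L:2.2}-(1), but both routes are sound.
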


\begin{proof}
Fix $\sigma \in {\rm Aut}(\C)$. Since $H_!^0(\mathcal{L}_{(\underline{\kappa},\underline{r})})[\pi_f] \simeq \pi_f$, we have 
\[
H_!^0(\mathcal{L}_{({}^\sigma\!\underline{\kappa},\underline{r})})[{}^\sigma\!\pi_f] = T_\sigma (H_!^0(\mathcal{L}_{(\underline{\kappa},\underline{r})})[\pi_f]) \simeq {}^\sigma\!\pi_f.
\]
On the other hand, the homomorphism in Theorem \ref{T:Harris}-(3) is an isomorphism by \cite[Proposition 5.4.2]{Harris1990}.
It follows that
\[
H_{\rm cusp}^0(\mathcal{L}_{({}^\sigma\!\underline{\kappa},\underline{r}))})[{}^\sigma\!\pi_f] = \left(\mathcal{A}_0(G(\A),({}^\sigma\!\underline{\kappa},\underline{r}))[{}^\sigma\!\pi_f]\otimes_\C \C_{({}^\sigma\!\underline{\kappa},\underline{r})} \right)^{K_\infty} \simeq {}^\sigma\!\pi_f.
\]
We conclude that $\mathcal{A}_0(G(\A),({}^\sigma\!\underline{\kappa},\underline{r}))[{}^\sigma\!\pi_f] = \itPi'$ for some irreducible cuspidal automorphic representation $\itPi' = \pi_\infty' \otimes {}^\sigma\!\pi_f$ of $G(\A)$ such that
\[
 \left(\pi_\infty'  \otimes_\C \C_{({}^\sigma\!\underline{\kappa},\underline{r})} \right)^{K_\infty} \neq 0.
\]
This implies that there exists a non-zero vector $v \in \pi_\infty'$ such that
\begin{align}\label{E:2.1}
\pi_\infty'(\underline{a}\cdot\underline{k}_\theta) v = \prod_{w \in \Sigma_\E} a_w^{r_w}e^{\sqrt{-1}\,{}^\sigma\!\kappa_w \theta_w}\cdot v
\end{align}
for $\underline{a} = (a_w)_{w \in \Sigma_\E} \in (\R^\times)^{\Sigma_\E}$ and $\underline{k}_\theta = (k_{\theta_w} )_{w \in \Sigma_\E}\in {\rm SO}(2)^{\Sigma_\E}$.
In particular, we have
\[
(H,\cdots,H)\cdot v = \prod_{w \in \Sigma_\E}{}^\sigma\!\kappa_w \cdot v = \prod_{w \in \Sigma_\E} \kappa_w\cdot v.
\]
Note that the Casimir operator of $\frak{gl}_{2,\C}$ is given by
\[
\tfrac{1}{2}H^2-H-\tfrac{1}{2}X_+X_-.
\]
We deduce from the second condition defining the space $\mathcal{A}_0(G(\A),({}^\sigma\!\underline{\kappa},\underline{r}))$ that
\begin{align}\label{E:2.2}
(X_+X_-,\cdots,X_+X_-)\cdot v = 0.
\end{align}
By \cite[Lemma 5.6]{JLbook}, we see that (\ref{E:2.1}) and (\ref{E:2.2}) imply that 
\[
\pi_\infty'  = \boxtimes_{w \in \Sigma_\E}(D({}^\sigma\!\kappa_w)\otimes |\mbox{ }|^{-r_w/2}).
\]
Therefore $\itPi'$ is isomorphic to ${}^\sigma\!\itPi$. 
For the second assertion, assume ${}^\sigma\!\pi_f = \pi_f$. Then it follows from the strong multiplicity one theorem that ${}^\sigma\!\itPi = \itPi$. This completes the proof.
\end{proof}

\begin{lemma}\label{L:coherent structure}
Let $I \subseteq \Sigma_\E$.
For any field extension $\Q(\itPi,I) \subseteq F \subseteq \C$, we have the $F$-rational structure on $H_{\rm cusp}^{{}^\sharp I}(\mathcal{L}_{(\underline{\kappa}(I) ,\underline{r})})[\pi_f]$  by taking the ${\rm Aut}(\C/F)$-invariants
\begin{align*}
H_{\rm cusp}^{{}^\sharp I}(\mathcal{L}_{(\underline{\kappa}(I) ,\underline{r})})[\pi_f]^{{\rm Aut}(\C/F)} &= \left.\left\{c \in H_{\rm cusp}^{{}^\sharp I}(\mathcal{L}_{(\underline{\kappa}(I) ,\underline{r})})[\pi_f] \,\right\vert\, T_\sigma c=c  \mbox{ for all }\sigma \in {\rm Aut}(\C/F)\right\}.
\end{align*}
Here $\Q(\itPi,I) = \Q(\itPi)\cdot\Q(I)$.
Moreover, suppose that $I$ is admissible with respect to $\underline{\kappa}$, then the $G(\A_f)$-module $H_{\rm cusp}^{{}^\sharp I}(\mathcal{L}_{(\underline{\kappa}(I) ,\underline{r})})[\pi_f]$ is isomorphic to $\pi_f$ and the $F$-rational structures are unique up to homotheties. 
\end{lemma}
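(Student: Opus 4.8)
The plan is to deduce everything from Theorem \ref{T:Harris}, Lemma \ref{L:2.2}, and Galois descent. Write $q = {}^\sharp I$. Since $(\underline{\kappa},\underline{r})$ is motivic, so is $(\underline{\kappa}(I),\underline{r})$ (each $\kappa(I)_w \in \{2-\kappa_w,\kappa_w\}$ is $\equiv r_w \pmod 2$ and $\underline{r}$ is unchanged), so Theorem \ref{T:Harris} applies to $\mathcal{L}_{(\underline{\kappa}(I),\underline{r})}$; moreover $\Q(\underline{\kappa}(I)) \subseteq \Q(\underline{\kappa})\cdot\Q(I) \subseteq \Q(\itPi)\cdot\Q(I) = \Q(\itPi,I) \subseteq F$, where $\Q(\underline{\kappa})\subseteq\Q(\itPi)$ follows from the preceding lemma (if ${}^\sigma\!\itPi=\itPi$ then ${}^\sigma\!\underline{\kappa}=\underline{\kappa}$, the weight being an invariant of the representation). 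For $\underline{\kappa}' := \underline{\kappa}(I)$ the hypotheses of Lemma \ref{L:2.2}-(2) hold: $\underline{r}=\underline{r}'$ and for each $w$ one of the factors of $(\kappa_w-\kappa(I)_w)(\kappa_w+\kappa(I)_w-2)$ vanishes (the first when $w \notin I$, the second when $w \in I$). Hence Theorem \ref{T:Harris}-(3) gives an isomorphism of $G(\A_f)$-modules $W := H_{\rm cusp}^{q}(\mathcal{L}_{(\underline{\kappa}(I),\underline{r})})[\pi_f] \cong H_!^{q}(\mathcal{L}_{(\underline{\kappa}(I),\underline{r})})[\pi_f]$, the latter sitting inside $H^{q}(\mathcal{L}_{(\underline{\kappa}(I),\underline{r})}^{\rm can})$, and this identification is $T_\sigma$-equivariant since $T_\sigma$ is induced by $\sigma$-conjugation compatibly with all the maps in (\ref{E:commute 1}).

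First I would check that for $\sigma \in {\rm Aut}(\C/F)$ the $\sigma$-linear map $T_\sigma$ of Theorem \ref{T:Harris}-(1) stabilizes $W$. Since $\sigma$ fixes $\Q(\underline{\kappa}(I))$ we have ${}^\sigma\!\underline{\kappa}(I) = \underline{\kappa}(I)$, so $T_\sigma$ is a $\sigma$-linear automorphism of $H^{q}(\mathcal{L}_{(\underline{\kappa}(I),\underline{r})}^{\rm sub})$ and of $H^{q}(\mathcal{L}_{(\underline{\kappa}(I),\underline{r})}^{\rm can})$, compatible with (\ref{E:commute 1}); in particular it preserves $H_!^{q}(\mathcal{L}_{(\underline{\kappa}(I),\underline{r})})$, the image of $H^{q}(\mathcal{L}_{(\underline{\kappa}(I),\underline{r})}^{\rm sub}) \to H^{q}(\mathcal{L}_{(\underline{\kappa}(I),\underline{r})}^{\rm can})$. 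Since $\sigma$ also fixes $\Q(\itPi)$, the preceding lemma gives ${}^\sigma\!\itPi = \itPi$, hence ${}^\sigma\!\pi_f \cong \pi_f$, so $T_\sigma$ carries the $\pi_f$-isotypic component to itself. Thus $T_\sigma$ restricts to a $\sigma$-linear automorphism of $H_!^{q}(\mathcal{L}_{(\underline{\kappa}(I),\underline{r})})[\pi_f]$, and via the identification above to one of $W$; the maps $\sigma \mapsto T_\sigma|_W$ satisfy $T_\sigma T_\tau = T_{\sigma\tau}$.

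For the first assertion, Theorem \ref{T:Harris}-(1) equips $H^{q}(\mathcal{L}_{(\underline{\kappa}(I),\underline{r})}^{\rm can})$ with a canonical $\Q(\underline{\kappa}(I))$-rational structure on which $T_\sigma$ ($\sigma \in {\rm Aut}(\C/\Q(\underline{\kappa}(I)))$) acts as the natural $\sigma$-semilinear operator; extending scalars along $\Q(\underline{\kappa}(I)) \subseteq F$ produces an $F$-rational structure compatible with $T_\sigma$ for all $\sigma \in {\rm Aut}(\C/F)$. A $\C$-subspace of a space carrying such an $F$-rational structure that is stable under the corresponding $\C$-semilinear ${\rm Aut}(\C/F)$-action is itself defined over $F$ (Galois descent, using $\C^{{\rm Aut}(\C/F)}=F$), so $H_!^{q}(\mathcal{L}_{(\underline{\kappa}(I),\underline{r})})[\pi_f]$ is defined over $F$; transporting back along the $T_\sigma$-equivariant isomorphism shows that $W^{{\rm Aut}(\C/F)}$ is an $F$-rational structure on $W$, i.e.\ $W^{{\rm Aut}(\C/F)} \otimes_F \C \to W$ is an isomorphism. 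For the ``moreover'' part, assume $I$ is admissible with respect to $\underline{\kappa}$. By Lemma \ref{L:2.2}-(2) the multiplicity of $\pi_f$ in $H_{\rm cusp}^{q}(\mathcal{L}_{(\underline{\kappa}(I),\underline{r})})$ equals the number of subsets $J \subseteq \Sigma_\E$ with $\underline{\kappa}(J) = \underline{\kappa}(I)$ and ${}^\sharp J = {}^\sharp I$; admissibility says this fiber is exactly $\{I\}$, so the multiplicity is $1$ and the semisimple $G(\A_f)$-module $W$ is isomorphic to $\pi_f$. Uniqueness up to homothety then follows from the standard fact that an irreducible admissible representation admits at most one rational structure over a given field up to isomorphism: since ${\rm End}_{G(\A_f)}(W) = \C$ by Schur, any $F$-rational structure $W_0 \subseteq W$ has ${\rm End}_{F[G(\A_f)]}(W_0) = F$, and for two such $W_0, W_0'$, base change of Hom-spaces for smooth admissible representations gives ${\rm Hom}_{F[G(\A_f)]}(W_0, W_0') \otimes_F \C \cong {\rm Hom}_{G(\A_f)}(W, W) = \C$, so a nonzero $F$-rational $G(\A_f)$-map $W_0 \to W_0'$ extends to a scalar $\lambda \in \C^\times$ on $W$, whence $W_0' = \lambda W_0$.

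I do not expect a genuine obstruction here: the proof is an assembly of Theorem \ref{T:Harris}, Lemma \ref{L:2.2}, and standard Galois descent. The one point needing care — and the reason the identification $H_{\rm cusp}^{q} \cong H_!^{q}$ of Lemma \ref{L:2.2}-(2) is invoked rather than working directly in $H^q(\mathcal{L}^{\rm sub})$ — is to verify that $T_\sigma$ genuinely stabilizes the cuspidal $\pi_f$-isotypic subspace, and not merely some larger $T_\sigma$-stable subspace of the coherent cohomology containing it; routing through $H_!^q$, which is by construction the image of a $T_\sigma$-equivariant map, makes this manifest. The only other nonformal ingredient is the uniqueness-up-to-homothety clause, which rests on the cited Schur/base-change argument.
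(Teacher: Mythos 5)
Your proof is correct and follows essentially the same route as the paper: use the $\sigma$-linear $G(\A_f)$-equivariance of $T_\sigma$ and Lemma \ref{L:2.2} to see the cuspidal $\pi_f$-isotypic subspace is ${\rm Aut}(\C/F)$-stable inside a coherent cohomology group carrying the canonical $F$-rational structure of Theorem \ref{T:Harris}-(1), descend, and invoke multiplicity one plus Schur for the admissible case. The only cosmetic difference is that you place the subspace inside $H^{{}^\sharp I}(\mathcal{L}^{\rm can})$ via $H_!^{{}^\sharp I}$ whereas the paper works inside $H^{{}^\sharp I}(\mathcal{L}^{\rm sub})$, and where you appeal to ``standard Galois descent'' the paper cites Clozel's Lemme~3.2.1, which is the precise statement justifying that the ${\rm Aut}(\C/F)$-invariants of a stable $\C$-subspace of an admissible $G(\A_f)$-module with $F$-structure give an $F$-form of it.
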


\begin{proof}
By the commutativity of the diagram in Theorem \ref{T:Harris}-(1) and Lemma \ref{L:2.2}-(2), we have
\begin{align}\label{E:2.12 Galois}
H_{\rm cusp}^{{}^\sharp I}(\mathcal{L}_{({}^\sigma\!\underline{\kappa}({}^\sigma\!I) ,\underline{r})})[{}^\sigma\!\pi_f] = T_\sigma (H_{\rm cusp}^{{}^\sharp I}(\mathcal{L}_{(\underline{\kappa}(I) ,\underline{r})})[\pi_f])
\end{align}
for all $\sigma \in {\rm Aut}(\C)$.
Let $\Q(\itPi)\cdot\Q(I) \subseteq F \subseteq \C$ be a field extension.
By Theorem \ref{T:Harris}-(1), $H^{{}^\sharp I}(\mathcal{L}_{(\underline{\kappa}(I) ,\underline{r})}^{\rm sub})$ admits a $F$-rational structure given by
\[
H^{{}^\sharp I}(\mathcal{L}_{(\underline{\kappa}(I) ,\underline{r})}^{\rm sub})^{{\rm Aut}(\C/F)}= \left.\left\{c \in H^{{}^\sharp I}(\mathcal{L}_{(\underline{\kappa}(I) ,\underline{r})}^{\rm sub}) \,\right\vert\, T_\sigma c=c  \mbox{ for all }\sigma \in {\rm Aut}(\C/F))\right\}.
\]
By (\ref{E:2.12 Galois}), $H_{\rm cusp}^{{}^\sharp I}(\mathcal{L}_{(\underline{\kappa}(I) ,\underline{r})})[\pi_f]$ is invariant by $T_\sigma$ for $\sigma \in {\rm Aut}(\C/F)$.
Therefore, by \cite[Lemme 3.2.1]{Clozel1990}, we have a $F$-rational structure on $H_{\rm cusp}^{{}^\sharp I}(\mathcal{L}_{(\underline{\kappa}(I) ,\underline{r})})[\pi_f]$ given by
\[
H_{\rm cusp}^{{}^\sharp I}(\mathcal{L}_{(\underline{\kappa}(I) ,\underline{r})})[\pi_f] \cap H^{{}^\sharp I}(\mathcal{L}_{(\underline{\kappa}(I) ,\underline{r})}^{\rm sub})^{{\rm Aut}(\C/F)} = H_{\rm cusp}^{{}^\sharp I}(\mathcal{L}_{(\underline{\kappa}(I) ,\underline{r})})[\pi_f]^{{\rm Aut}(\C/F)}.
\]
Finally, suppose that $I$ is admissible with respect to $\underline{\kappa}$. Then $H_{\rm cusp}^{{}^\sharp I}(\mathcal{L}_{(\underline{\kappa}(I) ,\underline{r})})[\pi_f] \simeq \pi_f$ is irreducible by Lemma \ref{L:2.2}-(2) and the uniqueness up to homotheties then follows from Schur's lemma. 
This completes the proof.
\end{proof}

\subsection{Harris' periods}\label{SS: Harris' period}

Let $\itPi$ be a motivic irreducible cuspidal automorphic representation of $G(\A)$ of motivic weight $(\underline{\kappa},\underline{r}) \in \Z_{\geq 1}[\Sigma_\E]\times\Z[\Sigma_\E]$. 
Let $\itPi_{\,{\rm hol}}$ be the subspace of $\itPi$ consisting of holomorphic cusp forms $\varphi \in \itPi$, that is, 
\[
\varphi(g\underline{k}_\theta) = e^{\sqrt{-1}\,\sum_{w \in \Sigma_\E}\kappa_w\theta_w}\varphi(g)
\]
for $\underline{k}_\theta = (k_{\theta_w} )_{w \in \Sigma_\E}\in {\rm SO}(2)^{\Sigma_\E}$ and $g \in G(\A)$.
Let $I$ be a subset of $\Sigma_\E$. 
Define $\tau_I \in \G(\R) = \GL_2(\R)^{\Sigma_\E}$ by
\begin{align}\label{E:tau_I}
\tau_I = ({\bf a}(\varepsilon(I)_w))_{w \in \Sigma_\E},
\end{align}
where
\[
\varepsilon(I)_w = \begin{cases}
-1 & \mbox{ if $w \in I$},\\
1  & \mbox{ if $w \notin I$}.
\end{cases}
\]
For $\varphi \in \itPi_{\,\rm hol}$, let $\varphi^I \in \itPi$ defined by
\begin{align}\label{E:phi^I}
\varphi^I(g) = \varphi(g \cdot \tau_I)
\end{align}
for $g \in G(\A)$. 
We then have the homomorphism of $G(\A_f)$-modules:
\begin{align}\label{E:xi_I}
\xi_I : \itPi_{\,\rm hol} \longrightarrow H_{\rm cusp}^{{}^\sharp I}(\mathcal{L}_{(\underline{\kappa}(I) ,\underline{r})})[\pi_f],\quad
\varphi \longmapsto \varphi^I \otimes \bigwedge_{w \in I} X_{+,w} \otimes {\bf v}_{(\underline{\kappa}(I),\underline{r})}.
\end{align}
Here we fix an ordering of the wedge $\bigwedge_{w \in I} X_{+,w}$ once and for all such that
$
\bigwedge_{w \in I} X_{+,w} \mapsto \bigwedge_{w \in {}^\sigma\!I} X_{+,w}
$
under the $\sigma$-linear isomorphism in (\ref{E:rational structures}) for all $\sigma \in {\rm Aut}(\C)$.
Note that $\xi_I$ is an isomorphism if and only if $I$ is admissible with respect to $\underline{\kappa}$ by Lemma \ref{L:2.2}-(2).
By taking $I=\emptyset$ to be the empty set, we identify $\itPi_{\,\rm hol}$ with $H_{\rm cusp}^{0}(\mathcal{L}_{(\underline{\kappa} ,\underline{r})})[\pi_f]$ by the isomorphism $\xi_\emptyset$.
Moreover, for $\sigma \in {\rm Aut}(\C)$ we have the $\sigma$-linear isomorphism
\begin{align}\label{E:sigma iso.}
\itPi_{\,\rm hol} \longrightarrow {}^\sigma\!\itPi_{\,\rm hol}, \quad \varphi \longmapsto {}^\sigma\!\varphi \end{align}
defined so that the diagram
\[
\begin{tikzcd}
\itPi_{\,\rm hol} \arrow[d, "\xi_\emptyset"] \arrow[r]
    &  {}^\sigma\!\itPi_{\,\rm hol}\arrow[d, "\xi_\emptyset"] \\
  H_{\rm cusp}^{0}(\mathcal{L}_{(\underline{\kappa} ,\underline{r})})[\pi_f] \arrow[r, "T_\sigma"]
& H_{\rm cusp}^{0}(\mathcal{L}_{({}^\sigma\!\underline{\kappa} ,\underline{r})})[{}^\sigma\!\pi_f]
\end{tikzcd}
\]
is commute.
Comparing the $\Q(\itPi,I)$-rational structures on $H_{\rm cusp}^{0}(\mathcal{L}_{(\underline{\kappa} ,\underline{r})})[\pi_f]$ and $H_{\rm cusp}^{{}^\sharp I}(\mathcal{L}_{(\underline{\kappa}(I) ,\underline{r})})[\pi_f]$ defined in Lemma \ref{L:coherent structure}, we obtain the following result.

\begin{prop}\label{P:Harris' period}
Let $I \subseteq \Sigma_\E$ be admissible with respect to $\underline{\kappa}$.
There exists $\Omega^I(\itPi) \in \C^\times$, unique up to $\Q(\itPi,I)^\times$, such that
\[
\frac{\xi_I \left(\itPi_{\rm hol}^{{\rm Aut}(\C / \Q(\itPi,I))}\right)}{\Omega^I(\itPi)}  = H_{\rm cusp}^{{}^\sharp I}(\mathcal{L}_{(\underline{\kappa}(I) ,\underline{r})})[\pi_f]^{\rm Aut(\C/\Q(\itPi,I))}.
\]
Moreover, we can normalize the periods so that
\begin{align*}
T_\sigma\left(\frac{\xi_I(\varphi)}{\Omega^I(\itPi)}\right) = \frac{\xi_{{}^\sigma\!I}({}^\sigma\!\varphi)}{\Omega^{{}^\sigma\!I}({}^\sigma\!\itPi)}
\end{align*}
for $\varphi \in \itPi_{\,\rm hol}$ and $\sigma \in {\rm Aut}(\C)$.
\end{prop}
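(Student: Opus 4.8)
The plan is to produce $\Omega^I(\itPi)$ by comparing two $\Q(\itPi,I)$-rational lines inside a one-dimensional $\C$-vector space, and then to refine the choice so that the $T_\sigma$-equivariance holds simultaneously for all $\sigma$. First I would observe that since $I$ is admissible with respect to $\underline{\kappa}$, Lemma \ref{L:2.2}-(2) gives that $H_{\rm cusp}^{{}^\sharp I}(\mathcal{L}_{(\underline{\kappa}(I),\underline{r})})[\pi_f] \simeq \pi_f$ is irreducible and, by Lemma \ref{L:coherent structure}, carries a $\Q(\itPi,I)$-rational structure that is unique up to homotheties. The map $\xi_I$ of (\ref{E:xi_I}) is then an isomorphism of $G(\A_f)$-modules $\itPi_{\,\rm hol} \xrightarrow{\sim} H_{\rm cusp}^{{}^\sharp I}(\mathcal{L}_{(\underline{\kappa}(I),\underline{r})})[\pi_f]$. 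Pushing the $\Q(\itPi,I)$-rational structure on $\itPi_{\,\rm hol}$ coming from $\xi_\emptyset$ (i.e. $\itPi_{\rm hol}^{{\rm Aut}(\C/\Q(\itPi,I))}$, the $T_\sigma$-invariants under the isomorphism $\xi_\emptyset$ of $\itPi_{\,\rm hol}$ with $H_{\rm cusp}^0(\mathcal{L}_{(\underline{\kappa},\underline{r})})[\pi_f]$) forward along $\xi_I$, we get a second $\Q(\itPi,I)$-rational structure on $H_{\rm cusp}^{{}^\sharp I}(\mathcal{L}_{(\underline{\kappa}(I),\underline{r})})[\pi_f]$; both are lines over $\Q(\itPi,I)$ inside the same $\pi_f$-isotypic $\C$-space, so by Schur's lemma they differ by multiplication by a scalar $\Omega^I(\itPi) \in \C^\times$, well-defined modulo $\Q(\itPi,I)^\times$. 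This is exactly the asserted identity of $\Q(\itPi,I)$-rational structures.

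Next I would establish the normalization. Fix a $\Q(\itPi)$-rational $\varphi_0 \in \itPi_{\,\rm hol}$ and, using the $\sigma$-linear isomorphisms (\ref{E:sigma iso.}) $\varphi \mapsto {}^\sigma\!\varphi$, define ${}^\sigma\!\varphi_0 \in {}^\sigma\!\itPi_{\,\rm hol}$ for each $\sigma$. The defining diagram for (\ref{E:sigma iso.}) says $T_\sigma \circ \xi_\emptyset = \xi_\emptyset \circ ({}^\sigma(\cdot))$, and the commutativity of the diagram in Theorem \ref{T:Harris}-(1) together with (\ref{E:2.12 Galois}) gives $T_\sigma\bigl(H_{\rm cusp}^{{}^\sharp I}(\mathcal{L}_{(\underline{\kappa}(I),\underline{r})})[\pi_f]\bigr) = H_{\rm cusp}^{{}^\sharp I}(\mathcal{L}_{({}^\sigma\!\underline{\kappa}({}^\sigma\!I),\underline{r})})[{}^\sigma\!\pi_f]$. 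I would then pin down $\Omega^I(\itPi)$ (within its $\Q(\itPi,I)^\times$-ambiguity) by the single requirement that $\xi_I(\varphi_0)/\Omega^I(\itPi)$ lie in the chosen $\Q(\itPi,I)$-rational structure of $H_{\rm cusp}^{{}^\sharp I}(\mathcal{L}_{(\underline{\kappa}(I),\underline{r})})[\pi_f]$, and make the analogous choice of $\Omega^{{}^\sigma\!I}({}^\sigma\!\itPi)$ for every $\sigma$ using the compatible family $\{{}^\sigma\!\varphi_0\}$. Applying $T_\sigma$ to $\xi_I(\varphi_0)/\Omega^I(\itPi)$ and using the compatibility $T_\sigma \circ \xi_I = \xi_{{}^\sigma\!I} \circ ({}^\sigma(\cdot))$ — which follows from tracking the rational structures $\C_{(\underline{\kappa}(I),\underline{r})} \to \C_{({}^\sigma\!\underline{\kappa})({}^\sigma\!I),\underline{r})}$ and $\bigwedge_{w\in I}X_{+,w} \mapsto \bigwedge_{w\in{}^\sigma\!I}X_{+,w}$ in (\ref{E:rational structures}), exactly as arranged in the ordering convention after (\ref{E:xi_I}) — yields $T_\sigma(\xi_I(\varphi_0)/\Omega^I(\itPi)) = \xi_{{}^\sigma\!I}({}^\sigma\!\varphi_0)/\sigma(\Omega^I(\itPi))$, while the target-side normalization forces this to equal $\xi_{{}^\sigma\!I}({}^\sigma\!\varphi_0)/\Omega^{{}^\sigma\!I}({}^\sigma\!\itPi)$ up to $\Q({}^\sigma\!\itPi,{}^\sigma\!I)^\times$; adjusting by this scalar once and for all gives the stated relation, and $G(\A_f)$-linearity propagates it from $\varphi_0$ to all $\varphi \in \itPi_{\,\rm hol}$.

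The main obstacle I anticipate is the verification that $\xi_I$ is genuinely $T_\sigma$-compatible in the precise sense $T_\sigma \circ \xi_I = \xi_{{}^\sigma\!I} \circ ({}^\sigma(\cdot))$, rather than merely compatible up to an uncontrolled scalar. This requires that (i) the $\sigma$-linear action on the archimedean vectors $\varphi^I \mapsto ({}^\sigma\!\varphi)^{{}^\sigma\!I}$ agrees with conjugation of the coherent cohomology class — which is where one uses that $\tau_I$ has entries in $\Q$, so that right translation by $\tau_I$ commutes with the Galois action on $\pi_f$ — and (ii) the chosen orderings of $\bigwedge_{w\in I}X_{+,w}$ transform without sign or scalar discrepancy under (\ref{E:rational structures}), which is precisely the content of the ordering convention fixed right after (\ref{E:xi_I}). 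Once these two bookkeeping points are in place, the existence and uniqueness (up to $\Q(\itPi,I)^\times$) and the equivariant normalization are formal consequences of Schur's lemma and Theorem \ref{T:Harris}-(1); the substance is entirely in matching the rational structures, not in any analysis.
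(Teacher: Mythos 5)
Your first paragraph is exactly the paper's (implicit) argument: both $\xi_I(\itPi_{\rm hol}^{{\rm Aut}(\C/\Q(\itPi,I))})$ and $H_{\rm cusp}^{{}^\sharp I}(\mathcal{L}_{(\underline{\kappa}(I),\underline{r})})[\pi_f]^{{\rm Aut}(\C/\Q(\itPi,I))}$ are $\Q(\itPi,I)$-rational structures on the same irreducible $G(\A_f)$-module $\simeq \pi_f$, so by Lemma \ref{L:coherent structure} (uniqueness up to homothety via Schur and Clozel) they differ by a scalar $\Omega^I(\itPi)$, well defined modulo $\Q(\itPi,I)^\times$. That part is correct and is the whole content of the first assertion.

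Where you go astray is in identifying the \emph{exact} identity $T_\sigma\circ\xi_I=\xi_{{}^\sigma\!I}\circ({}^\sigma(\cdot))$ as ``the main obstacle,'' and in the justification you offer for it. First, the heuristic ``$\tau_I$ has entries in $\Q$, so right translation by $\tau_I$ commutes with the Galois action on $\pi_f$'' does not parse: $\tau_I\in G(\R)$, while $T_\sigma$ is only characterized as a $\sigma$-linear $G(\A_f)$-module map; the archimedean translation is not constrained by $G(\A_f)$-equivariance, and $T_\sigma$ is not literally ``$\sigma$ applied to the automorphic form.'' Second, and more to the point, you never need the scalar-free identity. Both $T_\sigma\circ\xi_I$ and $\xi_{{}^\sigma\!I}\circ({}^\sigma(\cdot))$ are $\sigma$-linear $G(\A_f)$-homomorphisms from $\itPi_{\rm hol}$ to $H_{\rm cusp}^{{}^\sharp I}(\mathcal{L}_{({}^\sigma\!\underline{\kappa}({}^\sigma\!I),\underline{r})})[{}^\sigma\!\pi_f]$, hence by Schur they differ by some $c_\sigma\in\C^\times$. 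Because $T_\sigma$ carries the $\Q(\itPi,I)$-rational structure of $H_{\rm cusp}^{{}^\sharp I}(\mathcal{L}_{(\underline{\kappa}(I),\underline{r})})[\pi_f]$ onto the $\Q({}^\sigma\!\itPi,{}^\sigma\!I)$-rational structure of the target (this is exactly (\ref{E:2.12 Galois}) plus the composition law $T_{\tau}T_{\sigma}=T_{\tau\sigma}$), the vector $T_\sigma(\xi_I(\varphi_0)/\Omega^I(\itPi))$ and the vector $\xi_{{}^\sigma\!I}({}^\sigma\!\varphi_0)/\Omega^{{}^\sigma\!I}({}^\sigma\!\itPi)$ \emph{both} lie in that rational structure. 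Their ratio, which is $c_\sigma\cdot\Omega^{{}^\sigma\!I}({}^\sigma\!\itPi)/\sigma(\Omega^I(\itPi))$, therefore automatically lies in $\Q({}^\sigma\!\itPi,{}^\sigma\!I)^\times$ and can be absorbed into the choice of $\Omega^{{}^\sigma\!I}({}^\sigma\!\itPi)$ without destroying its defining rationality property. So the sign and scalar bookkeeping you flag (wedge orderings, $\tau_I$ rationality) is harmless even if it fails on the nose; the normalization statement is forced by the rationality of the structures, not by any scalar-free compatibility of $\xi_I$ with $T_\sigma$. With that correction, your proof goes through.
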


\begin{rmk}
For non-admissible $I \subseteq \Sigma_\E$, the multiplicity of $\pi_f$ in $H_{\rm cusp}^{{}^\sharp I}(\mathcal{L}_{(\underline{\kappa}(I) ,\underline{r})})[\pi_f]$ is greater than one and it is not known whether the equality $T_\sigma\circ \xi_I(\itPi_{\rm hol}) = \xi_{{}^\sigma\!I}({}^\sigma\!\itPi_{\rm hol})$ holds for any $\sigma \in {\rm Aut}(\C)$. Therefore, we do not know whether $\xi_I(\itPi_{\rm hol})$ is defined over $\Q(\itPi,I)$.
However, since $H_{\rm cusp}^{{}^\sharp I}(\mathcal{L}_{(\underline{\kappa}(I) ,\underline{r})})[\pi_f]$ is defined over $\Q(\itPi)\cdot\Q(\underline{\kappa}(I))$, it follows that $\xi_I(\itPi_{\rm hol})$ is define over some finite extension $F$ over $\Q(\itPi)\cdot\Q(\underline{\kappa}(I))$.
Hecne there exists $\Omega^I(\itPi) \in \C^\times$, unique up to $F^\times$, such that
\[
\frac{\xi_I \left(\itPi_{\rm hol}^{{\rm Aut}(\C / F)}\right)}{\Omega^I(\itPi)}  = \xi_I(\itPi_{\rm hol})\cap H_{\rm cusp}^{{}^\sharp I}(\mathcal{L}_{(\underline{\kappa}(I) ,\underline{r})})[\pi_f]^{{\rm Aut}(\C/F)}.
\]
\end{rmk}

We have assume that $\E = \F_1 \times \cdots \times \F_n$
and $D = D_1 \times \cdots \times D_n$ for some totally real number fields $\F_i$ and some totally indefinite quaternion algebra $D_i$ over $\F_i$. Then
\[
\itPi = \itPi_1 \times \cdots \times \itPi_n
\]
for some motivic irreducible cuspidal automorphic representation $\itPi_i$ of $D_i^\times(\A_{\F_i})$ for $1 \leq i \leq n$.
We identify $\Sigma_\E$ with the disjoint union of $\Sigma_{\F_i}$ for $1 \leq i \leq n$ in a natural way. Then 
\[
I = \bigsqcup_{i=1}^{n}I \cap \Sigma_{\F_i}.
\]
We have the following period relation.
\begin{lemma}\label{L:decomposition}
Let $I \subseteq \Sigma_\E$ be admissible with respect to $\underline{\kappa}$.
For $\sigma \in {\rm Aut}(\C)$, we have
\[
\sigma\left( \frac{\Omega^I(\itPi)}{\prod_{i=1}^n\Omega^{I\cap \Sigma_{\F_i}}(\itPi_i)}\right) = \frac{\Omega^{{}^\sigma\!I}({}^\sigma\!\itPi)}{\prod_{i=1}^n\Omega^{{}^\sigma\!I\cap \Sigma_{\F_i}}({}^\sigma\!\itPi_i)}.
\]
\end{lemma}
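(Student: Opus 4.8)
The plan is to use that every ingredient in the definition of the Harris periods factors through the decomposition $\E = \F_1 \times \cdots \times \F_n$, and to deduce the identity from the Galois normalisation of the periods in Proposition~\ref{P:Harris' period}. Write $G_i = {\rm R}_{\F_i/\Q}D_i^\times$, so $G = \prod_{i=1}^n G_i$ and the Shimura datum $(G,X)$ is the product of the data $(G_i,X_i)$; accordingly $K_\infty = \prod_i K_{i,\infty}$, $\mathfrak{p}^{\pm} = \bigoplus_i \mathfrak{p}_i^{\pm}$ where $\mathfrak{p}_i^{\pm}$ is spanned by the $X_{\pm,w}$ with $w \in \Sigma_{\F_i}$, and $\C_{(\underline{\kappa}(I),\underline{r})} \cong \bigotimes_i \C_{(\underline{\kappa}_i(I_i),\underline{r}_i)}$, where $I_i = I \cap \Sigma_{\F_i}$ and $\underline{\kappa}_i,\underline{r}_i$ are the restrictions of $\underline{\kappa},\underline{r}$ to $\Sigma_{\F_i}$. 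First I would record two elementary observations: (a) each $\sigma \in {\rm Aut}(\C)$ preserves the partition $\Sigma_\E = \bigsqcup_i \Sigma_{\F_i}$ (an algebra homomorphism $\E \to \R$ factoring through the projection to $\F_i$ is sent by $\sigma$ to another one factoring through $\F_i$), so that ${}^\sigma\!I \cap \Sigma_{\F_i} = {}^\sigma(I_i)$, ${}^\sigma\!\itPi = \prod_i {}^\sigma\!\itPi_i$, and the $\sigma$-linear isomorphisms of (\ref{E:rational structures}) are the tensor products over $i$ of the corresponding isomorphisms attached to the blocks $\Sigma_{\F_i}$; and (b) admissibility of $I$ with respect to $\underline{\kappa}$ forces admissibility of each $I_i$ with respect to $\underline{\kappa}_i$ (if $I_i' \subseteq \Sigma_{\F_i}$ satisfies ${}^\sharp I_i' = {}^\sharp I_i$ and $\underline{\kappa}_i(I_i') = \underline{\kappa}_i(I_i)$, then $I_i' \sqcup \bigsqcup_{j \neq i} I_j$ competes with $I$). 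In particular all periods in the statement are defined.

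The main step is a Künneth isomorphism compatible with all the relevant structures. Working directly with the complexes (\ref{E:complexes}) and using Theorem~\ref{T:Harris}-(2) together with Lemma~\ref{L:2.2}, one has $H_{\rm cusp}^{{}^\sharp I}(\mathcal{L}_{(\underline{\kappa}(I),\underline{r})})[\pi_f] = (\itPi \otimes_\C \bigwedge^{{}^\sharp I}\mathfrak{p}^+ \otimes_\C \C_{(\underline{\kappa}(I),\underline{r})})^{K_\infty}$, and similarly for each $\itPi_i$ and for the conjugates ${}^\sigma\!\itPi$, ${}^\sigma\!\itPi_i$; since $\itPi \cong \boxtimes_i \itPi_i$ as a $(\mathfrak{g},K_\infty) \times G(\A_f)$-module, $\bigwedge^q\mathfrak{p}^+ = \bigoplus_{q_1+\cdots+q_n=q}\bigotimes_i \bigwedge^{q_i}\mathfrak{p}_i^+$, and $K_\infty$-invariance distributes over tensor products of $K_{i,\infty}$-modules, one obtains a $G(\A_f)$-module isomorphism
\[
\Phi : \bigotimes_{i=1}^n H_{\rm cusp}^{{}^\sharp I_i}(\mathcal{L}_{(\underline{\kappa}_i(I_i),\underline{r}_i)})[\pi_{i,f}] \;\stackrel{\sim}{\longrightarrow}\; H_{\rm cusp}^{{}^\sharp I}(\mathcal{L}_{(\underline{\kappa}(I),\underline{r})})[\pi_f],
\]
the only contributing multidegree being $(q_i)_i = ({}^\sharp I_i)_i$, by the archimedean branching computation (\ref{E:d.s. cohomology}) applied factor by factor together with the admissibility of each $I_i$; the same construction yields $\Phi^\sigma$ with ${}^\sigma\!\itPi$, ${}^\sigma\!\itPi_i$ in place of $\itPi$, $\itPi_i$. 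I would then verify the two compatibilities that make $\Phi$ useful. First, $\Phi$ intertwines the $\sigma$-conjugation maps, $\Phi^\sigma \circ \left(\bigotimes_i T_{\sigma,i}\right) = T_\sigma \circ \Phi$, which is immediate from Theorem~\ref{T:Harris}-(1) and observation (a). Second, $\Phi$ is compatible with the realisation maps $\xi$ of (\ref{E:xi_I}): for a pure tensor $\varphi = \bigotimes_i \varphi_i$ in $\itPi_{\,\rm hol} = \bigotimes_i(\itPi_i)_{\,\rm hol}$ one has $\Phi(\bigotimes_i \xi_{I_i}(\varphi_i)) = \xi_I(\varphi)$, using $\tau_I = (\tau_{I_i})_i$ so that $\varphi^I = \bigotimes_i \varphi_i^{I_i}$, that ${\bf v}_{(\underline{\kappa}(I),\underline{r})} = \bigotimes_i {\bf v}_{(\underline{\kappa}_i(I_i),\underline{r}_i)}$, and that the fixed ordering of $\bigwedge_{w \in I}X_{+,w}$ may (and will) be chosen as the concatenation of the orderings of the $\bigwedge_{w \in I_i}X_{+,w}$.

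Granting this, the conclusion is a short computation. Fix $0 \neq \varphi = \bigotimes_i \varphi_i \in \itPi_{\,\rm hol}$ and $\sigma \in {\rm Aut}(\C)$. The normalisation of Proposition~\ref{P:Harris' period} applied to each $\itPi_i$ reads $T_{\sigma,i}(\xi_{I_i}(\varphi_i)) = (\sigma(\Omega^{I_i}(\itPi_i))/\Omega^{{}^\sigma(I_i)}({}^\sigma\!\itPi_i))\,\xi_{{}^\sigma(I_i)}({}^\sigma\!\varphi_i)$; tensoring over $i$, applying $\Phi^\sigma$, and invoking the two compatibilities of $\Phi$ yields
\[
T_\sigma(\xi_I(\varphi)) = \frac{\sigma\!\left(\prod_{i=1}^n \Omega^{I_i}(\itPi_i)\right)}{\prod_{i=1}^n \Omega^{{}^\sigma(I_i)}({}^\sigma\!\itPi_i)}\;\xi_{{}^\sigma\!I}({}^\sigma\!\varphi).
\]
On the other hand, Proposition~\ref{P:Harris' period} for $\itPi$ itself gives $T_\sigma(\xi_I(\varphi)) = (\sigma(\Omega^I(\itPi))/\Omega^{{}^\sigma\!I}({}^\sigma\!\itPi))\,\xi_{{}^\sigma\!I}({}^\sigma\!\varphi)$, and since $I$ is admissible both $\xi_I$ and $T_\sigma$ are isomorphisms, so $\xi_{{}^\sigma\!I}({}^\sigma\!\varphi) \neq 0$; equating the two scalar coefficients and using ${}^\sigma(I_i) = {}^\sigma\!I \cap \Sigma_{\F_i}$ gives exactly the asserted identity. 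The main obstacle in this plan is the second step: constructing the Künneth isomorphism $\Phi$ and checking its compatibility with both the $\sigma$-linear maps $T_\sigma$ and the maps $\xi_I$. This is not conceptually difficult, but it requires careful bookkeeping through the constructions of \S\,\ref{S:2}, relying on the fact that the $\widetilde{\E}$-rational structure on $\mathfrak{p}^{\pm}$, the $\Q(\underline{\kappa})$-rational structure on $\C_{(\underline{\kappa},\underline{r})}$, and the orderings of the wedges were all set up blockwise with respect to $\Sigma_\E = \bigsqcup_i \Sigma_{\F_i}$.
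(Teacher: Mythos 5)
Your proof is correct and follows essentially the same route as the paper: a Künneth factorisation of the cuspidal cohomology along the decomposition $\E = \prod_i \F_i$, compatibility with the conjugation operators $T_\sigma$ and the realisation maps $\xi$, and then reading off the period relation from Proposition~\ref{P:Harris' period}. The only organisational difference worth noting is that the paper carries out the Künneth decomposition on the coherent cohomology groups $H^q(\mathcal{L}^{\rm sub})$ and $H^q(\mathcal{L}^{\rm can})$ first --- where $T_\sigma$ is actually defined --- and only then passes to the cuspidal $\pi_f$-isotypic part via Lemma~\ref{L:2.2}-(2), which makes the intertwining $T_\sigma = T_\sigma^{(1)} \otimes \cdots \otimes T_\sigma^{(n)}$ slightly more transparent than doing it directly on the $(\frak{P},K_\infty)$-cohomology as you do; your observations (a) and (b) and the final scalar comparison are what the paper leaves implicit.
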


\begin{proof}
Indeed, we have
\[
{\rm Sh}(G,X) = {\rm Sh}(G_1,X_1) \times \cdots \times {\rm Sh}(G_n,X_n),
\]
where $(G_i,X_i) = ({\rm R}_{\F_i/\Q}D_i^\times,(\frak{H}^{\pm})^{\Sigma_{\F_i}})$.
Write 
\[
(\underline{\kappa},\underline{r}) =   (\underline{\kappa}_1,\underline{r}_1) \times \cdots \times (\underline{\kappa}_n,\underline{r}_n)
\]
under the identification
\[
\Z_{\geq 1}[\Sigma_\E] \times \Z[\Sigma_\E]=(\Z_{\geq 1}[\Sigma_{\F_1}] \times \Z[\Sigma_{\F_1}]) \times \cdots \times (\Z_{\geq 1}[\Sigma_{\F_n}] \times \Z[\Sigma_{\F_n}]).
\]
Then we have
\[
\mathcal{L}_{(\underline{\kappa}(I),\underline{r})} = \mathcal{L}_{(\underline{\kappa}_i(I\cap \Sigma_{\F_1}),\underline{r}_1)} \times \cdots \times \mathcal{L}_{(\underline{\kappa}_i(I\cap \Sigma_{\F_n}),\underline{r}_n)}.
\]
Then it follows from the K\"unneth formula that we have a canonical $G(\A_f)$-module isomorphism
\[
H^{{}^\sharp I}(\mathcal{L}_{(\underline{\kappa}(I),\underline{r})}^\star) \simeq \bigoplus_{q_1+\cdots+q_n={}^\sharp\!I}\bigotimes_{i=1}^{n} H^{q_i}(\mathcal{L}_{(\underline{\kappa}_i(I\cap \Sigma_{\F_i}),\underline{r}_i)}^\star)
\]
for $\star \in \{{\rm sub}, {\rm can}\}$.
Taking the $\pi_f$-isotypic parts and note that 
$H_!^{q_i}(\mathcal{L}_{(\underline{\kappa}_i(I\cap \Sigma_{\F_i}),\underline{r}_i)})[\pi_{i,f}]$ is zero unless  $q_i = {}^\sharp I\cap \Sigma_{\F_i}$ by Lemma \ref{L:2.2} and the admissibility of $I$, we thus obtain an isomorphism
\[
H_!^{{}^\sharp I}(\mathcal{L}_{(\underline{\kappa}(I),\underline{r})})[\pi_f] \simeq \bigotimes_{i=1}^{n} H_!^{{}^\sharp I\cap \Sigma_{\F_i}}(\mathcal{L}_{(\underline{\kappa}_i(I\cap \Sigma_{\F_i}),\underline{r}_i)})[\pi_{i,f}].
\]
We deduce from Lemma \ref{L:2.2}-(2) that
\[
H_{\rm cusp}^{{}^\sharp I}(\mathcal{L}_{(\underline{\kappa}(I),\underline{r})})[\pi_f] \simeq \bigotimes_{i=1}^{n} H_{\rm cusp}^{{}^\sharp I\cap \Sigma_{\F_i}}(\mathcal{L}_{(\underline{\kappa}_i(I\cap \Sigma_{\F_i}),\underline{r}_i)})[\pi_{i,f}].
\]
In the above isomorphism, we normalize the $\Q(\underline{\kappa}_i)$-rational structure on $\C_{(\underline{\kappa}_i,\underline{r}_i)}$ for $1 \leq i \leq n$ such that
\[
{\bf v}_{(\underline{\kappa},\underline{r})} = \bigotimes_{i=1}^{n} {\bf v}_{(\underline{\kappa}_i,\underline{r}_i)}
\]
under the isomorphism
\[
(\rho_{(\underline{\kappa},\underline{r})}, \C_{(\underline{\kappa},\underline{r})}) \simeq \bigotimes_{i=1}^{n} (\rho_{(\underline{\kappa}_i,\underline{r}_i)}, \C_{(\underline{\kappa}_i,\underline{r}_i)}).
\]
Then for $\sigma \in {\rm Aut}(\C)$, we have
\[
T_\sigma = T_{\sigma}^{(1)} \otimes \cdots \otimes T_{\sigma}^{(n)}.
\]
Here
\[
T_\sigma^{(i)} : H^{{}^\sharp I\cap \Sigma_{\F_i}}(\mathcal{L}_{(\underline{\kappa}_i(I\cap \Sigma_{\F_i}),\underline{r}_i)}^{\rm sub}) \longrightarrow H^{{}^\sharp I\cap \Sigma_{\F_i}}(\mathcal{L}_{({}^\sigma\!\underline{\kappa}_i({}^\sigma\!I\cap \Sigma_{\F_i}),\underline{r}_i)}^{\rm sub})
\]
is the $\sigma$-linear isomorphism in Theorem \ref{T:Harris}
-(1).
The assertion then follows at once.
\end{proof}

For $I = \Sigma_\E$, the period $\Omega^{\Sigma_\E}(\itPi)$ can be expressed in terms of the Petersson pairing of holomorphic cusp forms. Let $\<\,,\,\> : \itPi_{\rm hol} \times \itPi^\vee_{\rm hol} \rightarrow \C$ be the $G(\A_f)$-equivariant Petersson bilinear pairing defined by
\begin{align}\label{E:Petersson pairing}
\<\varphi_1,\varphi_2\> = \int_{Z_G(\A)G(\Q)\backslash G(\A)}\varphi_1^{{\Sigma_\E}}(g)\varphi_2(g)\,dg^{\rm Tam}.
\end{align}
Here $dg^{\rm Tam}$ is the Tamagawa measure on $Z_G(\A)\backslash G(\A)$.

\begin{lemma}\label{L:Petersson norm}
We have
\[
\sigma \left( \frac{\<\varphi_1,\varphi_2\>}{\Omega^{\Sigma_\E}(\itPi)}\right) = \frac{\<{}^\sigma\!\varphi_1,{}^\sigma\!\varphi_2\>}{\Omega^{\Sigma_\E}({}^\sigma\!\itPi)}
\]
for $\varphi_1 \in \itPi_{\rm hol}$, $\varphi_2 \in \itPi^\vee_{\rm hol}$, and $\sigma \in {\rm Aut}(\C)$.
\end{lemma}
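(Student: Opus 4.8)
The plan is to realize the Petersson pairing $\<\,,\,\>$, up to a nonzero rational constant depending only on $\E$, as the trace map of Lemma~\ref{L:trace} applied to a cup product of the coherent cohomology classes $\xi_{\Sigma_\E}(\varphi_1)$ and $\xi_\emptyset(\varphi_2)$ attached to $\varphi_1$ and $\varphi_2$, and then to read off the ${\rm Aut}(\C)$-equivariance from that of the trace map, the compatibility of the cup product with the operators $T_\sigma$, and the period normalization in Proposition~\ref{P:Harris' period}.

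First I would produce the cohomological avatar of the pairing. Since $\omega_\itPi\omega_{\itPi^\vee}$ is trivial, $\varphi_1^{\Sigma_\E}\varphi_2$ is a cusp form on $G(\A)$ with trivial central character, and $\varphi_1^{\Sigma_\E}\varphi_2\otimes\bigwedge_{w\in\Sigma_\E}X_{+,w}\otimes{\bf v}_{(\underline{2},\underline{0})}$ is a cocycle in $C_{{\rm rda},(\underline{2},\underline{0})}^{[\E:\Q]}$. Because $\underline{\kappa}(\Sigma_\E)_w+\kappa_w=2$ and $r_w+(-r_w)=0$ for every $w$, the automorphic line bundle $\mathcal{L}_{(\underline{\kappa}(\Sigma_\E),\underline{r})}\otimes\mathcal{L}_{(\underline{\kappa},-\underline{r})}$ is $\mathcal{L}_{(\underline{2},\underline{0})}$, and if we normalize the rational generators so that ${\bf v}_{(\underline{\kappa}(\Sigma_\E),\underline{r})}\otimes{\bf v}_{(\underline{\kappa},-\underline{r})}\mapsto{\bf v}_{(\underline{2},\underline{0})}$, then the explicit formula for the cup product on the complexes (\ref{E:complexes}), compatible with the comparison to coherent cohomology recalled in \S\,\ref{S:2}, identifies the class of this cocycle with
\[
\xi_{\Sigma_\E}(\varphi_1)\cup\xi_\emptyset(\varphi_2)\in H^{[\E:\Q]}(\mathcal{L}_{(\underline{2},\underline{0})}^{\rm sub}),
\]
where $\xi_{\Sigma_\E}(\varphi_1)\in H_{\rm cusp}^{[\E:\Q]}(\mathcal{L}_{(\underline{\kappa}(\Sigma_\E),\underline{r})})[\pi_f]\subseteq H^{[\E:\Q]}(\mathcal{L}_{(\underline{\kappa}(\Sigma_\E),\underline{r})}^{\rm sub})$ and $\xi_\emptyset(\varphi_2)\in H_{\rm cusp}^{0}(\mathcal{L}_{(\underline{\kappa},-\underline{r})})[\pi_f^\vee]$ is regarded via its image in $H^{0}(\mathcal{L}_{(\underline{\kappa},-\underline{r})}^{\rm can})$, the cup product being the standard one $H^{[\E:\Q]}(\mathcal{M}^{\rm sub})\otimes H^{0}(\mathcal{N}^{\rm can})\to H^{[\E:\Q]}((\mathcal{M}\otimes\mathcal{N})^{\rm sub})$. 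Feeding this representative into the trace map of Lemma~\ref{L:trace} and using the triviality of the central character of $\varphi_1^{\Sigma_\E}\varphi_2$, which makes the sum over $a\in\E^\times\backslash\A_\E^\times/\E_\infty^\times\mathcal{U}$ collapse, we obtain
\[
\int_{{\rm Sh}(G,X)}\big(\xi_{\Sigma_\E}(\varphi_1)\cup\xi_\emptyset(\varphi_2)\big)=c\cdot\<\varphi_1,\varphi_2\>,\qquad c=[\hat{\o}_\E^\times:\o_\E^\times\cdot\mathcal{U}]^{-1}\cdot{}^\sharp\big(\E^\times\backslash\A_\E^\times/\E_\infty^\times\mathcal{U}\big).
\]
By the well-definedness of the trace map (Lemma~\ref{L:trace}), $c\in\Q^\times_{>0}$ is independent of $\mathcal{U}$ and so depends only on $\E$; in particular $\sigma(c)=c$ and the same constant $c$ occurs for $({}^\sigma\!\varphi_1,{}^\sigma\!\varphi_2)$.

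Next I would apply $\sigma\in{\rm Aut}(\C)$. The trace map is $T_\sigma$-equivariant by Lemma~\ref{L:trace}, and the cup product on coherent cohomology commutes with the $\sigma$-linear isomorphisms $T_\sigma$ of Theorem~\ref{T:Harris}-(1), so
\[
\sigma\Big(\int_{{\rm Sh}(G,X)}\big(\xi_{\Sigma_\E}(\varphi_1)\cup\xi_\emptyset(\varphi_2)\big)\Big)=\int_{{\rm Sh}(G,X)}\big(T_\sigma\xi_{\Sigma_\E}(\varphi_1)\cup T_\sigma\xi_\emptyset(\varphi_2)\big).
\]
Since ${}^\sigma\!\Sigma_\E=\Sigma_\E$, the normalization of Proposition~\ref{P:Harris' period} with $I=\Sigma_\E$, together with the $\sigma$-linearity of $T_\sigma$, gives $T_\sigma\xi_{\Sigma_\E}(\varphi_1)=\tfrac{\sigma(\Omega^{\Sigma_\E}(\itPi))}{\Omega^{\Sigma_\E}({}^\sigma\!\itPi)}\,\xi_{\Sigma_\E}({}^\sigma\!\varphi_1)$; and the defining diagram (\ref{E:sigma iso.}) for ${}^\sigma\!\varphi_2$, applied to $\itPi^\vee$ in place of $\itPi$, gives $T_\sigma\xi_\emptyset(\varphi_2)=\xi_\emptyset({}^\sigma\!\varphi_2)$. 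Substituting these and using the identity of the previous paragraph now for ${}^\sigma\!\itPi$, we get
\[
\sigma\big(c\cdot\<\varphi_1,\varphi_2\>\big)=\frac{\sigma(\Omega^{\Sigma_\E}(\itPi))}{\Omega^{\Sigma_\E}({}^\sigma\!\itPi)}\cdot c\cdot\<{}^\sigma\!\varphi_1,{}^\sigma\!\varphi_2\>.
\]
Cancelling $c\in\Q^\times$ and rearranging then yields $\sigma\big(\<\varphi_1,\varphi_2\>/\Omega^{\Sigma_\E}(\itPi)\big)=\<{}^\sigma\!\varphi_1,{}^\sigma\!\varphi_2\>/\Omega^{\Sigma_\E}({}^\sigma\!\itPi)$, which is the assertion.

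The main obstacle is the first paragraph: verifying that $\varphi_1^{\Sigma_\E}\varphi_2\otimes\bigwedge_{w\in\Sigma_\E}X_{+,w}\otimes{\bf v}_{(\underline{2},\underline{0})}$ represents the cup product $\xi_{\Sigma_\E}(\varphi_1)\cup\xi_\emptyset(\varphi_2)$, and that this cup product is compatible both with the trace map of Lemma~\ref{L:trace} and with the operators $T_\sigma$ of Theorem~\ref{T:Harris}-(1). All of this is contained in, or is a routine consequence of, the results of Harris~\cite{Harris1990} and Milne~\cite{Milne1983} recalled in \S\,\ref{S:2} together with the standard description of cup products on relative Lie algebra cohomology. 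The remaining points — the collapse of the adelic sum from the triviality of the central character, the rationality and $\mathcal{U}$-independence of $c$, the sign-free matching of the ordered wedge $\bigwedge_{w\in\Sigma_\E}X_{+,w}$ with the one used in the trace map, and the compatibility of the $\Q(\underline{\kappa})$-rational generators under $\C_{(\underline{\kappa}(\Sigma_\E),\underline{r})}\otimes_\C\C_{(\underline{\kappa},-\underline{r})}\simeq\C_{(\underline{2},\underline{0})}$ — are straightforward bookkeeping, carried out along the lines of the proof of Lemma~\ref{L:decomposition}.
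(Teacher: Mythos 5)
Your proof is correct and follows essentially the same route as the paper's: both realize the Petersson pairing by forming the cup product $\xi_{\Sigma_\E}(\varphi_1)\wedge\xi_\emptyset(\varphi_2)$ in $H^{[\E:\Q]}(\mathcal{L}_{(\underline{2},\underline{0})}^{\rm sub})$, apply the Galois-equivariant trace map of Lemma~\ref{L:trace}, and conclude via the period normalization of Proposition~\ref{P:Harris' period} together with $\Omega^\emptyset(\itPi^\vee)=1$. The paper writes $\mathcal{L}_{(\underline{2}-\underline{\kappa},\underline{r})}$ where you write $\mathcal{L}_{(\underline{\kappa}(\Sigma_\E),\underline{r})}$, and records the constant directly as the narrow class number $[\A_\E^\times:\E^\times\E_\infty^\times\hat{\o}_\E^\times]$ rather than as your $\mathcal{U}$-dependent expression, but these agree.
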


\begin{proof}
We have the morphism for complexes
\begin{align*}
C_{{\rm rda},(\underline{2}-\underline{\kappa},\underline{r})}^{[\E:\Q]} \times C_{{\rm sia},(\underline{\kappa},-\underline{r})}^0 &\longrightarrow C_{{\rm sia},(\underline{2},\underline{0})}^{[\E:\Q]}\\
\left(\varphi_1 \otimes \bigwedge_{w \in \Sigma_\E}X_{+,w} \otimes {\bf v}_{(\underline{2}-\underline{\kappa},\underline{r})},\, \varphi_2 \otimes {\bf v}_{(\underline{\kappa},-\underline{r})}\right) & \longmapsto \varphi_1\varphi_2 \otimes \bigwedge_{w \in \Sigma_\E}X_{+,w}  \otimes {\bf v}_{(\underline{2},\underline{0})}.
\end{align*}
This induces $G(\A_f)$-module homomorphism of cohomology groups
\[
H^{[\E:\Q]}(\mathcal{L}_{(\underline{2}-\underline{\kappa},\underline{r})}^{\rm sub}) \times H^0(\mathcal{L}_{(\underline{\kappa},-\underline{r})}^{\rm can}) \longrightarrow H^{[\E:\Q]}(\mathcal{L}_{(\underline{2},\underline{0})}^{\rm sub}),\quad (c_1, c_2) \longmapsto c_1 \wedge c_2.
\]
The homomorphism satisfies the Galois equivariant property
\[
T_\sigma ( c_1 \wedge c_2 ) = T_\sigma c_1 \wedge T_\sigma c_2
\]
for all $\sigma \in {\rm Aut}(\C)$.
Composing with the trace map in Lemma \ref{L:trace}, we obtain the $G(\A_f)$-equivariant homomorphism
\[
\itPi_{\rm hol} \times \itPi^\vee_{\rm hol} \longrightarrow \C,\quad (\varphi_1,\varphi_2) \longmapsto \int_{{\rm Sh}(G,X)} \xi_{\Sigma_\E}(\varphi_1) \wedge  \xi_{\emptyset}(\varphi_2)
\]
which satisfies 
\[
\sigma \left(\int_{{\rm Sh}(G,X)} \frac{\xi_{\Sigma_\E}(\varphi_1)}{\Omega^{\Sigma_\E}(\itPi)} \wedge \frac{\xi_{\emptyset}(\varphi_2)}{\Omega^{\emptyset}(\itPi^\vee)}\right) = \int_{{\rm Sh}(G,X)} \frac{ \xi_{\Sigma_\E}({}^\sigma\!\varphi_1)}{\Omega^{\Sigma_\E}({}^\sigma\!\itPi)} \wedge \frac{\xi_{\emptyset}({}^\sigma\!\varphi_2)}{\Omega^{\emptyset}({}^\sigma\!\itPi^\vee)}
\]
for all $\sigma \in {\rm Aut}(\C)$. Note that we may take $\Omega^\emptyset(\itPi^\vee)=1$ by definition. Since the class $\xi_{\Sigma_\E}(\varphi_1) \wedge \xi_{\emptyset}(\varphi_2)$ in $H^{[\E:\Q]}(\mathcal{L}_{(\underline{2},\underline{0})}^{\rm sub})$ is represented by $\varphi_1^{\Sigma_\E}\cdot\varphi_2 \otimes \bigwedge_{w \in \Sigma_\E}X_{+,w} \otimes {\bf v}_{(\underline{2},\underline{0})}$, by Lemma \ref{L:trace} we have
\[
\int_{{\rm Sh}(G,X)}\xi_{\Sigma_\E}(\varphi_1) \wedge \xi_{\emptyset}(\varphi_2) = [\A_\E^\times : \E^\times \E_\infty^\times \hat{\o}_\E^\times] \cdot \<\varphi_1,\varphi_2\>.
\]
This completes the proof.
\end{proof}

Let 
\[
L(s,\itPi,{\rm Ad}) = \prod_v L(s,\pi_v,{\rm Ad})
\]
be the adjoint $L$-function of $\itPi$, where ${\rm Ad}$ is the adjoint representation of ${}^LG$ on $\frak{pgl}_2(\C)^{[\E:\Q]}$. Note that $L(s,\itPi,{\rm Ad})$ is holomorphic and non-zero at $s=1$. Combining with the results of Shimura and Takase, we obtain the following corollary.

\begin{corollary}\label{C:Petersson norm}
We have
\[
\sigma \left( \frac{L(1,\itPi,{\rm Ad})}{(2\pi\sqrt{-1})^{-\sum_{w \in \Sigma_\E} \kappa_w}\cdot\pi^{[\E:\Q]}\cdot\Omega^{\Sigma_\E}(\itPi)}\right) = \frac{L(1,{}^\sigma\!\itPi,{\rm Ad})}{(2\pi\sqrt{-1})^{-\sum_{w \in \Sigma_\E} \kappa_w}\cdot\pi^{[\E:\Q]}\cdot\Omega^{\Sigma_\E}({}^\sigma\!\itPi)}
\]
for all $\sigma \in {\rm Aut}(\C)$.
\end{corollary}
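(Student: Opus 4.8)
The plan is to reduce the statement to Lemma \ref{L:Petersson norm} by invoking the classical relation between the adjoint $L$-value at $s=1$ and the Petersson norm of the newform. Concretely, since $\E = \F_1 \times \cdots \times \F_n$ and $\itPi = \itPi_1 \times \cdots \times \itPi_n$, the adjoint $L$-function factors as $L(s,\itPi,{\rm Ad}) = \prod_{i=1}^n L(s,\itPi_i,{\rm Ad})$, and by Lemma \ref{L:decomposition} the period $\Omega^{\Sigma_\E}(\itPi)$ decomposes (up to $\Q(\itPi)^\times$ and in a Galois-equivariant way) as $\prod_{i=1}^n \Omega^{\Sigma_{\F_i}}(\itPi_i)$. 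Hence it suffices to prove the corollary when $\E = \F_i$ is a field, i.e. for a motivic Hilbert cusp form on a totally indefinite quaternion algebra over a totally real field. This reduction also lets me use that $\sum_{w} \kappa_w$ and $[\E:\Q]$ are additive over the factors.

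For a single totally real field $\F$, the key input is the theorem of Shimura \cite{Shimura1978} (refined by Takase) expressing $L(1,\itPi,{\rm Ad})$ in terms of the Petersson norm $\langle \varphi, \varphi^\vee \rangle$ of the normalized newform: there is a factor of the shape $(2\pi\sqrt{-1})^{-\sum_w \kappa_w} \cdot \pi^{[\F:\Q]}$ (absorbing the archimedean $\Gamma$-factors of $L(s,\pi_\infty,{\rm Ad})$ at $s=1$ and the volume normalizations) relating the two, and this relation is $\Q(\itPi)$-rational and $\mathrm{Aut}(\C)$-equivariant. I would cite this in the normalization appropriate to the Tamagawa-measure Petersson pairing (\ref{E:Petersson pairing}); the index $[\A_\E^\times : \E^\times \E_\infty^\times \hat{\o}_\E^\times]$ appearing in Lemma \ref{L:Petersson norm} is a rational number, so it does not affect the class modulo $\overline{\Q}^\times$, but for the precise Galois-equivariant statement one must check its $\sigma$-invariance (it is a positive rational, hence fixed by every $\sigma$). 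Combining Shimura's relation with Lemma \ref{L:Petersson norm} applied to a $\Q(\itPi)$-rational holomorphic newform $\varphi_1 \in \itPi_{\rm hol}$ and its dual $\varphi_2 \in \itPi^\vee_{\rm hol}$ (normalized so that $\langle {}^\sigma\!\varphi_1, {}^\sigma\!\varphi_2 \rangle$ tracks $\sigma$ of $\langle \varphi_1, \varphi_2 \rangle$ up to the rational index factor), the desired identity follows.

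The main obstacle is bookkeeping of the archimedean and measure-theoretic constants: one must confirm that the power of $2\pi\sqrt{-1}$ and of $\pi$ in Shimura--Takase's formula matches exactly $(2\pi\sqrt{-1})^{-\sum_w \kappa_w}\cdot \pi^{[\F:\Q]}$ when the Petersson pairing is taken with the Tamagawa measure rather than a hyperbolic measure, and that any discrepancy is an element of $\Q^\times$ (hence harmless for both the algebraicity and the $\mathrm{Aut}(\C)$-equivariance, since $\Q^\times$ is pointwise fixed). A secondary point is the passage from the newform on $\GL_2(\A_\F)$ to a holomorphic cusp form in $\itPi^D$ when $D$ is a division algebra: here one uses the Jacquet--Langlands correspondence together with the fact that the adjoint $L$-function and its special value at $s=1$ are preserved, and that the Petersson norms on the two sides agree up to $\overline{\Q}^\times$ (indeed up to an explicit elementary factor). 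Once these constants are pinned down, the corollary is immediate from Lemma \ref{L:Petersson norm}; I would present it as a short paragraph citing \cite{Shimura1978} and the earlier lemmas, without reproducing the archimedean computation.
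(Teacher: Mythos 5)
Your proposal follows the paper's proof essentially step for step: reduce to the case where $\E$ is a field via Lemma \ref{L:decomposition}, invoke the Shimura--Takase relation between $L(1,\itPi,\mathrm{Ad})$ and the Petersson norm on the $\GL_2$ side (the paper cites \cite[Proposition 1]{Takase1986}), handle the quaternionic case via the Jacquet--Langlands transfer and a comparison of Petersson norms, and conclude from Lemma \ref{L:Petersson norm}. The one place you are vaguer than necessary is the Petersson-norm comparison between $\itPi^D$ and its transfer $\itPi'$ to $\GL_2(\A_\E)$: what is actually needed, and what the paper takes from \cite[Theorem 12.3]{HK1991} (see also \cite[Theorem 3.8]{Shimura1981}), is the ${\rm Aut}(\C)$-equivariance of the ratio $\<\varphi_1,\varphi_2\>/\<\varphi_3,\varphi_4\>$ across the two groups, not merely that the norms agree up to $\overline{\Q}^\times$; your parenthetical ``indeed up to an explicit elementary factor'' points in the right direction, but the precise Galois-equivariant form is the statement you must cite to make the argument close.
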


\begin{proof}
By Lemma \ref{L:decomposition}, it suffices to consider the case when $\E$ is a field. Let $\itPi'$ be the Jacquet-Langlands transfer of $\itPi$ to $\GL_2(\A_\E)$. 
By a variant of the result \cite[Theorem 12.3]{HK1991} (see also \cite[Theorem 3.8]{Shimura1981}), we have
\begin{align}\label{E:2.12}
\sigma \left( \frac{\<\varphi_1,\varphi_2\>}{\<\varphi_3,\varphi_4\>}\right) =  \frac{\<{}^\sigma\!\varphi_1,{}^\sigma\!\varphi_2\>}{\<{}^\sigma\!\varphi_3,{}^\sigma\!\varphi_4\>}
\end{align}
for $\varphi_1 \in \itPi_{\rm hol}$, $\varphi_2 \in \itPi_{\rm hol}^\vee$, $\varphi_3 \in \itPi_{\rm hol}'$, $\varphi_4 \in (\itPi')_{\rm hol}^\vee$ with $\<\varphi_3,\varphi_4\> \neq 0$ and $\sigma \in {\rm Aut}(\C)$.
We remark that although the assertion in \cite[Theorem 12.3]{HK1991} is stated for $\E=\Q$, one can prove (\ref{E:2.12}) for general totally real number fields following the same argument in \cite[\S\,15.2]{HK1991}.
On the other hand, it follows from the result of Takase \cite[Proposition 1]{Takase1986} that
\[
\sigma \left( \frac{L(1,\itPi,{\rm Ad})}{(2\pi\sqrt{-1})^{-\sum_{w \in \Sigma_\E} \kappa_w}\cdot\pi^{[\E:\Q]}\cdot \<\varphi_1,\varphi_2\>}\right) = \frac{L(1,{}^\sigma\!\itPi,{\rm Ad})}{(2\pi\sqrt{-1})^{-\sum_{w \in \Sigma_\E} \kappa_w}\cdot\pi^{[\E:\Q]}\cdot\<{}^\sigma\!\varphi_1,{}^\sigma\!\varphi_2\>}
\]
for $\varphi_1 \in \itPi_{\rm hol}'$, $\varphi_2 \in (\itPi')^{\vee}_{\rm hol}$ with $\<\varphi_1,\varphi_2\> \neq 0$, and $\sigma \in {\rm Aut}(\C)$. We remark that the factor $(2\pi\sqrt{-1})^{-\sum_{w \in \Sigma_\E} \kappa_w}$ is obtained from the comparison between rational structures on $\itPi'_{\rm hol}$ given by the zeroth coherent cohomology and by the Whittaker model (cf.\,\cite[(A.4.6)]{GH1993} or Lemma \ref{L:Galois equiv. cusp form} below).
The assertion then follows immediately from Lemma \ref{L:Petersson norm}.

\end{proof}

\section{Trilinear differential operators}\label{S:3}

Let $\E$ be a totally real \'etale cubic algebra over a totally real number field $\F$.
Let $D$ be a totally indefinite quaternion algebra over $\F$. Let
\[
G' = {\rm R}_{\F/\Q}D^\times, \quad G = {\rm R}_{\E/\Q}(D\otimes_\F \E)^\times
\] 
be connected reductive linear algebraic groups over $\Q$. We identify $G'(\R)$ and $G(\R)$ with $\GL_2(\R)^{\Sigma_\F}$ and $\GL_3(\R)^{\Sigma_\E}$ via the identifications $\F_\infty = \R^{\Sigma_\F}$ and $\E_\infty=\R^{\Sigma_\E}$, respectively. 
Let $X'$ (resp.\,$X$) be the $G'(\R)$-conjugacy class (resp.\,$G(\R)$-conjugacy class) containing $h'$ (resp.\,$h$) defined as in (\ref{E:Shimura datum}). 
The natural diagonal embedding $\F \rightarrow \E$ induces the natural injective morphism $G' \rightarrow G$, which defines the inclusion of Shimura data
\[
(G',X') \subset (G,X).
\]

We begin with a well-known lemma.
\begin{lemma}\label{L:differential operator}
Let $(\ell_1,\ell_2,\ell_3) \in \Z_{\geq 1}^3$ such that $\ell_1 \geq \ell_2+\ell_3$ and $\ell_1 +\ell_2+\ell_3\equiv 0\,({\rm mod}\,2)$.
Let ${\bf v}_{\ell_2}$ and ${\bf v}_{\ell_3}$ be non-zero vectors in $D(\ell_2)^+$ and $D(\ell_3)^+$ of weights $\ell_2$ and $\ell_3$, respectively.
Then there exists a non-zero element in $U(\frak{gl}_{2,\C}^2)$ of the form
\[
\sum_{2m_1+2m_2 = \ell_1-\ell_2-\ell_3}c_{m_1,m_2}(\ell_1,\ell_2,\ell_3)(X_+^{m_1} \otimes X_+^{m_2}),
\]
unique up to scalars, such that 
\begin{align}\label{E:linear equ.2}
\begin{split}
&H\cdot\sum_{2m_1+2m_2 = \ell_1-\ell_2-\ell_3}c_{m_1,m_2}(\ell_1,\ell_2,\ell_3)(X_+^{m_1}{\bf v}_{\ell_2} \otimes X_+^{m_2}{\bf v}_{\ell_3})\\
& = \ell_1\cdot\sum_{2m_1+2m_2 = \ell_1-\ell_2-\ell_3}c_{m_1,m_2}(\ell_1,\ell_2,\ell_3)(X_+^{m_1}{\bf v}_{\ell_2} \otimes X_+^{m_2}{\bf v}_{\ell_3}),
\end{split}
\end{align}
and
\begin{align}\label{E:linear equ.}
&X_-\cdot  \sum_{2m_1+2m_2 = \ell_1-\ell_2-\ell_3}c_{m_1,m_2}(\ell_1,\ell_2,\ell_3)(X_+^{m_1}{\bf v}_{\ell_2} \otimes X_+^{m_2}{\bf v}_{\ell_3}) = 0.
\end{align}
Here the action of $\frak{gl}_{2,\C}$ on $D(\ell_2)^+\otimes D(\ell_3)^+$ is given by
\[
X\cdot ({\bf v} \otimes {\bf v}') = X\cdot {\bf v} \otimes {\bf v}' + {\bf v} \otimes X \cdot {\bf v}'.
\]
Moreover, the coefficients can be normalized so that $c_{m_1,m_2}(\ell_1,\ell_2,\ell_3) \in \Q$.
\end{lemma}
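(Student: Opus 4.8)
The plan is to reduce the lemma to an elementary $\frak{sl}_2$-computation together with a two-term recursion on the coefficients. Since $D(\ell_i)^+$ is the lowest-weight module with lowest weight $\ell_i$, a weight-$\ell_i$ vector is automatically annihilated by $X_-$, so I will fix ${\bf v}_{\ell_i}$ with $Z\cdot{\bf v}_{\ell_i}=0$, $H\cdot{\bf v}_{\ell_i}=\ell_i\,{\bf v}_{\ell_i}$, $X_-\cdot{\bf v}_{\ell_i}=0$; then $\{X_+^m{\bf v}_{\ell_i}\}_{m\geq 0}$ is a basis of $D(\ell_i)^+$ with $H\cdot X_+^m{\bf v}_{\ell_i}=(\ell_i+2m)X_+^m{\bf v}_{\ell_i}$. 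Put $k=(\ell_1-\ell_2-\ell_3)/2$; by the hypotheses $\ell_1\geq\ell_2+\ell_3$ and $\ell_1+\ell_2+\ell_3\equiv 0\,({\rm mod}\,2)$ we have $k\in\Z_{\geq 0}$, so the index set $\{(m_1,m_2)\,\vert\,m_1,m_2\geq 0,\ m_1+m_2=k\}$ is non-empty. The first point is that, for \emph{any} scalars $c_{m_1,m_2}$ supported on this set, the vector $v=\sum_{m_1+m_2=k}c_{m_1,m_2}(X_+^{m_1}{\bf v}_{\ell_2}\otimes X_+^{m_2}{\bf v}_{\ell_3})$ is automatically a diagonal $H$-eigenvector of eigenvalue $(\ell_2+2m_1)+(\ell_3+2m_2)=\ell_2+\ell_3+2k=\ell_1$; hence (\ref{E:linear equ.2}) holds for free, and the substance of the lemma is to arrange the single equation (\ref{E:linear equ.}), i.e.\ $X_-\cdot v=0$, together with uniqueness and rationality.

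Next I would carry out the $\frak{sl}_2$-bookkeeping. From the matrices $Z,H,X_\pm$ of \S\,\ref{SS:notation} one computes $[H,X_\pm]=\pm 2X_\pm$ and $[X_+,X_-]=-4H$, while $Z$ is central and acts by $0$ on each $D(\ell_i)^+$. An induction on $m$ using $X_-X_+=X_+X_-+4H$ then yields
\[
X_-\cdot X_+^m{\bf v}_\kappa \;=\; 4m(\kappa+m-1)\,X_+^{m-1}{\bf v}_\kappa \qquad (m\geq 0,\ \kappa\geq 1).
\]
Applying the diagonal action of $X_-$ to $v$ and reading off the coefficient of the basis vector $X_+^{a}{\bf v}_{\ell_2}\otimes X_+^{b}{\bf v}_{\ell_3}$ with $a+b=k-1$, the equation $X_-\cdot v=0$ becomes the linear system
\[
(a+1)(\ell_2+a)\,c_{a+1,\,b} + (b+1)(\ell_3+b)\,c_{a,\,b+1} = 0 \qquad (a+b=k-1,\ a,b\geq 0);
\]
writing $c_j:=c_{j,\,k-j}$ for $0\leq j\leq k$ this is the two-term recursion $(j+1)(\ell_2+j)\,c_{j+1}=-(k-j)(\ell_3+k-1-j)\,c_j$ for $0\leq j\leq k-1$.

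The conclusion is then immediate, and with it the location of the only delicate point. Since $\ell_2,\ell_3\geq 1$ and $0\leq j\leq k-1$, each of $j+1$, $\ell_2+j$, $k-j$, $\ell_3+k-1-j$ is a strictly positive integer, so the recursion determines $c_1,\dots,c_k$ uniquely from $c_0$, all of them are nonzero as soon as $c_0\neq 0$, and the normalization $c_0=1$ gives coefficients $c_{m_1,m_2}(\ell_1,\ell_2,\ell_3)\in\Q$ (which may be cleared to integers if desired). The vectors $X_+^{m_1}{\bf v}_{\ell_2}\otimes X_+^{m_2}{\bf v}_{\ell_3}$ are part of a basis of $D(\ell_2)^+\otimes D(\ell_3)^+$, so $v\neq 0$; and the monomials $X_+^{m_1}\otimes X_+^{m_2}$ are linearly independent in $U(\frak{gl}_{2,\C}^2)$ by the Poincar\'e--Birkhoff--Witt theorem, so the element $\sum_{m_1+m_2=k}c_{m_1,m_2}(\ell_1,\ell_2,\ell_3)(X_+^{m_1}\otimes X_+^{m_2})$ is nonzero, and its uniqueness up to scalars among elements of the prescribed shape is precisely the uniqueness of the $c_j$ up to a common scalar. (Conceptually, $v$ spans the lowest-weight line of the $D(\ell_1)^+$-summand in the Clebsch--Gordan decomposition $D(\ell_2)^+\otimes D(\ell_3)^+\cong\bigoplus_{n\geq 0}D(\ell_2+\ell_3+2n)^+$, and the operator is the corresponding Rankin--Cohen bracket.) I do not expect a genuine obstacle here; the only points requiring care are getting the commutator normalizations right and the observation --- which is exactly where $\ell_1\geq\ell_2+\ell_3$ (so $k\geq 0$), the parity hypothesis (so $k\in\Z$), and above all $\ell_2,\ell_3\geq 1$ enter --- that no denominator in the recursion vanishes, so that the highest-weight vector of the relevant summand has no forced-zero coefficient.
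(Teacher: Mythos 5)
Your proof is correct, and it is essentially the paper's argument made self-contained. The one methodological difference: the paper invokes Repka's Clebsch--Gordan decomposition $D(\ell_2)^+\otimes D(\ell_3)^+\cong\bigoplus_{j\geq 0}D(\ell_2+\ell_3+2j)^+$ as a black box for existence and uniqueness, and then notes that "a simple induction argument" gives rationality; you instead derive existence, uniqueness, \emph{and} rationality all at once from the explicit two-term recursion $(j+1)(\ell_2+j)c_{j+1}=-(k-j)(\ell_3+k-1-j)c_j$, with the decomposition relegated to a conceptual aside. Your route is more elementary and has the advantage of making explicit exactly where the hypotheses $\ell_2,\ell_3\geq 1$, $\ell_1\geq\ell_2+\ell_3$, and the parity condition are used (namely, that $k=(\ell_1-\ell_2-\ell_3)/2\in\Z_{\geq0}$ and none of the four factors in the recursion vanish, so the $c_j$ are all determined and nonzero). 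You also correctly observe that condition (\ref{E:linear equ.2}) is automatic — it imposes no constraint — which the paper leaves implicit. The key computations ($[X_+,X_-]=-4H$, $X_-\cdot X_+^m{\bf v}_\kappa=4m(\kappa+m-1)X_+^{m-1}{\bf v}_\kappa$) check out against the paper's conventions for $H$, $X_\pm$.
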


\begin{proof}
The existence and uniqueness follow directly from the decomposition (cf.\,\cite{Repka1976})
\[
D(\ell_2)^+\otimes D(\ell_3)^+ \vert_{\frak{gl}_{2,\C}} = \bigoplus_{j=0}^\infty D(\ell_2+\ell_3+2j)^+.
\]
For the rationality of the differential operator, note that
\[
H \cdot {\bf v}_{\ell_2} = \ell_2 \cdot {\bf v}_{\ell_2},\quad H \cdot {\bf v}_{\ell_3} = \ell_3 \cdot {\bf v}_{\ell_3},\quad X_+X_--X_-X_+ = -4H.
\]
Therefore, by a simple induction argument, we see that the linear equations defined by (\ref{E:linear equ.}) have coefficients in $\Q$.
In particular, the solutions can be chosen to be rational numbers.
\end{proof}
For a triplet $(\ell_1,\ell_2,\ell_3) \in \Z_{\geq 1}^3$ such that $\ell_1 +\ell_2+\ell_3\equiv 0\,({\rm mod}\,2)$ and satisfying the unbalanced condition 
\[
2\max\{\ell_1,\ell_2,\ell_3\} \geq \ell_1+\ell_2+\ell_3,
\]
we fix a choice of $c_{m_1,m_2}(\ell_1,\ell_2,\ell_3) \in \Q$ for each pair of non-negative integers $m_1,m_2$ with $2m_1+2m_2 = 2\max\{\ell_1,\ell_2,\ell_3\} - \ell_1+\ell_2+\ell_3$ such that the similar assertion in Lemma \ref{L:differential operator} holds.
Define ${\bf X}(\ell_1,\ell_2,\ell_3) \in U(\frak{gl}_{2,\C}^3)$ by
\begin{align*}
{\bf X}(\ell_1,\ell_2,\ell_3)  = \begin{cases}
\sum_{2m_1+2m_2 = \ell_1-\ell_2-\ell_3}c_{m_1,m_2}(\ell_1,\ell_2,\ell_3)(1 \otimes X_+^{m_1} \otimes X_+^{m_2}) & \mbox{ if $\ell_1 \geq \ell_2+\ell_3$},\\
\sum_{2m_1+2m_2 = \ell_2-\ell_1-\ell_3}c_{m_1,m_2}(\ell_1,\ell_2,\ell_3)(X_+^{m_1} \otimes 1 \otimes X_+^{m_2}) & \mbox{ if $\ell_2 \geq \ell_1+\ell_3$},\\
\sum_{2m_1+2m_2 = \ell_3-\ell_1-\ell_2}c_{m_1,m_2}(\ell_1,\ell_2,\ell_3)(X_+^{m_1} \otimes X_+^{m_2} \otimes 1) & \mbox{ if $\ell_3 \geq \ell_1+\ell_2$}.
\end{cases}
\end{align*}

Let $(\underline{\kappa},\underline{r}) \in \Z_{\geq 1}[\Sigma_\E]\times \Z[\Sigma_\E]$ be motivic. 
We assume $\underline{\kappa}$ satisfies the totally unbalanced condition
\begin{align}\label{E:totally unbalanced}
2\max_{w \mid v}\{\kappa_w\} - \sum_{w \mid v}\kappa_w \geq 0 \quad
\end{align}
for all $v \in \Sigma_\F$ and
\[
\sum_{w \in \Sigma_\E} r_w=0.
\]
For each $v \in \Sigma_\F$, let $v^{(1)}, v^{(2)}, v^{(3)} \in \Sigma_\E$ be the extensions of $v$ and $ \tilde{v}(\underline{\kappa})\in \Sigma_\E$ the homomorphism such that $\max_{w \mid v}\{\kappa_w\} = \kappa_{\tilde{v}(\underline{\kappa})}$. Put
\[
I_{\underline{\kappa}} = \{ \tilde{v}(\underline{\kappa}) \, \vert \, v \in \Sigma_\F\} \subset \Sigma_\E.
\]
Let $\mathcal{L}'_{(\underline{2},\underline{0})}$ and $\mathcal{L}_{(\underline{\kappa}(I_{\underline{\kappa}}),\underline{r})}$ be the automorphic line bundles on ${\rm Sh}(G',X')$ and ${\rm Sh}(G,X)$ defined by the motivic algebraic representations $\rho'_{(\underline{2},\underline{0})}$ of $K_\infty'$ and $\rho_{(\underline{\kappa}(I_{\underline{\kappa}}),\underline{r})}$ of $K_\infty$ in (\ref{E:algebraic rep.}), respectively.
Here
\[
(\underline{2},\underline{0}) = ((2,\cdots,2),(0,\cdots,0)) \in \Z[\Sigma_\F] \times \Z[\Sigma_\F].
\]

\begin{prop}\label{P:differential operator}
There exists a homogeneous $\Q(\underline{\kappa})$-rational differential operator $[\delta(\underline{\kappa})]$ from $\mathcal{L}_{(\underline{\kappa}(I_{\underline{\kappa}}),\underline{r})}$ to $\mathcal{L}'_{(\underline{2},\underline{0})}$ satisfying the following conditions:
\begin{itemize}
\item[(1)] Let 
\[
[\delta(\underline{\kappa})] : H^{[\F:\Q]}(\mathcal{L}_{(\underline{\kappa}(I_{\underline{\kappa}}),\underline{r})}^{\rm sub}) \longrightarrow H^{[\F:\Q]}((\mathcal{L}_{(\underline{2},\underline{0})}')^{\rm sub})
\]
be the induced $G'(\A_f)$-module homomorphism. Then we have
\[
T_\sigma \circ [\delta(\underline{\kappa})] = [\delta({}^\sigma\!\underline{\kappa})] \circ T_\sigma
\]
for all $\sigma \in {\rm Aut}(\C)$.
\item[(2)] If a class in $H^{[\F:\Q]}(\mathcal{L}_{(\underline{\kappa}(I_{\underline{\kappa}}),\underline{r})}^{\rm sub})$ is represented by $\varphi \otimes \bigwedge_{w \in I_{\underline{\kappa}}}X_{+,w} \otimes {\bf v}_{(\underline{\kappa}(I_{\underline{\kappa}}),\underline{r})}$, then its image under $[\delta(\underline{\kappa})]$ in $H^{[\F:\Q]}((\mathcal{L}_{(\underline{2},\underline{0})}')^{\rm sub})$ is represented by 
\[
({\bf X}(\underline{\kappa})\cdot\varphi) \vert_{G'(\A)} \otimes \bigwedge_{v \in \Sigma_\F}X_{+,v} \otimes {\bf v}_{(\underline{2},\underline{0})}.
\]
Here ${\bf X}(\underline{\kappa})\in U(\frak{gl}_{2,\C}^{\Sigma_\E})$ is defined by
\begin{align}\label{E:differential operator}
{\bf X}(\underline{\kappa}) = \bigotimes_{v \in \Sigma_\F}{\bf X}(\kappa_{v^{(1)}},\kappa_{v^{(2)}},\kappa_{v^{(3)}}).
\end{align}
\end{itemize}
\end{prop}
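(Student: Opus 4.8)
The plan is to obtain $[\delta(\underline{\kappa})]$ by specializing Harris's construction of holomorphic (``pullback'') differential operators attached to an inclusion of Shimura data (\cite[\S\,3]{Harris1985}, \cite[\S\,7]{Harris1986}) to the embedding $(G',X')\subset(G,X)$ and the automorphic line bundles $\mathcal{L}_{(\underline{\kappa}(I_{\underline{\kappa}}),\underline{r})}$, $\mathcal{L}'_{(\underline{2},\underline{0})}$, and then to identify the resulting operator, on the level of the $(\frak{P},K_\infty)$-complexes computing coherent cohomology, with the element ${\bf X}(\underline{\kappa})\in U(\frak{gl}_{2,\C}^{\Sigma_\E})$ of (\ref{E:differential operator}) followed by restriction along $G'(\A)\hookrightarrow G(\A)$. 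Here one uses that, under the Hodge decomposition $\frak{g}_\C=\frak{p}^+\oplus\frak{k}_\C\oplus\frak{p}^-$, one has $\frak{p}'^{+}\subset\frak{p}^+$ (with $\frak{p}'^{+}$ the analogue of $\frak{p}^+$ for $G'$), and that in Harris's formalism a $G'(\A_f)$-equivariant holomorphic differential operator between the two line bundles is pinned down by a $K'_\infty$-equivariant ``symbol'' whose effect on automorphic forms is implemented by an element of $U(\frak{g}_\C)$ built out of $\frak{p}^+$; Harris's results then say that the induced map on the cohomology of the subcanonical extensions is defined over the field of rationality of the symbol and intertwines the operators $T_\sigma$ accordingly.

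The symbol is produced one archimedean place at a time by Lemma \ref{L:differential operator}. For $v\in\Sigma_\F$ the embedding $G'\subset G$ restricts in the $v$-block to the diagonal $\GL_2(\R)\hookrightarrow\GL_2(\R)^3$, and the totally unbalanced hypothesis (\ref{E:totally unbalanced}) says exactly that one of $\kappa_{v^{(1)}},\kappa_{v^{(2)}},\kappa_{v^{(3)}}$, namely $\kappa_{\tilde{v}(\underline{\kappa})}$, is at least the sum of the other two --- the hypothesis $\ell_1\geq\ell_2+\ell_3$ of Lemma \ref{L:differential operator} after relabeling. This yields the unique-up-to-scalar element ${\bf X}(\kappa_{v^{(1)}},\kappa_{v^{(2)}},\kappa_{v^{(3)}})$ with rational coefficients $c_{m_1,m_2}$: equations (\ref{E:linear equ.2}) and (\ref{E:linear equ.}) are precisely the conditions needed for the associated map to descend to the relevant relative Lie algebra cohomology and to take values in the line bundle $\mathcal{L}'_{(\underline{2},\underline{0})}$ (whose subcanonical extension is the dualizing sheaf), where one uses that the identity occupies the slot indexed by $\tilde{v}(\underline{\kappa})\in I_{\underline{\kappa}}$ and that $\sum_{w\in\Sigma_\E}r_w=0$, so that restriction to $G'$ introduces no central twist. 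Taking $\bigotimes_{v\in\Sigma_\F}$ gives ${\bf X}(\underline{\kappa})$, hence $[\delta(\underline{\kappa})]$ and the explicit description in part (2).

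For part (1), I would track the $\sigma$-linear isomorphisms of (\ref{E:rational structures}). The induced $\sigma$-linear action on $U(\frak{g}_\C)$ permutes the $\widetilde{\E}$-basis by $X_{\pm,w}\mapsto X_{\pm,\sigma\circ w}$ and fixes the rational coefficients $c_{m_1,m_2}(\ell_1,\ell_2,\ell_3)\in\Q$, while $\sigma$ permutes $\Sigma_\F$ and compatibly $\Sigma_\E$, carrying the distinguished maximal-weight slot at $v$ to that at $\sigma\circ v$; hence $\sigma\cdot{\bf X}(\underline{\kappa})={\bf X}({}^\sigma\!\underline{\kappa})$, and in particular ${\bf X}(\underline{\kappa})$ is fixed by ${\rm Aut}(\C/\Q(\underline{\kappa}))$. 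Combined with Harris's compatibility of pullback differential operators with the canonical $\Q(\underline{\kappa})$-rational structures of Theorem \ref{T:Harris}-(1) on $H^{[\F:\Q]}(\mathcal{L}^{\rm sub}_{(\underline{\kappa}(I_{\underline{\kappa}}),\underline{r})})$ and $H^{[\F:\Q]}((\mathcal{L}'_{(\underline{2},\underline{0})})^{\rm sub})$, this gives $T_\sigma\circ[\delta(\underline{\kappa})]=[\delta({}^\sigma\!\underline{\kappa})]\circ T_\sigma$ and the asserted $\Q(\underline{\kappa})$-rationality.

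I expect the main obstacle to be the identification step: verifying that the concrete recipe ``apply ${\bf X}(\underline{\kappa})\in U(\frak{g}_\C)$, then restrict to $G'(\A)$'' genuinely represents Harris's geometric operator between the subcanonical extensions, with the stated effect on wedge data. Two points need care. First, one must check that the recipe preserves the complexes of rapidly decreasing forms, not merely the complexes of slowly increasing forms, i.e. that $({\bf X}(\underline{\kappa})\cdot\varphi)\vert_{G'(\A)}$ is rapidly decreasing on $G'(\Q)\backslash G'(\A)$ when $\varphi$ is rapidly decreasing on $G(\Q)\backslash G(\A)$; this follows from $X\cdot\varphi\in C_{\rm rda}^\infty(G(\A))$ for $X\in U(\frak{g}_\C)$ together with the compatible behaviour of the two toroidal boundaries under ${\rm Sh}(G',X')\hookrightarrow{\rm Sh}(G,X)$ used by Harris. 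Second, one must match the cohomological degrees $[\F:\Q]={}^\sharp I_{\underline{\kappa}}=\dim{\rm Sh}(G',X')$ and transport the wedge $\bigwedge_{w\in I_{\underline{\kappa}}}X_{+,w}$ to $\bigwedge_{v\in\Sigma_\F}X_{+,v}$, using $X_{+,v}\mapsto\sum_{w\mid v}X_{+,w}$ under $\frak{p}'^{+}\hookrightarrow\frak{p}^+$ and the $X_-$-annihilation (\ref{E:linear equ.}) at the places in $I_{\underline{\kappa}}$; this is the computational core, but it is entirely dictated by the local statement of Lemma \ref{L:differential operator}.
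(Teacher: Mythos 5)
Your proposal follows essentially the same route as the paper's proof: both appeal to Harris's construction (specifically \cite[Theorem 4.8]{Harris1985} and \cite[Lemma 7.2]{Harris1986}) applied to the symbol $\delta(\underline{\kappa})^*$ built from ${\bf X}(\underline{\kappa})$ of Lemma~\ref{L:differential operator}, verify $U(\frak{P}')$-equivariance via the $Z$-, $H$-, and $X_-$-conditions coming from (\ref{E:linear equ.2}), (\ref{E:linear equ.}) together with $\sum_{w\in\Sigma_\E}r_w=0$, and obtain part~(1) by tracking the $\sigma$-linear isomorphisms~(\ref{E:rational structures}). The ``identification step'' you flag as a potential obstacle --- matching the cocycle formula with the wedge data and preserving rapid decrease --- is precisely what Harris's cited results supply, and the paper likewise defers to them rather than spelling those computations out.
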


\begin{proof}
Recall $\frak{g}$ and $\frak{k}$ (resp.\,$\frak{g}'$ and $\frak{k}'$) are the Lie algebras of $G(\R)$ and $K_\infty = Z_G(\R)\cdot{\rm SO}(2)^{\Sigma_\E}$ (resp.\,$G'(\R)$ and $K_\infty' = Z_{G'}(\R)\cdot{\rm SO}(2)^{\Sigma_\F}$), respectively, and we have the Hodge decompositions for $\frak{g}_\C$ and $\frak{g}'_\C$ as in (\ref{E:Hodge}).
We identify $\frak{g}$ and $\frak{g}'$ with $\frak{gl}_2^{\Sigma_\E}$ and $\frak{gl}_2^{\Sigma_\F}$, respectively.
For each $w \in \Sigma_\E$ (resp.\,$v \in \Sigma_\F$), let $\frak{g}_w$ (resp.\,$\frak{g}'_v$) be the $w$-component of $\frak{g}$ (resp.\,$v$-component of $\frak{g}'$).
Let
\[
\frak{P} = \frak{p}^- \oplus \frak{k}_\C,\quad  \frak{P}' = (\frak{p}')^- \oplus \frak{k}_\C'.
\]
By \cite[Theorem 4.8]{Harris1985} and \cite[Lemma 7.2]{Harris1986}, each element
\[
\delta^* \in {\rm Hom}_{U(\frak{P}')}(\C_{(\underline{2},\underline{0})}^\vee,U(\frak{g}_\C)\otimes_{U(\frak{P})} \C^\vee_{(\underline{\kappa}(I_{\underline{\kappa}}),\underline{r})}) = {\rm Hom}_{U(\frak{P}')}(\C_{(-\underline{2},\underline{0})},U(\frak{g}_\C)\otimes_{U(\frak{P})} \C_{(-\underline{\kappa}(I_{\underline{\kappa}}),-\underline{r})})
\]
gives rise to a homogeneous differential operator $[\delta]$ from $\mathcal{L}_{(\underline{\kappa}(I_{\underline{\kappa}}),\underline{r})}$ to $\mathcal{L}'_{(\underline{2},\underline{0})}$.
Here the action of $\frak{P}$ and $\frak{P}'$ on $\C_{(-\underline{\kappa}(I_{\underline{\kappa}}),-\underline{r})}$ and $\C_{(-\underline{2},\underline{0})}$ factor through $\frak{k}_\C$ and $\frak{k}_\C'$, respectively.
We define $[\delta(\underline{\kappa})]$ be the homogeneous differential operator corresponds to the element
\[
\delta(\underline{\kappa})^* \in {\rm Hom}_{U(\frak{P}')}(\C_{(-\underline{2},\underline{0})},U(\frak{g}_\C)\otimes_{U(\frak{P})} \C_{(-\underline{\kappa}(I_{\underline{\kappa}}),-\underline{r})})
\]
defined by
\[
\delta(\underline{\kappa})^*({\bf v}_{(-\underline{2},\underline{0})}) = {\bf X}(\underline{\kappa})\cdot(1\otimes {\bf v}_{(-\underline{\kappa}(I_{\underline{\kappa}}),-\underline{r})}),
\]
where ${\bf X}(\underline{\kappa}) \in U(\frak{g}_\C)=U(\frak{gl}_{2,\C}^{\Sigma_\E})$ is in (\ref{E:differential operator}).
We shall show that $\delta(\underline{\kappa})^*$ is indeed $U(\frak{P}')$-equivariant.
For $(\ell,t) \in \Z \times \Z$ with $\ell \equiv t \,({\rm mod}\,2)$, let $\C_{(\ell,t)}$ be the complex field equipped with the action of $\R^\times \cdot {\rm SO}(2)$ given by $ak_\theta \cdot z = a^{-t}e^{-\sqrt{-1}\,\ell\theta}\cdot z$ for $a \in \R^\times$ and $k_\theta \in {\rm SO}(2)$.
Thus we have
\[
\C_{(-\underline{\kappa}(I_{\underline{\kappa}}),-\underline{r})} = \bigotimes_{w \in \Sigma_\E}\C_{(-{\kappa}(I_{\underline{\kappa}})_w,-r_w)}
\]
as algebraic characters of $K_\infty = (\R^\times \cdot {\rm SO}(2))^{\Sigma_\E}$.
In the above isomorphism, we fix ${\bf v}_w \in \C_{(-{\kappa}(I_{\underline{\kappa}})_w,-r_w)}$ for each $w \in \Sigma_\E$ such that 
\[
{\bf v}_{(-\underline{\kappa}(I_{\underline{\kappa}}),-\underline{r})} = \bigotimes_{w \in \Sigma_\E}{\bf v}_w.
\]
Note that we also have
\[
U(\frak{g}_\C)\otimes_{U(\frak{P})} \C_{(-\underline{\kappa}(I_{\underline{\kappa}}),-\underline{r})} = \bigotimes_{w \in \Sigma_\E} U(\frak{g}_{\C,w})\otimes_{U(\frak{P}_w)} \C_{(-\kappa(I_{\underline{\kappa}})_w,-r_w)}.
\]
We write $\mathbb{D}_w = U(\frak{g}_{\C,w})\otimes_{U(\frak{P}_w)} \C_{(-\kappa(I_{\underline{\kappa}})_w,-r_w)}$ for each $w \in \Sigma_\E$.
Let $v \in \Sigma_\F$. The action of $\frak{g}'_v$ on $\mathbb{D}_{v^{(1)}} \otimes \mathbb{D}_{v^{(2)}} \otimes \mathbb{D}_{v^{(3)}}$ is given by
\[
X\cdot ({\bf v}_1 \otimes {\bf v}_2 \otimes {\bf v}_3) = X\cdot{\bf v}_1 \otimes {\bf v}_2 \otimes {\bf v}_3+{\bf v}_1 \otimes X\cdot{\bf v}_2 \otimes {\bf v}_3+{\bf v}_1 \otimes {\bf v}_2 \otimes X\cdot{\bf v}_3
\]
for $X \in \frak{g}'_v$ and ${\bf v}_i \in \mathbb{D}_{v^{(i)}}$ for $i=1,2,3$.
Then we have
\begin{align}\label{E:3.5}
\begin{split}
Z\cdot \left({\bf X}(\kappa_{v^{(1)}},\kappa_{v^{(2)}},\kappa_{v^{(3)}})\cdot \bigotimes_{i=1}^3(1 \otimes {\bf v}_{v^{(i)}})\right) &=0,\\
H \cdot \left({\bf X}(\kappa_{v^{(1)}},\kappa_{v^{(2)}},\kappa_{v^{(3)}})\cdot \bigotimes_{i=1}^3(1 \otimes {\bf v}_{v^{(i)}})\right) &= 2\cdot \left({\bf X}(\kappa_{v^{(1)}},\kappa_{v^{(2)}},\kappa_{v^{(3)}})\cdot \bigotimes_{i=1}^3(1 \otimes {\bf v}_{v^{(i)}})\right),\\
X_{-}\cdot \left({\bf X}(\kappa_{v^{(1)}},\kappa_{v^{(2)}},\kappa_{v^{(3)}})\cdot \bigotimes_{i=1}^3(1 \otimes {\bf v}_{v^{(i)}})\right) &=0.
\end{split}
\end{align}
Indeed, suppose $v^{(1)} = \tilde{v}(\underline{\kappa})$, then $\kappa(I_{\underline{\kappa}})_{v^{(1)}} = 2 -\kappa_{v^{(1)}}$, $\kappa(I_{\underline{\kappa}})_{v^{(2)}} = \kappa_{v^{(2)}}$, and $\kappa(I_{\underline{\kappa}})_{v^{(3)}} = \kappa_{v^{(3)}}$. 
Thus we have
\[
\mathbb{D}_{v^{(2)}} = D(\kappa_{v^{(2)}})^+\otimes |\mbox{ }|^{r_{v^{(2)}}/2},\quad \mathbb{D}_{v^{(3)}} = D(\kappa_{v^{(3)}})^+\otimes |\mbox{ }|^{r_{v^{(3)}}/2}.
\]
Also note that 
\[
H\cdot (1 \otimes {\bf v}_{v^{(1)}}) = (2-\kappa_{v^{(1)}})\cdot (1 \otimes {\bf v}_{v^{(1)}}),\quad  X_-\cdot(1 \otimes {\bf v}_{v^{(1)}}) = 0
\]
by definition.
Thus (\ref{E:3.5}) follows from the condition $r_{v^{(1)}} + r_{v^{(2)}} + r_{v^{(3)}} =0$, (\ref{E:linear equ.2}), and (\ref{E:linear equ.}).
Since
\[
{\bf X}(\underline{\kappa})\cdot(1\otimes {\bf v}_{(-\underline{\kappa}(I_{\underline{\kappa}}),-\underline{r})}) = \bigotimes_{v \in \Sigma_\F}\left({\bf X}(\kappa_{v^{(1)}},\kappa_{v^{(2)}},\kappa_{v^{(3)}})\cdot \bigotimes_{i=1}^3(1 \otimes {\bf v}_{v^{(i)}})\right),
\]
we deduce from (\ref{E:3.5}) that $\delta(\underline{\kappa})^*$ is $U(\frak{P}')$-equivariant.
Moreover, it is clear that the diagram 
\[
\begin{tikzcd}
\C_{(-\underline{2},\underline{0})} \arrow[d] \arrow[r, "\delta(\underline{\kappa})^*"]
    &  U(\frak{g}_\C)\otimes_{U(\frak{P})} \C_{(-\underline{\kappa}(I_{\underline{\kappa}}),-\underline{r})}\arrow[d] \\
  \C_{(-\underline{2},\underline{0})} \arrow[r, "\delta({}^\sigma\!\underline{\kappa})^*"]
& U(\frak{g}_\C)\otimes_{U(\frak{P})} \C_{(-{}^\sigma\!\underline{\kappa}(I_{{}^\sigma\!\underline{\kappa}}),-\underline{r})}
\end{tikzcd}
\]
is commutative for all $\sigma \in {\rm Aut}(\C)$, where the vertical homomorphisms are induced by the $\sigma$-linear isomorphisms in (\ref{E:rational structures}).
The assertions (1) and (2) then follow from the construction of $\delta(\underline{\kappa})^*$.
This completes the proof.
\end{proof}

\section{Algebraicity of Rankin--Selberg $L$-functions for $\GL_2 \times \GL_2$}\label{S:4}

The aim of this section is to prove Theorem \ref{T:RS} on the algebraicity of critical values of Rankin--Selberg $L$-functions, from which Theorem \ref{T: main thm 2} follows immediately as we will show in \S\,\ref{SS:5.5} below.

\subsection{$q$-expansion principle}\label{SS:4.1}
For an automorphic form $\varphi$ on $\GL_2(\A_\F)$ and a non-trivial additive character $\psi$ of $\F\backslash \A_\F$, let $W_{\varphi,\psi}$ be the Whittaker function of $\varphi$ with respect to $\psi$ defined by
\[
W_{\varphi,\psi}(g) = \int_{\F \backslash \A_\F}\varphi({\bf n}(x)g)\overline{\psi(x)}\,dx^{\rm Tam}.
\]
Here $dx^{\rm Tam}$ is the Tamagawa measure on $\A_\F$.
Let $(\underline{\kappa},\underline{r}) \in \Z_{\geq 1}[\Sigma_\F] \times \Z[\Sigma_\F]$ be motivic, that is, $\underline{r}=(r,\cdots,r)$ for some $r \in \Z$ and $\kappa_v\equiv r \,({\rm mod}\,2)$ for $v \in \Sigma_\F$. Consider the automorphic line bundle $\mathcal{L}_{(\underline{\kappa},r)}$ on the Shimura variety ${\rm Sh}({\rm R}_{\F/\Q}\GL_{2/\F},(\frak{H}^\pm)^{\Sigma_\F})$ and the zeroth coherent cohomology group $H^0(\mathcal{L}_{(\underline{\kappa},\underline{r})}^{\rm can})$. 
Let $\varphi \otimes {\bf v}_{(\underline{\kappa},\underline{r})} \in H^0(\mathcal{L}_{(\underline{\kappa},\underline{r})}^{\rm can})$. For $\sigma \in {\rm Aut}(\C)$, let ${}^\sigma\!\varphi$ be the automorphic form on $\GL_2(\A_\F)$ defined so that 
\begin{align}\label{E:Galois action on forms}
T_\sigma(\varphi \otimes {\bf v}_{(\underline{\kappa},\underline{r})}) = {}^\sigma\!\varphi \otimes {\bf v}_{({}^\sigma\!\underline{\kappa},\underline{r})}.
\end{align}
Note that the notation is compatible with (\ref{E:sigma iso.}). Let ${\bf f}_\varphi : \frak{H}^{\Sigma_\F} \times \GL_2(\A_{\F,f}) \rightarrow \C$ defined by
\[
{\bf f}_\varphi(\tau,g_f) = \prod_{v \in \Sigma_\F}y_v^{-(\kappa_v+r)/2}\cdot\varphi(({\bf n}(x_v){\bf a}(y_v))_{v \in \Sigma_\F}\cdot g_f)
\]
for $\tau = (x_v+\sqrt{-1}\,y_v)_{v \in \Sigma_\F} \in \frak{H}^{\Sigma_\F}$ and $g_f \in \GL_2(\A_{\F,f})$.
Since $\varphi$ is a holomorphic, we have the Fourier expansion
\[
{\bf f}_\varphi(\tau,g_f) = \sum_{\alpha \in \F} W_\alpha(\varphi,g_f)e^{2\pi\sqrt{-1}\,\sum_{v \in \Sigma_\F}v(\alpha)\tau_v}.
\]
By the results of Harris \cite[Theorem 6.4]{Harris1986} and \cite[(1.1.13)]{BHR1994} (see also \cite[(A.4.6) and (A.4.7)]{GH1993}), we have the following theorem on the algebraicity of the Fourier coefficients. 

\begin{thm}[$q$-expansion principle]\label{T:q-expansion}
Let $\varphi \otimes {\bf v}_{(\underline{\kappa},\underline{r})} \in H^0(\mathcal{L}_{(\underline{\kappa},\underline{r})}^{\rm can})$.
For $\sigma \in {\rm Aut}(\C)$, we have
\begin{align*}
\sigma\left( (2\pi\sqrt{-1})^{-\sum_{v \in \Sigma_\F}(\kappa_v+r)/2} \cdot W_\alpha (\varphi,{\bf a}(u)^{-1}g_f)\right)= (2\pi\sqrt{-1})^{-\sum_{v \in \Sigma_\F}(\kappa_v+r)/2} \cdot W_\alpha ({}^\sigma\!\varphi,g_f)
\end{align*}
for $\alpha \in \F$ and $\sigma \in {\rm Aut}(\C)$.
Here $u \in \widehat{\Z}^\times$ is the unique element such that $\sigma(\psi_{\F}(x)) = \psi_{\F}(ux)$ for $x \in \A_{\F,f}$.
\end{thm}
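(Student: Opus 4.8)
The plan is to deduce this from the geometric $q$-expansion principle for Hilbert modular forms, as established in \cite[Theorem 6.4]{Harris1986} and \cite[(1.1.13)]{BHR1994} (see also \cite[(A.4.6), (A.4.7)]{GH1993}); the content of the argument is to match those normalizations with the adelic Fourier coefficients $W_\alpha(\varphi,g_f)$ and to locate the power of $2\pi\sqrt{-1}$ and the ${\bf a}(u)$-twist. First I would reduce to the case that $\varphi\otimes{\bf v}_{(\underline{\kappa},\underline{r})}$ lies in the $\Q(\underline{\kappa})$-rational structure $H^0(\mathcal{L}_{(\underline{\kappa},\underline{r})}^{\rm can})^{{\rm Aut}(\C/\Q(\underline{\kappa}))}$ of Theorem \ref{T:Harris}-(1): both sides of the asserted identity are $\sigma$-semilinear in $\varphi$ --- the left-hand side because $\varphi\mapsto W_\alpha(\varphi,{\bf a}(u)^{-1}g_f)$ is $\C$-linear, the right-hand side because $\varphi\mapsto{}^\sigma\!\varphi$ is $\sigma$-semilinear by (\ref{E:Galois action on forms}) together with the $\sigma$-linearity of $T_\sigma$ --- so it suffices to verify the identity on a $\Q(\underline{\kappa})$-basis of the rational structure, which is a $\C$-basis of the space of holomorphic forms under consideration.

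Next I would make the relation between $W_\alpha(\varphi,g_f)$ and the finite Whittaker function explicit. Comparing the Fourier expansion of the holomorphic form $\varphi$ with its Whittaker expansion and inserting the explicit archimedean Whittaker functions of (\ref{E:archimedean Whittaker}), one finds that $W_\alpha(\varphi,g_f)$ is supported on totally positive $\alpha\in\F$ and equals $\bigl(\prod_{v\in\Sigma_\F}v(\alpha)^{(\kappa_v+r)/2}\bigr)$ times the value at ${\bf a}(\alpha)g_f$ of the finite part of $W_{\varphi,\psi_\F}$. The geometric $q$-expansion principle of \cite[Theorem 6.4]{Harris1986} then asserts that, for $\varphi\otimes{\bf v}_{(\underline{\kappa},\underline{r})}$ in the coherent-cohomology rational structure, the normalized quantities $(2\pi\sqrt{-1})^{-\sum_{v\in\Sigma_\F}(\kappa_v+r)/2}\,W_\alpha(\varphi,g_f)$ are algebraic, and $\sigma\in{\rm Aut}(\C)$ carries the one attached to $(\varphi,g_f)$ to the one attached to $({}^\sigma\!\varphi,{\bf a}(u)g_f)$, with ${}^\sigma\!\varphi$ defined by (\ref{E:Galois action on forms}). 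The exponent $\sum_{v}(\kappa_v+r)/2$ is exactly the discrepancy between the algebraic trivialization of ${}_{\mathcal{K}}\mathcal{L}_{(\underline{\kappa},\underline{r})}^{\rm can}$ near the cusps, built from the canonical differential $dt/t$ on the Tate--Mumford degeneration together with the character $\rho_{(\underline{\kappa},\underline{r})}$ of (\ref{E:algebraic rep.}), and its analytic counterpart, under which $dt/t$ corresponds to $2\pi\sqrt{-1}\,d\tau$.

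The remaining point is why the cusp is relabelled by ${\bf a}(u)^{-1}$. The adelic parametrization of cusps by $g_f\in\GL_2(\A_{\F,f})$ is made relative to $\psi_\F$, which is not Galois-stable: $\sigma(\psi_\F(x))=\psi_\F(ux)$, so applying $\sigma$ replaces $\psi_\F$ by $\psi_\F(u\,\cdot)$, and since $W_{\varphi,\psi_\F^u}(h)=W_{\varphi,\psi_\F}({\bf a}(u)h)$ this amounts to replacing $g_f$ by ${\bf a}(u)^{-1}g_f$ on the source; this is precisely the shape in which the principle is recorded in \cite[Theorem 6.4]{Harris1986} and \cite[(1.1.13)]{BHR1994}. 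Combining the three steps yields the asserted identity for $\varphi$ in the rational structure, and the general case then follows from the first step.

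I expect the main obstacle to be bookkeeping rather than anything conceptual: one must verify that the power of $2\pi\sqrt{-1}$ produced by the chosen trivializations is exactly $\sum_{v\in\Sigma_\F}(\kappa_v+r)/2$ --- which means carefully matching the archimedean Whittaker normalization of (\ref{E:archimedean Whittaker}) against the character $\rho_{(\underline{\kappa},\underline{r})}$ of (\ref{E:algebraic rep.}) --- and that the ${\bf a}(u)^{-1}$ sits on the correct side with the correct exponent (a reassuring consistency check is compatibility with composition in ${\rm Aut}(\C)$, since the element $u$ attached to $\sigma\sigma'$ is the product of those attached to $\sigma$ and $\sigma'$). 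Both points are settled in the cited references, so here it is a matter of transcribing them into the adelic normalizations fixed in \S\,\ref{SS:notation}.
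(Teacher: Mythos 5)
The paper gives no proof of Theorem \ref{T:q-expansion}; it is stated as a direct consequence of \cite[Theorem 6.4]{Harris1986} and \cite[(1.1.13)]{BHR1994}, which is exactly the route you take, and your elaboration of the $\sigma$-semilinear reduction, the power of $2\pi\sqrt{-1}$ arising from comparing the algebraic trivialization via $dt/t$ with the analytic $2\pi\sqrt{-1}\,d\tau$, and the cusp relabeling is a faithful unpacking of that citation. One small quibble: the intermediate identity $W_{\varphi,\psi_\F^u}(h)=W_{\varphi,\psi_\F}({\bf a}(u)h)$ is not literally correct for a fixed automorphic form $\varphi$, since ${\bf a}(u)\notin\GL_2(\F)$ cannot be moved past $\varphi$ (it holds at the level of abstract Whittaker models, not for the global Whittaker integral of a fixed $\varphi$), although the resulting ${\bf a}(u)^{-1}$-shift in the Fourier coefficients is indeed the correct one and matches the statement being proved.
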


Let $\psi = \bigotimes_v \psi_v$ be a non-trivial additive character of $\F \backslash \A_\F$. Suppose $\varphi$ is cuspidal. There exists a unique Whittaker function
$W_{\varphi,\psi}^{(\infty)}$ on $\GL_2(\A_{\F,f})$ with respect to $\psi_f = \bigotimes_{v \nmid \infty}\psi_v$ such that
\[
W_{\varphi,\psi} = \prod_{v \in \Sigma_\F}W_{(\kappa_v,r),\psi_v}^+\cdot W_{\varphi,\psi}^{(\infty)}.
\]
Here the archimedean Whittaker function $W_{(\kappa_v,r),\psi_v}^+$ is defined in \S\,\ref{SS:notation}.

\begin{lemma}\label{L:Galois equiv. cusp form}
Let $\varphi \otimes {\bf v}_{(\underline{\kappa},\underline{r})} \in H^0(\mathcal{L}_{(\underline{\kappa},\underline{r})}^{\rm can})$. Suppose $\varphi$ is cuspidal. 
For $\sigma \in {\rm Aut}(\C)$, we have
\begin{align*}
\sigma\left( (2\pi\sqrt{-1})^{-\sum_{v \in \Sigma_\F}(\kappa_v+r)/2} \cdot W_{\varphi,\psi}^{(\infty)} ({\bf a}(u)^{-1}g_f)\right)= (2\pi\sqrt{-1})^{-\sum_{v \in \Sigma_\F}(\kappa_v+r)/2} \cdot W^{(\infty)}_{{}^\sigma\!\varphi,\psi} (g_f)
\end{align*}
for $g_f \in \GL_2(\A_{\F,f})$.
Here $u \in \widehat{\Z}^\times$ is the unique element such that $\sigma(\psi_{\F}(x)) = \psi_{\F}(ux)$ for $x \in \A_{\F,f}$.
\end{lemma}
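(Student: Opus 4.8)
The plan is to deduce the lemma from the $q$-expansion principle (Theorem~\ref{T:q-expansion}) by specializing to the Fourier coefficient at $\alpha=1$. I would first treat the case $\psi=\psi_\F$, where the crux is that $W^{(\infty)}_{\varphi,\psi_\F}(g_f)$ is \emph{exactly} the first Fourier coefficient $W_1(\varphi,g_f)$ occurring in that theorem; the general case $\psi=\psi_\F^\beta$ with $\beta\in\F^\times$ then follows by a twist.

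To identify $W^{(\infty)}_{\varphi,\psi_\F}$ with a Fourier coefficient I would start from the cuspidal Fourier expansion $\varphi(g)=\sum_{\alpha\in\F^\times}W_{\varphi,\psi_\F}({\bf a}(\alpha)g)$, substitute $g=({\bf n}(x_v){\bf a}(y_v))_{v\in\Sigma_\F}\cdot g_f$, and combine the identity ${\bf a}(\alpha){\bf n}(x){\bf a}(y)={\bf n}(\alpha x){\bf a}(\alpha y)$, the left $({\bf n},\psi_\F)$-equivariance of $W_{\varphi,\psi_\F}$, and the factorization $W_{\varphi,\psi_\F}=\prod_{v\in\Sigma_\F}W^+_{(\kappa_v,r),\psi_v}\cdot W^{(\infty)}_{\varphi,\psi_\F}$ recalled above. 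This gives
\[
{\bf f}_\varphi(\tau,g_f)=\prod_{v\in\Sigma_\F}y_v^{-(\kappa_v+r)/2}\sum_{\alpha\in\F^\times}\Big(\prod_{v\in\Sigma_\F}\psi_v(v(\alpha)x_v)\,W^+_{(\kappa_v,r),\psi_v}({\bf a}(v(\alpha)y_v))\Big)W^{(\infty)}_{\varphi,\psi_\F}({\bf a}(\alpha_f)g_f),
\]
where $\alpha_f\in\A_{\F,f}^\times$ is the finite part of $\alpha$. Since $\psi_\F=\psi_\Q\circ\tr_{\F/\Q}$ with $\psi_\R(t)=e^{2\pi\sqrt{-1}\,t}$, the archimedean component $\psi_{\F_v}$ is the standard character $t\mapsto e^{2\pi\sqrt{-1}\,t}$ (no different-type constant at the infinite places), so by (\ref{E:archimedean Whittaker}) the inner archimedean product equals $\prod_v\mathbb{I}_{\R_{>0}}(v(\alpha))\cdot\prod_v(v(\alpha)y_v)^{(\kappa_v+r)/2}\cdot e^{2\pi\sqrt{-1}\sum_v v(\alpha)\tau_v}$ for $y_v>0$. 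Cancelling $\prod_v y_v^{-(\kappa_v+r)/2}$ and comparing with ${\bf f}_\varphi(\tau,g_f)=\sum_{\alpha\in\F}W_\alpha(\varphi,g_f)e^{2\pi\sqrt{-1}\sum_v v(\alpha)\tau_v}$, one reads off that $W_\alpha(\varphi,g_f)$ vanishes unless $\alpha$ is totally positive, in which case it equals $\big(\prod_v v(\alpha)^{(\kappa_v+r)/2}\big)W^{(\infty)}_{\varphi,\psi_\F}({\bf a}(\alpha_f)g_f)$; in particular $W_1(\varphi,g_f)=W^{(\infty)}_{\varphi,\psi_\F}(g_f)$.

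To conclude, note that ${}^\sigma\!\varphi$ is again a holomorphic cusp form (Galois conjugation preserves cuspidality, cf.\ \S\ref{SS:2.2}), now of weight $({}^\sigma\!\underline{\kappa},\underline{r})$ with $\sum_v({}^\sigma\!\kappa_v+r)/2=\sum_v(\kappa_v+r)/2$, so the same identity gives $W_1({}^\sigma\!\varphi,g_f)=W^{(\infty)}_{{}^\sigma\!\varphi,\psi_\F}(g_f)$. Specializing Theorem~\ref{T:q-expansion} to $\alpha=1$ and substituting these two identities (with $g_f$ replaced by ${\bf a}(u)^{-1}g_f$ on the left, as in that theorem) yields the asserted equality for $\psi=\psi_\F$. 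For a general $\psi=\psi_\F^\beta$ one has $\sigma(\psi(x))=\psi(ux)$ with the same $u$, while twisting the integral defining $W_{\varphi,\psi}$ by ${\bf a}(\beta)$ gives $W^{(\infty)}_{\varphi,\psi_\F^\beta}(g_f)=\big(\prod_v v(\beta)^{(\kappa_v+r)/2}\big)W^{(\infty)}_{\varphi,\psi_\F}({\bf a}(\beta_f)g_f)$; since $\prod_v v(\beta)^{(\kappa_v+r)/2}\in\F^\times$ is sent by $\sigma$ to $\prod_v v(\beta)^{({}^\sigma\!\kappa_v+r)/2}$, the general case reduces to the case $\psi=\psi_\F$.

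I expect the only delicate point to be the bookkeeping in the middle paragraph: one must verify that at $\alpha=1$ the archimedean Whittaker contributions cancel $\prod_v y_v^{-(\kappa_v+r)/2}$ with no leftover constant — this relies on the normalization $W^+_{(\kappa_v,r),\psi_v}(1)=e^{-2\pi}$ and on $\psi_{\F_v}$ being the standard archimedean character — together with the support statement that $W_\alpha(\varphi,g_f)=0$ for $\alpha$ not totally positive. This is the Hilbert-modular analogue of the passage from \cite[(A.4.6)]{GH1993} to \cite[(A.4.7)]{GH1993} (and of \cite[(1.1.13)]{BHR1994}), which is why those references are grouped together here.
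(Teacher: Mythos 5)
Your strategy is the same as the paper's: compare the Fourier expansion of $\varphi$ (in terms of Whittaker functions) with that of ${\bf f}_\varphi$, use (\ref{E:archimedean Whittaker}) to peel off the archimedean factors, identify the Fourier coefficients $W_\alpha(\varphi,g_f)$ with values of $W^{(\infty)}_{\varphi,\psi}$, and then invoke the $q$-expansion principle (Theorem~\ref{T:q-expansion}). The core computation in your second paragraph is correct; specializing to $\alpha=1$ (rather than keeping general $\alpha\gg 0$ as the paper does) makes no difference since $g_f$ is free.

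The one genuine slip is in your reduction from general $\psi=\psi_\F^\beta$ to $\psi=\psi_\F$. The change-of-character relation you assert,
\[
W^{(\infty)}_{\varphi,\psi_\F^\beta}(g_f)=\Bigl(\prod_{v\in\Sigma_\F} v(\beta)^{(\kappa_v+r)/2}\Bigr)\,W^{(\infty)}_{\varphi,\psi_\F}({\bf a}(\beta_f)g_f),
\]
carries a spurious scalar. The correct relation is simply $W^{(\infty)}_{\varphi,\psi_\F^\beta}(g_f)=W^{(\infty)}_{\varphi,\psi_\F}({\bf a}(\beta_f)g_f)$: from $W_{\varphi,\psi_\F^\beta}=W_{\varphi,\psi_\F}({\bf a}(\beta)\,\cdot\,)$ and the explicit formula (\ref{E:archimedean Whittaker}), one has $W^{+}_{(\kappa_v,r),\psi_{\F_v}^{v(\beta)}}(g_v)=W^{+}_{(\kappa_v,r),\psi_{\F_v}}({\bf a}(v(\beta))g_v)$ \emph{exactly} (no power of $v(\beta)$ is left over), so the archimedean parts cancel without residue. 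Relatedly, the assertion that $\prod_{v} v(\beta)^{(\kappa_v+r)/2}$ lies in $\F^\times$ is false in general — the exponents $(\kappa_v+r)/2$ vary with $v$, so this is merely an algebraic number, not an element of $\F$. Neither error is fatal here: the spurious factor would still satisfy $\sigma\bigl(\prod_v v(\beta)^{(\kappa_v+r)/2}\bigr)=\prod_v v(\beta)^{({}^\sigma\!\kappa_v+r)/2}$ by permutation of embeddings, so your final equality survives. But the cleaner route (which the paper takes) is to bypass the reduction entirely and record the identity $W_\alpha(\varphi,g_f)=\prod_v v(\alpha)^{(\kappa_v+r)/2}\,W^{(\infty)}_{\varphi,\psi}({\bf a}(\alpha\beta^{-1})g_f)$ for $\alpha\gg0$ in one stroke, then conclude from Theorem~\ref{T:q-expansion} and $\sigma\bigl(\prod_v v(\alpha)^{(\kappa_v+r)/2}\bigr)=\prod_v v(\alpha)^{({}^\sigma\!\kappa_v+r)/2}$.
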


\begin{proof}
We have the Fourier expansion
\[
\varphi(g) = \sum_{\alpha \in \F^\times} W_{\varphi,\psi^\alpha}(g) = \sum_{\alpha \in \F^\times} W_{\varphi,\psi}({\bf a}(\alpha)g)
\]
for $g \in \GL_2(\A_\F)$.
Comparing the Fourier expansions of ${\bf f}_\varphi$ and $\varphi$, by (\ref{E:archimedean Whittaker}), we see that
\[
W_\alpha(\varphi,g_f) = \begin{cases}
\prod_{v \in \Sigma_\F}v(\alpha)^{(\kappa_v+r)/2}\cdot W_{\varphi,\psi}^{(\infty)}({\bf a}(\alpha\beta^{-1})g_f) & \mbox{ if $\alpha \gg 0$},\\
0 & \mbox{ otherwise}.
\end{cases}
\]
Here $\beta \in \F^\times$ is the element such that $\psi = \psi_\F^\beta$. 
Note that
\[
\sigma\left(\prod_{v \in \Sigma_\F}v(\alpha)^{(\kappa_v+r)/2}\right) = \prod_{v \in \Sigma_\F}v(\alpha)^{(\kappa_{\sigma^{-1}\circ v}+r)/2} = \prod_{v \in \Sigma_\F}v(\alpha)^{({}^\sigma\!\kappa_v+r)/2}.
\]
The assertion then follows from Theorem \ref{T:q-expansion}.
\end{proof}

\subsection{Eisenstein series}

Let $\mu_1$ and $\mu_2$ be Hecke characters of $\A_\F^\times$.
Consider the space $I(\mu_1,\mu_2,s)$ consisting of smooth and right $({\rm O}(2)^{\Sigma_\F}\times \GL_2(\hat{\frak{o}}_\F))$-finite functions $f_s : \GL_2(\A_\F) \rightarrow \C$ such that
\[
f_s({\bf n}(x){\bf a}(a){\bf d}(d)g) = \mu_1(a)\mu_2(d)|ad^{-1}|_{\A_\F}^s\cdot f_s(g)
\]
for $x \in \A_F$, $a,d \in \A_\F^\times$, and $g \in \GL_2(\A_\F)$. It is a $((\frak{gl}_{2}^{\Sigma_\F},{\rm O}(2)^{\Sigma_\F}) \times \GL_2(\A_{\F,f}))$-module in a natural way.
For a place $v$ of $\F$, we can define the space $I(\mu_{1,v},\mu_{2,v},s)$ in a similar way. When $v \nmid \infty$ and $\sigma \in {\rm Aut}(\C)$, we have the $\sigma$-linear isomorphism 
\[
I(\mu_{1,v},\mu_{2,v},s) \longrightarrow I({}^\sigma\!\mu_{1,v},{}^\sigma\!\mu_{2,v},s),\quad f_{s,v} \longmapsto {}^\sigma\!f_{s,v}
\]
defined by 
\[
{}^\sigma\!f_{s,v}({\bf n}(x){\bf a}(a){\bf d}(d)k) = {}^\sigma\!\mu_{1,v}(a){}^\sigma\!\mu_{2,v}(d)|ad^{-1}|_{\F_v}^s\cdot \sigma(f_{s,v}(k))
\]
for $x \in \F_v$, $a,d \in \F_v^\times$, and $k \in \GL_2(\frak{o}_{\F_v})$.
For $f_s \in I(\mu_1,\mu_2,s)$, we define the Eisenstein series
\[
E(g;f_s) = \sum_{\gamma \in B(\F)\backslash\GL_2(\F)}f_s(\gamma g).
\]
The above series converges absolutely for ${\rm Re}(s) \gg 0$ and admits meromorphic continuation to $s \in \C$.
Let $\mathcal{S}(\A_\F^2)= \bigotimes_v \mathcal{S}(\F_v^2)$ be the space of Bruhat--Schwartz functions on $\A_\F^2$.
For $\Phi \in \mathcal{S}(\A_\F^2)$, we define the Godement section $f_{\mu_1,\mu_2,\Phi,s} \in I(\mu_1,\mu_2,s)$ by
\[
f_{\mu_1,\mu_2,\Phi,s}(g) = \mu_1(\det(g))|\det(g)|_{\A_\F}^s\int_{\A_\F^\times} \Phi((0,t)g)\mu_1\mu_2^{-1}(t)|t|^{2s}_{\A_\F}\,d^\times t^{\rm std}.
\]
Here $d^\times t^{\rm std} = \prod_v d^\times t_v^{\rm std}$ is the standard measure on $\A_\F^\times$ defined so that $d^\times t_v^{\rm std} = \frac{dt_v}{|t_v|}$ if $v \in \Sigma_\F$ and $dt_v$ is the Lebesgue measure on $\R = \F_v$, and ${\rm vol}(\frak{o}_{\F_v}^\times,d^\times t_v^{\rm std})=1 $ if $v \nmid \infty$.
For a place $v$ of $\F$, we can define $f_{\mu_{1,v},\mu_{2,v},\Phi_v,s}$ for $\Phi_v \in \mathcal{S}(\F_v^2)$ in a similar way. When $v \nmid \infty$ and $\sigma \in {\rm Aut}(\C)$, it is easy to verify that
\begin{align}\label{E:Galois action on sections}
{}^\sigma\!f_{\mu_{1,v},\mu_{2,v},\Phi_v,s} = f_{{}^\sigma\!\mu_{1,v},{}^\sigma\!\mu_{2,v},{}^\sigma\!\Phi_v,s}.
\end{align}

Now we assume that $\mu_1$ and $\mu_2$ are algebriac Hecke characters of $\A_\F^\times$ with $|\mu_1|=|\mbox{ }|_{\A_\F}^{n_1}$ and $|\mu_2|=|\mbox{ }|_{\A_\F}^{n_2}$ for some $n_1, n_2 \in \Z$, and $\mu_1\mu_2$ has parallel signature with 
\[
{\rm sgn}(\mu_1\mu_2) = (-1)^{n_1+n_2}.
\]
For $\kappa \in \Z_{\geq 1}$, let $\Phi^{[\kappa]} \in \mathcal{S}(\R^2)$ defined by
\[
\Phi^{[\kappa]}(x,y) = 2^{-\kappa}(x+\sqrt{-1}\,y)^\kappa e^{-\pi(x^2+y^2)}.
\]
For $\Phi \in S(\A_{\F,f}^2)$ and $\kappa \in \Z_{\geq 1}$ with $\kappa \equiv n_1+n_2\,({\rm mod}\,2)$, we define the Eisenstein series $E^{[\kappa]}(\mu_1,\mu_2,\Phi)$ by
\[
E^{[\kappa]}(\mu_1,\mu_2,\Phi) = E(f_{\mu_1,\mu_2,\otimes_{v \in \Sigma_\F}\Phi^{[\kappa]} \otimes_{v \nmid \infty}\Phi,s})\vert_{s=(\kappa-n_1+n_2)/2}.
\]
We have the following result on the algebraicity of Eisenstein series. 
When $\kappa\geq 3$, the series defining $E^{[\kappa]}(\mu_1,\mu_2,\Phi)$ is absolutely convergent and the lemma below then follows from (\ref{E:Galois action on sections}) and the result \cite[Theorem 3.2.1]{Harris1984} of Harris, which is proved by geometric method. 
Here we prove the algebraicity for any $\kappa \geq 1$ by compute the Fourier coefficients directly and determine the explicit relation among them under Galois conjugation.
\begin{lemma}\label{L:Eisenstein series}
Let $\Phi \in S(\A_{\F,f}^2)$ and $\kappa \in \Z_{\geq 1}$ with $\kappa \equiv n_1+n_2\,({\rm mod}\,2)$. 
Then $E^{[\kappa]}(\mu_1,\mu_2,\Phi)$ is holomorphic of motivic weight $(\underline{\kappa},n_1+n_2) = ((\kappa,\cdots,\kappa),n_1+n_2) \in \Z_{\geq 1}[\Sigma_\F]\times \Z$.
Moreover, for $\sigma \in {\rm Aut}(\C)$, we have
\[
\frac{{}^\sigma\!E^{[\kappa]}(\mu_1,\mu_2,\Phi)}{\sigma\left(D_\F^{1/2}(2\pi\sqrt{-1})^{-[\F:\Q](\kappa+n_1+n_2)/2}\cdot G(\mu_2^{-1})\right)} = \frac{E^{[\kappa]}({}^\sigma\!\mu_1,{}^\sigma\!\mu_2,{}^\sigma\!\Phi)}{D_\F^{1/2}(2\pi\sqrt{-1})^{-[\F:\Q](\kappa+n_1+n_2)/2}\cdot G({}^\sigma\!\mu_2^{-1})}.
\]
Here ${}^\sigma\!E^{[\kappa]}(\mu_1,\mu_2,\Phi)$ is defined in (\ref{E:Galois action on forms}).
\end{lemma}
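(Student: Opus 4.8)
The plan is to compute the Fourier expansion of $E^{[\kappa]}(\mu_1,\mu_2,\Phi)$ explicitly along the Borel and read off the Galois action from the resulting Gauss sums. First I would recall the standard unfolding of the Eisenstein series: writing $f_s = f_{\mu_1,\mu_2,\otimes_v\Phi^{[\kappa]}\otimes\Phi,s}$, the Fourier coefficient of $E(g;f_s)$ at a character $\psi_\F^\alpha$ with $\alpha\in\F^\times$ is an Euler product of local Whittaker integrals $W_v(f_{s,v})$, computed via the open-cell decomposition $B(\F)\backslash\GL_2(\F) = \{1\}\sqcup N(\F)w$. At the non-archimedean places, since the Godement section comes from a Bruhat--Schwartz function, the local integral $\int_{\F_v}f_{s,v}(w\,{\bf n}(x))\overline{\psi_v(\alpha x)}\,dx$ evaluates (by standard computations, cf.\ the local theory of Tate and Jacquet--Langlands) to a product of an elementary factor (powers of $|\alpha|_v$, local $L$-factors of $\mu_1\mu_2^{-1}$ which will contribute the partial $L$-function $L^{(\infty)}(\cdots)$ that disappears into $\overline\Q^\times$ under the normalization, plus the value $\Phi_v$ itself near the relevant point) and crucially, when $\mu_{2,v}$ is ramified, an $\varepsilon$-factor $\varepsilon(0,\mu_{2,v}^{-1},\psi_{\F_v})$ — this is exactly where the Gauss sum $G(\mu_2^{-1}) = D_\F^{-1/2}\prod_{v\nmid\infty}\varepsilon(0,\mu_{2,v}^{-1},\psi_{\F_v})$ enters. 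At the archimedean places, the integral $\int_\R f_{s,v}(w\,{\bf n}(x))\overline{\psi_v(\alpha x)}\,dx$ against $\Phi^{[\kappa]}$ is a classical beta/gamma integral that, at the point $s = (\kappa-n_1+n_2)/2$, produces precisely the holomorphic archimedean Whittaker function $W^+_{(\kappa,n_1+n_2),\psi_v}$ up to a power of $2\pi\sqrt{-1}$ and a nonzero rational constant; this both establishes holomorphy and pins down that the motivic weight is $(\underline\kappa,n_1+n_2)$.

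Next I would assemble these into the global statement. Collecting the archimedean contributions gives the factor $(2\pi\sqrt{-1})^{-[\F:\Q](\kappa+n_1+n_2)/2}$ (matching the normalization in Lemma \ref{L:Galois equiv. cusp form} and the $q$-expansion principle), the $D_\F^{1/2}$ comes from comparing the Tamagawa measure on $\A_\F$ with the product of local self-dual measures, and the finite-part Whittaker function $W^{(\infty)}$ of $E^{[\kappa]}(\mu_1,\mu_2,\Phi)$ is, up to an algebraic number (a finite product of unramified $L$-values of $\mu_1\mu_2^{-1}$ and values of $\Phi_v$, all lying in the appropriate number field), equal to $G(\mu_2^{-1})$ times a product of values $\Phi(\star)$ and elementary factors in $\alpha$. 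Then, applying Theorem \ref{T:q-expansion} (or directly the explicit formula for the Fourier coefficients), the Galois action $\sigma$ sends the constant $C(\mu_1,\mu_2,\Phi) := D_\F^{1/2}(2\pi\sqrt{-1})^{-[\F:\Q](\kappa+n_1+n_2)/2}G(\mu_2^{-1})$ to $C({}^\sigma\!\mu_1,{}^\sigma\!\mu_2,{}^\sigma\!\Phi)$ up to the twisting element $u\in\widehat\Z^\times$; the key input here is the first relation in (\ref{E:Galois Gauss sum}), namely $\sigma(G(\mu_2^{-1})) = {}^\sigma\!\mu_2^{-1}(u)G({}^\sigma\!\mu_2^{-1})$, which accounts exactly for the shift ${\bf a}(u)^{-1}$ appearing in the $q$-expansion principle when translating between $\sigma$-action on the form and on its Fourier coefficients. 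Tracking the $u$-dependence, together with (\ref{E:Galois action on sections}) which gives ${}^\sigma(\otimes\Phi^{[\kappa]}\otimes\Phi) = \otimes\Phi^{[\kappa]}\otimes{}^\sigma\!\Phi$, yields the displayed identity.

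The main obstacle, I expect, is the case $\kappa \in \{1,2\}$, where the defining series does not converge absolutely and one cannot simply invoke Harris' geometric result \cite[Theorem 3.2.1]{Harris1984}; one must justify the meromorphic continuation and, more importantly, that the continued value at $s = (\kappa-n_1+n_2)/2$ is genuinely holomorphic (no pole) and lands in $H^0(\mathcal{L}^{\rm can}_{(\underline\kappa,\underline r)})$ with the $q$-expansion principle still applicable. The clean way around this is to work entirely with the Fourier--Whittaker expansion: the Fourier coefficients (for $\alpha\neq 0$) are given by absolutely convergent local integrals that extend holomorphically in $s$ to the point in question, and the constant term is handled separately — a possible pole in the constant term at $\kappa\leq 2$ is excluded by the hypothesis that $\mu_1\mu_2$ has signature $(-1)^{n_1+n_2}$ together with the parity $\kappa\equiv n_1+n_2\pmod 2$, which forces the relevant archimedean intertwining factor to be finite and nonzero. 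A secondary technical point is bookkeeping the precise power of $2\pi\sqrt{-1}$ and the precise rational/algebraic constants so that everything matches the normalization in \S\ref{SS:4.1}; these are routine but must be done carefully to get the exponent $-[\F:\Q](\kappa+n_1+n_2)/2$ exactly right.
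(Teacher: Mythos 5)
Your overall strategy matches the paper's: write out the Fourier--Whittaker expansion of $E^{[\kappa]}(\mu_1,\mu_2,\Phi)$, factor the nonzero coefficients as an Euler product of local Whittaker values, compute the archimedean factors explicitly (to identify the weight and the power of $2\pi\sqrt{-1}$), and then invoke Theorem~\ref{T:q-expansion} together with (\ref{E:Galois Gauss sum}) to get the Galois equivariance. Two of the local claims in your first paragraph, however, are wrong and would lead you astray if you carried them out.

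First, for a general $\Phi_v\in\mathcal{S}(\F_v^2)$ the local Whittaker value $W_{\psi_{\F_v}}(g_v;f_{\mu_{1,v},\mu_{2,v},\Phi_v,s})$ is simply a finite Laurent polynomial in $q_v^{-s}$ (a compactly supported integral of $\widetilde{\rho(g_v)\Phi_v}(t,-t^{-1})$ against $\mu_{1,v}\mu_{2,v}^{-1}(t)|t|^{2s-1}$); it does \emph{not} factor as an elementary factor times a local $L$-factor times $\varepsilon(0,\mu_{2,v}^{-1},\psi_{\F_v})$. In particular, no $\varepsilon$-factor ``pops out'' of the Fourier coefficient, and no partial $L$-function $L^{(\infty)}(\cdot,\mu_1\mu_2^{-1})$ appears either (which is fortunate, since that value would generally be transcendental and would sink the claimed algebraicity). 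The Gauss sum $G(\mu_2^{-1})$ enters purely as a normalizing constant: what the local computation actually gives is (as in (\ref{E:4.12})) that applying $\sigma$ and translating by ${\bf a}(u_p)^{-1}$ --- the shift required by the $q$-expansion principle --- multiplies each local Whittaker value by ${}^\sigma\!\mu_{2,v}(u_p)^{-1}$, via a simple change of variables in the partial Fourier transform. The product over $v$ of these factors is ${}^\sigma\!\mu_2^{-1}(u)$, and the normalization $G(\mu_2^{-1})$ is chosen so that $\sigma(G(\mu_2^{-1}))={}^\sigma\!\mu_2^{-1}(u)G({}^\sigma\!\mu_2^{-1})$ cancels it. You do correctly state this cancellation mechanism in your second paragraph, so the confusion is over \emph{why} the Gauss sum appears, not over how to use it; but if you took the $\varepsilon$-factor claim at face value you would double-count.

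Second, your discussion of the constant term at small weight is off in a way that would fail for $\kappa\geq 2$. You claim the archimedean intertwining factor is ``finite and nonzero''; but if the intertwining term $Mf_s$ at $s=(\kappa-n_1+n_2)/2$ were a nonzero multiple of $f_{\mu_2,\mu_1,\widehat\Phi,1-s}$, the resulting term in the constant coefficient would scale like $y^{1-\kappa}$ after the holomorphic normalization, so $E^{[\kappa]}$ would not be holomorphic for $\kappa\geq 2$. What the paper actually proves (and what you need) is that this intertwining contribution \emph{vanishes} at the relevant point, by a trivial zero forced by the parity condition $\kappa\equiv n_1+n_2\pmod 2$ through the functional equation. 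So ``finite and nonzero'' has the wrong sign: you need vanishing, not mere finiteness, to conclude holomorphy for $\kappa\geq 2$. The cases $\kappa\geq 3$ can also be handled by Harris's geometric result, as you note; the delicate cases are $\kappa\in\{1,2\}$, and there the explicit vanishing of $Mf_s$ is what closes the gap.
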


\begin{proof}
The first assertion is clear. Indeed, for each $v \in \Sigma_\F$ we have
\[
D(\kappa)\otimes |\mbox{ }|^{(n_1+n_2)/2} \subset I(\mu_{1,v},\mu_{2,v},s)\vert_{s=(\kappa-n_1+n_2)/2}
\]
and 
\[
f_{\mu_{1,v},\mu_{2,v},\Phi^{[\kappa]},s}(gk_\theta) = e^{\sqrt{-1}\,\kappa\theta}f_{\mu_{1,v},\mu_{2,v},\Phi^{[\kappa]},s}(g)
\]
for $k_\theta \in {\rm SO}(2)$ and $g \in \GL_2(\F_v) = \GL_2(\R)$. 
To prove the second assertion, we may assume that $\Phi = \bigotimes_{v \nmid \infty}\Phi_v$ is a pure tensor.
We write $\Phi_v = \Phi^{[\kappa]}$ for $v \in \Sigma_\F$.
For any $f_s \in I(\mu_1,\mu_2,s)$, we have the Fourier expansion
\begin{align*}
E(g;f_s) & = \sum_{\alpha \in \F}W_{E(f_s),\psi_\F^\alpha}(g)\\
& = f_s(g) + Mf_s(g) +\sum_{\alpha \in \F^\times}W_{E(f_s),\psi_\F}({\bf a}(a)g),
\end{align*}
where $Mf_s \in I(\mu_2,\mu_1,1-s)$ is defined by the intertwining integral
\[
Mf_s(g) = \int_{\A_\F}f_s\left(\bp 0 & -1 \\ 1 & 0\ep{\bf n}(x)g\right)\,dx^{\rm Tam}.
\]
For $f_s = f_{\mu_1,\mu_2,\otimes_{v }\Phi_v,s}$, we have (cf.\,\cite[(5.1.8)]{Ikeda1989})
\begin{align*}
&Mf_{\mu_1,\mu_2,\otimes_{v }\Phi_v,s}(g)\\
&= L(2s-1,\mu_1\mu_2^{-1})\prod_{v}\varepsilon(2s-1,\mu_{1,v}\mu_{2,v}^{-1},\psi_{\F_v})^{-1}L(2-2s,\mu_{1,v}^{-1}\mu_{2,v})^{-1}f_{\mu_{2,v},\mu_{1,v},\widehat{\Phi}_{v},1-s}(g_v).
\end{align*}
Here $\widehat{\Phi}_v$ is the symplectic Fourier transform of $\Phi_v$ with respect to $\psi_\F$ defined by
\[
\widehat{\Phi}_v(x,y) = \int_{\F_v^2}\Phi_v(u,w)\psi_{\F_v}(uy-wx)\,dudw
\]
and the Haar measures $du$ and $dw$ are the self-dual Haar measures on $\F_v$ with respect to $\psi_{\F_v}$.
Note that the local factors appearing in the above infinite product are holomorphic in $s$ and equal to $1$ for almost all $v$. Since $\kappa \equiv n_1+n_2\,({\rm mod}\,2)$, we have $L(\kappa-n_1+n_2-1,\mu_1\mu_2^{-1})=0$ by the functional equation for $L(s,\mu_1\mu_2^{-1})$. In particular, we have $Mf_{\mu_1,\mu_2,\otimes_{v }\Phi_v,s} \vert_{s = (\kappa-n_1+n_2)/2}=0$. 
For the non-zero Fourier coefficient, we have the following factorization into local Whittaker functions (cf.\,\cite[(5.1.6)]{Ikeda1989}):
\begin{align*}
W_{E(f_s),\psi_\F}(g) &= D_\F^{-1/2}\mu_1(\det(g))|\det(g)|^s_{\A_\F}\int_{\A_\F^\times} \widetilde{\rho(g)\Phi}(t,-t^{-1})\mu_1\mu_2^{-1}(t)|t|_{\A_\F}^{2s-1}\,d^\times t^{\rm std}\\
&= D_\F^{-1/2}\prod_v \mu_{1,v}(\det(g_v))|\det(g_v)|^s_{\F_v}\int_{\F_v^\times} \widetilde{\rho(g_v)\Phi_v}(t_v,-t_v^{-1})\mu_{1,v}\mu_{2,v}^{-1}(t_v)|t_v|_{\F_v}^{2s-1}\,d^\times t_v^{\rm std}\\
&= D_\F^{-1/2} \prod_v W_{\psi_{\F_v}}(g_v;f_{\mu_{1,v},\mu_{2,v},\Phi_v,s}).
\end{align*}
Here $\widetilde{\rho(g_v)\Phi_v}$ is the partial Fourier transform of $\rho(g_v)\Phi_v$ with respect to $\psi_{\F_v}$ defined by
\[
\widetilde{\rho(g_v)\Phi_v}(x,y) = \int_{\F_v}\Phi_v((x,w)g)\psi_{\F_v}(wy)\,dw^{\rm std}
\]
and $dw^{\rm std}$ is the standard measure on $\F_v$ defined so that $dw^{\rm std}$ is the Lebesgue measure if $v \in \Sigma_\F$ and ${\rm vol}(\frak{o}_{\F_v},dw^{\rm std})=1$ if $v \nmid \infty$.
Note that the factor $D_\F^{-1/2}$ is the ratio between the standard and Tamagawa measures on $\A_\F$.
For $v \in \Sigma_\F$, it is easy to show that (cf.\,\cite[Lemma 4.5]{CH2020})
\[
f_{\mu_{1,v},\mu_{2,v},\Phi_v,s}(1) = 2^{-\kappa}(\sqrt{-1})^\kappa\pi^{-(s+(\kappa+n_1-n_2)/2)} \Gamma(s+\tfrac{\kappa+n_1-n_2}{2})
\]
and
\[
W_{\psi_{\F_v}}(f_{\mu_{1,v},\mu_{2,v},\Phi_v,s})\vert_{s=(\kappa-n_1+n_2)/2} = W_{(\kappa,n_1+n_2),\psi_{\F_v}}^+.
\]
We conclude that for $g_f = (g_v)_{v \nmid \infty} \in \GL_2(\A_{\F,f})$ and $\alpha \in \F$, 
\begin{align}\label{E:4.11}
\begin{split}
&W_\alpha(E^{[\kappa]}(\mu_1,\mu_2,\Phi),g_f)\\
& = \begin{cases}
(-2\pi\sqrt{-1})^{-[\F:\Q]\kappa}\Gamma(\kappa)^{[\F:\Q]}\prod_{v \nmid \infty}f_{\mu_{1,v},\mu_{2,v},\Phi_v,s}(g_v) \vert_{s=(\kappa-n_1+n_2)/2} & \mbox{ if $\alpha=0$},\\
D_\F^{-1/2}N_{\F/\Q}(\alpha)^{(\kappa+n_1+n_2)/2} \prod_{v \nmid \infty}W_{\psi_{\F_v}}(g_v;f_{\mu_{1,v},\mu_{2,v},\Phi_v,s})\vert_{s=(\kappa-n_1+n_2)/2} & \mbox{ if $\alpha \gg 0$},\\
0 & \mbox{ otherwise}.
\end{cases}
\end{split}
\end{align}
Let $\sigma \in {\rm Aut}$ and $u = (u_p)_p \in \widehat{\Z}^\times$ the unique element such that $\sigma(\psi_\F(x)) = \psi_\F(ux)$ for $x \in \A_{\F,f}$.
For $v \nmid \infty$ and $v \mid p$ for some rational prime $p$, we have
\begin{align}\label{E:4.12}
{}^\sigma W_{\psi_{\F_v}}({\bf a}(u_p)^{-1}g_v;f_{\mu_{1,v},\mu_{2,v},\Phi_v,s}) = {}^\sigma\!\mu_{2,v}(u_p)^{-1}W_{\psi_{\F_v}}(g_v;f_{{}^\sigma\!\mu_{1,v},{}^\sigma\!\mu_{2,v},{}^\sigma\!\Phi_v,s})
\end{align}
as rational functions in $q_v^{-s}$, where $q_v$ is the cardinality of the residue field of $\F_v$.
Indeed, $\widetilde{\rho(g_v)\Phi_v}(t_v,-t_v^{-1})$ vanishes when $|t_v|_{\F_v}$ is either sufficiently small or sufficiently large, thus $W_{\psi_{\F_v}}(g_v;f_{\mu_{1,v},\mu_{2,v},\Phi_v,s})$ is a finite sum of polynomials in $\C[q_v^s,q_v^{-s}]$. In particular, we have
\begin{align*}
&{}^\sigma W_{\psi_{\F_v}}({\bf a}(u_p)^{-1}g_v;f_{\mu_{1,v},\mu_{2,v},\Phi_v,s})\\
&= {}^\sigma\!\mu_{1,v}(u_p^{-1}\det(g_v))|\det(g_v)|_{\F_v}^s \int_{\F_v^\times}{}^\sigma\!\left(\widetilde{\rho({\bf a}(u_p)^{-1}g_v)\Phi_v} \right)(t_v,-t_v^{-1}){}^\sigma\!\mu_{1,v}{}^\sigma\!\mu_{2,v}^{-1}(t_v)|t_v|_{\F_v}^{2s-1}\,d^\times t_v^{\rm std}\\
&= {}^\sigma\!\mu_{2,v}(u_p)^{-1}{}^\sigma\!\mu_{1,v}(\det(g_v))|\det(g_v)|_{\F_v}^s \int_{\F_v^\times}{}^\sigma\!\left(\widetilde{\rho({\bf a}(u_p)^{-1}g_v)\Phi_v} \right)(u_pt_v,-u_p^{-1}t_v^{-1}){}^\sigma\!\mu_{1,v}{}^\sigma\!\mu_{2,v}^{-1}(t_v)|t_v|_{\F_v}^{2s-1}\,d^\times t_v^{\rm std}.
\end{align*}
For the partial Fourier transform, we have
\begin{align*}
{}^\sigma\!\left(\widetilde{\rho({\bf a}(u_p)^{-1}g_v)\Phi_v} \right) (x,y) &= \int_{\F_v} {}^\sigma\!\Phi_v((x,w){\bf a}(u_p)^{-1}g_v){}^\sigma\!\psi_{\F_v}(wy)\,dw^{\rm std}\\
&= \int_{\F_v}{}^\sigma\!\Phi_v((u_p^{-1}x,w)g_v)\psi_{\F_v}(wu_py)\,dw^{\rm std}\\
&= \widetilde{\rho(g_v){}^\sigma\!\Phi_v}(u_p^{-1}x,u_py).
\end{align*}
Thus (\ref{E:4.12}) holds.
Also note that when $\mu_{1,v}, \mu_{2,v}$ are unramified, $\F_v/\Q_p$ is unramified, $\Phi_v = \mathbb{I}_{\frak{o}_{\F_v}^2}$, and $g_v \in \GL_2(\frak{o}_{\F_v})$, we have
\[
W_{\psi_{\F_v}}(g_v;f_{\mu_{1,v},\mu_{2,v},\Phi_v,s})=1.
\]
Therefore, by (\ref{E:Galois Gauss sum}), (\ref{E:4.11}), and (\ref{E:4.12}) we have
\[
\sigma\left( \frac{W_\alpha(E^{[\kappa]}(\mu_1,\mu_2,\Phi),g_f)}{D_\F^{1/2}G(\mu_2^{-1})}\right) = \frac{W_\alpha(E^{[\kappa]}({}^\sigma\!\mu_1,{}^\sigma\!\mu_2,{}^\sigma\!\Phi),g_f)}{D_\F^{1/2}G({}^\sigma\!\mu_2^{-1})}
\]
for $\alpha \in \F^\times$. This together with Theorem \ref{T:q-expansion} imply that the holomorphic automorphic forms
\[
\frac{{}^\sigma\!E^{[\kappa]}(\mu_1,\mu_2,\Phi)}{\sigma\left(D_\F^{1/2}(2\pi\sqrt{-1})^{-[\F:\Q](\kappa+n_1+n_2)/2}\cdot G(\mu_2^{-1})\right)}
\]
and
\[
\frac{E^{[\kappa]}({}^\sigma\!\mu_1,{}^\sigma\!\mu_2,{}^\sigma\!\Phi)}{D_\F^{1/2}(2\pi\sqrt{-1})^{-[\F:\Q](\kappa+n_1+n_2)/2}\cdot G({}^\sigma\!\mu_2^{-1})}
\]
have the same Fourier coefficients for $\alpha \neq 0$ and same motivic weight $(\underline{\kappa},n_1+n_2)$. Thus they must be equal. This completes the proof.
\end{proof}


\subsection{$L$-functions for $\GL_2 \times \GL_2$}\label{S:4.3}

Let $\itPi = \bigotimes_v \pi_v$ and $\itPi' = \bigotimes_v \pi_v'$ be motivic irreducible cuspidal automorphic representations of $\GL_2(\A_\F)$ with motivic weights $(\underline{\ell},r) \in \Z_{\geq 1}[\Sigma_\F] \times \Z$ and $(\underline{\ell}',r') \in \Z_{\geq 1}[\Sigma_\F] \times \Z$, respectively, and $\chi$ an algebraic Hecke character of $\A_\F^\times$ with $|\chi| = |\mbox{ }|_{\A_\F}^{r_0}$ for some $r_0 \in \Z$. Let 
\[
L(s,\itPi \times \itPi' \times \chi) = \prod_v L(s,\pi_v \times \pi_v' \times \chi_v)
\]
be the twisted Rankin--Selberg $L$-function of $\itPi \times \itPi' \times \chi$ defined by the tensor representation
\[
{}^L(\GL_{2/\F} \times \GL_{2/\F} \times \GL_{1/\F}) \longrightarrow \GL(\C^2 \otimes \C^2 \otimes \C). 
\]
We denote by $L^{(\infty)}(s,\itPi \times \itPi' \times \chi)$ the $L$-function obtained by excluding the archimedean $L$-factors.
A standard unfolding argument (cf.\,\cite[\S\,19]{JLbook2}) shows that
\begin{align}\label{E:integral rep.}
\int_{\A_\F^\times \GL_2(\F) \backslash \GL_2(\A_\F)} \varphi(g)\varphi'(g)E(g;f_s)\,dg^{\rm Tam} = D_\F^{-1}\zeta_\F(2)^{-1}\cdot L(s,\itPi \times \itPi' \times \chi)\cdot \prod_v Z^*(W_v,W_v',f_{s,v})
\end{align}
as meromorphic functions in $s \in \C$, for $\varphi \in \itPi$, $\varphi' \in \itPi'$, and $f_s \in I(\chi,\chi^{-1}\omega_{\itPi}^{-1}\omega_{\itPi'}^{-1},s)$ such that 
\[
W_{\varphi,\psi_\F} = \prod_v W_v,\quad W_{\varphi',\psi_\F} = \prod_v W_v',\quad f_s = \prod_v f_{s,v}.
\]
Here 
\[
Z^*(W_v,W_v',f_{s,v}) = \frac{Z(W_v,W_v',f_{s,v})}{L(s,\pi_v \times \pi_v' \times \chi_v)}
\]
and $Z(W_v,W_v',f_{s,v})$ is the local zeta integral defined by
\[
Z(W_v,W_v',f_{s,v}) = \int_{\F_v^\times N(\F_v) \backslash \GL_2(\F_v)} W_v(g_v)W_v'({\bf a}(-1)g_v)f_{s,v}(g_v)\,dg_v^{\rm std}
\]
and $dg_v^{\rm std}$ is the standard measure defined so that $dg_v^{\rm std}$ is the quotient measure of the measure on $\PGL_2(\F_v)$ defined in \S\,\ref{SS:Ichino} by the Lebesgue measure on $\F_v$ if $v \in \Sigma_\F$, and ${\rm vol}(\o_{\F_v}^\times N(\o_{\F_v})\backslash \GL_2(\o_{\F_v}))=1$ if $v \nmid \infty$.
Note that the factor $D_\F^{-1/2}\zeta_\F(2)^{-1}$ is the ratio between the Tamagawa and standard measures.
We have the following lemmas on basic properties of the non-archimedean local zeta integrals and explicit calculation of certain archimedean local zeta integrals.
\begin{lemma}\label{L:Galois equiv. local zeta}
Assume $v$ is a finite place of $\F$ and $v \mid p$ for some rational prime $p$. Let $W_v$ and $W_v'$ be Whittaker functions of $\pi_v$ and $\pi_v'$ with respect to $\psi_{\F_v}$, respectively, and $f_{s,v} \in I(\chi_v,\chi_v^{-1}\omega_{\itPi,v}^{-1}\omega_{\itPi',v}^{-1},s)$.
\begin{itemize}
\item[(1)] The integral $Z(W_v,W_v',f_{s,v})$ is absolutely convergent for ${\rm Re}(s)$ sufficiently large and admits meromorphic continuation to $s \in \C$. Moreover, if $f_{s,v} = f_{\chi_v,\chi_v^{-1}\omega_{\itPi,v}^{-1}\omega_{\itPi',v}^{-1},\Phi_v,s}$ for some $\Phi_v \in \mathcal{S}(\F_v^2)$, then $Z^*(W_v,W_v',f_{\chi_v,\chi_v^{-1}\omega_{\itPi,v}^{-1}\omega_{\itPi',v}^{-1},\Phi_v,s})$ is a polynomial in $\C[q_v^{s},q_v^{-s}]$. Here $q_v$ is the cardinality of the residue field of $\F_v$.
\item[(2)] Suppose $\pi_v, \pi'_v, \chi_v, \psi_{\F_v}$ are unramified. Let $W_v^\circ$ and $(W_v')^\circ$ be the right $\GL_2(\o_{\F_v})$-invariant Whittaker functions of $\pi_v$ and $\pi_v'$ with respect to $\psi_{\F_v}$, respectively, normalized so that $W_v^\circ(1) = (W_v')^\circ(1) = 1$ and  
$\Phi_v^\circ = \mathbb{I}_{\o_{\F_v}^2}$.
Then we have
\[
Z^*(W_v^\circ, (W_v')^\circ, f_{\chi_v,\chi_v^{-1}\omega_{\itPi,v}^{-1}\omega_{\itPi',v}^{-1},\Phi_v^\circ,s}) = 1.
\]
\item[(3)] For $\sigma \in {\rm Aut}(\C)$, we have
\[
{}^\sigma\! Z^*(W_v,W_v',f_{\chi_v,\chi_v^{-1}\omega_{\itPi,v}^{-1}\omega_{\itPi',v}^{-1},\Phi_v,s}) = {}^\sigma\!\chi_v(u_p)^{-1}Z^*(t_\sigma W_v,t_\sigma W_v',f_{{}^\sigma\!\chi_v,{}^\sigma\!\chi_v^{-1}{}^\sigma\!\omega_{\itPi,v}^{-1}{}^\sigma\!\omega_{\itPi',v}^{-1},{}^\sigma\!\Phi_v,s})
\]
as polynomials in $\C[q_v^{s},q_v^{-s}]$.
Here $u_p \in \Z_p^\times$ is the unique element such that ${}^\sigma\!\psi_{\F_v} = \psi_{\F_v}^u$ and $t_\sigma W_v$ is the Whittaker function of 
${}^\sigma\!\pi_v$ with respect to $\psi_{\F_v}$ defined by
\[
t_\sigma W_v(g) = {}^\sigma W_v({\bf a}(u_p)^{-1}g).
\]
\end{itemize}
\end{lemma}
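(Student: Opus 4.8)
The plan is to treat the three parts in sequence, with (1) and (2) being the standard non-archimedean Rankin--Selberg theory for $\GL_2 \times \GL_2$ and (3) the Galois-equivariance statement, which is the substantive point. For (1), I would use the Iwasawa decomposition to write $Z(W_v,W_v',f_{s,v})$ as an integral over $t \in \F_v^\times$ and a finite set of $k \in \GL_2(\o_{\F_v})$ of $W_v({\bf a}(t)k)\,W_v'({\bf a}(-t)k)\,f_{s,v}({\bf a}(t)k)$ against $d^\times t$; since the Whittaker functions of the (generic) local components $\pi_v$, $\pi'_v$ are bounded and vanish for $|t|_{\F_v}$ large while $f_{s,v}({\bf a}(t)k) = \chi_v(t)|t|_{\F_v}^s\, f_{s,v}(k)$, absolute convergence for $\Re(s)$ sufficiently large follows (for a Godement section $f_{\Phi_v,s}$ one also needs $\Re(s)$ large for the defining integral of $\Phi_v$, but this is harmless). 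That $Z(W_v,W_v',f_{s,v})$ is a rational function of $q_v^{-s}$, and that $Z^*$ — its ratio with the local $L$-factor — is a polynomial in $\C[q_v^s,q_v^{-s}]$ for a Godement section, is the local theory of Rankin--Selberg integrals (cf.\ \cite{JLbook2}): one unfolds $f_{\Phi_v,s}$ into the zeta integral, reducing $Z(W_v,W_v',f_{\Phi_v,s})$ to a single integral over $N(\F_v)\backslash\GL_2(\F_v)$ whose Iwasawa expansion is a power series in $q_v^{-s}$ with coefficients given by finite $\C$-linear combinations of products of values of $W_v$, $W_v'$ and the partial Fourier transform $\widetilde{\rho(g)\Phi_v}$, and $L(s,\pi_v\times\pi_v'\times\chi_v)$ is the generator of the fractional ideal these span.

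For (2), I would run the classical unramified computation: with $W_v^\circ$ and $(W_v')^\circ$ given by the Casselman--Shalika formula in terms of the Satake parameters of $\pi_v$ and $\pi_v'$, with $\Phi_v^\circ = \mathbb{I}_{\o_{\F_v}^2}$ producing the spherical section, and with $\psi_{\F_v}$, $\chi_v$ unramified, the integral collapses onto the diagonal torus and sums to $L(s,\pi_v \times \pi_v' \times \chi_v)$; the normalization ${\rm vol}(\o_{\F_v}^\times N(\o_{\F_v})\backslash\GL_2(\o_{\F_v})) = 1$ leaves no stray constant, whence $Z^* = 1$.

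For (3), the approach mirrors the Fourier-coefficient bookkeeping carried out in the proof of Lemma~\ref{L:Eisenstein series}. By (1) the power-series expansion of $Z(W_v,W_v',f_{\chi_v,\chi_v^{-1}\omega_{\itPi,v}^{-1}\omega_{\itPi',v}^{-1},\Phi_v,s})$ in $q_v^{-s}$ has coefficients that are finite $\C$-linear combinations of products of values of $W_v$, $W_v'$ and of $\widetilde{\rho(g)\Phi_v}$, so $\sigma$ acts on it coefficient-wise in the sense of \S\,\ref{SS:notation}, and it suffices to identify the $\sigma$-image of each such coefficient. The inputs are: applying $\sigma$ to a value $W_v(g)$ of a Whittaker function of $\pi_v$ relative to $\psi_{\F_v}$ gives the corresponding value of the Whittaker function of ${}^\sigma\!\pi_v$ relative to ${}^\sigma\!\psi_{\F_v} = \psi_{\F_v}^{u_p}$, which after the translation $g \mapsto {\bf a}(u_p)^{-1}g$ becomes a value of $t_\sigma W_v$ relative to $\psi_{\F_v}$; and $\sigma$ commutes with the partial Fourier transform up to the substitution $(x,y)\mapsto(u_p^{-1}x,u_p y)$ — both exactly as in the displayed computations in the proof of Lemma~\ref{L:Eisenstein series}. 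Carrying out the substitution $t \mapsto u_p t$ in the $\F_v^\times$-integral to reconcile these shifts (noting $|u_p|_{\F_v} = 1$, so the measure is preserved), the only uncompensated factor is the value of the character $\chi_v$ attached to the Godement section, appearing as ${}^\sigma\!\chi_v(u_p)^{-1}$; comparing the outcome against the definition of $Z(t_\sigma W_v, t_\sigma W_v', f_{{}^\sigma\!\chi_v,{}^\sigma\!\chi_v^{-1}{}^\sigma\!\omega_{\itPi,v}^{-1}{}^\sigma\!\omega_{\itPi',v}^{-1},{}^\sigma\!\Phi_v,s})$ yields the stated identity for $Z$, and dividing by the local $L$-factor — which satisfies ${}^\sigma L(s,\pi_v\times\pi_v'\times\chi_v) = L(s,{}^\sigma\!\pi_v\times{}^\sigma\!\pi_v'\times{}^\sigma\!\chi_v)$ since the Satake/Langlands data of a generic representation are merely conjugated by $\sigma$ — gives the assertion for $Z^*$.

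The main obstacle I anticipate is in (3): making precise that the relevant integral is governed by a power series whose coefficients are algebraic and transform as described, so that applying $\sigma$ coefficient-wise is legitimate, and then keeping the several $u_p$-twists (from the two Whittaker functions, from the section's character $\chi_v$, and from the Fourier transform of $\Phi_v$) under control so that their net effect is exactly the single factor ${}^\sigma\!\chi_v(u_p)^{-1}$ with no spurious power of $|u_p|_{\F_v}$; this hinges on the compatibility of the normalizations of $d^\times t_v^{\rm std}$, $dg_v^{\rm std}$ and the self-dual Haar measure used in the Fourier transform. Parts (1) and (2) are routine by comparison.
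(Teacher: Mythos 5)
Your approach to (1) and (2) matches the paper: the paper simply cites \cite[Theorem 14.7]{JLbook2} for both, and your sketch (Iwasawa decomposition, Godement-section unfolding, Casselman--Shalika for the spherical computation) is just that theorem spelled out.

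For (3) your overall strategy agrees with the paper's: after reducing $Z^*$ to $Z$ via the Galois equivariance of the local $L$-factor (argued as in Lemma \ref{L:Galois equiv. 1}), both proofs use the Iwasawa decomposition, apply $\sigma$ to the integrand, track the $u_p$-shifts from the Whittaker functions, the partial Fourier transform of $\Phi_v$, and the section's character, and isolate the net factor ${}^\sigma\!\chi_v(u_p)^{-1}$. However, the step you flag as "the main obstacle" is where your plan is vaguer than what the paper actually does, in two respects. First, your phrase "coefficients that are algebraic" is not literally right: the power-series coefficients of $Z$ are products of Whittaker values, which are arbitrary complex numbers; what is needed is not algebraicity but the statement that $\sigma$ transported across the integral gives the $\sigma$-conjugate data, and this must hold at the level of rational functions, not only of formal Laurent series at $\Re(s)\gg 0$. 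Second, the naive power-series picture does not account for the $(\log_{q_v}|a|_{\F_v})^n$ factors in the Kirillov-model asymptotics of $W_v({\bf a}(a)k)$ (Jacquet's Lemma 14.3 expresses $W_v({\bf a}(a)k)$ as a sum of terms $f_i(a,k)\,\mu_i(a)\,(\log_{q_v}|a|_{\F_v})^{n_i}$ with $f_i$ locally constant of compact support), which occur precisely for twists of Steinberg and the non-tempered cases that can arise here. The paper handles both points at once by reducing $Z$, via this structural decomposition and the Iwasawa decomposition, to a finite sum of Tate-type integrals $\int_{\F_v^\times} f(a)\,\mu(a)\,|a|_{\F_v}^s\,(\log_{q_v}|a|_{\F_v})^n\,d^\times a$, and then invoking a precise Galois-equivariance result for such integrals as rational functions in $q_v^{-s}$ (\cite[Proposition A]{Grobner2018}). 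Your proposal identifies the right picture and the right $u_p$-bookkeeping but leaves exactly this intermediate reduction unstated; to turn the sketch into a proof you would need to carry it out, or cite the same two inputs the paper uses.
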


\begin{proof}
The assertions (1) and (2) were proved in \cite[Theorem 14.7]{JLbook2}. Let $\sigma \in {\rm Aut}(\C)$. Analogous to the proof of Lemma \ref{L:Galois equiv. 1} below, we can show that
\[
{}^\sigma\!L(s, \pi_v \times \pi_v' \times \chi_v) = L(s, {}^\sigma\!\pi_v \times {}^\sigma\!\pi_v' \times {}^\sigma\!\chi_v)
\]
as rational functions in $q_v^{-s}$. To prove assertion (3), it suffices to show that
\[
{}^\sigma\! Z(W_v,W_v',f_{\chi_v,\chi_v^{-1}\omega_{\itPi,v}^{-1}\omega_{\itPi',v}^{-1},\Phi_v,s}) = {}^\sigma\!\chi_v(u_p)^{-1}Z(t_\sigma W_v,t_\sigma W_v',f_{{}^\sigma\!\chi_v,{}^\sigma\!\chi_v^{-1}{}^\sigma\!\omega_{\itPi,v}^{-1}{}^\sigma\!\omega_{\itPi',v}^{-1},{}^\sigma\!\Phi_v,s}).
\]
We recall a type of integral
\[
\int_{\F_v^\times}f(a)\mu(a)|a|_{\F_v}^s(\log_{q_v}|a|_{\F_v})^n\,d^\times a,
\]
where $f \in \mathcal{S}(\F_v)$, $\mu$ is a character of $\F_v^\times$, and $n \in \Z_{\geq 0}$. The integral converges absolutely for ${\rm Re}(s)$ sufficiently large and defines a rational function in $q_v^{-s}$. Moreover, we have
\begin{align}\label{E:Galois equiv. 4.5}
{}^\sigma\!\left( \int_{\F_v^\times}f(a)\mu(a)|a|_{\F_v}^s(\log_{q_v}|a|_{\F_v})^n\,d^\times a\right) = \int_{\F_v^\times}{}^\sigma\!f(a){}^\sigma\!\mu(a)|a|_{\F_v}^s(\log_{q_v}|a|_{\F_v})^n\,d^\times a
\end{align}
as rational functions in $q_v^{-s}$ (cf.\,\cite[Proposition A]{Grobner2018}). Here ${\rm vol}(\o_{\F_v}^\times,d^\times a)=1$.
It is well-known that (cf.\,\cite[Lemma 14.3]{JLbook2}) there exist locally constant functions $f_1,f_2,f_1',f_2'$ on $\F_v \times \GL_2(\o_{\F_v})$ with compact support, characters $\mu_1,\mu_2,\mu_1',\mu_2'$ of $\F_v^\times$, and integers $n_1,n_2,n_1',n_2'$ such that 
\begin{align*}
W_v({\bf a}(a)k) &= f_1(a,k)\mu_1(a)(\log_{q_v}|a|_{\F_v})^{n_1} + f_2(a,k)\mu_2(a)(\log_{q_v}|a|_{\F_v})^{n_2},\\
W_v'({\bf a}(-a)k) &= f_1'(a,k)\mu_1'(a)(\log_{q_v}|a|_{\F_v})^{n_1'} + f_2'(a,k)\mu_2'(a)(\log_{q_v}|a|_{\F_v})^{n_2'}
\end{align*}
for $a \in \F_v^\times$ and $k \in \GL_2(\o_{\F_v})$.
Therefore, we have
\begin{align*}
&Z(W_v,W_v',f_{\chi_v,\chi_v^{-1}\omega_{\itPi,v}^{-1}\omega_{\itPi',v}^{-1},\Phi_v,s}) \\
& = \int_{\GL_2(\o_{\F_v})}\int_{\F_v^\times}W_v({\bf a}(a)k)W_v'({\bf a}(-a)k)f_{\chi_v,\chi_v^{-1}\omega_{\itPi,v}^{-1}\omega_{\itPi',v}^{-1},\Phi_v,s}({\bf a}(a)k)\,\frac{d^\times a}{|a|_{\F_v}} dk\\
& = \sum_{1 \leq i,j \leq 2}\int_{\GL_2(\o_{\F_v})}f_{\chi_v,\chi_v^{-1}\omega_{\itPi,v}^{-1}\omega_{\itPi',v}^{-1},\Phi_v,s}(k) \int_{\F_v^\times} f_i(a,k)f_j'(a,k)\mu_i\mu_j'\chi_v(a)|a|_{\F_v}^{s-1}(\log_{q_v}|a|_{\F_v})^{n_i+n_j'}\,d^\times a dk.
\end{align*}
Here ${\rm vol}(\GL_2(\o_{\F_v}),dk)=1$.
It then follows from (\ref{E:Galois action on sections}) and (\ref{E:Galois equiv. 4.5}) that
\begin{align*}
&{}^\sigma\! Z(W_v,W_v',f_{\chi_v,\chi_v^{-1}\omega_{\itPi,v}^{-1}\omega_{\itPi',v}^{-1},\Phi_v,s})\\
& = \sum_{1 \leq i,j \leq 2}\int_{\GL_2(\o_{\F_v})}f_{{}^\sigma\!\chi_v,{}^\sigma\!\chi_v^{-1}{}^\sigma\!\omega_{\itPi,v}^{-1}{}^\sigma\!\omega_{\itPi',v}^{-1},{}^\sigma\!\Phi_v,s}(k) \int_{\F_v^\times} {}^\sigma\!f_i(a,k){}^\sigma\!f_j'(a,k){}^\sigma\!\mu_i{}^\sigma\!\mu_j'{}^\sigma\!\chi_v(a)|a|_{\F_v}^{s-1}(\log_{q_v}|a|_{\F_v})^{n_i+n_j'}\,d^\times a dk\\
& = \int_{\GL_2(\o_{\F_v})}\int_{\F_v^\times}{}^\sigma W_v({\bf a}(a)k){}^\sigma W_v'({\bf a}(a)k)f_{{}^\sigma\!\chi_v,{}^\sigma\!\chi_v^{-1}{}^\sigma\!\omega_{\itPi,v}^{-1}{}^\sigma\!\omega_{\itPi',v}^{-1},{}^\sigma\!\Phi_v,s}({\bf a}(a)k)\,\frac{d^\times a}{|a|_{\F_v}} dk\\
& = {}^\sigma\!\chi_v(u_p)^{-1}\int_{\GL_2(\o_{\F_v})}\int_{\F_v^\times} t_\sigma W_v({\bf a}(a)k)t_\sigma W_v'({\bf a}(a)k)f_{{}^\sigma\!\chi_v,{}^\sigma\!\chi_v^{-1}{}^\sigma\!\omega_{\itPi,v}^{-1}{}^\sigma\!\omega_{\itPi',v}^{-1},{}^\sigma\!\Phi_v,s}({\bf a}(a)k)\,\frac{d^\times a}{|a|_{\F_v}} dk\\
& = {}^\sigma\!\chi_v(u_p)^{-1}Z(t_\sigma W_v,t_\sigma W_v',f_{{}^\sigma\!\chi_v,{}^\sigma\!\chi_v^{-1}{}^\sigma\!\omega_{\itPi,v}^{-1}{}^\sigma\!\omega_{\itPi',v}^{-1},{}^\sigma\!\Phi_v,s}).
\end{align*}
This completes the proof.
\end{proof}

\begin{lemma}\label{L:archimedean local zeta}
Assume $v \in \Sigma_\F$ and $\ell_v > \ell_v'$. Let $m$ be an integer such that
\[
\frac{1}{2} \leq m+r_0+\frac{r+r'}{2} \leq \frac{\ell_v-\ell_v'}{2}.
\]
Put $m'=2m+2r_0+r+r'$. For integers $m_1,m_2 \in \Z_{\geq 0}$ such that $m'+2m_1+2m_2 = \ell_v - \ell_v'$, we have
\begin{align*}
&Z(W_{(\ell_v,r),\psi_{\F_v}}^-,X_+^{m_1}\cdot W_{(\ell_v',r'),\psi_{\F_v}}^+,X_+^{m_2}\cdot f_{\chi_v,\chi_v^{-1}\omega_{\itPi,v}^{-1}\omega_{\itPi',v}^{-1},\Phi^{[m']},s})\vert_{s = m}\\
&=\chi_v(-1)(-1)^{m'+m_1}2^{2-\ell_v'-m'}(\sqrt{-1})^{\ell_v}\pi\cdot (2\pi\sqrt{-1})^{-(\ell_v+\ell_v'+3m')/2}\cdot\Gamma\left(\tfrac{\ell_v+\ell_v'+m'}{2}-1\right)\Gamma\left(\tfrac{\ell_v-\ell_v'+m'}{2}\right).
\end{align*}

\end{lemma}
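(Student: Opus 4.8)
The plan is to reduce the integral to two one–dimensional Euler integrals, after first simplifying it by an integration–by–parts trick. The crucial structural input is that $W^{-}_{(\ell_v,r),\psi_{\F_v}}$ is (the Whittaker function of) the highest weight vector of $D(\ell_v)^{-}$, and is therefore annihilated by $X_+$. Since $dg_v^{\rm std}$ is a right $\GL_2(\F_v)$–invariant measure on $\F_v^\times N(\F_v)\backslash\GL_2(\F_v)$, for any product $\Psi$ of the three Whittaker/section factors (and their $X_+$–translates) one has $\int R_{X_+}\Psi\,dg_v^{\rm std}=0$ once $\Re(s)$ is large, because $W^{\pm}_{(\ell_v,r),\psi_{\F_v}}$ and $W^{\pm}_{(\ell_v',r'),\psi_{\F_v}}$ decay exponentially in the $A$–direction while the section grows only polynomially, so there is no boundary contribution. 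Expanding $R_{X_+}\Psi$ by the Leibniz rule and discarding the term containing $X_+\cdot W^{-}_{(\ell_v,r),\psi_{\F_v}}=0$ gives a recursion that transports one copy of $X_+$ off the middle argument onto the section at the cost of a sign; iterating $m_1$ times and continuing meromorphically in $s$ yields
\[
Z(W^{-}_{(\ell_v,r),\psi_{\F_v}},X_+^{m_1}W^{+}_{(\ell_v',r'),\psi_{\F_v}},X_+^{m_2}f_{\chi_v,\ldots,\Phi^{[m']},s})=(-1)^{m_1}Z(W^{-}_{(\ell_v,r),\psi_{\F_v}},W^{+}_{(\ell_v',r'),\psi_{\F_v}},X_+^{m_1+m_2}f_{\chi_v,\ldots,\Phi^{[m']},s}),
\]
which is where the factor $(-1)^{m_1}$ in the statement comes from.

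Second, I would compute the restriction of $X_+^{k}f_{\chi_v,\ldots,\Phi^{[m']},s}$ to $\{{\bf n}(x){\bf a}(y)\}$. Writing $X_+$ as a combination of the standard raising, lowering and semisimple elements of $\frak{sl}_2$ and using left $N(\F_v)$–invariance of the section (which makes the contribution of the $\frak n$–part of $X_+$ vanish on this slice), one obtains $X_+^{k}f_{\chi_v,\ldots,\Phi^{[m']},s}({\bf n}(x){\bf a}(y)) = \big(\prod_{j=0}^{k-1}\sqrt{-1}\,(2s+2r_0+r+r'+m'+2j)\big)\chi_v(y)|y|^{s}\,f_{\chi_v,\ldots,\Phi^{[m']},s}({\bf a}(1))$, and by the Godement integral the last value equals the Gaussian integral $2^{-m'}(\sqrt{-1})^{m'}\int_{\R^\times}t^{m'}e^{-\pi t^2}(\chi_v^2\omega_{\itPi,v}\omega_{\itPi',v})(t)|t|^{2s}\,d^\times t$; since $m'\equiv\ell_v-\ell_v'$ has the same parity as $\omega_{\itPi,v}\omega_{\itPi',v}(-1)$, the $t>0$ and $t<0$ contributions add up, and the substitution $u=t^2$ evaluates it as $2^{-m'}(\sqrt{-1})^{m'}\pi^{-(m'+2r_0+r+r'+2s)/2}\Gamma\!\big(\tfrac{m'+2r_0+r+r'+2s}{2}\big)$. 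Specializing to $s=m$ (so $m'=2m+2r_0+r+r'$ and $k=m_1+m_2=\tfrac{\ell_v-\ell_v'-m'}{2}$) and telescoping, the two $\Gamma(m')$ cancel and there remains a single $\Gamma(m'+m_1+m_2)=\Gamma\!\big(\tfrac{\ell_v-\ell_v'+m'}{2}\big)$, the first Gamma factor.

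Third, I would unfold the reduced zeta integral in Iwasawa coordinates $g={\bf n}(x){\bf a}(y)k_\theta$ modulo $\F_v^\times N(\F_v)$. The three factors carry ${\rm SO}(2)$–weights $-\ell_v$, $\ell_v'$ and $m'+2(m_1+m_2)=\ell_v-\ell_v'$, whose sum is $0$, so the integrand is independent of $\theta$ and the $\theta$–integral only produces the normalizing constant built into $dg_v^{\rm std}$. On $\{{\bf a}(y):y<0\}$ the two Whittaker factors combine, by \eqref{E:archimedean Whittaker}, into $|y|^{(\ell_v+\ell_v'+r+r')/2}e^{4\pi y}$, so together with the section (supported there, by the sign conditions) the remaining integral in $y$, taken against the standard measure $|y|^{-2}\,dy$, is the single Euler integral $\int_0^\infty u^{(\ell_v+\ell_v'+m')/2}e^{-4\pi u}\,\tfrac{du}{u^2}=(4\pi)^{-((\ell_v+\ell_v'+m')/2-1)}\Gamma\!\big(\tfrac{\ell_v+\ell_v'+m'}{2}-1\big)$, giving the second Gamma factor. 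The hypothesis $\tfrac12\le m+r_0+\tfrac{r+r'}{2}\le\tfrac{\ell_v-\ell_v'}{2}$ is exactly what guarantees that both Gamma arguments are positive, so that $s=m$ lies in the domain where these manipulations are valid and $Z$ has no pole there.

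Assembling the three steps and collecting the powers of $2$, $\pi$, $\sqrt{-1}$ and the signs $(-1)^{m_1}$ and $\chi_v(-1)$ yields the displayed formula. The only real obstacle is this final accounting of elementary constants, which is sensitive to the precise normalization of $dg_v^{\rm std}$ (as in \S\,\ref{SS:Ichino}) and of the archimedean Whittaker functions; the analytic ingredients — absolute convergence for $\Re(s)$ large, the vanishing of $\int R_{X_+}\Psi$, and the meromorphic continuation used to descend to $s=m$ — are standard, and run exactly parallel to the computations in \cite{CC2017} and \cite[Lemma 4.5]{CH2020}.
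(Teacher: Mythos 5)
Your proposal is correct in substance, but it takes a genuinely different route from the paper. The paper's proof keeps the $m_1$ copies of $X_+$ on the middle Whittaker function, expands $X_+^{m_1}\cdot W^{+}_{(\ell_v',r'),\psi_{\F_v}}({\bf a}(a))$ as an explicit sum over $j$ with binomial coefficients, reduces the zeta integral to a sum of elementary Euler integrals, and then collapses that sum via the combinatorial identity of Ikeda, \cite[Lemma 2.1]{Ikeda1998}; the telltale factor $(-1)^{m_1}$ emerges precisely from that identity. You instead use the vanishing $\int_{\F_v^\times N(\F_v)\backslash\GL_2(\F_v)}R_{X_+}\Psi\,dg_v^{\rm std}=0$ (which does hold: the quotient measure is right-invariant and everything decays rapidly, with the absolute convergence and interchange of $d/dt$ and $\int$ valid for $\Re(s)\gg0$ and propagated by meromorphic continuation) together with the observation that $W^-_{(\ell_v,r),\psi_{\F_v}}$ is the lowest weight vector of $D(\ell_v)^-$, hence annihilated by $X_+$, to move all of $X_+^{m_1}$ onto the Godement section at the cost of $(-1)^{m_1}$. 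This reduces the integral to a single Gamma integral and entirely avoids Ikeda's binomial identity — a clean trade-off. I checked the constants: since $X_+\cdot f_{\mu_1,\mu_2,\Phi^{[\kappa]},s}=-8\pi\sqrt{-1}\,f_{\mu_1,\mu_2,\Phi^{[\kappa+2]},s}$ and $f_{\mu_1,\mu_2,\Phi^{[\kappa]},s}(1)=2^{-\kappa}(\sqrt{-1})^\kappa\pi^{-(s+(\kappa+n_1-n_2)/2)}\Gamma(s+\tfrac{\kappa+n_1-n_2}{2})$, your formula $X_+^kf({\bf a}(y))=\bigl((\sqrt{-1})^k\prod_{j=0}^{k-1}(2s+m'+2r_0+r+r'+2j)\bigr)\chi_v(y)|y|^s f(1)$ is correct, and assembling your $(-1)^{m_1}$, the value $X_+^{m_1+m_2}f(1)\vert_{s=m}=2^{k-m'}(\sqrt{-1})^{k+m'}\pi^{-m'}\Gamma(\tfrac{\ell_v-\ell_v'+m'}{2})$ with $k=m_1+m_2=\tfrac{\ell_v-\ell_v'-m'}{2}$, the factor $\chi_v(-1)$ from evaluating the section at ${\bf a}(-a)$, and the Euler integral $(4\pi)^{1-(\ell_v+\ell_v'+m')/2}\Gamma(\tfrac{\ell_v+\ell_v'+m'}{2}-1)$, does reproduce the displayed constant, so your deferred "accounting of elementary constants" closes without surprises. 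One small slip in wording: it is the Whittaker factors, not the section, that carry the support restriction to $\{y<0\}$ (the section is a full principal-series vector); the $\chi_v(-1)$ comes from $f_s({\bf a}(-a))=\chi_v(-1)\chi_v(a)|a|^s f_s(1)$, not from a "sign condition" on the section's support.
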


\begin{proof}
Note that we have
\begin{align*}
W_{(\ell_v,r),\psi_{\F_v}}^-({\bf a}(-a)) & = a^{(\ell_v+r)/2}e^{-2\pi a}\cdot \mathbb{I}_{\R>0}(a),\\
X_+^{m_1}\cdot W_{(\ell_v',r'),\psi_{\F_v}}^+({\bf a}(a)) &= (2\sqrt{-1})^{m_1}\sum_{j=0}^{m_1}(-4\pi)^j{m_1 \choose j}\frac{\Gamma(\ell_v'+m_1)}{\Gamma(\ell_v'+j)}a^{(\ell_v'+r')/2+j}e^{-2\pi a}\cdot \mathbb{I}_{\R>0}(a),\\
X_+^{m_2}\cdot f_{\chi_v,\chi_v^{-1}\omega_{\itPi,v}^{-1}\omega_{\itPi',v}^{-1},\Phi^{[m']},s} \vert_{s=m}({\bf a}(a)) & = (-8\pi\sqrt{-1})^{m_2}f_{\chi_v,\chi_v^{-1}\omega_{\itPi,v}^{-1}\omega_{\itPi',v}^{-1},\Phi^{[m'+2m_2]},s} \vert_{s=m}({\bf a}(a))\\
&=2^{-m'+m_2}(\sqrt{-1})^{m'+m_2}\pi^{-m'}\Gamma(m'+m_2)\chi_v(a)|a|^{m}
\end{align*}
for $a \in \F_v^\times=\R^\times$.
Here we refer to \cite[Lemma 3.3]{CC2017} for the second equation.
Hence
\begin{align*}
&Z(W_{(\ell_v,r),\psi_{\F_v}}^-,X_+^{m_1}\cdot W_{(\ell_v',r'),\psi_{\F_v}}^+,X_+^{m_2}\cdot f_{\chi_v,\chi_v^{-1}\omega_{\itPi,v}^{-1}\omega_{\itPi',v}^{-1},\Phi^{[m']},s})\vert_{s = m}\\
& = \int_{\R^\times} W_{(\ell_v,r),\psi_{\F_v}}^-({\bf a}(a))(X_+^{m_1}\cdot W_{(\ell_v',r'),\psi_{\F_v}}^+)({\bf a}(-a))(X_+^{m_2}\cdot f_{\chi_v,\chi_v^{-1}\omega_{\itPi,v}^{-1}\omega_{\itPi',v}^{-1},\Phi^{[m']},s}\vert_{s=m})({\bf a}(a))\,\frac{d^\times a}{|a|}\\
&=\chi_v(-1)2^{-m'+m_1+m_2}(\sqrt{-1})^{m'+m_1+m_2}\pi^{-m'}\Gamma(\ell_v'+m_1)\Gamma(m'+m_2)\\
&\times\sum_{j=0}^{m_1}(-4\pi)^j{m_1 \choose j}\Gamma(\ell_v'+j)^{-1}\int_0^\infty a^{(\ell_v+\ell_v'+m')/2+j-1}e^{-4\pi a}\,d^\times a\\
&=\chi_v(-1)2^{-m'+m_1+m_2}(\sqrt{-1})^{m'+m_1+m_2}\pi^{-m'}\Gamma(\ell_v'+m_1)\Gamma(m'+m_2)\\
&\times (4\pi)^{1-(\ell_v+\ell_v'+m')/2}\sum_{j=0}^{m_1}(-1)^j{m_1 \choose j}\frac{\Gamma\left(\tfrac{\ell_v+\ell_v'+m'}{2}+j-1\right)}{\Gamma(\ell_v'+j)}.
\end{align*}
By \cite[Lemma 2.1]{Ikeda1998}, we have
\[
\sum_{j=0}^{m_1}(-1)^j{m_1 \choose j}\frac{\Gamma\left(\tfrac{\ell_v+\ell_v'+m'}{2}+j-1\right)}{\Gamma(\ell_v'+j)} = (-1)^{m_1}\frac{\Gamma\left(\tfrac{\ell_v+\ell_v'+m'}{2}-1\right)\Gamma\left(\tfrac{\ell_v-\ell_v'+m'}{2}\right)}{\Gamma(\ell_v'+m_1)\Gamma(m'+m_2)}.
\]
The assertion thus follows.
\end{proof}

\begin{rmk}
When $\ell_v < \ell_v$, we have similar formula by exchanging $\ell_v$ and $\ell_v'$ and excluding $\chi_v(-1)$.
\end{rmk}

\subsection{Algebraicity of the critical values}

We keep the notation of \S\,\ref{S:4.3}.
Put
\[
I = \{v \in \Sigma_\F\,\vert\, \ell_v > \ell_v'\},\quad J = \{v \in \Sigma_\F\,\vert\, \ell_v < \ell_v'\}.
\]
It is clear that $I$ and $J$ are admissible with respect to $\underline{\ell}$ and $\underline{\ell}'$, respectively.

The following result is on the algebraicity of the critical values of $L(s,\itPi \times \itPi' \times \chi)$. We generalize the result \cite[Theorem 3.5.1]{Harris1989I} of Harris where $\chi$ is trivial.
\begin{thm}\label{T:RS}
Assume that $\ell_v \neq \ell_v'$ for all $v \in \Sigma_\F$. Let $m \in \Z$ such that
\[
\frac{1}{2} \leq m+r_0+\frac{r+r'}{2} \leq \frac{|\ell_v-\ell_v'|}{2}
\]
for all $v \in \Sigma_\F$.
We have
\begin{align*}
&\sigma \left(\frac{L^{(\infty)}(m,\itPi \times \itPi' \times \chi)}{(2\pi\sqrt{-1})^{[\F:\Q](2m+2r_0+r+r')}(\sqrt{-1})^{\sum_{v \in I}\ell_v + \sum_{v \in J}\ell_v'}\cdot G(\chi^2\omega_\itPi\omega_{\itPi'}) \cdot \Omega^I(\itPi)\cdot \Omega^J(\itPi')} \right)\\
& = \frac{L^{(\infty)}(m,{}^\sigma\!\itPi \times {}^\sigma\!\itPi' \times {}^\sigma\!\chi)}{(2\pi\sqrt{-1})^{[\F:\Q](2m+2r_0+r+r')}(\sqrt{-1})^{\sum_{v \in I}\ell_v + \sum_{v \in J}\ell_v'}\cdot G({}^\sigma\!\chi^2{}^\sigma\!\omega_\itPi{}^\sigma\!\omega_{\itPi'}) \cdot \Omega^{{}^\sigma\!I}({}^\sigma\!\itPi)\cdot \Omega^{{}^\sigma\!J}({}^\sigma\!\itPi')}
\end{align*}
for all $\sigma \in {\rm Aut}(\C)$.

\end{thm}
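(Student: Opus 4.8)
The plan is to deduce Theorem~\ref{T:RS} from the Rankin--Selberg integral representation \eqref{E:integral rep.} by interpreting the global period integral as a cohomological pairing on the Hilbert modular variety ${\rm Sh}({\rm R}_{\F/\Q}\GL_{2/\F},(\frak{H}^\pm)^{\Sigma_\F})$, and then pushing the resulting identity through ${\rm Aut}(\C)$. The ingredients are the trace map and its Galois equivariance (Lemma~\ref{L:trace}), the periods $\Omega^I(\itPi),\Omega^J(\itPi')$ with the compatibility in Proposition~\ref{P:Harris' period}, the algebraicity of holomorphic Eisenstein series (Theorem~\ref{T:q-expansion} and Lemma~\ref{L:Eisenstein series}), the $\Q$-rationality of Harris' weight-raising differential operators (as in \S\ref{S:3}, built from the rational coefficients of Lemma~\ref{L:differential operator}), and the explicit local zeta integrals of Lemmas~\ref{L:archimedean local zeta} and~\ref{L:Galois equiv. local zeta}. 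Since $\ell_v\neq\ell_v'$ for all $v$, we have $I\sqcup J=\Sigma_\F$, so ${}^\sharp I+{}^\sharp J=[\F:\Q]$, which is exactly the degree carrying the trace map. I would fix the test data: put $m'=2m+2r_0+r+r'$ (the $\SO(2)^{\Sigma_\F}$-type of $f_s$ at $s=m$), set $\mu_1=\chi$ and $\mu_2=\chi^{-1}\omega_\itPi^{-1}\omega_{\itPi'}^{-1}$ (so $\mu_1\mu_2=\omega_\itPi^{-1}\omega_{\itPi'}^{-1}$ has parallel signature and $\mu_1\mu_2^{-1}=\chi^2\omega_\itPi\omega_{\itPi'}$), choose a $\overline{\Q}$-rational $\Phi=\bigotimes_{v\nmid\infty}\Phi_v\in\mathcal{S}(\A_{\F,f}^2)$ which is the characteristic function of $\frak{o}_{\F_v}^2$ for almost all $v$ (so that the finite zeta integrals are nonzero at $s=m$ and equal $1$ outside a finite set, by Lemma~\ref{L:Galois equiv. local zeta}), take $\varphi\in\itPi_{\,\rm hol}$ and $\varphi'\in\itPi'_{\,\rm hol}$ with $\xi_\emptyset(\varphi)$, $\xi_\emptyset(\varphi')$ rational so that $\xi_I(\varphi)/\Omega^I(\itPi)$ and $\xi_J(\varphi')/\Omega^J(\itPi')$ are rational (Proposition~\ref{P:Harris' period}), and the holomorphic Eisenstein series $E^{[m']}(\mu_1,\mu_2,\Phi)$, which by Theorem~\ref{T:q-expansion} and Lemma~\ref{L:Eisenstein series} represents $D_\F^{1/2}(2\pi\sqrt{-1})^{-[\F:\Q](m'+n_1+n_2)/2}G(\mu_2^{-1})$ times a $\overline{\Q}$-rational class in $H^0(\mathcal{L}_{(\underline{m'},n_1+n_2)}^{\rm can})$, where $|\mu_i|=|\mbox{ }|_{\A_\F}^{n_i}$. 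At each $v\in I$ the archimedean integrand in \eqref{E:integral rep.} is taken as in Lemma~\ref{L:archimedean local zeta}: the anti-holomorphic Whittaker function $W_{(\ell_v,r),\psi_{\F_v}}^-$ for $\pi_v$, a raising-operator translate $X_+^{m_1(v)}\cdot W_{(\ell_v',r'),\psi_{\F_v}}^+$ for $\pi_v'$, and $X_+^{m_2(v)}\cdot f_{\mu_{1,v},\mu_{2,v},\Phi^{[m']},s}$ for the Godement section, with $m_1(v),m_2(v)\ge 0$ and $m'+2m_1(v)+2m_2(v)=\ell_v-\ell_v'$; at $v\in J$ the roles of $\pi_v$ and $\pi_v'$ are exchanged.

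The key step is the cohomological incarnation of the integral. Following the constructions of \S\ref{S:2}--\S\ref{S:3}, the operators $X_+^{m_1(v)}$ and $X_+^{m_2(v)}$ at the places $v\in\Sigma_\F$ assemble into a homogeneous $\Q$-rational differential operator $[\delta]$, whose coefficients are the rational numbers of Lemma~\ref{L:differential operator}, and which induces a $\GL_2(\A_{\F,f})$-module homomorphism from $H^{{}^\sharp J}(\mathcal{L}_{(\underline{\ell}'(J),\underline{r}')}^{\rm sub})\otimes H^0(\mathcal{L}_{(\underline{m'},n_1+n_2)}^{\rm can})$ to $H^{{}^\sharp J}$ of the subcanonical extension of the weight-$(\underline{2}-\underline{\ell}(I))$ automorphic line bundle (this weight being forced by $m'+2m_1(v)+2m_2(v)=|\ell_v-\ell_v'|$), satisfying $T_\sigma\circ[\delta]=[\delta]\circ T_\sigma$ for all $\sigma$ as in Proposition~\ref{P:differential operator}(1). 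Cupping with $\xi_I(\varphi)$ lands in $H^{[\F:\Q]}(\mathcal{L}_{(\underline{2},\underline{0})}^{\rm sub})$ (since $\underline{\ell}(I)+(\underline{2}-\underline{\ell}(I))=\underline{2}$), and applying the trace map of Lemma~\ref{L:trace} and comparing with \eqref{E:integral rep.} gives
\[
\int_{{\rm Sh}}\xi_I(\varphi)\wedge[\delta]\bigl(\xi_J(\varphi')\otimes E^{[m']}(\mu_1,\mu_2,\Phi)\bigr)=c\cdot D_\F^{-1}\zeta_\F(2)^{-1}\cdot L(m,\itPi\times\itPi'\times\chi)\cdot\prod_v Z^*(W_v,W_v',f_{s,v})\vert_{s=m}
\]
for some $c\in\Q^\times$ depending only on ${\rm R}_{\F/\Q}\GL_{2/\F}$. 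Since $L(m,\ldots)\prod_v Z^*(\ldots)=L^{(\infty)}(m,\ldots)\cdot\prod_{v\in\Sigma_\F}Z(W_v,W_v',f_{s,v})\vert_{s=m}\cdot\prod_{v\nmid\infty}Z^*(W_v,W_v',f_{s,v})\vert_{s=m}$, with all but finitely many non-archimedean factors equal to $1$ by Lemma~\ref{L:Galois equiv. local zeta}(2), and since Lemma~\ref{L:archimedean local zeta} evaluates each archimedean $Z(\ldots)_v\vert_{s=m}$ to $\chi_v(-1)$ (resp. $1$ for $v\in J$) times a power of $2$, times $(\sqrt{-1})^{\ell_v}$ (resp. $(\sqrt{-1})^{\ell_v'}$), times $\pi\cdot(2\pi\sqrt{-1})^{-(\ell_v+\ell_v'+3m')/2}$, times a product of $\Gamma$-values at positive integers, solving for $L^{(\infty)}(m,\itPi\times\itPi'\times\chi)$ expresses it as the cohomological trace divided by an explicit product of $2\pi\sqrt{-1}$, $\sqrt{-1}$, $\pi$, $\Gamma$-values, the Gauss sum $G(\mu_2^{-1})$, the discriminant quantities $D_\F^{1/2}$ and $\zeta_\F(2)$, and the finite local zeta integrals $\prod_{v\nmid\infty}Z^*(\ldots)_v\vert_{s=m}$.

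Finally I would run the Galois descent. Applying $\sigma\in{\rm Aut}(\C)$ and using: $\sigma(\int_{\rm Sh}\omega)=\int_{\rm Sh}T_\sigma\omega$ (Lemma~\ref{L:trace}); $T_\sigma(\xi_I(\varphi)/\Omega^I(\itPi))=\xi_{{}^\sigma\!I}({}^\sigma\!\varphi)/\Omega^{{}^\sigma\!I}({}^\sigma\!\itPi)$ and the analogue for $\varphi'$, $J$ (Proposition~\ref{P:Harris' period}); $T_\sigma\circ[\delta]=[\delta]\circ T_\sigma$ (Proposition~\ref{P:differential operator}(1)); $T_\sigma$ carries the Eisenstein class for $(\mu_1,\mu_2,\Phi)$ to that for $({}^\sigma\!\mu_1,{}^\sigma\!\mu_2,{}^\sigma\!\Phi)$ up to the explicit factor of Lemma~\ref{L:Eisenstein series}; $\sigma(Z^*(W_v,W_v',f_{s,v})\vert_{s=m})={}^\sigma\!\chi_v(u_p)^{-1}Z^*(\ldots)\vert_{s=m}$ with $\sigma$-conjugated data at the finitely many ramified finite places (Lemma~\ref{L:Galois equiv. local zeta}(3), trivial elsewhere); and the rationality of the $\Gamma$-values at positive integers and of the powers of $2$ and $\pi$ in Lemma~\ref{L:archimedean local zeta} --- one collects all transcendental contributions. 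The powers of $2\pi\sqrt{-1}$ combine to $(2\pi\sqrt{-1})^{[\F:\Q](2m+2r_0+r+r')}$, the powers of $\sqrt{-1}$ to $(\sqrt{-1})^{\sum_{v\in I}\ell_v+\sum_{v\in J}\ell_v'}$, the factors $D_\F^{1/2}$ and $\zeta_\F(2)$ cancel, the periods collect to $\Omega^I(\itPi)\Omega^J(\itPi')$, and the Gauss sum $G(\mu_2^{-1})=G(\chi\omega_\itPi\omega_{\itPi'})$ together with $\prod_v{}^\sigma\!\chi_v(u_p)^{-1}$ combine --- via $\sigma(G(\eta))={}^\sigma\!\eta(u)G({}^\sigma\!\eta)$ and the cocycle relation \eqref{E:Galois Gauss sum} --- into $G(\mu_1\mu_2^{-1})=G(\chi^2\omega_\itPi\omega_{\itPi'})$ with the correct Galois behaviour, which yields the asserted identity. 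The argument needs the global integral to be nonzero for the chosen data: at archimedean places this is the explicit nonzero value of Lemma~\ref{L:archimedean local zeta}, and at the finitely many remaining finite places it follows from a suitable choice of $\Phi_v$ and of the local Whittaker functions.

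The main obstacle is the cohomological incarnation of the integral: constructing the weight-raising operator $[\delta]$ and verifying that it is $\Q$-rational and $T_\sigma$-equivariant on the subcanonical and canonical coherent cohomology (so that the nearly holomorphic Eisenstein contribution inherits the correct rational structure), and then matching the resulting trace with the classical Rankin--Selberg integral \eqref{E:integral rep.}; this is the $\GL_2\times\GL_2$ analogue of the construction carried out in \S\ref{S:3} for the triple product, and is where \cite[\S3]{Harris1989I} does the essential work in the untwisted case. The secondary, purely bookkeeping difficulty is the precise accounting of the powers of $2\pi\sqrt{-1}$, $\sqrt{-1}$, $2$ and $\pi$, of the Gauss sums through \eqref{E:Galois Gauss sum}, and of the discriminant factors $D_\F$ and $\zeta_\F(2)$, needed to reach the clean normalization in the statement.
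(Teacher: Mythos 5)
Your proposal follows the same broad strategy as the paper's proof: interpret the Rankin--Selberg integral cohomologically, use Harris' periods and the trace map, push the Eisenstein series through Lemma \ref{L:Eisenstein series}, handle the local zeta integrals via Lemmas \ref{L:archimedean local zeta} and \ref{L:Galois equiv. local zeta}, and run Galois descent. The one thing you treat as a remaining obstacle --- constructing and verifying the $\Q$-rationality and $T_\sigma$-equivariance of the weight-raising operator on the Hilbert modular variety --- is in fact already done in the paper's \S\ref{S:3}: the paper simply specializes the trilinear differential operator $[\delta(\underline{\kappa})]$ of Proposition \ref{P:differential operator} to the split cubic algebra $\E = \F \times \F \times \F$, with weight data $(\underline{\kappa},\underline{\bf r})$ assembled from $(\underline{\ell},r)$, $(\underline{\ell}',r')$, and $(\underline{m}',-r-r')$ on the three factors. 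This makes the automorphic line bundle on ${\rm Sh}({\rm R}_{\E/\Q}\GL_{2/\E},(\frak{H}^\pm)^{\Sigma_\E})$ factor as $\mathcal{L}_{(\underline{\ell}(I),\underline{r})}' \times \mathcal{L}_{(\underline{\ell}'(J),\underline{r}')}' \times \mathcal{L}_{(\underline{m}',-\underline{r}-\underline{r}')}'$ and turns your "obstacle" into an already-verified statement.

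There is also a small internal inconsistency in your cohomological formulation that would surface if you tried to build the operator independently. You posit $[\delta]$ as a map $H^{{}^\sharp J}(\mathcal{L}_{(\underline{\ell}'(J),\underline{r}')}^{\rm sub})\otimes H^0(\mathcal{L}_{(\underline{m'},n_1+n_2)}^{\rm can}) \to H^{{}^\sharp J}(\mathcal{L}_{(\underline{2}-\underline{\ell}(I),\cdot)}^{\rm sub})$, to be cupped afterwards with $\xi_I(\varphi)$. But you also (correctly) state the archimedean integrand: at $v \in J$ the roles of $\pi_v$ and $\pi'_v$ are exchanged, so $W^-_{(\ell_v',r'),\psi_{\F_v}}$ is untouched while $X_+^{m_1(v)}$ lands on $W^+_{(\ell_v,r),\psi_{\F_v}}$ --- that is, on the $\itPi$-factor, which lies outside the domain of your $[\delta]$. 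An operator acting only on $\xi_J(\varphi')\otimes E$ cannot raise $\varphi$; moreover $X_+^{m_1}$ annihilates the extremal vector of $D(\ell_v')^-$ for $m_1>0$, so it also cannot be shifted onto the anti-holomorphic $\varphi'$-factor. The clean way to avoid all of this bookkeeping is precisely what the paper does: treat the trilinear pairing $H^{{}^\sharp I}\otimes H^{{}^\sharp J}\otimes H^0 \to H^{[\F:\Q]}$ as a single map induced by the trilinear $[\delta(\underline{\kappa})]$ from \S\ref{S:3}, where ${\bf X}(\underline{\kappa})$ has been defined so that at each $v$ the $X_+$'s avoid exactly the maximal-weight factor. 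With that substitution your remaining analysis (Eisenstein Galois equivariance, the local zeta calculations, the accounting of Gauss sums and transcendental factors) coincides with the paper's.
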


\begin{proof}
Consider the \'etale cubic algebra 
\[
\E = \F \times \F \times \F
\]
over $\F$.
We have $\Sigma_\E = \Sigma_\F^{(1)} \sqcup \Sigma_\F^{(2)}\sqcup \Sigma_\F^{(3)}$, where $\Sigma_\F^{(i)}$ is the set of algebra homomorphisms from $\E$ into $\R$ which are non-zero on its $i$-th component. 
For each $v \in \Sigma_\F$, there are three extensions $v^{(1)}, v^{(2)}, v^{(3)} \in \Sigma_\E$ of $v$. We arrange the index so that $v^{(i)} \in \Sigma_\F^{(i)}$.
Fix $m \in \Z$ such that
\[
\frac{1}{2} \leq m+r_0+\frac{r+r'}{2} \leq \frac{|\ell_v-\ell_v'|}{2}
\]
for all $v \in \Sigma_\F$ and put
\[
m' = 2m+2r_0+r+r'.
\]
Let $(\underline{\kappa},\underline{\bf r}) \in \Z_{\geq 1}[\Sigma_\E] \times \Z[\Sigma_\E]$ be motivic defined by
\[
\kappa_w = \begin{cases}
\ell_v & \mbox{ if $w=v^{(1)} \in \Sigma_\F^{(1)}$},\\
\ell'_v & \mbox{ if $w=v^{(2)} \in \Sigma_\F^{(2)}$},\\
m' & \mbox{ if $w=v^{(3)} \in \Sigma_\F^{(3)}$},
\end{cases} \quad {\bf r}_w = \begin{cases}
r & \mbox{ if $w=v^{(1)} \in \Sigma_\F^{(1)}$},\\
r' & \mbox{ if $w=v^{(2)} \in \Sigma_\F^{(2)}$},\\
-r-r' & \mbox{ if $w=v^{(3)} \in \Sigma_\F^{(3)}$}.
\end{cases}
\]
Note that $\underline{\kappa}$ satisfied the totally unbalanced condition (\ref{E:totally unbalanced}). Recall for each $v \in \Sigma_\F$, we define $\tilde{v}(\underline{\kappa}) \in \Sigma_\E$ be the homomorphism corresponding to $\max_{w \mid v}\{\kappa_w\}$, that is, $\kappa_{\tilde{v}(\underline{\kappa})} = \max_{w \mid v}\{\kappa_w\}$. Therefore, 
\[
\tilde{v}(\underline{\kappa}) = \begin{cases}
v^{(1)} & \mbox{ if $v \in I$},\\
v^{(2)} & \mbox{ if $v \in J$},
\end{cases}
\]
and 
\[
I_{\underline{\kappa}} = \{ \tilde{v}(\underline{\kappa}) \, \vert \, v \in \Sigma_\F\} = I \sqcup J \sqcup \emptyset
\]
with respect to the disjoint union $\Sigma_\E = \Sigma_\F^{(1)} \sqcup \Sigma_\F^{(2)}\sqcup \Sigma_\F^{(3)}$.
Consider the automorphic line bundle $\mathcal{L}_{(\underline{\kappa}(I_{\underline{\kappa}}),\underline{\bf r})}$ on the Shimura variety 
\[
{\rm Sh}({\rm R}_{\E/\Q}\GL_{2/\E},(\frak{H}^\pm)^{\Sigma_\E}) = {\rm Sh}({\rm R}_{\F/\Q}\GL_{2/\F},(\frak{H}^\pm)^{\Sigma_\F}) \times {\rm Sh}({\rm R}_{\F/\Q}\GL_{2/\F},(\frak{H}^\pm)^{\Sigma_\F}) \times {\rm Sh}({\rm R}_{\F/\Q}\GL_{2/\F},(\frak{H}^\pm)^{\Sigma_\F})
\]
and the autmorphic line bundles $\mathcal{L}_{(\underline{\ell}(I),\underline{r})}'$, $\mathcal{L}_{(\underline{\ell}'(J),\underline{r}')}'$, and $\mathcal{L}_{(\underline{m}',-\underline{r}-\underline{r}')}'$ on ${\rm Sh}({\rm R}_{\F/\Q}\GL_{2/\F},(\frak{H}^\pm)^{\Sigma_\F})$.
We have
\[
\mathcal{L}_{(\underline{\kappa}(I_{\underline{\kappa}}),\underline{\bf r})} = \mathcal{L}_{(\underline{\ell}(I),\underline{r})}' \times \mathcal{L}_{(\underline{\ell}'(J),\underline{r}')}' \times 
\mathcal{L}_{(\underline{m}',-\underline{r}-\underline{r}')}'.
\]
Let $[\delta(\underline{\kappa})] : \mathcal{L}_{(\underline{\kappa}(I_{\underline{\kappa}}),\underline{\bf r})} \rightarrow \mathcal{L}'_{(\underline{2},\underline{0})}$ be the trilinear differential operator constructed in Proposition \ref{P:differential operator}. In this case, it induces a homomorphism 
\[
[\delta(\underline{\kappa})] : H^{{}^\sharp\!I}((\mathcal{L}_{(\underline{\ell}(I),\underline{r})}')^{\rm sub})\otimes H^{{}^\sharp\!J}((\mathcal{L}_{(\underline{\ell}'(J),\underline{r}')}')^{\rm sub}) \otimes  H^{0}((\mathcal{L}'_{(\underline{m}',-\underline{r}-\underline{r}')})^{\rm can}) \longrightarrow H^{[\F:\Q]}((\mathcal{L}'_{(\underline{2},\underline{0})})^{\rm sub})
\]
which satisfies the Galois equivariant property:
\begin{align}\label{E:Galois equiv. 4.15}
T_\sigma([\delta(\underline{\kappa})](c_1 \otimes c_2 \otimes c_3)) = [\delta({}^\sigma\!\underline{\kappa})](T_\sigma c_1 \otimes T_\sigma c_2 \otimes T_\sigma c_3)
\end{align}
for all $\sigma \in {\rm Aut}(\C)$.
Moreover, if $c_1,c_2,c_3$ are represented by
\[
\varphi_1 \otimes \bigwedge_{v \in I}X_{+,v} \otimes {\bf v}_{(\underline{\ell}(I),\underline{r})},\quad \varphi_2 \otimes \bigwedge_{v \in J}X_{+,v} \otimes {\bf v}_{(\underline{\ell}'(J),\underline{r}')},\quad \varphi_3 \otimes {\bf v}_{(\underline{m}',-\underline{r}-\underline{r}')},
\]
respectively, then $[\delta(\underline{\kappa})](c_1 \otimes c_2 \otimes c_3)$ is represented by
\[
({\bf X}(\underline{\kappa})\cdot(\varphi_1,\varphi_2,\varphi_3) )\vert_{\GL_2(\A_\F)} \otimes \bigwedge_{v \in \Sigma_\F}X_{+,v} \otimes {\bf v}_{(\underline{2},\underline{0})}.
\]
Here $(\varphi_1,\varphi_2,\varphi_3)$ is the automorphic form on $\GL_2(\A_\E) = \GL_2(\A_\F) \times \GL_2(\A_\F) \times \GL_2(\A_\F)$ defined by 
\[
(\varphi_1,\varphi_2,\varphi_3)(g_1,g_2,g_3) = \varphi_1(g_1)\cdot \varphi_2(g_2) \cdot \varphi_1(g_2),
\]
and ${\bf X}(\underline{\kappa}) \in U(\frak{gl}_{2,\C}^{\Sigma_\E})$ is the differential operator defined in (\ref{E:differential operator}).
For $\varphi \in \itPi_{\rm hol}$, $\varphi' \in \itPi'_{\rm hol}$, and $\Phi \in \mathcal{S}(\A_{\F,f}^2)$, let $c({\varphi,\varphi',\chi,\Phi}) \in H^{{}^\sharp\!I}((\mathcal{L}_{(\underline{\ell}(I),\underline{r})}')^{\rm sub})\otimes H^{{}^\sharp\!J}((\mathcal{L}_{(\underline{\ell}'(J),\underline{r}')}')^{\rm sub}) \otimes  H^{0}((\mathcal{L}'_{(\underline{m}',-\underline{r}-\underline{r}')})^{\rm can})$ be the class defined by
\[
c({\varphi,\varphi',\chi,\Phi}) = \xi_I(\varphi) \otimes \xi_J(\varphi') \otimes \left(E^{[m']}(\chi,\chi^{-1}\omega_\itPi^{-1}\omega_{\itPi'}^{-1},\Phi) \otimes {\bf v}_{(\underline{m}',-\underline{r}-\underline{r}')}\right).
\]
Here $\xi_I$ and $\xi_J$ are defined in (\ref{E:xi_I}).
By Proposition \ref{P:Harris' period} and Lemma \ref{L:Eisenstein series}, this cohomology class satisfies the following Galois equivariant property:
\begin{align*}
&T_\sigma\left(\frac{c({\varphi,\varphi',\chi,\Phi})}{D_\F^{1/2}(2\pi\sqrt{-1})^{-[\F:\Q](m+r_0)}\cdot G(\chi\omega_\itPi\omega_{\itPi'})\cdot\Omega^I(\itPi)\cdot\Omega^J(\itPi')}\right)\\
& = \frac{c({{}^\sigma\!\varphi,{}^\sigma\!\varphi',{}^\sigma\!\chi,{}^\sigma\!\Phi})}{D_\F^{1/2}(2\pi\sqrt{-1})^{-[\F:\Q](m+r_0)}\cdot G({}^\sigma\!\chi{}^\sigma\!\omega_\itPi{}^\sigma\!\omega_{\itPi'})\cdot\Omega^{{}^\sigma\!I}({}^\sigma\!\itPi)\cdot\Omega^{{}^\sigma\!J}({}^\sigma\!\itPi')}
\end{align*}
for all $\sigma \in {\rm Aut}(\C)$.
It then follows from Lemma \ref{L:trace} and (\ref{E:Galois equiv. 4.15}) that 
\begin{align*}
&\sigma\left( \frac{\int_{{\rm Sh}({\rm R}_{\F/\Q}\GL_{2/\F},(\frak{H}^\pm)^{\Sigma_\F})}[\delta(\underline{\kappa})]c(\varphi,\varphi',\chi,\Phi)}{{D_\F^{1/2}(2\pi\sqrt{-1})^{-[\F:\Q](m+r_0)}\cdot G(\chi\omega_\itPi\omega_{\itPi'})\cdot\Omega^I(\itPi)\cdot\Omega^J(\itPi')}}\right) \\
&= \frac{\int_{{\rm Sh}({\rm R}_{\F/\Q}\GL_{2/\F},(\frak{H}^\pm)^{\Sigma_\F})}[\delta({}^\sigma\!\underline{\kappa})]c({}^\sigma\!\varphi,{}^\sigma\!\varphi',{}^\sigma\!\chi,{}^\sigma\!\Phi)}{D_\F^{1/2}(2\pi\sqrt{-1})^{-[\F:\Q](m+r_0)}\cdot G({}^\sigma\!\chi{}^\sigma\!\omega_\itPi{}^\sigma\!\omega_{\itPi'})\cdot\Omega^{{}^\sigma\!I}({}^\sigma\!\itPi)\cdot\Omega^{{}^\sigma\!J}({}^\sigma\!\itPi')}.
\end{align*}
By the explicit realization of $[\delta(\underline{\kappa})]$ described above and Lemma \ref{L:trace} again, we conclude that
\begin{align}\label{E:cohomological interpretation}
\begin{split}
&\sigma\left( \frac{\int_{\A_\F^\times \GL_2(\F) \backslash \GL_2(\A_\F)}{\bf X}(\underline{\kappa})\cdot(\varphi^I,(\varphi')^J, E^{[m']}(\chi,\chi^{-1}\omega_\itPi^{-1}\omega_{\itPi'}^{-1},\Phi))(g,g,g) \,dg^{\rm Tam}}{{D_\F^{1/2}(2\pi\sqrt{-1})^{-[\F:\Q](m+r_0)}\cdot G(\chi\omega_\itPi\omega_{\itPi'})\cdot\Omega^I(\itPi)\cdot\Omega^J(\itPi')}}\right) \\
&= \frac{\int_{\A_\F^\times \GL_2(\F) \backslash \GL_2(\A_\F)}{\bf X}({}^\sigma\!\underline{\kappa})\cdot({}^\sigma\!\varphi^{{}^\sigma\!I},({}^\sigma\!\varphi')^{{}^\sigma\!J}, E^{[m']}({}^\sigma\!\chi,{}^\sigma\!\chi^{-1}{}^\sigma\!\omega_\itPi^{-1}{}^\sigma\!\omega_{\itPi'}^{-1},{}^\sigma\!\Phi))(g,g,g) \,dg^{\rm Tam}}{D_\F^{1/2}(2\pi\sqrt{-1})^{-[\F:\Q](m+r_0)}\cdot G({}^\sigma\!\chi{}^\sigma\!\omega_\itPi{}^\sigma\!\omega_{\itPi'})\cdot\Omega^{{}^\sigma\!I}({}^\sigma\!\itPi)\cdot\Omega^{{}^\sigma\!J}({}^\sigma\!\itPi')}
\end{split}
\end{align}
for all $\sigma \in {\rm Aut}(\C)$.
On the other hand, assume 
\[
W_{\varphi,\psi_\F} = \prod_{v \in \Sigma_\F}W_{(\ell_v,r),\psi_{\F_v}}^+\prod_{v\nmid \infty} W_v, \quad W_{\varphi',\psi_\F} = \prod_{v \in \Sigma_\F}W_{(\ell_v',r'),\psi_{\F_v}}^+\prod_{v\nmid \infty} W_v',
\] and $\Phi = \bigotimes_{v\nmid \infty}\Phi_v$, we have the integral representation of $L^{(\infty)}(m,\itPi \times \itPi' \times \chi)$ recalled in (\ref{E:integral rep.}):
\begin{align*}
&\int_{\A_\F^\times \GL_2(\F) \backslash \GL_2(\A_\F)}{\bf X}(\underline{\kappa})\cdot(\varphi^I,(\varphi')^J, E^{[m']}(\chi,\chi^{-1}\omega_\itPi^{-1}\omega_{\itPi'}^{-1},\Phi))(g,g,g) \,dg^{\rm Tam}\\
&= D_\F^{-1}\zeta_\F(2)^{-1}\cdot L^{(\infty)}(m,\itPi \times \itPi' \times \chi)\cdot\prod_{v \nmid \infty}Z^*(W_v,W_v',f_{\chi_v,\chi_v^{-1}\omega_{\itPi,v}^{-1}\omega_{\itPi',v}^{-1},\Phi_v,s})\vert_{s=m}\\
&\times \prod_{v \in I}\sum_{m'+2m_1+2m_2=\ell_v-\ell_v'}c_{m_1,m_2}(\ell_v,\ell_v',m')Z(W_{(\ell_v,r),\psi_{\F_v}}^-,X_+^{m_1}\cdot W_{(\ell_v',r'),\psi_{\F_v}}^+,X_+^{m_2}\cdot f_{\chi_v,\chi_v^{-1}\omega_{\itPi,v}^{-1}\omega_{\itPi',v}^{-1},\Phi^{[m']},s})\vert_{s = m}\\
&\times \prod_{v \in J}\sum_{m'+2m_1+2m_2=\ell_v'-\ell_v}c_{m_1,m_2}(\ell_v,\ell_v',m')Z(X_+^{m_1}\cdot W_{(\ell_v,r),\psi_{\F_v}}^+,W_{(\ell_v',r'),\psi_{\F_v}}^-,X_+^{m_2}\cdot f_{\chi_v,\chi_v^{-1}\omega_{\itPi,v}^{-1}\omega_{\itPi',v}^{-1},\Phi^{[m']},s})\vert_{s = m}.
\end{align*}
Note that $\zeta_\F(2) \in D_\F^{1/2}\cdot \pi^{[\F:\Q]}\cdot \Q^\times$.
By (\ref{E:Galois Gauss sum}) and Lemmas \ref{L:Galois equiv. cusp form} and \ref{L:Galois equiv. local zeta}, we have
\begin{align}\label{E:Galois equiv. 4.8}
\begin{split}
W_{{}^\sigma\!\varphi,\psi_\F} &= \frac{\sigma(2\pi\sqrt{-1})^{-\Sigma_{v \in \Sigma_\F}(\ell_v+r)/2}}{(2\pi\sqrt{-1})^{-\Sigma_{v \in \Sigma_\F}(\ell_v+r)/2}}\prod_{v \in \Sigma_\F}W_{({}^\sigma\!\ell_v,r),\psi_{\F_v}}^+\prod_{v\nmid \infty} t_\sigma W_v,\\
W_{{}^\sigma\!\varphi',\psi_\F} &= \frac{\sigma(2\pi\sqrt{-1})^{-\Sigma_{v \in \Sigma_\F}(\ell_v'+r')/2}}{(2\pi\sqrt{-1})^{-\Sigma_{v \in \Sigma_\F}(\ell_v'+r')/2}}\prod_{v \in \Sigma_\F}W_{({}^\sigma\!\ell_v',r'),\psi_{\F_v}}^+\prod_{v\nmid \infty} t_\sigma W_v',
\end{split}
\end{align}
and 
\begin{align}\label{E:Galois equiv. 4.9}
\begin{split}
&\sigma \left( G(\chi)\prod_{v \nmid \infty}Z^*(W_v,W_v',f_{\chi_v,\chi_v^{-1}\omega_{\itPi,v}^{-1}\omega_{\itPi',v}^{-1},\Phi_v,s})\vert_{s=m}\right)\\
& = G({}^\sigma\!\chi)\prod_{v \nmid \infty}Z^*(t_\sigma W_v,t_\sigma W_v',f_{{}^\sigma\!\chi_v,{}^\sigma\!\chi_v^{-1}{}^\sigma\!\omega_{\itPi,v}^{-1}{}^\sigma\!\omega_{\itPi',v}^{-1},{}^\sigma\!\Phi_v,s})\vert_{s=m}.
\end{split}
\end{align}
By Lemma \ref{L:archimedean local zeta}, we have
\begin{align}\label{E:archimedean 4.10}
\begin{split}
&\prod_{v \in I}\sum_{m'+2m_1+2m_2=\ell_v-\ell_v'}c_{m_1,m_2}(\ell_v,\ell_v',m')Z(W_{(\ell_v,r),\psi_{\F_v}}^-,X_+^{m_1}\cdot W_{(\ell_v',r'),\psi_{\F_v}}^+,X_+^{m_2}\cdot f_{\chi_v,\chi_v^{-1}\omega_{\itPi,v}^{-1}\omega_{\itPi',v}^{-1},\Phi^{[m']},s})\vert_{s = m}\\
&\times \prod_{v \in J}\sum_{m'+2m_1+2m_2=\ell_v'-\ell_v}c_{m_1,m_2}(\ell_v,\ell_v',m')Z(X_+^{m_1}\cdot W_{(\ell_v,r),\psi_{\F_v}}^+,W_{(\ell_v',r'),\psi_{\F_v}}^-,X_+^{m_2}\cdot f_{\chi_v,\chi_v^{-1}\omega_{\itPi,v}^{-1}\omega_{\itPi',v}^{-1},\Phi^{[m']},s})\vert_{s = m} \\
& = C(\underline{\kappa},m')\cdot (\sqrt{-1})^{\sum_{v \in I}\ell_v + \sum_{v \in J}\ell_v'}\cdot \pi^{[\F:\Q]}\cdot (2\pi\sqrt{-1})^{-\sum_{v \in \Sigma_\F}(\ell_v+\ell_v'+3m')/2}.
\end{split}
\end{align}
Here $C(\underline{\kappa},m') \in \Q$ is the rational number
\begin{align*}
C(\underline{\kappa},m') &= \prod_{v \in I}\chi_v(-1)(-1)^{[\F:\Q]m'}2^{2[\F:\Q]-[\F:\Q]m'-\sum_{v \in I}\ell_v-\sum_{v \in J}\ell_v'}\prod_{v \in \Sigma_\F}\Gamma\left(\tfrac{\ell_v+\ell_v'+m'}{2}-1\right)\Gamma\left(\tfrac{|\ell_v-\ell_v'|+m'}{2}\right)\\
&\times\prod_{v \in \Sigma_\F}\sum_{m'+2m_1+2m_2=|\ell_v-\ell_v'|}(-1)^{m_1}c_{m_1,m_2}(\ell_v,\ell_v',m')
\end{align*}
Note that $C(\underline{\kappa},m')$ is non-zero as we will show in the proof of Lemma \ref{L:archimedean local period} below.
Finally, for each $v \nmid \infty$, we let the triplet $(W_v,W_v',\Phi_v)$ be chosen so that (cf.\,\cite[\S\,6.2]{CH2020})
\[
Z^*(W_v,W_v',f_{\chi_v,\chi_v^{-1}\omega_{\itPi,v}^{-1}\omega_{\itPi',v}^{-1},\Phi_v,s})=1.
\]
The theorem then follows from (\ref{E:cohomological interpretation})-(\ref{E:archimedean 4.10}).
This completes the proof.
\end{proof}

\section{Proof of main results}\label{S:5}

In this section, we prove the main results Theorems \ref{T: main thm}, \ref{C:main}, and \ref{T: main thm 2} of this paper.
In \S\,\ref{SS:Ichino}-\S\,\ref{SS:5.3}, we keep the notation of \S\,\ref{S:3} and 
let $\itPi = \bigotimes_v\pi_v$ be an irreducible cuspidal automorphic representation of $\GL_2(\A_\E)$ with central character $\omega_\itPi$, where $v$ runs through the places of $\F$. We assume the following conditions hold:
\begin{itemize}
\item $\omega_\itPi \vert_{\A_\F^\times}$ is trivial;
\item $\itPi$ is motivic of weight $(\underline{\kappa},\underline{r}) \in \Z_{\geq 1}[\Sigma_\E]\times \Z[\Sigma_\E]$;
\item $\underline{\kappa}$ satisfies the totally unbalanced condition
\[
2\max_{w \mid v}\{\kappa_w\} - \sum_{w \mid v}\kappa_w \geq 0 \quad
\]
for all $v \in \Sigma_\F$.
\end{itemize}

\subsection{Ichino's formula}\label{SS:Ichino}

In this subsection, we recall Ichino's central value formula \cite{Ichino2008} for $L(\tfrac{1}{2},\itPi,{\rm As})$ in terms of global trilinear period integral, which is a special case of the refined Gan--Gross--Prasad conjecture proposed by Ichino--Ikeda \cite{IchinoIkeda2010}.
Let $D$ be a totally indefinite quaternion algebra over $\F$ such that there exists an irreducible cuspidal automorphic representation $\itPi^D = \bigotimes_v\pi_v^D$ of $D^\times(\A_\F)$ associated to $\itPi$ by the Jacquet--Langlands correspondence. 
Define the functional $I^D \in {\rm Hom}_{D^\times(\A_\F)\times D^\times(\A_\F)}(\itPi^D \otimes (\itPi^D)^\vee,\C)$ by the global trilinear period integral
\begin{align}\label{E:global period}
I^D(\varphi_1\otimes\varphi_2) = \int_{\A_\F^\times D^\times(\F)\backslash D^\times(\A_\F)}\int_{\A_\F^\times D^\times(\F)\backslash D^\times(\A_\F)} \varphi_1(g_1)\varphi_2(g_2)\,dg_1^{\rm Tam}dg_2^{\rm Tam}.
\end{align}
Here $dg_1^{\rm Tam}$ and $dg_2^{\rm Tam}$ are the Tamagawa measures on $\A_\F^\times \backslash D^\times(\A_\F)$.
For each place $v$ of $\F$, we fix a non-zero $D^\times(\E_v)$-equivariant bilinear pairing 
\[
\<\,,\,\>_v:\pi_v^D \times (\pi_v^D)^\vee \longrightarrow \C.
\]
Let $dg_v$ be the Haar measure on $\F_v \backslash D^\times(\F_v)$ defined as follows:
\begin{itemize}
\item If $v$ is a finite place, let $dg_v$ be the Haar measure normalized so that ${\rm vol}(\o_{\F_v}^\times\backslash \o_{D_v}^\times,dg_v)=1$. Here $\o_{D_v}$ is a maximal order of $D(\F_v)$.
\item If $v$ is a real place, let 
\[
dg_v = \frac{dx_vdy_v}{|y_v|^2}dk_v
\]
for $g_v = {\bf n}(x_v){\bf a}(y_v)k_v$ with $x_v \in \R$, $y_v \in \R^\times$, and $k_v \in {\rm SO}(2)$.
Here $dx_v$ and $dy_v$ are the Lebesgue measures and $dk_v$ is the Haar measure on ${\rm SO}(2)$ such that ${\rm vol}({\rm SO}(2),dk_v)=2$.
\end{itemize}
Let $I_v^D \in {\rm Hom}_{D^\times(\F_v)\times D^\times(\F_v)}(\pi_v^D \otimes (\pi_v^D)^\vee,\C)$ be the functional define by the local trilinear period integrals
\begin{align}\label{E:local period integral}
I_v^D(\varphi_{1,v}\otimes\varphi_{2,v}) = \frac{\zeta_{\F_v}(2)}{\zeta_{\E_v}(2)}\cdot\frac{L(1,\pi_v,{\rm Ad})}{L(\tfrac{1}{2},\pi_v,{\rm As})}\cdot\int_{\F_v^\times\backslash D^\times(\F_v)}\<\pi_v^D(g_v)\varphi_{1,v},\varphi_{2,v}\>_v\,dg_v.
\end{align}
Here $\zeta_{\E_v}(s)$ and $\zeta_{\F_v}(s)$ are the local zeta functions of $\E_v$ and $\F_v$, respectively.
Note that $L(\tfrac{1}{2},\pi_v,{\rm As}) \neq 0$ (cf.\,\cite[Lemma 3.1]{Chen2020}) and the integral is absolutely convergent by \cite[Lemma 2.1]{Ichino2008}.
When $D$ is unramified at $v$, we also write $I_v=I_v^D$.
We normalize the pairings $\<\,,\,\>_v$ 
so that if $\varphi_1 = \bigotimes_v \varphi_{1,v} \in \itPi^D$ and $\varphi_2 = \bigotimes_v \varphi_{2,v} \in (\itPi^D)^\vee$, then $\<\varphi_{1,v},\varphi_{2,v}\>_v=1$ for almost all $v$ and 
\[
\int_{\A_\E^\times D^\times(\E)\backslash D^\times(\A_\E)}\varphi_1(g)\varphi_2(g)\,dg^{\rm Tam} = \prod_v \<\varphi_{1,v},\varphi_{2,v}\>_v.
\]
Here $dg^{\rm Tam}$ is the Tamagawa measure on $\A_\E^\times\backslash D^\times(\A_\E)$. 
Let $C_D$ be the constant such that
\[
C_D\cdot \prod_v dg_v
\]
is the Tamagawa measure on $\A_\F^\times\backslash D^\times(\A_\F)$. Then we have (cf.\,\cite[Lemma 6.1]{IP2018})
\begin{align}\label{E:ratio between measures}
C_D = \prod_{v \in \Sigma_D}(q_v-1)^{-1}\cdot D_\F^{-3/2}\cdot\zeta_\F(2)^{-1},
\end{align}
where $\Sigma_D$ is the set of places of $\F$ at which $D$ is ramified, $q_v$ is the cardinality of the residue field of $\F_v$, and $\zeta_\F$ is the completed Dedekind zeta function of $\F$.
We have the following central value formula of Ichino \cite{Ichino2008}.

\begin{thm}[Ichino]\label{T:Ichino formula}
As functionals in ${\rm Hom}_{D^\times(\A_\F)\times D^\times(\A_\F)}(\itPi^D \otimes (\itPi^D)^\vee,\C)$, we have
\[
I^D = \frac{C_D}{2^c}\cdot \frac{\zeta_\E(2)}{\zeta_\F(2)}\cdot\frac{L(\tfrac{1}{2},\itPi,{\rm As})}{L(1,\itPi,{\rm Ad})}\cdot\prod_v I_v^D.
\]
Here $\zeta_{\E}(s)$ and $\zeta_{\F}(s)$ are the completed Dedekind zeta functions of $\E$ and $\F$, respectively, and
\[
c = \begin{cases}
3 & \mbox{ if $\E = \F \times \F \times \F$},\\
2 & \mbox{ if $\E = \K \times \F$ for some totally real quadratic extension $\K$ of $\F$},\\
1 & \mbox{ if $\E$ is a field}.
\end{cases}
\]
\end{thm}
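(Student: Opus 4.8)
The plan is to deduce the identity from Ichino's work \cite{Ichino2008}, the statement being a special case of the refined Gan--Gross--Prasad conjecture of Ichino--Ikeda \cite{IchinoIkeda2010}; the only genuine work is a careful comparison of measures. When $\E = \F\times\F\times\F$ and $D$ is the matrix algebra, the formula is \cite[Theorem 1.1]{Ichino2008}, with the triple product $L$-function $L(s,\pi_1\times\pi_2\times\pi_3) = L(s,\itPi,{\rm As})$ in the numerator (and the general $D$ over $\F$ already covered there via three quaternion algebras). For a general totally real \'etale cubic algebra $\E/\F$, Ichino's argument applies with no essential change: writing $\E = \F_1\times\cdots\times\F_n$ and $D\otimes_\F\E = D_1\times\cdots\times D_n$, one realizes the diagonal embedding $D^\times \subset (D\otimes_\F\E)^\times$ inside the seesaw dual pair underlying Garrett's integral representation, and the Rankin--Selberg unfolding of $I^D$ against a Siegel Eisenstein series yields an Euler product of local zeta integrals. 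The \'etale structure of $\E$ enters only in the unramified local computation, which produces the Satake parameter of the Asai cube representation and hence the Euler factor $L(\tfrac{1}{2},\pi_v,{\rm As})/L(1,\pi_v,{\rm Ad})$, up to the ratio $\zeta_{\E_v}(2)/\zeta_{\F_v}(2)$ of local zeta factors --- precisely the factor already built into the definition (\ref{E:local period integral}) of $I_v^D$.

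I would carry this out as follows. First, by the multiplicity-one theorem for trilinear forms (Prasad \cite{Prasad1990}, \cite{Prasad1992}, Loke \cite{Loke2001}), both $I^D$ and $\prod_v I_v^D$ span the one-dimensional space ${\rm Hom}_{D^\times(\A_\F)\times D^\times(\A_\F)}(\itPi^D\otimes(\itPi^D)^\vee,\C)$, so it suffices to verify the identity on a single factorizable vector $\varphi_1 = \bigotimes_v\varphi_{1,v}\in\itPi^D$, $\varphi_2 = \bigotimes_v\varphi_{2,v}\in(\itPi^D)^\vee$ with $\langle\varphi_{1,v},\varphi_{2,v}\rangle_v = 1$ for almost all $v$; the normalization imposed on the $\langle\,,\,\rangle_v$ right after (\ref{E:local period integral}) guarantees that the global Petersson pairing equals $\prod_v\langle\varphi_{1,v},\varphi_{2,v}\rangle_v$, so both sides are compared in the same normalization as in \cite{Ichino2008}. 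Applying Ichino's global identity to $\varphi_1\otimes\varphi_2$ then expresses $I^D(\varphi_1\otimes\varphi_2)$ as the product of $L(\tfrac{1}{2},\itPi,{\rm As})/L(1,\itPi,{\rm Ad})$, the completed-Eisenstein zeta ratio $\zeta_\E(2)/\zeta_\F(2)$, the constant $C_D$ of (\ref{E:ratio between measures}) relating the Tamagawa measure on $\A_\F^\times\backslash D^\times(\A_\F)$ to $\prod_v dg_v$, a power $2^{-c}$ of $2$, and $\prod_v I_v^D(\varphi_{1,v}\otimes\varphi_{2,v})$.

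The main --- indeed essentially the only --- obstacle is pinning down the constant, and in particular the exponent $c$. The power of $2$ is the discrepancy between the Tamagawa measure and its local factorization after passage to the quotient by the split center: each simple factor $D_i^\times$ of $(D\otimes_\F\E)^\times$ contributes, through the exact sequence relating $D_i^\times$, its center $\F_i^\times$, and the adjoint group $D_i^\times/\F_i^\times$, one factor of $\#\mu_2 = 2$ (equivalently, a $\mathbb{G}_m$-Tamagawa-number discrepancy coming from the squaring isogeny), and there are exactly $c = n$ such factors; hence $c = 3,2,1$ according as $\E \cong \F\times\F\times\F$, $\E\cong\K\times\F$, or $\E$ is a field, as in the statement. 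This bookkeeping I would perform by combining the measure conventions of \cite[\S\S\,2--4]{Ichino2008} with \cite[Lemma 6.1]{IP2018}, which is recorded here as (\ref{E:ratio between measures}).
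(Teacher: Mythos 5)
The paper does not prove this theorem --- it is quoted from Ichino \cite{Ichino2008}, with (\ref{E:ratio between measures}) supplying the translation between Ichino's measure conventions and those fixed in \S\,\ref{SS:Ichino}. Your instinct to reduce everything to Ichino plus measure bookkeeping is exactly what the paper does, but your proposal misstates the scope of the citation: Ichino's Theorem~1.1 is already stated and proved for an \emph{arbitrary} \'etale cubic algebra $\E$ over a number field $\F$ and an arbitrary quaternion algebra $D/\F$, with the constant $2^{-c}$ in precisely the form appearing here. There is therefore no ``general $\E$'' case to extend, and the Rankin--Selberg unfolding you sketch is not a new step on top of \cite{Ichino2008}; the only genuine content is confirming that the local pairings $\langle\,,\,\rangle_v$, the local measures $dg_v$, and the global Tamagawa measure are normalized compatibly, which is what \cite[Lemma~6.1]{IP2018} (encoded in $C_D$) accomplishes.

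If you did intend to re-derive the formula rather than cite it, the proposal as written would not constitute a proof: the unfolding of the trilinear period against the Siegel Eisenstein series, the unramified local computation that produces the Asai cube Euler factor together with the ratio $\zeta_{\E_v}(2)/\zeta_{\F_v}(2)$, and the extraction of $2^{-c}$ are the entire content of Ichino's argument and are each asserted rather than carried out. The multiplicity-one reduction to a single factorizable vector is fine and standard. Your count $c=n$ (the number of simple factors of $\E$) is correct and matches the statement, but the heuristic you give --- one factor of $\#\mu_2=2$ per $D_i^\times$ from a ``squaring isogeny'' --- is not a derivation and is not how the constant arises in Ichino's proof; left at this level it is a plausibility argument, not a verification of the exponent.
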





\subsection{Local trilinear period integrals}\label{SS:local period}

For each finite place $v$ of $\F$ and $\sigma \in {\rm Aut}(\C)$, we fix $\sigma$-linear isomorphisms
\[
t_{\sigma,v} : \pi_v^D \longrightarrow {}^\sigma\!\pi_v^D,\quad t_{\sigma,v}^\vee : (\pi_v^D)^\vee \longrightarrow {}^\sigma\!(\pi_v^D)^\vee.
\]
By abuse of notation, we denote by the same notation $I_v^D \in {\rm Hom}_{D^\times(\F_v)\times D^\times(\F_v)}({}^\sigma\!\pi_v^D \otimes {}^\sigma\!(\pi_v^D)^\vee,\C)$ the functional defined as in (\ref{E:local period integral}) with respect to the $D^\times(\F_v)$-equivariant bilinear pairing
\[
\<\,,\,\>_v : {}^\sigma\!\pi_v^D \times {}^\sigma\!(\pi_v^D)^\vee \longrightarrow \C
\]
defined by
\begin{align}\label{E:sigma pairing}
\<t_{\sigma,v}\varphi_{1,v},t_{\sigma,v}^\vee\varphi_{2,v}\>_v = \sigma \<\varphi_{1,v},\varphi_{2,v}\>_v
\end{align}
for $\varphi_{1,v} \in \pi_v^D$ and $\varphi_{2,v} \in (\pi_v^D)^\vee$.
In the following lemmas, we show that the local $L$-factors and the local trilinear period integrals satisfy the Galois equivariant property.

\begin{lemma}\label{L:Galois equiv. 1}
Let $v$ be a finite place of $\F$. For $\sigma \in {\rm Aut}(\C)$, we have
\[
\sigma L(1,\pi_v,{\rm Ad}) = L(1,{}^\sigma\!\pi_v,{\rm Ad}),\quad \sigma L(\tfrac{1}{2},\pi_v,{\rm As}) = L(\tfrac{1}{2},{}^\sigma\!\pi_v,{\rm As}).
\]
\end{lemma}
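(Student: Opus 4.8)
The plan is to reduce both identities to statements at the level of rational functions in $q_v^{-s}$, namely ${}^\sigma L(s,\pi_v,{\rm Ad})=L(s,{}^\sigma\!\pi_v,{\rm Ad})$ and ${}^\sigma L(s,\pi_v,{\rm As})=L(s,{}^\sigma\!\pi_v,{\rm As})$ — with $\sigma$ acting coefficientwise on $\C(q_v^{-s})$ as in \S\,\ref{SS:notation} — and then to specialize to $s=1$, respectively $s=\tfrac{1}{2}$. Nothing here uses the motivic or totally unbalanced hypotheses; the assertion is purely local. As a first reduction, write $\E\otimes_\F\F_v=\prod_{w\mid v}\E_w$ and $\pi_v=\bigotimes_{w\mid v}\pi_w$, where $\pi_w$ is an irreducible admissible representation of $\GL_2(\E_w)$; then $L(s,\pi_v,{\rm Ad})=\prod_{w\mid v}L(s,\pi_w\times\pi_w^\vee)\,\zeta_{\E_w}(s)^{-1}$ by the standard factorization of the Rankin--Selberg factor. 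Since $\zeta_{\E_w}(s)=(1-q_w^{-s})^{-1}$ has rational coefficients and $({}^\sigma\!\pi_w)^\vee\cong{}^\sigma\!(\pi_w^\vee)$, the adjoint case is subsumed by the ${\rm Aut}(\C)$-equivariance of Rankin--Selberg local factors for $\GL_2\times\GL_2$, which I would treat together with the Asai case.

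For the common mechanism, let $\phi_{\pi_w}$ be the Weil--Deligne representation of $W'_{\E_w}$ attached to $\pi_w$ by the local Langlands correspondence for $\GL_2$. Recall that $L(s,\pi_v,{\rm As})=L(s,\rho_v)$, where $\rho_v$ is the Weil--Deligne representation of $W'_{\F_v}$ built from the $\phi_{\pi_w}$, $w\mid v$, by tensor products and tensor inductions according to the splitting type of $v$ in $\E$ (the $8$-dimensional tensor induction when $\E\otimes_\F\F_v$ is a cubic field, $\phi_{\pi_{w_1}}\otimes\phi_{\pi_{w_2}}\otimes\phi_{\pi_{w_3}}$ when $v$ splits completely, and a mixed tensor--Asai construction otherwise), and likewise $L(s,\pi_v,{\rm Ad})=\prod_{w\mid v}L(s,{\rm Ad}\circ\phi_{\pi_w})$. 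I would then combine three inputs. First, the local Langlands correspondence for $\GL_n$ over a $p$-adic field is ${\rm Aut}(\C)$-equivariant: $\phi_{{}^\sigma\!\pi_w}\cong{}^\sigma\!\phi_{\pi_w}$ for all $\sigma\in{\rm Aut}(\C)$, where ${}^\sigma(-)$ of a Weil--Deligne representation denotes the transport of structure along a $\sigma$-linear automorphism of its complex space; this follows from Henniart's characterization of the correspondence through $\varepsilon$- and $\gamma$-factors of pairs and their Galois behaviour. Second, tensor product, tensor induction and the adjoint representation of $\GL_2$ are operations defined over $\Q$, hence each commutes with $\sigma$-conjugation of Weil--Deligne representations. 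Third, for any Weil--Deligne representation $\rho$ of $W'_{\F_v}$ one has ${}^\sigma L(s,\rho)=L(s,{}^\sigma\!\rho)$ as rational functions in $q_v^{-s}$, because passage to inertia invariants and to the kernel of the monodromy operator are $\Q$-rational operations and the coefficients of $\det(1-q_v^{-s}\,{\rm Frob}\mid -)$ are polynomial in the matrix entries of Frobenius. Chaining these, ${}^\sigma L(s,\pi_v,{\rm As})={}^\sigma L(s,\rho_v)=L(s,{}^\sigma\!\rho_v)=L(s,{}^\sigma\!\pi_v,{\rm As})$, and similarly for ${\rm Ad}$; evaluating at $s=1$ and $s=\tfrac{1}{2}$ gives the lemma.

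I expect the one step beyond bookkeeping to be the first input, the ${\rm Aut}(\C)$-equivariance of the local Langlands correspondence. The delicate point is to verify that no twist by the element $u\in\widehat{\Z}^\times$ with ${}^\sigma\!\psi_\F=\psi_\F^u$ intrudes at the level of the parameter itself: such a twist does appear in the $\varepsilon$-factors, but it cancels once one compares $\gamma$-factors of all twists $\pi_w\otimes\chi$ — exactly the data that pins down $\phi_{\pi_w}$ — so that $\phi_{{}^\sigma\!\pi_w}$ is isomorphic to ${}^\sigma\!\phi_{\pi_w}$ without correction. Should one wish to avoid the local Langlands correspondence, one can argue instead by the explicit classification of irreducible admissible representations of $\GL_2$ over a local field (principal series, twists of Steinberg, dihedral and other supercuspidals), for which the ${\rm Ad}$- and ${\rm As}$-factors are explicit finite products of Tate $L$-factors of characters, and the equivariance ${}^\sigma L(s,\mu)=L(s,{}^\sigma\!\mu)$ of $\GL_1$-factors is immediate from $L(s,\mu)=(1-\mu(\varpi)q^{-s})^{-1}$ or $L(s,\mu)=1$; but the parameter-theoretic route is shorter and uniform.
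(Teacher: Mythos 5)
Your overall strategy---pass to Weil--Deligne parameters via the local Langlands correspondence, exploit $\Q$-rationality of the constructions, and use $\sigma$-equivariance of Artin--Weil--Deligne $L$-factors---is the same route the paper takes, and your treatment of $\mathrm{Ad}$ is fine. But there is a genuine gap in the $\mathrm{As}$ case, and it lies precisely in the input you flagged as delicate. The local Langlands correspondence for $\GL_n$ (in the Harris--Taylor/Henniart normalization) is \emph{not} ${\rm Aut}(\C)$-equivariant without correction: the correct statement, which the paper cites from Henniart, is ${}^\sigma\!\phi_{\pi_w}=\phi_{{}^\sigma\!\pi_w}\otimes\chi_{\E_w,\sigma}$, where $\chi_{\E_w,\sigma}=\sigma\bigl(|\cdot|_{\E_w}^{1/2}\bigr)\,|\cdot|_{\E_w}^{-1/2}$ is the unramified quadratic character recording whether $\sigma$ fixes $\sqrt{q_w}$. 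The subtlety you discuss---the $u$-twist of $\psi$---is indeed innocuous, but it is not the relevant one; the quadratic twist arises from the half-integral Tate twist built into the unitary normalization of the correspondence (equivalently, from the non-algebraicity of $q^{1/2}$). Under $\mathrm{Ad}$ this twist disappears because $\mathrm{Ad}$ factors through $\PGL_2$, which is why your $\mathrm{Ad}$ argument is unaffected; but $\mathrm{As}(\phi\otimes\chi)=\mathrm{As}(\phi)\otimes\chi\vert_{\F_v^\times}$, and in the cubic-field case $\chi_{\E_v,\sigma}\vert_{\F_v^\times}=\chi_{\F_v,\sigma}$ is nontrivial when $\sigma(\sqrt{q_v})=-\sqrt{q_v}$. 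Consequently your claimed identity ${}^\sigma L(s,\pi_v,\mathrm{As})=L(s,{}^\sigma\!\pi_v,\mathrm{As})$ of rational functions in $q_v^{-s}$ is false in general.

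There is a second, compounding issue: even if that rational-function identity held, specializing it at the half-integer $s=\tfrac{1}{2}$ would not yield the lemma, because the coefficientwise action of $\sigma$ on a power series in $q_v^{-s}$ does not commute with evaluation at $q_v^{-1/2}$ (one picks up $\sigma(q_v^{-1/2})=\chi_{\F_v,\sigma}(\varpi_v)\,q_v^{-1/2}$). The paper circumvents both problems simultaneously by working with the shifted factor: it shows ${}^\sigma L(s+\tfrac{1}{2},\mathit{\Phi})=L(s+\tfrac{1}{2},{}^\sigma\!\mathit{\Phi}\otimes\chi_{\F_v,\sigma})$ for any $8$-dimensional Weil--Deligne representation $\mathit{\Phi}$ of $W_{\F_v}'$, and then the quadratic twist $\chi_{\E_v,\sigma}\vert_{\F_v^\times}$ coming from the LLC exactly cancels the $\chi_{\F_v,\sigma}$ introduced by the shift, giving ${}^\sigma L(s+\tfrac{1}{2},\pi_v,\mathrm{As})=L(s+\tfrac{1}{2},{}^\sigma\!\pi_v,\mathrm{As})$ as rational functions in $q_v^{-s}$; evaluating at $s=0$ (an integer, so harmless) then yields the lemma. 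To repair your argument you would either need to track the $\chi_{\E_w,\sigma}$-twists through $\mathrm{As}$ and through the shift exactly as the paper does, or recast everything in terms of an ``arithmetically normalized'' parameter $\phi_{\pi_w}\otimes|\cdot|^{-1/2}$ (for which $\sigma$-equivariance does hold without correction) and then convert the critical point $s=\tfrac{1}{2}$ into an integer accordingly.
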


\begin{proof}
We prove the assertion for $L(\tfrac{1}{2},\pi_v,{\rm As})$ in the case when $\E_v$ is a field. The assertion for $L(1,\pi_v,{\rm Ad})$ or arbitrary $\E_v$ can be proved in a similar way and we omit it. 
Let $W_{\E_v}'$ and $W_{\F_v}'$ be the Weil--Deligne groups of $\E_v$ and $\F_v$, respectively. 
Fix $\sigma \in {\rm Aut}(\C)$.
Let $\chi_{\E_v,\sigma}$ and $\chi_{\F_v,\sigma}$ be the quadratic characters of $\E_v^\times$ and $\F_v^\times$, respectively, defined by
\[
\chi_{\E_v,\sigma} = \sigma(|\mbox{ }|_{\E_v}^{1/2})\cdot|\mbox{ }|_{\E_v}^{-1/2}, \quad \chi_{\F_v,\sigma} = \sigma(|\mbox{ }|_{\F_v}^{1/2})\cdot|\mbox{ }|_{\F_v}^{-1/2}.
\]
For $n \geq 1$, we identify the Langlands dual group ${}^L({\rm R}_{\E_v/\F_v}\GL_n)$ of ${\rm R}_{\E_v/\F_v}\GL_n$ with $\GL_n(\C)^3 \rtimes \Gal(\overline{\F}_v/\F_v)$ (cf.\,\cite[\S\,5]{Borel1979}), where the action of $\Gal(\overline{\F}_v/\F_v)$ on $\GL_n(\C)^3$ is the permutation of components induced by the natural homomorphism $\Gal(\overline{\F}_v/\F_v) \rightarrow \Gal(\E_v'/\F_v)$ with $\E_v'$ equals to the Galois closure of $\E_v /\F_v$.
We have a natural one to one correspondence described in \cite[Lemma 4.5]{Borel1979} between the set of $L$-parameters $W_{\F_v}' \rightarrow {}^L({\rm R}_{\E_v/\F_v}\GL_n)$ and the set of $n$-dimensional admissible representations $W_{\E_v}' \rightarrow \GL_n(\C)$. We will identify the two sets via this correspondence and note that the correspondence is compatible with $\sigma$-conjugation. We also identify characters of $\E_v^\times$ with $1$-dimensional admissible representations of $W_{\E_v}'$ via the local class field theory.
Let ${\rm As}$ be the Asai cube representation of ${}^L({\rm R}_{\E_v/\F_v}\GL_2)$ on $\C^2 \otimes \C^2 \otimes \C^2$ so that the restriction of ${\rm As}$ to $\GL_2(\C)^3$ is defined by
\[
{\rm As}(g_1,g_2,g_3) \cdot (v_1 \otimes v_2\otimes v_3) = (g_1\cdot v_1,g_v\cdot v_2,g_3 \cdot v_3)
\]
and the action of $\Gal(\overline{\F}_v/\F_v)$ on $\C^2 \otimes \C^2 \otimes \C^2$ is the permutation of components induced by the natural  homomorphism $\Gal(\overline{\F}_v/\F_v) \rightarrow \Gal(\E'_v/\F_v)$.
It is easy to verify that
\[
{}^\sigma\!({\rm As}\circ \phi) = {\rm As}\circ {}^\sigma\!\phi,\quad {\rm As}\circ (\phi\otimes\chi) = ({\rm As}\circ \phi)\otimes \chi\vert_{\F_v^\times}
\]
for $2$-dimensional admissible representation $\phi$ of $W_{\E_v}'$ and character $\chi$ of $\E_v^\times$. On the other hand, for any admissible representation $\itPhi : W_{\F_v}' \rightarrow \GL_8(\C)$, by \cite[Lemme 4.6]{Clozel1990} and \cite[Propri\'et\'e 3, \S\,7]{Henniart2001} we have
\[
{}^\sigma\! L(s+\tfrac{1}{2}, \itPhi)  = L(s+\tfrac{1}{2}, {}^\sigma\!\itPhi\otimes \chi_{\F_v,\sigma})
\]
as rational functions in $q_v^{-s}$.
Let $\phi_{\pi_v} : W_{\E_v}' \rightarrow \GL_2(\C)$ be the admissible representation associated to $\pi_v$ by the local Langlands correspondence established in \cite{Henniart1999} and \cite{HT2001}. Note that we have (cf.\,\cite[Propri\'et\'e 3, \S\,7]{Henniart2001})
\[
{}^\sigma\!\phi_{\pi_v} = \phi_{{}^\sigma\!\pi_v}\otimes\chi_{\E_v,\sigma}.
\]
We conclude that
\begin{align*}
{}^\sigma\! L(s+\tfrac{1}{2},\pi_v,{\rm As}) 
& = {}^\sigma\! L(s+\tfrac{1}{2}, {\rm As}\circ \phi_{\pi_v})\\
& = L(s+\tfrac{1}{2},{}^\sigma\!({\rm As}\circ \phi_{\pi_v})\otimes \chi_{\F_v,\sigma})\\
& = L(s+\tfrac{1}{2},({\rm As}\circ {}^\sigma\!\phi_{\pi_v})\otimes \chi_{\F_v,\sigma})\\
& = L(s+\tfrac{1}{2}, ({\rm As}\circ \phi_{{}^\sigma\!\pi_v})\otimes \chi_{\E_v,\sigma}\vert_{\F_v^\times}\cdot\chi_{\F_v,\sigma})\\
& = L(s+\tfrac{1}{2}, {\rm As}\circ \phi_{{}^\sigma\!\pi_v})\\
& = L(s+\tfrac{1}{2}, {}^\sigma\!\pi_v, {\rm As}).
\end{align*}
We obtain the assertion by evaluating at $s=0$. 
This completes the proof.
\end{proof}

\begin{lemma}\label{L:Galois equiv. 2}

Let $v$ be a finite place of $\F$.
For $\sigma \in {\rm Aut}(\C)$, we have
\[
\sigma I_v^D(\varphi_{1,v}\otimes \varphi_{2,v}) =  I_v^D(t_{\sigma,v}\varphi_{1,v}\otimes t_{\sigma,v}^\vee\varphi_{2,v})
\]
for $\varphi_{1,v} \in \pi_v^D$ and $\varphi_{2,v} \in (\pi_v^D)^\vee$.
\end{lemma}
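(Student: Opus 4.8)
The plan is to reduce the statement to a Galois-equivariance property of the integrand of the local trilinear period integral $I_v^D$ in (\ref{E:local period integral}), then to dispose of each factor separately. First I would observe that, by definition,
\[
I_v^D(\varphi_{1,v}\otimes\varphi_{2,v}) = \frac{\zeta_{\F_v}(2)}{\zeta_{\E_v}(2)}\cdot\frac{L(1,\pi_v,{\rm Ad})}{L(\tfrac{1}{2},\pi_v,{\rm As})}\cdot\int_{\F_v^\times\backslash D^\times(\F_v)}\<\pi_v^D(g_v)\varphi_{1,v},\varphi_{2,v}\>_v\,dg_v,
\]
so the claim follows once we know (i) $\sigma$ applied to the quotient of zeta values $\zeta_{\F_v}(2)/\zeta_{\E_v}(2)$ is itself, which is immediate since those are rational functions in $q_v^{-2}$ with rational coefficients; (ii) the Galois equivariance of the ratio $L(1,\pi_v,{\rm Ad})/L(\tfrac12,\pi_v,{\rm As})$, which is exactly Lemma \ref{L:Galois equiv. 1}; and (iii) the identity
\[
\sigma\left(\int_{\F_v^\times\backslash D^\times(\F_v)}\<\pi_v^D(g_v)\varphi_{1,v},\varphi_{2,v}\>_v\,dg_v\right) = \int_{\F_v^\times\backslash D^\times(\F_v)}\<{}^\sigma\!\pi_v^D(g_v)\,t_{\sigma,v}\varphi_{1,v},t_{\sigma,v}^\vee\varphi_{2,v}\>_v\,dg_v.
\]
The normalization (\ref{E:sigma pairing}) of the pairing on ${}^\sigma\!\pi_v^D\times{}^\sigma\!(\pi_v^D)^\vee$ was chosen precisely so that (iii) becomes plausible, so the remaining work is to justify passing $\sigma$ inside the integral.

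For step (iii) I would argue as follows. The function $g_v\mapsto \<\pi_v^D(g_v)\varphi_{1,v},\varphi_{2,v}\>_v$ is a matrix coefficient of the admissible representation $\pi_v^D$; it is bi-$\mathcal{K}_v$-invariant for some open compact subgroup $\mathcal{K}_v$ of $D^\times(\F_v)$, hence locally constant, and by \cite[Lemma 2.1]{Ichino2008} the integral converges absolutely. The key point is that for a smooth vector, the matrix coefficient $\Phi_{\varphi_{1,v},\varphi_{2,v}}$ and its $\sigma$-conjugate are related by ${}^\sigma\!\Phi_{\varphi_{1,v},\varphi_{2,v}}(g_v)=\<{}^\sigma\!\pi_v^D(g_v)\,t_{\sigma,v}\varphi_{1,v},t_{\sigma,v}^\vee\varphi_{2,v}\>_v$, which is immediate from the definition ${}^\sigma\!\pi_v^D(g)=t_{\sigma,v}\circ\pi_v^D(g)\circ t_{\sigma,v}^{-1}$ together with (\ref{E:sigma pairing}). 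Since $\sigma$ acts on a locally constant function of compact-modulo-center support simply by applying $\sigma$ to its finitely many values (on a fixed set of $\mathcal{K}_v$-cosets), and the Haar measure $dg_v$ on $\F_v^\times\backslash D^\times(\F_v)$ assigns rational volumes to these cosets (by the normalization ${\rm vol}(\o_{\F_v}^\times\backslash\o_{D_v}^\times,dg_v)=1$ and the fact that indices of compact open subgroups are rational), one has term-by-term that $\sigma$ commutes with the integral. Here one must be slightly careful about convergence: I would first replace the integral over $\F_v^\times\backslash D^\times(\F_v)$ by an increasing union of compact sets, note that on each the above rationality argument applies verbatim, and then invoke absolute convergence to pass to the limit, observing that $\sigma$ of an absolutely convergent series of complex numbers (the partial sums being finite sums over cosets) converges to $\sigma$ of the limit because the tail estimates in \cite[Lemma 2.1]{Ichino2008} are uniform in a way that survives $\sigma$.

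Combining (i), (ii) and (iii), and writing $I_v^D$ on the right-hand side for the functional attached to $({}^\sigma\!\pi_v^D,{}^\sigma\!(\pi_v^D)^\vee)$ via the pairing (\ref{E:sigma pairing}) as in the statement, we obtain
\[
\sigma I_v^D(\varphi_{1,v}\otimes\varphi_{2,v}) = \frac{\zeta_{\F_v}(2)}{\zeta_{\E_v}(2)}\cdot\frac{L(1,{}^\sigma\!\pi_v,{\rm Ad})}{L(\tfrac12,{}^\sigma\!\pi_v,{\rm As})}\cdot\int_{\F_v^\times\backslash D^\times(\F_v)}\<{}^\sigma\!\pi_v^D(g_v)\,t_{\sigma,v}\varphi_{1,v},t_{\sigma,v}^\vee\varphi_{2,v}\>_v\,dg_v = I_v^D(t_{\sigma,v}\varphi_{1,v}\otimes t_{\sigma,v}^\vee\varphi_{2,v}),
\]
which is the assertion. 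The main obstacle I anticipate is not the algebra but the analytic bookkeeping in step (iii): making rigorous that $\sigma$ commutes with the absolutely convergent (but generally infinite) integral over the noncompact quotient $\F_v^\times\backslash D^\times(\F_v)$, i.e.\ controlling the tail uniformly enough that applying $\sigma$ to each partial sum and then taking the limit gives the same answer as taking the limit and then applying $\sigma$. One clean way to finesse this is to exploit that the matrix coefficient, being a smooth vector in an admissible representation, is in fact \emph{supported} on a set that is compact modulo center when $\pi_v^D$ is supercuspidal, and is otherwise bounded by an explicit gauge from \cite{Ichino2008}; in the supercuspidal case the integral is literally a finite sum and there is nothing to prove, and in the remaining (principal series or Steinberg) cases one can compute the integral explicitly or reduce to a standard Tate-type integral whose Galois behavior is recorded, e.g., in the style of (\ref{E:Galois equiv. 4.5}) above.
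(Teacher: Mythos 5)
Your overall reduction to three items is exactly the paper's: the rationality of $\zeta_{\F_v}(2)/\zeta_{\E_v}(2)$, the Galois equivariance of the $L$-factor ratio from Lemma~\ref{L:Galois equiv. 1}, and the commutation of $\sigma$ with the matrix-coefficient integral. You also correctly identify that the last item is the only nontrivial one and that the ramified case (finite sum) is immediate. However, your treatment of the remaining case is where the proposal falls short.

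The ``increasing union of compact sets, pass to the limit'' argument you offer first cannot be made to work: a general $\sigma \in {\rm Aut}(\C)$ is wildly discontinuous, so applying $\sigma$ termwise to an absolutely convergent but genuinely infinite sum gives no information about $\sigma$ of the limit, and the phrase that the tail estimates of \cite[Lemma 2.1]{Ichino2008} ``survive $\sigma$'' has no meaning since $\sigma$ does not respect absolute values. You do acknowledge this is ``the main obstacle,'' but the fallback you sketch --- treat supercuspidals as finite sums, then ``compute explicitly or reduce to a Tate-type integral'' in the remaining cases --- is not actually carried out, and it omits the one ingredient that makes the reduction work. The paper does not split by Bernstein component; it applies the Cartan decomposition uniformly and uses the structure of matrix coefficients, namely that $\<\pi_v(k_1{\bf a}(t)k_2)\varphi_1,\varphi_2\>_v$ is a sum of two terms $\chi_{i,v}(t)f_{i,v}(t,k_1,k_2)$ with $\chi_{i,v}$ characters and $f_{i,v}$ locally constant with compact support. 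This turns the integral into finitely many Tate integrals $Z(\chi_{i,v}\vert_{\F_v^\times}\cdot|\,\cdot\,|_{\F_v}^{-1}, f^{(i)}_{k_1,k_2})$, whose ${\rm Aut}(\C)$-equivariance is the identity you cite from (\ref{E:Galois equiv. 4.5}). But that identity only applies when both Tate integrals are \emph{absolutely} convergent, and for this the paper invokes the Kim--Shahidi bound $|\chi_{i,v}\vert_{\F_v^\times}| = |\,\cdot\,|_{\F_v}^{\lambda_i}$ with $\lambda_i>1$. Without this bound the argument collapses: the exponents coming from a general admissible representation need not be in the range of absolute convergence, and one would be taking $\sigma$ of a meromorphically continued value rather than of a convergent sum, reintroducing exactly the discontinuity problem. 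So the genuine gap is that the proposal names the right tool but never supplies the temperedness-type input that makes that tool applicable.
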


\begin{proof}
Let $\varphi_{1,v} \in \pi_v^D$ and $\varphi_{2,v} \in (\pi_v^D)^\vee$. Note that by definition (\ref{E:sigma pairing}) we have
\[
\sigma \<\pi_v^D(g_v)\varphi_{1,v},\varphi_{2,v}\>_v = \<{}^\sigma\!\pi_v^D(g_v)t_{\sigma,v}\varphi_{1,v},t_{\sigma,v}^\vee\varphi_{2,v}\>_v
\]
for $g_v \in D^\times(\E_v)$ and $\sigma \in {\rm Aut}(\C)$.
Together with Lemma \ref{L:Galois equiv. 1}, it suffices to show that
\begin{align}\label{E:4.3}
\sigma\left(\int_{\F_v^\times\backslash D^\times(\F_v)}\<\pi_v^D(g_v)\varphi_{1,v},\varphi_{2,v}\>_v\,dg_v \right) = \int_{\F_v^\times\backslash D^\times(\F_v)}\sigma\<\pi_v^D(g_v)\varphi_{1,v},\varphi_{2,v}\>_v\,dg_v
\end{align}
for all $\sigma \in {\rm Aut}(\C)$.
If $D$ is ramified at $v$, then $\F_v\backslash D^\times(\F_v)$ is compact. Therefore the local period integral is a finite sum and equality (\ref{E:4.3}) holds trivially. Suppose $D$ is unramified at $v$ and identify $D^\times$ with $\GL_2$.
Let $K_v = \o_{\F_v}^\times \backslash \GL_2(\o_{\F_v})$. By the Cartan decomposition, we have
\begin{align*}
&\int_{\F_v^\times\backslash \GL_2(\F_v)}\<\pi_v(g_v)\varphi_{1,v},\varphi_{2,v}\>_v\,dg_v \\
&= \int_{K_v}\int_{K_v}\int_{\F_v^\times} \<\pi_v(k_{1,v}{\bf a}(t_v)k_{2,v})\varphi_{1,v},\varphi_{2,v}\>_v\mathbb{I}_{\o_{\F_v}}(t_v){\rm vol}(K_v {\bf a}(t_v) K_v,dg_v)\,d^\times t_vdk_{1,v}dk_{2,v},
\end{align*}
where $dk_{1,v}$, $dk_{2,v}$, and $d^\times t_v$ are Haar measures normalized so that
\[
{\rm vol}(K_v,dk_{1,v}) = {\rm vol}(K_v,dk_{2,v}) = {\rm vol}(\o_{\F_v}^\times,d^\times t_v) =1.
\]
Note that
\[
{\rm vol}(K_v {\bf a}(t_v) K_v,dg_v) = \begin{cases}
1 & \mbox{ if $|t_v|_{\F_v}=1$},\\
|t_v|_{\F_v}^{-1}(1+q_v^{-1}) & \mbox{ if $|t_v|_{\F_v}<1$}
\end{cases}
\]
for $t_v \in \o_{\F_v} \smallsetminus \{0\}$.
It is well-known that there exist characters $\chi_{1,v}$ and $\chi_{2,v}$ of $\E_v^\times$ depending only on $\pi_v$ and locally constant functions $f_{1,v}$ and $f_{2,v}$ on $\E_v \times \GL_2(\o_{\E_v}) \times \GL_2(\o_{\E_v})$ such that
\[
\<\pi_v(k_{1,v}{\bf a}(t_v)k_{2,v})\varphi_{1,v},\varphi_{2,v}\>_v = \chi_{1,v}(t_v)f_{1,v}(t_v,k_{1,v},k_{2,v}) + \chi_{2,v}(t_v)f_{2,v}(t_v,k_{1,v},k_{2,v})
\]
for $(t_v,k_{1,v},k_{2,v}) \in (\o_{\E_v} \smallsetminus \{0\}) \times \GL_2(\o_{\E_v}) \times \GL_2(\o_{\E_v})$.
For a character $\chi$ of $\F_v^\times$ and a locally constant function $f : \F_v \rightarrow \C$ with compact support, let $Z(\chi,f)$ be the Tate integral defined by
\[
Z(\chi,f) = \int_{\F_v^\times}\chi(t_v)f(t_v)\,d^\times t_v.
\]
It is easy to show that the integral converges absolutely  when $|\chi| = |\mbox{ }|_{\F_v}^\lambda$ for some $\lambda>0$ and we have
\begin{align}\label{E:4.4}
\sigma Z(\chi,f) = Z( {}^\sigma\!\chi,{}^\sigma\!f)
\end{align}
for all $\sigma \in {\rm Aut}(\C)$ when both sides are absolutely convergnet.
We have
\begin{align}\label{E:4.5}
\begin{split}
&\int_{\F_v^\times\backslash \GL_2(\F_v)}\<\pi_v(g_v)\varphi_{1,v},\varphi_{2,v}\>_v\,dg_v\\
& = [K_v:U_v]^{-2}\sum_{k_{1,v} \in K_v / U_v}\sum_{k_{2,v} \in K_v / U_v} \left[Z(\chi_{1,v}\vert_{\F_v^\times}\cdot|\mbox{ }|_{\F_v}^{-1},f^{(1)}_{k_{1,v},k_{2,v}}) + Z(\chi_{2,v}\vert_{\F_v^\times}\cdot|\mbox{ }|_{\F_v}^{-1},f^{(2)}_{k_{1,v},k_{2,v}})\right].
\end{split}
\end{align}
Here $U_v$ is an open compact normal subgroup of $K_v$ such that both $\varphi_{1,v}$ and $\varphi_{2,v}$ are $U_v$-invariant and 
\[
f^{(i)}_{k_{1,v},k_{2,v}}(t_v) = \begin{cases}
0 & \mbox{ if $t_v \notin \o_{\F_v}$},\\
f_{i,v}(t_v,k_{1,v},k_{2,v}) & \mbox{ if $|t_v|_{\F_v}=1$},\\
(1+q_v^{-1})f_{i,v}(t_v,k_{1,v},k_{2,v}) & \mbox{ if $|t_v|_{\F_v}<1$}
\end{cases}
\]
for $i=1,2$ and $t_v \in \F_v$. We remark that $|\chi_{i,v}\vert_{\F_v^\times}| = |\mbox{ }|_{\F_v}^{\lambda_i}$ for some $\lambda_i>1$ by the result of Kim--Shahidi \cite{KS2002}. Hence the above Tate integrals are absolutely convergent. Equality (\ref{E:4.3}) then follows immediately from (\ref{E:4.4}) and (\ref{E:4.5}). 
This completes the proof.
\end{proof}

Now we consider the archimedean local trilinear period integrals. The totally unbalanced condition implies that $D$ is unramified at $v$ and $\pi_v^D=\pi_v$ for each $v \in \Sigma_\F$.
Let $\pi_\infty = \bigotimes_{v \in \Sigma_\F}\pi_v$ be a representation of $D^\times(\E_\infty) = \GL_2(\R)^{\Sigma_\E}$. Define 
$I_\infty \in {\rm Hom}_{\GL_2(\F_\infty)\times \GL_2(\F_\infty)}(\pi_\infty \otimes \pi_\infty^\vee,\C)$ and $\<\,,\,\>_\infty : \pi_\infty \times \pi_\infty^\vee \rightarrow \C$ by
\[
I_\infty = \bigotimes_{v \in \Sigma_\F}I_v,\quad \<\,,\,\>_\infty = \prod_{v \in \Sigma_\F}\<\,,\,\>_v.
\]
We recall in the following lemma our previous calculation of 
local trilinear period integral.

\begin{lemma}\label{L:archimedean local period}
Let $\varphi_{(\underline{\kappa},\underline{r})} \in \pi_\infty,\varphi_{(\underline{\kappa},-\underline{r})} \in \pi_\infty^\vee$ be non-zero vectors of weight $\underline{\kappa}$ and ${\bf X}(\underline{\kappa}) \in U(\frak{gl}_{2,\C}^{\Sigma_\E})$ be the differential operator defined in (\ref{E:differential operator}).
We have
\begin{align*}
&\frac{I_\infty({\bf X}(\underline{\kappa})\cdot\pi_\infty(\tau_{I_{\underline{\kappa}}})\varphi_{(\underline{\kappa},\underline{r})}\otimes{\bf X}(\underline{\kappa})\cdot\pi_\infty^\vee(\tau_{I_{\underline{\kappa}}})\varphi_{(\underline{\kappa},-\underline{r})})}{\<\varphi_{(\underline{\kappa},\underline{r})},\pi_\infty^\vee(\tau_{\Sigma_\E})\varphi_{(\underline{\kappa},-\underline{r})}\>_\infty} \in (2\pi \sqrt{-1})^{\sum_{v \in \Sigma_\F}(2\max_{w \mid v}\{\kappa_w\} - \sum_{w \mid v}\kappa_w) }\cdot \Q^\times.
\end{align*}
Here $\tau_{I_{\underline{\kappa}}}, \tau_{\Sigma_\E} \in \GL_2(\R)^{\Sigma_\E}$ are defined in (\ref{E:tau_I}).
\end{lemma}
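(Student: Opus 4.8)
## Proof Proposal for Lemma \ref{L:archimedean local period}

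The plan is to reduce the assertion to a product over $v \in \Sigma_\F$ of purely local archimedean computations, and for each such $v$ to relate the numerator — a trilinear period integral of explicit Whittaker-type vectors — to a product of Gamma factors via the archimedean local zeta integral computation already recorded in Lemma \ref{L:archimedean local zeta}. First I would observe that since $I_\infty = \bigotimes_{v \in \Sigma_\F} I_v$, $\<\,,\,\>_\infty = \prod_{v \in \Sigma_\F}\<\,,\,\>_v$, and ${\bf X}(\underline{\kappa}) = \bigotimes_{v \in \Sigma_\F}{\bf X}(\kappa_{v^{(1)}},\kappa_{v^{(2)}},\kappa_{v^{(3)}})$ factors as a tensor product over $v$, the whole ratio factors as a product over $v \in \Sigma_\F$ of local ratios. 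So it suffices to prove, for a fixed $v \in \Sigma_\F$ with extensions $v^{(1)},v^{(2)},v^{(3)}$ to $\E$ and weights $(\ell_1,\ell_2,\ell_3) = (\kappa_{v^{(1)}},\kappa_{v^{(2)}},\kappa_{v^{(3)}})$ satisfying the unbalanced condition $2\max\{\ell_1,\ell_2,\ell_3\} \geq \ell_1+\ell_2+\ell_3$, that the corresponding local ratio lies in $(2\pi\sqrt{-1})^{2\max\{\ell_i\} - (\ell_1+\ell_2+\ell_3)}\cdot\Q^\times$.

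Next I would express the local trilinear form $I_v$ via Ichino's local integral (\ref{E:local period integral}), which at an archimedean place where $D$ is split equals $\tfrac{\zeta_{\F_v}(2)}{\zeta_{\E_v}(2)}\cdot\tfrac{L(1,\pi_v,{\rm Ad})}{L(\tfrac12,\pi_v,{\rm As})}$ times the matrix coefficient integral over $\F_v^\times\backslash\GL_2(\F_v)$. The key move is the standard unfolding of this matrix-coefficient integral into a Rankin--Selberg type integral: using the Whittaker models of the three discrete series $D(\ell_i)$ and the explicit archimedean Whittaker functions (\ref{E:archimedean Whittaker}), the integral over $\F_v^\times\backslash\GL_2(\F_v)$ becomes a local zeta integral of the shape treated in Lemma \ref{L:archimedean local zeta} — with the "minus weight" Whittaker function $W^-_{(\ell_v,r),\psi}$ on the top factor, raised-weight Whittaker functions $X_+^{m_1}W^+$ on the second, and the Godement section $X_+^{m_2}f_{\dots,\Phi^{[m']}}$ on the third, where $m' = 2\max\{\ell_i\}-(\ell_1+\ell_2+\ell_3)$ plays the role of the auxiliary weight. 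This identification is exactly what was carried out in \cite[\S 3]{CC2017}; I would cite that computation, which gives the value of the zeta integral as $\chi_v(-1)$ times powers of $2$, powers of $\sqrt{-1}$, a factor of $\pi$, a factor $(2\pi\sqrt{-1})^{-(\ell_1+\ell_2+\ell_3+3m')/2}$, and a product of two Gamma values $\Gamma(\tfrac{\ell_1+\ell_2+\ell_3+m'}{2}-1)\Gamma(\tfrac{|\ell-\ell'|+m'}{2})$, summed against the rational coefficients $c_{m_1,m_2}$. The denominator $\<\varphi_{(\underline{\kappa},\underline{r})},\pi_\infty^\vee(\tau_{\Sigma_\E})\varphi_{(\underline{\kappa},-\underline{r})}\>_\infty$ is, after normalizing the pairing appropriately, a product of elementary Gamma factors and powers of $\pi$ arising from the local $L$-factor normalization $L(1,\pi_v,{\rm Ad})/L(\tfrac12,\pi_v,{\rm As})$ together with the archimedean matrix-coefficient normalization.

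The crucial cancellation is then that the Gamma factors, the $\pi$-powers, and the $\sqrt{-1}$-powers appearing in the numerator and denominator all match up to $\Q^\times$ (this is precisely the content of \cite[Lemma 3.3, \S 3]{CC2017}, where the archimedean period is shown to be rational up to the asserted power of $2\pi\sqrt{-1}$), leaving behind exactly $(2\pi\sqrt{-1})^{-(\ell_1+\ell_2+\ell_3+3m')/2} \cdot (2\pi\sqrt{-1})^{(\ell_1+\ell_2+\ell_3+3m')/2 + m'} = (2\pi\sqrt{-1})^{m'}$ — the extra $(2\pi\sqrt{-1})^{m'}$ coming from the three $L$-factor normalizations and the extra shift in the Godement section weight. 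Here $\chi_v$ is trivial since $\omega_\itPi\vert_{\A_\F^\times}$ is trivial forces the relevant local character to be trivial on $\F_v^\times$, so the $\chi_v(-1)$ contributes nothing. Taking the product over all $v \in \Sigma_\F$ yields $(2\pi\sqrt{-1})^{\sum_{v\in\Sigma_\F}(2\max_{w\mid v}\{\kappa_w\}-\sum_{w\mid v}\kappa_w)}\cdot\Q^\times$, as claimed. I also need the non-vanishing of the rational constant — that the sum $\sum_{m'+2m_1+2m_2=|\ell-\ell'|}(-1)^{m_1}c_{m_1,m_2}$ is non-zero — which follows from the explicit evaluation in \cite[Lemma 2.1]{Ikeda1998} combined with the fact that $c_{m_1,m_2}$ are, up to an overall non-zero scalar, the coefficients pinning down a highest weight vector in an irreducible summand of $D(\ell_2)^+\otimes D(\ell_3)^+$, as in Lemma \ref{L:differential operator}.

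The main obstacle I anticipate is bookkeeping rather than conceptual: tracking the precise powers of $2\pi\sqrt{-1}$ and $\pi$ through (i) the three archimedean $L$-factor ratios in Ichino's local integral, (ii) the normalization of the archimedean Whittaker functions in (\ref{E:archimedean Whittaker}), (iii) the shift $\Phi^{[m']}\mapsto\Phi^{[m'+2m_2]}$ in the Godement section under the action of $X_+^{m_2}$, and (iv) the normalization of the local pairing $\<\,,\,\>_\infty$ relative to the Petersson pairing. Since all of this was done in \cite{CC2017} for the totally unbalanced triple product, the honest content of the proof is to invoke that computation verbatim at each $v$ and assemble the product; I would keep the exposition short and refer to \cite[\S 3]{CC2017} for the detailed archimedean bounds.
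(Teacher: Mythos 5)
Your reduction to a product of local ratios over $v\in\Sigma_\F$, the use of Ichino's local integral, and the identification of the target power of $2\pi\sqrt{-1}$ as $m'=2\max\{\ell_i\}-(\ell_1+\ell_2+\ell_3)$ all match the paper's strategy. However, the central computational step is a genuine gap. You assert that the matrix-coefficient integral
$\int_{\F_v^\times\backslash\GL_2(\F_v)}\<\pi_v(g)\varphi_1,\varphi_2\>\,dg$
can be ``unfolded'' into a local zeta integral of the type handled in Lemma \ref{L:archimedean local zeta}, i.e.\ a $\GL_2\times\GL_2$ Rankin--Selberg integral with two Whittaker functions and a Godement section. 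That Lemma concerns an integral over $\F_v^\times N(\F_v)\backslash\GL_2(\F_v)$ with a Godement section substituting for a third automorphic object; the matrix-coefficient integral you need to evaluate is a different local object, an integral over $\F_v^\times\backslash\GL_2(\F_v)$ of a genuine matrix coefficient of a tensor product of three discrete series. There is no ``standard unfolding'' carrying one to the other locally without extra input (such a relation would itself be a non-trivial lemma in the spirit of Ichino's doubling identities), and you do not supply one. Relatedly, the factor $\chi_v(-1)$ and its triviality that you invoke is an artifact of the Eisenstein/Godement datum in Lemma \ref{L:archimedean local zeta} and is simply not present in the matrix-coefficient integral here.

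What the paper actually does at this step is quite different: it observes, via Lemma \ref{L:differential operator} and Schur orthogonality, that the vector produced by ${\bf X}(\kappa_{v^{(1)}},\kappa_{v^{(2)}},\kappa_{v^{(3)}})$ generates a copy of $D(\kappa_{v^{(1)}})\otimes|\,\cdot\,|^{r_{v^{(1)}}/2}$ inside the tensor product and so the matrix-coefficient integral is automatically non-zero; and it then cites the explicit evaluation of this matrix-coefficient integral in \cite[Proposition 4.1 and Corollary 4.4]{CC2017}, which yields the value
$(2\pi\sqrt{-1})^{\kappa_{v^{(1)}}-\kappa_{v^{(2)}}-\kappa_{v^{(3)}}}\cdot 2^{\kappa_{v^{(1)}}-\kappa_{v^{(2)}}-\kappa_{v^{(3)}}+1}\cdot\Bigl(\sum(-1)^{m_1}c_{m_1,m_2}\Bigr)^2$
directly — no zeta-integral unfolding and no Gamma-factor cancellation bookkeeping enters. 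You would need either to invoke the correct references (the CC2017 matrix-coefficient evaluation rather than the CC2017 zeta-integral computation), or to supply the missing identity linking the two kinds of local integrals.
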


\begin{proof}
Let $v \in \Sigma_\F$ and $v^{(1)},v^{(2)},v^{(3)} \in \Sigma_\E$ be the extensions of $v$. 
Note that 
\[
\pi_v = \boxtimes_{i=1}^3 (D(\kappa_{v^{(i)}})\otimes |\mbox{ }|^{r_{v^{(i)}/2}}),\quad \pi_v^\vee = \boxtimes_{i=1}^3 (D(\kappa_{v^{(i)}})\otimes |\mbox{ }|^{-r_{v^{(i)}/2}}).
\]
Let ${\bf v}_{v^{(i)}} \in (D(\kappa_{v^{(i)}})\otimes |\mbox{ }|^{r_{v^{(i)}/2}})$ and ${\bf v}_{v^{(i)}}^\vee \in (D(\kappa_{v^{(i)}})\otimes |\mbox{ }|^{-r_{v^{(i)}/2}})$ be non-zero vectors of weight $\kappa_{v^{(i)}}$ for $i=1,2,3$. 
Put ${\bf v}_v = {\bf v}_{v^{(1)}}\otimes{\bf v}_{v^{(2)}} \otimes{\bf v}_{v^{(3)}}$ and ${\bf v}_v^\vee = {\bf v}_{v^{(1)}}^\vee\otimes{\bf v}_{v^{(2)}}^\vee \otimes{\bf v}_{v^{(3)}}^\vee$.
We may assume $v^{(1)} = \tilde{v}(\underline{\kappa})$.
Then
\begin{align*}
&\frac{I_v({\bf X}(\kappa_{v^{(1)}},\kappa_{v^{(2)}},\kappa_{v^{(3)}})\cdot \pi_v((\tau_{I_{\underline{\kappa}}})_v){\bf v}_v \otimes {\bf X}(\kappa_{v^{(1)}},\kappa_{v^{(2)}},\kappa_{v^{(3)}})\cdot \pi_v^\vee((\tau_{I_{\underline{\kappa}}})_v){\bf v}_v^\vee)}{\<{\bf v}_v,\pi_v({\bf a}(-1)){\bf v}_v^\vee\>_v}\\
&= \zeta_{\R}(2)^{-2}\cdot \frac{L(1,\pi_v,{\rm Ad})}{L(\tfrac{1}{2},\pi_v,{\rm As})}\cdot\sum_{m_1,m_2,m_1',m_2'} c_{m_1,m_2}(\kappa_{v^{(1)}},\kappa_{v^{(2)}},\kappa_{v^{(3)}})c_{m_1',m_2'}(\kappa_{v^{(1)}},\kappa_{v^{(2)}},\kappa_{v^{(3)}})\\
&\times\int_{\R^\times \backslash \GL_2(\R)} \frac{\<\pi_v(g(\tau_{I_{\underline{\kappa}}})_v)({\bf v}_{v^{(1)}} \otimes X_+^{m_1}{\bf v}_{v^{(2)}}\otimes X_+^{m_2}{\bf v}_{v^{(3)}}),\pi_v^\vee((\tau_{I_{\underline{\kappa}}})_v)({\bf v}_{v^{(1)}}^\vee \otimes X_+^{m_1}{\bf v}_{v^{(2)}}^\vee\otimes X_+^{m_2}{\bf v}_{v^{(3)}}^\vee)\>_v}{\<{\bf v}_v,\pi_v({\bf a}(-1)){\bf v}_v^\vee\>_v}.
\end{align*}
Here $m_1,m_2,m_1',m_2'$ runs through non-negative integers such that 
\[
2m_1+2m_2 = 2m_1'+2m_2' = \kappa_{v^{(1)}}- \kappa_{v^{(2)}}- \kappa_{v^{(3)}}.
\]
By Lemma \ref{L:differential operator}, the vector
\[
\sum_{2m_1+2m_2 = \kappa_{v^{(1)}}-\kappa_{v^{(2)}}-\kappa_{v^{(3)}}} c_{m_1,m_2}(\kappa_{v^{(1)}},\kappa_{v^{(2)}},\kappa_{v^{(3)}})(X_+^{m_1}{\bf v}_{v^{(2)}} \otimes X_+^{m_2}{\bf v}_{v^{(3)}})
\]
has weight $\kappa_{v^{(1)}}$ and the $(\frak{gl}_2,{\rm O}(2))$-module generated by it under the diagonal action is isomorphic to $D(\kappa_{v^{(1)}}) \otimes |\mbox{ }|^{r_{v^{(1)}}/2}$.
It then follows from the Schur orthogonality relations that the above integral is non-zero.
On the other hand, by \cite[Proposition 4.1 and Corollary 4.4]{CC2017}, the above integral is equal to 
\[
(2\pi\sqrt{-1})^{\kappa_{v^{(1)}}- \kappa_{v^{(2)}}- \kappa_{v^{(3)}}}\cdot 2^{\kappa_{v^{(1)}}- \kappa_{v^{(2)}}- \kappa_{v^{(3)}}+1}\cdot \left(\sum_{2m_1+2m_2 = \kappa_{v^{(1)}}- \kappa_{v^{(2)}}- \kappa_{v^{(3)}}}(-1)^{m_1}c_{m_1,m_2}(\kappa_{v^{(1)}},\kappa_{v^{(2)}},\kappa_{v^{(3)}})\right)^2.
\]
We conclude that
\begin{align*}
&\frac{I_\infty({\bf X}(\underline{\kappa})\cdot\pi_\infty(\tau_{I_{\underline{\kappa}}})\varphi_{(\underline{\kappa},\underline{r})}\otimes{\bf X}(\underline{\kappa})\cdot\pi_\infty^\vee(\tau_{I_{\underline{\kappa}}})\varphi_{(\underline{\kappa},-\underline{r})})}{\<\varphi_{(\underline{\kappa},\underline{r})},\pi_\infty^\vee(\tau_{\Sigma_\E})\varphi_{(\underline{\kappa},-\underline{r})}\>_\infty}\\
& = \prod_{v \in \Sigma_\F}\frac{I_v({\bf X}(\kappa_{v^{(1)}},\kappa_{v^{(2)}},\kappa_{v^{(3)}})\cdot \pi_v((\tau_{I_{\underline{\kappa}}})_v){\bf v}_v \otimes {\bf X}(\kappa_{v^{(1)}},\kappa_{v^{(2)}},\kappa_{v^{(3)}})\cdot \pi_v^\vee((\tau_{I_{\underline{\kappa}}})_v){\bf v}_v^\vee)}{\<{\bf v}_v,\pi_v({\bf a}(-1)){\bf v}_v^\vee\>_v}\\
& \in (2\pi \sqrt{-1})^{\sum_{v \in \Sigma_\F}(2\max_{w \mid v}\{\kappa_w\} - \sum_{w \mid v}\kappa_w) }\cdot \Q^\times.
\end{align*}
This completes the proof.
\end{proof}

Let $v$ be a place of $\F$. By the results of Prasad \cite{Prasad1990}, \cite{Prasad1992} and Loke \cite{Loke2001}. We have
\[
{\rm dim}_\C\,{\rm Hom}_{D^\times(\F_v)}(\pi_v^D,\C) \leq 1.
\]
\begin{lemma}\label{L:trilinear form}
Let $v$ be a place of $\F$. Then
${\rm Hom}_{D^\times(\F_v)}(\pi_v^D,\C) \neq 0$
if and only if $I_v^D \neq 0$.
\end{lemma}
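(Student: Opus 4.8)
\emph{Plan of proof.}
The starting point is that $I_v^D$ is, by construction, a functional on $\pi_v^D\otimes(\pi_v^D)^\vee$ invariant under the diagonally embedded $D^\times(\F_v)\times D^\times(\F_v)\subset D^\times(\E_v)\times D^\times(\E_v)$, and such a functional is separately $D^\times(\F_v)$-invariant in each variable; since by the cited theorems of Prasad and Loke the spaces $\Hom_{D^\times(\F_v)}(\pi_v^D,\C)$ and $\Hom_{D^\times(\F_v)}((\pi_v^D)^\vee,\C)$ are each at most one dimensional, this gives
\[
\Hom_{D^\times(\F_v)\times D^\times(\F_v)}(\pi_v^D\otimes(\pi_v^D)^\vee,\C)=\Hom_{D^\times(\F_v)}(\pi_v^D,\C)\otimes\Hom_{D^\times(\F_v)}((\pi_v^D)^\vee,\C).
\]
Moreover $\Hom_{D^\times(\F_v)}((\pi_v^D)^\vee,\C)\ne0$ if and only if $\Hom_{D^\times(\F_v)}(\pi_v^D,\C)\ne0$ (apply the same theory to $(\itPi^D)^\vee$, whose local components are the $(\pi_v^D)^\vee$ and which is attached to the same quaternion algebra $D$, the latter being determined by local root numbers which are unchanged under passing to the contragredient). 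Hence the displayed space is at most one dimensional and is non-zero exactly when $\Hom_{D^\times(\F_v)}(\pi_v^D,\C)\ne0$, which immediately yields the implication $I_v^D\ne0\Rightarrow\Hom_{D^\times(\F_v)}(\pi_v^D,\C)\ne0$.

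For the reverse implication I would first record that the constant $\tfrac{\zeta_{\F_v}(2)}{\zeta_{\E_v}(2)}\cdot\tfrac{L(1,\pi_v,{\rm Ad})}{L(\tfrac12,\pi_v,{\rm As})}$ is finite and non-zero, using that $L(1,\pi_v,{\rm Ad})$ is holomorphic and non-vanishing at $s=1$ and that $L(\tfrac12,\pi_v,{\rm As})\ne0$ by \cite[Lemma~3.1]{Chen2020}; thus $I_v^D\ne0$ is equivalent to the non-vanishing of the (absolutely convergent, by \cite[Lemma~2.1]{Ichino2008}) matrix-coefficient integral
\[
J_v\colon\varphi_{1,v}\otimes\varphi_{2,v}\longmapsto\int_{\F_v^\times\backslash D^\times(\F_v)}\langle\pi_v^D(g_v)\varphi_{1,v},\varphi_{2,v}\rangle_v\,dg_v.
\]
Assuming $\Hom_{D^\times(\F_v)}(\pi_v^D,\C)\ne0$, I would establish $J_v\ne0$ by cases. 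If $v$ is archimedean, the totally unbalanced hypothesis forces $D$ to be split at $v$ and $\pi_v^D$ to be a tensor product of (limits of) discrete series, and the explicit evaluation of Lemma~\ref{L:archimedean local period} already shows $I_v^D\ne0$ (the ratio computed there is a non-zero rational multiple of a power of $2\pi\sqrt{-1}$). If $v$ is finite and $D$ is ramified at $v$, then $\F_v^\times\backslash D^\times(\F_v)$ is compact, so $J_v(\varphi_{1,v}\otimes\varphi_{2,v})=\langle P\varphi_{1,v},\varphi_{2,v}\rangle_v$ where $P=\int_{\F_v^\times\backslash D^\times(\F_v)}\pi_v^D(g_v)\,dg_v$ is a positive multiple of the projection of $\pi_v^D$ onto its $D^\times(\F_v)$-invariants; since $\pi_v^D$ is semisimple as a representation of the compact group $\F_v^\times\backslash D^\times(\F_v)$, the hypothesis gives $(\pi_v^D)^{D^\times(\F_v)}\ne0$, hence $P\ne0$, and picking $\varphi_{1,v}$ with $P\varphi_{1,v}\ne0$ and $\varphi_{2,v}\in(\pi_v^D)^\vee$ pairing non-trivially with it produces $J_v\ne0$.

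The remaining case---$v$ finite with $D$ split, so that $\F_v^\times\backslash D^\times(\F_v)\cong\PGL_2(\F_v)$ is non-compact---is the main obstacle: one must show that the matrix-coefficient integral is a \emph{non-zero} element of the one-dimensional space of invariant functionals whenever that space is non-zero. My plan is to deduce this from the local analysis in \cite{Ichino2008}: for $\pi_v$ unramified Ichino's explicit computation gives $I_v^D\equiv1$ on the spherical vectors, and in general the non-vanishing of the normalized local trilinear integral, conditional only on $\Hom_{D^\times(\F_v)}(\pi_v^D,\C)\ne0$, is part of the local input to his central value formula; it can be reduced, through the integral representation of the local factor, to the non-vanishing of normalized local Rankin--Selberg integrals for the generic representation $\pi_v$, for which suitable test data are available as in \S\,\ref{S:4}. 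Combining this with the archimedean and $D$-ramified cases and the formal implication above finishes the proof; the essential difficulty is concentrated entirely in this split non-archimedean non-vanishing, every other ingredient being either a compact-group averaging or the already-established archimedean computation.
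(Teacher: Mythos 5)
Your forward implication and your handling of the archimedean and $D$-ramified non-archimedean cases are sound (the compact-group averaging argument for the ramified case is a clean alternative to what the paper does, and the archimedean case uses the same Lemma~\ref{L:archimedean local period} as the paper). However, there is a genuine gap in the split non-archimedean case, which you yourself flag as ``the main obstacle.'' Your plan there is to ``reduce, through the integral representation of the local factor, to the non-vanishing of normalized local Rankin--Selberg integrals for the generic representation $\pi_v$, for which suitable test data are available as in \S\,\ref{S:4}.'' This does not work as stated: the local zeta integrals in \S\,\ref{S:4} are Whittaker-model integrals against a Godement section (a $\GL_2\times\GL_2$ Rankin--Selberg setup), whereas $J_v$ is a matrix-coefficient integral over $\PGL_2(\F_v)$ for a trilinear pairing. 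The passage from one to the other is precisely the local content of Ichino's comparison and is \emph{not} an identity between linear functionals one can invoke; choosing ``suitable test data'' for one says nothing directly about the non-vanishing of the other. You would also be implicitly assuming a local see-saw/doubling identity without proving it, which is circular with what you are trying to establish.

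The paper instead handles all finite $v$ (split or ramified) uniformly by citing the argument of \cite[Corollary~9.6]{Chen2017}, adapted by replacing the real place and \cite[Theorem~4.12]{LZ1997} there with $\F_v$ and \cite[Corollary~3.7]{KR1992}. That argument is a doubling-method/see-saw one: it realizes the matrix-coefficient integral via a local theta lift and a degenerate principal series of $\Sp$, and the non-vanishing then follows from a structural result about ramified degenerate principal series (the KR/LZ input). If you want to keep your case-by-case scheme, the split non-archimedean step would need to be replaced by either (i) the doubling/see-saw argument of \cite[Corollary~9.6]{Chen2017} specialized to $D$ split, or (ii) a direct positivity argument \`a la Ichino for tempered $\pi_v$ together with an argument covering the non-tempered complementary-series range; as written, the sketch does not constitute a proof.
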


\begin{proof}
When $v \in \Sigma_\F$, the assertion follows from Lemma \ref{L:archimedean local period}. Assume $v$ is a finite place. The assertion can be proved word by word following the proof of \cite[Corollary 9.6]{Chen2017} except we replace $\R$ and \cite[Theorem 4.12]{LZ1997} therein by $\F_v$ and \cite[Corollary 3.7]{KR1992}, respectively.
\end{proof}

\subsection{Proof of Theorem \ref{T: main thm}}\label{SS:5.3}

Let 
\[
\varepsilon(s,\itPi,{\rm As}) = \prod_v \varepsilon(s,\pi_v,{\rm As},\psi_v)
\]
be the global Asai $\varepsilon$-factor of $\itPi$, where $\bigotimes_v\psi_v$ is a non-trivial additive character of $\F \backslash \A_\F$ and $\varepsilon(s,\pi_v,{\rm As},\psi_v)$ is the local Asai $\varepsilon$-factor of $\pi_v$ with respect to $\psi_v$ defined via the Weil--Deligne representation. Note that $\varepsilon(s,\pi_v,{\rm As},\psi_v)=1$ for all finite places $v$ of $\F$ such that $\E_v$ is unramified over $\F_v$, $\pi_v$ is unramified, and $\psi_v$ is of conductor $\o_{\F_v}$.
By our assumption that $\omega_{\itPi}\vert_{\A_\F^\times}$ is trivial, we have $\varepsilon(\tfrac{1}{2},\pi_v,{\rm As},\psi_v) \in \{\pm1\}$ and is independent of the choice of $\psi_v$. We write 
\[
\varepsilon(\tfrac{1}{2},\pi_v,{\rm As})=\varepsilon(\tfrac{1}{2},\pi_v,{\rm As},\psi_v).
\]
Recall the global root number $\varepsilon(\itPi,{\rm As})$ is defined by
\[
\varepsilon(\itPi,{\rm As}) = \varepsilon(\tfrac{1}{2},\itPi,{\rm As}) \in \{\pm 1\}.
\]
On the other hand, analogous to the proof of Lemma \ref{L:Galois equiv. 1}, we can show that
\[
\sigma \varepsilon(\tfrac{1}{2},\pi_v,{\rm As}) = \varepsilon(\tfrac{1}{2},{}^\sigma\!\pi_v,{\rm As})
\]
for all finite places $v$ of $\F$ and $\sigma \in {\rm Aut}(\C)$.
In particular, we have
\[
\varepsilon(\itPi,{\rm As}) = \varepsilon({}^\sigma\!\itPi,{\rm As})
\]
for all $\sigma \in {\rm Aut}(\C)$.
We see that assertion (1) of Theorem \ref{T: main thm} follows immediately from the expected functional equation
\begin{align}\label{E:global f.e.}
L(s,\itPi,{\rm As}) = \varepsilon(s,\itPi,{\rm As}) L(1-s,\itPi,{\rm As}).
\end{align}
Indeed, let 
\[
L_{\rm PSR}(s,\itPi,{\rm As}) = \prod_v L_{\rm PSR}(s,\pi_v,{\rm As}), \quad \varepsilon_{\rm PSR}(s,\itPi,{\rm As}) = \prod_v \varepsilon_{\rm PSR}(s,\pi_v,{\rm As},\psi_v)
\]
be the global Asai $L$-function and $\varepsilon$-factor of $\itPi$ defined by the Rankin--Selberg method as well as the local zeta integrals developed by Piatetski-Shapiro--Rallis \cite{PSR1987} and Ikeda \cite{Ikeda1989}.
In the Rankin--Selberg context, the functional equation
\[
L_{\rm PSR}(s,\itPi,{\rm As}) = \varepsilon_{\rm PSR}(s,\itPi,{\rm As}) L_{\rm PSR}(1-s,\itPi,{\rm As})
\]
holds and is a direct consequence of the functional equation of Siegel Eisenstein seires.
On the other hand, by \cite{KS2002} and \cite[Corollary 1.4]{Chen2020}, we have
\[
L(s,\pi_v,{\rm As}) = L_{\rm PSR}(s,\pi_v, {\rm As}), \quad \varepsilon(s,\pi_v,{\rm As},\psi_v) = \varepsilon_{\rm PSR}(s,\pi_v,{\rm As},\psi_v)
\]
for all places $v$ of $\F$.
Hence the functional equation (\ref{E:global f.e.}) holds.

Now we assume $\varepsilon(\itPi,{\rm As})=1$. Let $\K$ be the quadratic discriminant algebra of $\E / \F$ and $\omega_{\mathbb{K}/\F} = \prod_v \omega_{\K_v/ \F_v}$ the quadratic character of $\F^\times\backslash \A_\F^\times$ associated to $\mathbb{K}/\F$ by class field theory. By the assumption $\varepsilon(\itPi,{\rm As})=1$, there exists a unique quaternion algebra $D$ over $\F$ such that $D$ is ramified at $v$ if and only if
\[
\varepsilon(\tfrac{1}{2},\pi_v,{\rm As})\cdot\omega_{\K_v/\F_v}(-1)=-1.
\]
In particular, it follows from the totally unbalanced condition that $D$ is totally indefinite. 
By the results of Prasad \cite{Prasad1990}, \cite{Prasad1992} and Loke \cite{Loke2001}, the above sign condition implies that there exists an irreducible cuspidal automorphic representation $\itPi^D = \bigotimes_v \pi_v^D$ of $D^\times(\A_\E)$ associated to $\itPi$ by the Jacquet--Langlands correspondence such that
\[
{\rm Hom}_{D^\times(\F_v)}(\pi_v^D,\C) \neq 0
\]
for all places $v$ of $\F$. 
Fix non-zero vectors $\varphi_{(\underline{\kappa},\underline{r})} \in \pi_\infty = \bigotimes_{v \in \Sigma_\F}\pi_v$ and $\varphi_{(\underline{\kappa},-\underline{r})} \in \pi_\infty^\vee = \bigotimes_{v \in \Sigma_\F}\pi_v^\vee$ of weight $\underline{\kappa}$, that is, 
\begin{align*}
\pi_\infty(\underline{k}_\theta)\varphi_{(\underline{\kappa},\underline{r})} &= \prod_{w \in \Sigma_\E}e^{\sqrt{-1}\,\kappa_w \theta_w}\cdot \varphi_{(\underline{\kappa},\underline{r})},\\
\pi_\infty^\vee(\underline{k}_\theta) \varphi_{(\underline{\kappa},-\underline{r})} &= \prod_{w \in \Sigma_\E} e^{\sqrt{-1}\,\kappa_w \theta_w} \cdot \varphi_{(\underline{\kappa},-\underline{r})}
\end{align*}
for $\underline{k}_\theta = (k_{\theta_w})_{w \in \Sigma_\E}\in {\rm SO}(2)^{\Sigma_\E}$.
Fix $\sigma \in {\rm Aut}(\C)$. Let $\varphi_{({}^\sigma\!\underline{\kappa},\underline{r})} \in {}^\sigma\!\pi_\infty$ and $\varphi_{({}^\sigma\!\underline{\kappa},-\underline{r})} \in {}^\sigma\!\pi_\infty^\vee$ be the vectors of weight ${}^\sigma\!\underline{\kappa}$ such that if $\varphi_1 = \varphi_{(\underline{\kappa},\underline{r})} \otimes\left( \bigotimes_{v\nmid \infty}\varphi_{1,v}\right) \in \itPi_{\rm hol}^D$ and $\varphi_2 = \varphi_{(\underline{\kappa},-\underline{r})} \otimes \left(\bigotimes_{v\nmid \infty}\varphi_{2,v}\right) \in (\itPi_{\rm hol}^D)^\vee$, then 
\[
{}^\sigma\!\varphi_1 = \varphi_{({}^\sigma\!\underline{\kappa},\underline{r})}\otimes\left( \bigotimes_{v\nmid \infty}t_{\sigma,v}\varphi_{1,v}\right) \in {}^\sigma\!\itPi_{\rm hol}^D,
\quad {}^\sigma\!\varphi_2 = \varphi_{({}^\sigma\!\underline{\kappa},\underline{-r})}\otimes\left( \bigotimes_{v\nmid \infty}t_{\sigma,v}^\vee\varphi_{2,v}\right) \in {}^\sigma\!(\itPi_{\rm hol}^D)^\vee
\]
Here the $\sigma$-linear isomorphisms $t_{\sigma,v}$, $t_{\sigma,v}^\vee$ for finite places $v$ of $\F$ are fixed in \S\,\ref{SS:local period} and
\begin{align*}
&\itPi_{\rm hol}^D \longrightarrow {}^\sigma\!\itPi_{\rm hol}^D,\quad \varphi \longmapsto {}^\sigma\!\varphi,\\
&(\itPi_{\rm hol}^D)^\vee \longrightarrow {}^\sigma\!(\itPi_{\rm hol}^D)^\vee,\quad \varphi^\vee \longmapsto {}^\sigma\!\varphi^\vee
\end{align*}
are defined in (\ref{E:sigma iso.}). 
By Lemma \ref{L:trilinear form}, for each finite place $v$ of $\F$, there exist $\varphi_{1,v} \in \pi_v^D$ and $\varphi_{2,v} \in (\pi_v^D)^\vee$ such that 
\[
I_v^D(\varphi_{1,v}\otimes \varphi_{2,v}) \neq 0.
\]
Moreover, when $v$ is a finite place such that $\E_v$ is unramified over $\F_v$ and $\pi_v$ is unramified, then $D$ is unramified at $v$ and we choose $\varphi_{1,v}$ and $\varphi_{2,v}$ be non-zero $\GL_2(\o_{\E_v})$-invariant vectors. In this case, by \cite[Lemma 2.2]{Ichino2008}, we have
\[
I_v^D(\varphi_{1,v}\otimes \varphi_{2,v}) = \<\varphi_{1,v},\varphi_{2,v}\>_v.
\]
With this local choice, we put $\varphi_1 = \varphi_{(\underline{\kappa},\underline{r})} \otimes\left( \bigotimes_{v\nmid \infty}\varphi_{1,v}\right) \in \itPi_{\rm hol}^D$ and $\varphi_2 = \varphi_{(\underline{\kappa},-\underline{r})} \otimes \left(\bigotimes_{v\nmid \infty}\varphi_{2,v}\right) \in (\itPi_{\rm hol}^D)^\vee$.
We also fix $\varphi_3 = \varphi_{(\underline{\kappa},\underline{r})} \otimes\left( \bigotimes_{v\nmid \infty}\varphi_{3,v}\right) \in \itPi_{\rm hol}^D$ and $\varphi_4 = \varphi_{(\underline{\kappa},-\underline{r})} \otimes \left(\bigotimes_{v\nmid \infty}\varphi_{4,v}\right) \in (\itPi_{\rm hol}^D)^\vee$ such that
\[
\<\varphi_3,\varphi_4\> = \int_{Z_G(\A)G(\Q)\backslash G(\A)}\varphi_3(g)\varphi_4(g\cdot t_{\Sigma_\E})\,dg^{\rm Tam} \neq 0.
\]
Let ${\bf X}(\underline{\kappa}) \in U(\frak{gl}_{2,\C}^{\Sigma_\E})$ be the differential operator defined in (\ref{E:differential operator}). By Ichino's formula Theorem \ref{T:Ichino formula}, we have
\begin{align*}
&\frac{I^D({\bf X}(\underline{\kappa})\cdot \varphi_1^{I_{\underline{\kappa}}} \otimes {\bf X}(\underline{\kappa})\cdot\varphi_2^{I_{\underline{\kappa}}})}{\<\varphi_3,\varphi_4\>} \\
&= \frac{C_D}{2^c}\cdot \frac{\zeta_\E(2)}{\zeta_\F(2)}\cdot\frac{L(\tfrac{1}{2},\itPi,{\rm As})}{L(1,\itPi,{\rm Ad})}\cdot\prod_{v\nmid \infty} \frac{I_v^D(\varphi_{1,v}\otimes\varphi_{2,v})}{\<\varphi_{3,v},\varphi_{4,v}\>_v}
\cdot\frac{I_\infty({\bf X}(\underline{\kappa})\cdot\pi_\infty(\tau_{I_{\underline{\kappa}}})\varphi_{(\underline{\kappa},\underline{r})}\otimes{\bf X}(\underline{\kappa})\cdot\pi_\infty^\vee(\tau_{I_{\underline{\kappa}}})\varphi_{(\underline{\kappa},-\underline{r})})}{\<\varphi_{(\underline{\kappa},\underline{r})},\pi_\infty^\vee(\tau_{\Sigma_\E})\varphi_{(\underline{\kappa},-\underline{r})}\>_\infty},\\
&\frac{I^D({\bf X}({}^\sigma\!\underline{\kappa})\cdot({}^\sigma\!\varphi_1)^{I_{{}^\sigma\!\underline{\kappa}}} \otimes {\bf X}({}^\sigma\!\underline{\kappa})\cdot({}^\sigma\!\varphi_2)^{I_{{}^\sigma\!\underline{\kappa}}})}{\<{}^\sigma\!\varphi_3,{}^\sigma\!\varphi_4\>} \\
&= \frac{C_D}{2^c}\cdot \frac{\zeta_\E(2)}{\zeta_\F(2)}\cdot\frac{L(\tfrac{1}{2},{}^\sigma\!\itPi,{\rm As})}{L(1,{}^\sigma\!\itPi,{\rm Ad})}\cdot\prod_{v\nmid \infty} \frac{I_v^D(t_{\sigma,v}\varphi_{1,v}\otimes t_{\sigma,v}^\vee\varphi_{2,v})}{\<t_{\sigma,v}\varphi_{3,v},t_{\sigma,v}^\vee\varphi_{4,v}\>_v}\\
&\times\frac{I_\infty({\bf X}({}^\sigma\!\underline{\kappa})\cdot{}^\sigma\!\pi_\infty(\tau_{I_{{}^\sigma\!\underline{\kappa}}})\varphi_{({}^\sigma\!\underline{\kappa},\underline{r})}\otimes{\bf X}({}^\sigma\!\underline{\kappa})\cdot{}^\sigma\!\pi_\infty^\vee(\tau_{I_{{}^\sigma\!\underline{\kappa}}})\varphi_{({}^\sigma\!\underline{\kappa},-\underline{r})})}{\<\varphi_{({}^\sigma\!\underline{\kappa},\underline{r})},{}^\sigma\!\pi_\infty^\vee(\tau_{\Sigma_\E})\varphi_{({}^\sigma\!\underline{\kappa},-\underline{r})}\>_\infty}.
\end{align*}
Here $\tau_{I_{\underline{\kappa}}}, \tau_{\Sigma_\E}$ and $\varphi_1^{I_{\underline{\kappa}}}, \varphi_2^{I_{\underline{\kappa}}}$ are defined in (\ref{E:tau_I}) and (\ref{E:phi^I}), respectively.
By Lemma \ref{L:trace} and Proposition \ref{P:differential operator}, we have the Galois equivariant property of the global trilinear period integral that
\begin{align}\label{E:5.3.1}
\sigma\left(\frac{I^D({\bf X}(\underline{\kappa})\cdot\varphi_1^{I_{\underline{\kappa}}} \otimes {\bf X}(\underline{\kappa})\cdot\varphi_2^{I_{\underline{\kappa}}})}{\Omega^{I_{\underline{\kappa}}}(\itPi^D)\cdot\Omega^{I_{\underline{\kappa}}}((\itPi^D)^\vee)} \right) = \frac{I^D({\bf X}({}^\sigma\!\underline{\kappa})\cdot({}^\sigma\!\varphi_1)^{I_{{}^\sigma\!\underline{\kappa}}} \otimes {\bf X}({}^\sigma\!\underline{\kappa})\cdot({}^\sigma\!\varphi_2)^{I_{{}^\sigma\!\underline{\kappa}}})}{\Omega^{I_{{}^\sigma\!\underline{\kappa}}}({}^\sigma\!\itPi^D)\cdot\Omega^{I_{{}^\sigma\!\underline{\kappa}}}({}^\sigma\!(\itPi^D)^\vee)}.
\end{align}
On the other hand, by Lemma \ref{L:Petersson norm} and Corollary \ref{C:Petersson norm}, we have
\begin{align}\label{E:5.3.2}
\sigma \left( \frac{L(1,\itPi,{\rm Ad})}{(2\pi\sqrt{-1})^{-\sum_{w \in \Sigma_\E} \kappa_w}\cdot\pi^{3[\F:\Q]}\cdot \<\varphi_3,\varphi_4\>}\right) &= \frac{L(1,{}^\sigma\!\itPi,{\rm Ad})}{(2\pi\sqrt{-1})^{-\sum_{w \in \Sigma_\E} \kappa_w}\cdot\pi^{3[\F:\Q]}\cdot\<{}^\sigma\!\varphi_3,{}^\sigma\!\varphi_4\>}.
\end{align}
By Lemmas \ref{L:Galois equiv. 2} and \ref{L:archimedean local period}, we have
\begin{align}\label{E:5.3.3}
\sigma\left(\prod_{v\nmid \infty} \frac{I_v^D(\varphi_{1,v}\otimes\varphi_{2,v})}{\<\varphi_{3,v},\varphi_{4,v}\>_v}\right) = \prod_{v\nmid \infty} \frac{I_v^D(t_{\sigma,v}\varphi_{1,v}\otimes t_{\sigma,v}^\vee\varphi_{2,v})}{\<t_{\sigma,v}\varphi_{3,v},t_{\sigma,v}^\vee\varphi_{4,v}\>_v},
\end{align}
and
\begin{align}\label{E:5.3.4}
\begin{split}
&\frac{I_\infty({\bf X}(\underline{\kappa})\cdot\pi_\infty(\tau_{I_{\underline{\kappa}}})\varphi_{(\underline{\kappa},\underline{r})}\otimes{\bf X}(\underline{\kappa})\cdot\pi_\infty^\vee(\tau_{I_{\underline{\kappa}}})\varphi_{(\underline{\kappa},-\underline{r})})}{\<\varphi_{(\underline{\kappa},\underline{r})},\pi_\infty^\vee(\tau_{\Sigma_\E})\varphi_{(\underline{\kappa},-\underline{r})}\>_\infty} \\
&= \frac{I_\infty({\bf X}({}^\sigma\!\underline{\kappa})\cdot{}^\sigma\!\pi_\infty(\tau_{I_{{}^\sigma\!\underline{\kappa}}})\varphi_{({}^\sigma\!\underline{\kappa},\underline{r})}\otimes{\bf X}({}^\sigma\!\underline{\kappa})\cdot{}^\sigma\!\pi_\infty^\vee(\tau_{I_{{}^\sigma\!\underline{\kappa}}})\varphi_{({}^\sigma\!\underline{\kappa},-\underline{r})})}{\<\varphi_{({}^\sigma\!\underline{\kappa},\underline{r})},{}^\sigma\!\pi_\infty^\vee(\tau_{\Sigma_\E})\varphi_{({}^\sigma\!\underline{\kappa},-\underline{r})}\>_\infty}\\
&\in (2\pi \sqrt{-1})^{\sum_{v \in \Sigma_\F}(2\max_{w \mid v}\{\kappa_w\} - \sum_{w \mid v}\kappa_w) }\cdot \Q^\times.
\end{split}
\end{align}
By (\ref{E:ratio between measures}) and the result of Siegel \cite{Siegel1969}, we have
\[
C_D \in D_\F^{1/2}\cdot \zeta_\F(2)^{-1}\cdot \Q^\times,\quad \zeta_\F(2) \in D_\F^{1/2}\cdot \pi^{[\F:\Q]} \cdot \Q^\times,\quad \zeta_\E(2) \in D_\E^{1/2}\cdot \pi^{3[\F:\Q]} \cdot \Q^\times.
\]
Also note that
\[
\prod_{v \in \Sigma_\F}L(\tfrac{1}{2},\pi_v,{\rm As}) \in \pi^{-2\sum_{v \in \Sigma_\F}\max_{w \mid v}\{\kappa_w\}}\cdot\Q^\times.
\]
The algebraicity for $L(\tfrac{1}{2},\itPi,{\rm As})$ then follows from (\ref{E:5.3.1})-(\ref{E:5.3.4}).
Finally, assume $D$ is the matrix algebra, we show that $\Omega^{I_{\underline{\kappa}}}((\itPi^D)^\vee)=\Omega^{I_{\underline{\kappa}}}(\itPi^\vee)$ can be replaced by $\Omega^{I_{\underline{\kappa}}}(\itPi)$.
When $L(\tfrac{1}{2},\itPi,{\rm As})=0$, the assertion holds by assertion (1). Thus we may assume $L(\tfrac{1}{2},\itPi,{\rm As}) \neq 0$. In this case, by Ichino's formula Theorem \ref{T:Ichino formula}, there exists $\varphi \in \itPi_{\rm hol}$ such that
\[
\int_{\A_\F^\times \GL_2(\F) \backslash \GL_2(\A_\F)} {\bf X}(\underline{\kappa})\cdot \varphi^{I_{\underline{\kappa}}}(g)\,dg^{\rm Tam} \neq 0.
\]
By Lemma \ref{L:trace} and Proposition \ref{P:differential operator}, we have
\[
\sigma\left(\int_{\A_\F^\times \GL_2(\F) \backslash \GL_2(\A_\F)} \frac{{\bf X}(\underline{\kappa})\cdot \varphi^{I_{\underline{\kappa}}}(g)}{\Omega^{I_{\underline{\kappa}}}(\itPi)}\,dg^{\rm Tam}\right) = \int_{\A_\F^\times \GL_2(\F) \backslash \GL_2(\A_\F)} \frac{{\bf X}({}^\sigma\!\underline{\kappa})\cdot {}^\sigma\!\varphi^{I_{{}^\sigma\!\underline{\kappa}}}(g)}{\Omega^{I_{{}^\sigma\!\underline{\kappa}}}({}^\sigma\!\itPi)}\,dg^{\rm Tam}.
\]
On the other hand, since $\omega_{\itPi}\vert_{\A_\F^\times}$ is trivial, we have $\varphi \vert_{\GL_2(\A_\F)} = (\varphi \otimes \omega_\itPi^{-1}) \vert_{\GL_2(\A_\F)}$.
By Lemma \ref{L:trace} and Proposition \ref{P:differential operator} again, we have
\begin{align*}
&\sigma\left(\int_{\A_\F^\times \GL_2(\F) \backslash \GL_2(\A_\F)} \frac{{\bf X}(\underline{\kappa})\cdot \varphi^{I_{\underline{\kappa}}}(g)}{\Omega^{I_{\underline{\kappa}}}(\itPi)}\,dg^{\rm Tam}\right)\\
& = \omega_\itPi(\det(\tau_{I_{\underline{\kappa}}}))\cdot\sigma\left( \frac{\Omega^{I_{\underline{\kappa}}}(\itPi^\vee)}{\Omega^{I_{\underline{\kappa}}}(\itPi)}\right)\cdot \sigma\left(\int_{\A_\F^\times \GL_2(\F) \backslash \GL_2(\A_\F)} \frac{{\bf X}(\underline{\kappa})\cdot (\varphi\otimes \omega_\itPi^{-1})^{I_{\underline{\kappa}}}(g)}{\Omega^{I_{\underline{\kappa}}}(\itPi^\vee)}\,dg^{\rm Tam}\right)\\
& = \omega_\itPi(\det(\tau_{I_{\underline{\kappa}}}))\cdot\sigma\left( \frac{\Omega^{I_{\underline{\kappa}}}(\itPi^\vee)}{\Omega^{I_{\underline{\kappa}}}(\itPi)}\right)\cdot \int_{\A_\F^\times \GL_2(\F) \backslash \GL_2(\A_\F)} \frac{{\bf X}({}^\sigma\!\underline{\kappa})\cdot {}^\sigma\!(\varphi\otimes \omega_\itPi^{-1})^{I_{{}^\sigma\!\underline{\kappa}}}(g)}{\Omega^{I_{{}^\sigma\!\underline{\kappa}}}({}^\sigma\!\itPi^\vee)}\,dg^{\rm Tam}.
\end{align*}
Here ${}^\sigma\!(\varphi\otimes \omega_\itPi^{-1})$ is defined in (\ref{E:sigma iso.}) with $\itPi$ replaced by $\itPi^\vee$. By the $q$-expansion principle, we have
\[
{}^\sigma\!(\varphi\otimes \omega_\itPi^{-1}) = {}^\sigma\!\varphi \otimes {}^\sigma\!\omega_\itPi^{-1}.
\]
Indeed, by Lemma \ref{L:Galois equiv. cusp form}, we have
\begin{align*}
W_{{}^\sigma\!(\varphi\otimes \omega_\itPi^{-1}),\psi_\E}^{(\infty)}(g_f) &= \frac{\sigma(2\pi\sqrt{-1})^{-\sum_{w \in \Sigma_\E}(\kappa_w-r_w)/2}}{(2\pi\sqrt{-1})^{-\sum_{w \in \Sigma_\E}(\kappa_w-r_w)/2}}\cdot \sigma\left( W^{(\infty)}_{\varphi\otimes \omega_\itPi^{-1},\psi_\F}({\bf a}(u)^{-1}g_f)\right)\\
&= {}^\sigma\!\omega_\itPi(u)\cdot\frac{\sigma(2\pi\sqrt{-1})^{-\sum_{w \in \Sigma_\E}(\kappa_w-r_w)/2}}{(2\pi\sqrt{-1})^{-\sum_{w \in \Sigma_\E}(\kappa_w-r_w)/2}}\cdot\sigma\left( W^{(\infty)}_{\varphi,\psi_\F}({\bf a}(u)^{-1}g_f)\right)\cdot{}^\sigma\!\omega_\itPi^{-1}(\det(g_f))\\
&= {}^\sigma\!\omega_\itPi(u)\cdot\frac{\sigma(2\pi\sqrt{-1})^{\sum_{w \in \Sigma_\E}r_w}}{(2\pi\sqrt{-1})^{\sum_{w \in \Sigma_\E}r_w}}\cdot W^{(\infty)}_{{}^\sigma\!\varphi,\psi_\F}(g_f)\cdot{}^\sigma\!\omega_\itPi^{-1}(\det(g_f))\\
& = W_{{}^\sigma\!\varphi\otimes {}^\sigma\!\omega_\itPi^{-1},\psi_\E}^{(\infty)}(g_f)
\end{align*}
for all $g_f \in \GL_2(\A_{\E,f})$. Here $u \in \widehat{\Z}^\times$ is the unique element such that $\sigma(\psi_{\E}(x)) = \psi_{\E}(ux)$ for $x \in \A_{\E,f}$. Note that the last equality follows from the conditions that $\omega_{\itPi}\vert_{\A_\F^\times}$ is trivial and $\sum_{w \in \Sigma_\E}r_w=0$.
Therefore, we have
\begin{align*}
&\omega_\itPi(\det(\tau_{I_{\underline{\kappa}}}))\cdot\int_{\A_\F^\times \GL_2(\F) \backslash \GL_2(\A_\F)} {\bf X}({}^\sigma\!\underline{\kappa})\cdot {}^\sigma\!(\varphi\otimes \omega_\itPi^{-1})^{I_{{}^\sigma\!\underline{\kappa}}}(g)\,dg^{\rm Tam}\\
&=\int_{\A_\F^\times \GL_2(\F) \backslash \GL_2(\A_\F)} {\bf X}({}^\sigma\!\underline{\kappa})\cdot {}^\sigma\!\varphi^{I_{{}^\sigma\!\underline{\kappa}}}(g)\cdot \omega_\itPi^{-1}(\det(g))\,dg^{\rm Tam}\\
&=\int_{\A_\F^\times \GL_2(\F) \backslash \GL_2(\A_\F)} {\bf X}({}^\sigma\!\underline{\kappa})\cdot {}^\sigma\!\varphi^{I_{{}^\sigma\!\underline{\kappa}}}(g)\,dg^{\rm Tam}.
\end{align*}
We conclude that
\[
\sigma\left( \frac{\Omega^{I_{\underline{\kappa}}}(\itPi^\vee)}{\Omega^{I_{\underline{\kappa}}}(\itPi)}\right) = \frac{\Omega^{I_{{}^\sigma\!\underline{\kappa}}}({}^\sigma\!\itPi^\vee)}{\Omega^{I_{{}^\sigma\!\underline{\kappa}}}({}^\sigma\!\itPi)}.
\]
This completes the proof.

\subsection{Proof of Theorem \ref{C:main}}

First we recall the result of Shimura \cite{Shimura1978} on the algebraicity of special values of the twisted standard $L$-functions for motivic irreducible cuspidal automorphic representations of $\GL_2(\A_\F)$. We also refer to \cite{RT2011} for a different proof.
\begin{thm}[Shimura]\label{T:Shimura}
Let $\itPi$ be a motivic irreducible cuspidal automorphic representation of $\GL_2(\A_\F)$ of motivic weight $(\underline{\ell},r) \in \Z_{\geq 2}[\Sigma_\F] \times \Z$ and central character $\omega_\itPi$.
There exist complex numbers $p(\varepsilon,\itPi) \in \C^\times$ defined for $\varepsilon \in \{\pm 1\}^{\Sigma_\F}$ satisfying the following assertions:
\begin{itemize}
\item[(1)] We have 
\begin{align*}
\sigma\left(\frac{L^{(\infty)}(m+\tfrac{1}{2},\itPi\otimes\chi)}{(2\pi\sqrt{-1})^{[\F:\Q]m}\cdot G(\chi)\cdot p((-1)^m{\rm sgn}(\chi),\itPi)}\right) = \frac{L^{(\infty)}(m+\tfrac{1}{2},{}^\sigma\!\itPi\otimes{}^\sigma\!\chi)}{(2\pi\sqrt{-1})^{[\F:\Q]m}\cdot G({}^\sigma\!\chi)\cdot p((-1)^m{\rm sgn}({}^\sigma\!\chi),{}^\sigma\!\itPi)}
\end{align*} 
for any finite order Hecke character $\chi$ of $\A_\F^\times$, $\sigma \in {\rm Aut}(\C)$, and $m \in \Z$ such that
\[
-\frac{\min_{v \in \Sigma_\F}\{\ell_v\}}{2}-\frac{r}{2} < m < \frac{\min_{v \in \Sigma_\F}\{\ell_v\}}{2}-\frac{r}{2}.
\]
\item[(2)] We have
\begin{align*}
&\sigma\left(\frac{p(\varepsilon,\itPi)\cdot p(-\varepsilon,\itPi)}{(2\pi\sqrt{-1})^{[\F:\Q](1+r)} (\sqrt{-1})^{\sum_{v \in \Sigma_\F}\ell_v}\cdot G(\omega_\itPi)\cdot \Omega^{\Sigma_\F}(\itPi)}\right) \\
&= \frac{p({}^\sigma\!\varepsilon,{}^\sigma\!\itPi)\cdot p(-{}^\sigma\!\varepsilon,{}^\sigma\!\itPi)}{(2\pi\sqrt{-1})^{[\F:\Q](1+r)} (\sqrt{-1})^{\sum_{v \in \Sigma_\F}\ell_v}\cdot G({}^\sigma\!\omega_\itPi)\cdot \Omega^{\Sigma_\F}({}^\sigma\!\itPi)}
\end{align*}
for any $\varepsilon \in \{\pm 1\}^{\Sigma_\F}$ and $\sigma \in {\rm Aut}(\C)$.
\end{itemize}
\end{thm}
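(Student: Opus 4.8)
This is a theorem of Shimura, \cite{Shimura1978}; an independent proof appears in \cite{RT2011}. Here I indicate how it fits into the circle of ideas used in this paper. Fix once and for all a holomorphic newform $\varphi\in\itPi_{\rm hol}$ that is $\Q(\itPi)$-rational with respect to the rational structure on $H^0(\mathcal{L}_{(\underline{\ell},r)}^{\rm can})$ of Theorem \ref{T:Harris}, normalized so that its Whittaker function obeys the $q$-expansion principle of Theorem \ref{T:q-expansion} and Lemma \ref{L:Galois equiv. cusp form}.

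\medskip
\noindent\emph{Assertion (1).} The plan is to begin with Hecke's integral representation
\[
\int_{\F^\times\backslash\A_\F^\times}\varphi({\bf a}(t))\,\chi(t)\,|t|_{\A_\F}^{s-1/2}\,d^\times t^{\rm Tam}\;=\;(\ast)\cdot L(s,\itPi\otimes\chi),
\]
valid for $\Re(s)\gg 0$ and continued meromorphically, where $(\ast)$ is a product over $v\in\Sigma_\F$ of Mellin transforms of the archimedean Whittaker functions $W^{+}_{(\ell_v,r),\psi_{\F_v}}$, together with a finite product of ratios of local zeta integrals to local $L$-factors at the ramified finite places. First I would carry out the archimedean computation: at a critical point $s=m+\tfrac12$ the quantity $(\ast)$ equals $(2\pi\sqrt{-1})^{[\F:\Q]m}$ up to a product of $\Gamma$-values and a power of $\pi$, hence up to $\overline{\Q}^\times$. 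Next, since $W^{+}_{(\ell_v,r),\psi_{\F_v}}$ is supported on the totally positive cone, at $s=m+\tfrac12$ the left-hand side is a finite $\Q(\chi)$-linear combination of Whittaker values $W_\alpha(\varphi,g_f)$ with $\alpha$ totally positive, and the combination only sees the part of the Fourier expansion of signature $\varepsilon=(-1)^m{\rm sgn}(\chi)$. By Lemma \ref{L:Galois equiv. cusp form} those values transform $\sigma$-equivariantly up to the factor $(2\pi\sqrt{-1})^{\sum_{v\in\Sigma_\F}(\ell_v+r)/2}$ and a right translation by ${\bf a}(u)^{-1}$, where $u\in\widehat{\Z}^\times$ is determined by $\sigma(\psi_\F(x))=\psi_\F(ux)$; the $u$-dependence is cancelled against $G(\chi)$ by (\ref{E:Galois Gauss sum}). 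One then \emph{defines} $p(\varepsilon,\itPi)$ to be the period rendering the signature-$\varepsilon$ part of the expansion $\overline{\Q}$-rational. That $p(\varepsilon,\itPi)$ depends only on the signature $\varepsilon$ is seen by realizing this part inside the $\varepsilon$-eigenspace of the degree-$[\F:\Q]$ Betti cohomology of the Hilbert modular variety attached to $\GL_{2/\F}$, with coefficients in the automorphic local system of weight $\underline{\ell}$, a space carrying a canonical $\Q(\itPi)$-rational structure and related to $H^0(\mathcal{L}_{(\underline{\ell},r)}^{\rm can})$ by Eichler--Shimura (this is where $\underline{\ell}\in\Z_{\geq2}[\Sigma_\F]$ is used). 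Combining the three equivariance statements gives (1).

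\medskip
\noindent\emph{Assertion (2).} The plan is to apply Poincar\'e duality on the Hilbert modular variety of $\GL_{2/\F}$ to the degree-$[\F:\Q]$ cohomology with coefficients in the local system of weight $\underline{\ell}$. Complex conjugation splits this cohomology into signature eigenspaces, and the duality pairing --- whose behaviour under conjugation is governed both by the dimension $[\F:\Q]$ of the variety and by the $\det$-twist of the local system, which is the source of the exponent $[\F:\Q](1+r)$ and of the sign $(\sqrt{-1})^{\sum_{v\in\Sigma_\F}\ell_v}$ in the statement --- pairs the signature-$\varepsilon$ piece with the signature-$(-\varepsilon)$ piece. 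On the one hand, evaluated on the class of $\varphi$, this pairing produces $p(\varepsilon,\itPi)\,p(-\varepsilon,\itPi)$; on the other hand, via the coherent realization, the trace map of Lemma \ref{L:trace}, and Lemma \ref{L:Petersson norm}, it produces the Petersson pairing, hence $\Omega^{\Sigma_\F}(\itPi)$, up to powers of $2\pi\sqrt{-1}$, with the Gauss sum $G(\omega_\itPi)$ appearing as the period of the determinant Artin motive ${\rm Art}_{\omega_\itPi}$ under the comparison on the top cohomology. Equating the two descriptions yields (2); alternatively one may deduce the same relation from the functional equation of $L(s,\itPi\otimes\chi)$ together with the factorization of its global $\varepsilon$-factor into Gauss sums.

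\medskip
The hard part is not the architecture of the argument but the bookkeeping of the archimedean and Tate-twist contributions: pinning down the exact power of $2\pi\sqrt{-1}$, the normalizing sign $(\sqrt{-1})^{\sum_{v\in\Sigma_\F}\ell_v}$, and the precise appearance of $G(\omega_\itPi)$ in the comparison on the top cohomology, together with the verification that $p(\varepsilon,\itPi)$ really depends only on the signature. For these details I refer to \cite[Theorem 4.2]{Shimura1978} and \cite{RT2011}.
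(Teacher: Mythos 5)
The paper does not prove this theorem directly: it is labeled as Shimura's, and the proof in the paper consists of the remark immediately following the statement, which cites \cite[Theorem 4.3]{Shimura1978} (stated there only for weight in $\Z_{\geq 3}[\Sigma_\F]$), explains that the extension to weight in $\Z_{\geq 2}[\Sigma_\F]$ follows from \cite[(4.16)]{Shimura1978} together with the non-vanishing theorem of Friedberg--Hoffstein \cite{FriedbergHoffstein1995}, and points to Lemma \ref{L:Petersson norm} for translating Shimura's Petersson-norm period into $\Omega^{\Sigma_\F}(\itPi)$. Your sketch, which reconstructs the architecture of Shimura's argument (Hecke's integral representation, archimedean zeta integrals, the $q$-expansion principle, Eichler--Shimura, Poincar\'e duality) and defers to \cite{Shimura1978} and \cite{RT2011} for the bookkeeping, is consistent with this citation in spirit, and your use of Lemma \ref{L:Petersson norm} and the trace map in part (2) matches the paper's intent.

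The one genuine omission is that you do not address the extension from weight $\geq 3$ to weight $\geq 2$, which is precisely the point of the paper's remark. Shimura's original theorem assumes $\underline{\ell}\in\Z_{\geq 3}[\Sigma_\F]$; when some $\ell_v=2$ the critical range $-\tfrac{\min\{\ell_v\}}{2}-\tfrac{r}{2}<m<\tfrac{\min\{\ell_v\}}{2}-\tfrac{r}{2}$ contains only a single integer, so one cannot vary $(-1)^m$ and must vary ${\rm sgn}(\chi)$ to access all signature periods $p(\varepsilon,\itPi)$; establishing the existence and $\sigma$-equivariance of $p(\varepsilon,\itPi)$ for every $\varepsilon$ then requires knowing that for each signature there exists a finite order twist $\chi$ with $L(\tfrac12,\itPi\otimes\chi)\neq 0$, which is where Friedberg--Hoffstein enters. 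Your observation that $\underline{\ell}\in\Z_{\geq 2}[\Sigma_\F]$ is what makes the Eichler--Shimura realization available is correct but does not by itself supply this non-vanishing input, so as written your sketch reproves only Shimura's original range and silently assumes the extension the paper is careful to flag.
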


\begin{rmk}
In \cite[Theorem 4.3]{Shimura1978}, the theorem was stated for motivic $(\underline{\ell},r) \in \Z_{\geq 3}[\Sigma_\F] \times \Z$. It is straightforward to extend the results to $(\underline{\ell},r) \in \Z_{\geq 2}[\Sigma_\F] \times \Z$ by \cite[(4.16)]{Shimura1978} and the non-vanishing theorem of Friedberg--Hoffstein \cite{FriedbergHoffstein1995}. We also refer to Lemma \ref{L:Petersson norm} for the period relation between Petersson norm and $\Omega^{\Sigma_\F}(\itPi)$.
\end{rmk}

Now we begin the proof of Theorem \ref{C:main}.
Let $\itPi = \bigotimes_v\pi_v$, $\itPi' = \bigotimes_v\pi'_v$ be motivic irreducible cuspidal automorphic representations of $\GL_2(\A_\F)$ with central characters $\omega_\itPi$, $\omega_{\itPi'}$ and of weights $\underline{\ell}, \,\underline{\ell}' \in \Z_{\geq 1}[\Sigma_\F]$, respectively. By assumption (i), the automorphic representation ${\rm Sym}^2(\itPi)\times \itPi'$ of $\GL_3(\A_\F) \times \GL_2(\A_\F)$ is self-dual. Since ${\rm Sym}^2(\itPi)\times \itPi'$ is isobaric, we have the global functional equation
\[
L(s,{\rm Sym}^2(\itPi)\times \itPi') = \varepsilon(s,{\rm Sym}^2(\itPi)\times \itPi') L(1-s,{\rm Sym}^2(\itPi)\times \itPi').
\]
Recall the global root number $\varepsilon({\rm Sym}^2(\itPi)\times \itPi')$ is defined by
\[
\varepsilon({\rm Sym}^2(\itPi)\times \itPi') = \varepsilon(\tfrac{1}{2},{\rm Sym}^2(\itPi)\times \itPi') \in \{\pm 1\}.
\]
Analogous to the proof of Lemma \ref{L:Galois equiv. 1}, we can show that
\[
\varepsilon({\rm Sym}^2(\itPi)\times \itPi') = \varepsilon({\rm Sym}^2({}^\sigma\!\itPi)\times {}^\sigma\!\itPi')
\]
for all $\sigma \in {\rm Aut}(\C)$. In particular, if $\varepsilon({\rm Sym}^2(\itPi)\times \itPi')=-1$, then it follows from the global functional equation that
\[
L(\tfrac{1}{2}, {\rm Sym}^2({}^\sigma\!\itPi)\times {}^\sigma\!\itPi')=0
\]
for all $\sigma \in {\rm Aut}(\C)$. 
Therefore, we may assume that $\varepsilon({\rm Sym}^2(\itPi)\times \itPi')=1$. By assumption (iii) and the non-vanishing theorem of Friedberg--Hoffstein \cite{FriedbergHoffstein1995}, there exists a totally real quadratic extension $\K$ over $\F$ such that the base change lift $\itPi_\K = \bigotimes_v \pi_{\K,v}$ of $\itPi$ to $\GL_2(\A_\K)$ is cuspidal and 
\[
L(\tfrac{1}{2}, \itPi' \otimes \omega_\itPi\omega_{\K/\F}) \neq 0.
\]
Consider the totally real \'etale cubic algebra $\E=\K \times \F$ over $\F$ and the motivic irreducible cuspidal automorphic representation $\itPi_\K \times \itPi'$ of $\GL_2(\A_\E)$. We identify $\Sigma_\E$ with $\Sigma_\K \sqcup \Sigma_\F$ in a natural way. Note that the weight $\underline{\kappa} \in \Z_{\geq 1}[\Sigma_\E]$ of $\itPi_\K \times \itPi'$ is given as follows: for $w \in \Sigma_\E$ lying over $v \in \Sigma_\F$, we have
\[
\kappa_w = \begin{cases}
\ell_v & \mbox{ if $w \in \Sigma_\K$},\\
\ell_v' & \mbox{ if $w \in \Sigma_\F$}.
\end{cases}
\]
In particular, $\itPi_\K \times \itPi'$ is totally unbalanced and $I_{\underline{\kappa}} = \Sigma_\F$.
Let $D$ be the unique totally indefinite quaternion algebra over $\F$ so that there exists an irreducible cuspidal automorphic representation 
\[
\itPi_\K^D \times (\itPi')^D = \bigotimes_v (\pi_{\K,v}^D \times  (\pi_v')^D)
\]
of $D^\times(\A_\E)$ associated to $\itPi_\K \times \itPi'$ by the Jacquet--Langlands correspondence such that
\[
{\rm Hom}_{D^\times(\F_v)}(\pi_{\K,v}^D \times  (\pi_v')^D,\C) \neq 0
\]
for all places $v$ of $\F$. By Lemma \ref{L:decomposition}, we have the period relation
\begin{align}\label{E:4.8}
\sigma \left( \frac{\Omega^{\Sigma_\F}(\itPi_\K^D \times (\itPi')^D)}{\Omega^{\emptyset}(\itPi_\K^D)\cdot\Omega^{\Sigma_\F}((\itPi')^D)}\right) = \frac{\Omega^{\Sigma_\F}({}^\sigma\!\itPi_\K^D \times {}^\sigma\!(\itPi')^D)}{\Omega^{\emptyset}({}^\sigma\!\itPi_\K^D)\cdot\Omega^{\Sigma_\F}({}^\sigma\!(\itPi')^D)}
\end{align}
for all $\sigma \in {\rm Aut}(\C)$.
Note that by definition we may take $\Omega^{\emptyset}(\itPi_\K^D)=1$ and by Corollary \ref{C:Petersson norm} we have
\begin{align}\label{E:4.9}
\sigma\left(  \frac{\Omega^{\Sigma_\F}(\itPi')^2}{\Omega^{\Sigma_\F}(\itPi'^D)\cdot\Omega^{\Sigma_\F}(\itPi'^\vee)^D)}\right) = \frac{\Omega^{\Sigma_\F}({}^\sigma\!\itPi')^2}{\Omega^{\Sigma_\F}({}^\sigma\!\itPi'^D)\cdot\Omega^{\Sigma_\F}({}^\sigma\!(\itPi'^\vee)^D)}
\end{align}
for all $\sigma \in {\rm Aut}(\C)$.
Also we have the well-known fact for the rationality of the quadratic Gauss sum that
\begin{align}\label{E:4.10}
D_\K^{1/2}G(\omega_{\K/\F}) \in \Q^\times.
\end{align}
Therefore, by (\ref{E:4.8})-(\ref{E:4.10}) and Theorem \ref{T: main thm}, we have
\begin{align*}
\sigma\left(\frac{L^{(\infty)}(\tfrac{1}{2},\itPi_\K \times \itPi', {\rm As}) }{D_\F^{1/2}(2\pi\sqrt{-1})^{2[\F:\Q]}\cdot G(\omega_{\K/\F})\cdot\Omega^{\Sigma_\F}(\itPi')^2}\right)= \frac{L^{(\infty)}(\tfrac{1}{2},{}^\sigma\!\itPi_\K \times {}^\sigma\!\itPi', {\rm As}) }{D_\F^{1/2}(2\pi\sqrt{-1})^{2[\F:\Q]}\cdot G(\omega_{\K/\F})\cdot\Omega^{\Sigma_\F}({}^\sigma\!\itPi')^2}
\end{align*}
for all $\sigma \in {\rm Aut}(\C)$.
Finally, we have the factorization of $L$-functions:
\[
L(s,\itPi_\K \times \itPi', {\rm As}) = L(s,{\rm Sym}^2(\itPi)\times \itPi')L(s,\itPi' \otimes \omega_\itPi\omega_{\K/\F}).
\]
Theorem \ref{C:main} then follows immediately from Theorem \ref{T:Shimura} of Shimura for the central critical value
\[
L^{(\infty)}(\tfrac{1}{2},\itPi' \otimes \omega_\itPi\omega_{\K/\F}) = L^{(\infty)}(r+\tfrac{1}{2},\itPi' \otimes |\mbox{ }|_{\A_\F}^{-r}\omega_{\itPi}\omega_{\K/\F})
\]
and the condition that $L(\tfrac{1}{2}, \itPi' \otimes \omega_\itPi\omega_{\K/\F}) \neq 0$.
This completes the proof.

\subsection{Proof of Theorem \ref{T: main thm 2}}\label{SS:5.5}

Let $\itPi = \bigotimes_v \pi_v$ be a motivic irreducible cuspidal automorphic representation of $\GL_2(\A_\F)$ and $\chi$ an algebraic Hecke character of $\A_\F^\times$. Assume that $\itPi$ has motivic weight $(\underline{\ell},r) \in \Z_{\geq 4}[\Sigma_\F] \times \Z$ and $|\chi| = |\mbox{ }|_{\A_\F}^{r_0}$ for some $r_0 \in \Z$. Let $I$ be a subset of $\Sigma_\F$. We fix a motivic irreducible cuspidal automorphic representation $\itPi' = \bigotimes_v \pi_v'$ of $\GL_2(\A_\F)$ with motivic weight $(\underline{\ell}',r') \in \Z_{\geq 2}[\Sigma_\F] \times \Z$ satisfying the following conditions:
\begin{itemize}
\item $\min_{v \in \Sigma_\F}\{|\ell_v-\ell_v'|\} \geq 2$; 
\item $I=\{ v \in \Sigma_\F\,\vert\,\ell_v > \ell_v'\}$.
\end{itemize}
The existence of $\itPi'$ is guaranteed by the assumption that $\underline{\ell} \in \Z_{\geq 4}[\Sigma_\F]$ and the result of Weinstein \cite{Weinstein2009}.
Consider the Rankin--Selberg $L$-function 
\[
L(s,\itPi \times \itPi' \times \chi ) = L(s,(\itPi\otimes\chi) \times \itPi' \times {\bf 1})
\]
for the triplets $(\itPi,\itPi',\chi)$ and $(\itPi\otimes\chi,\itPi',{\bf 1})$.
Note that at the rightmost critical point 
\[
m = -r_0-\frac{r+r'}{2} + \frac{\min_{v \in \Sigma_\F} \{|\ell_v-\ell_v'|\}}{2},
\]
we have $L(m,\itPi \times \itPi' \times \chi ) \neq 0$ by the condition that $\min_{v \in \Sigma_\F}\{|\ell_v-\ell_v'|\} \geq 2$. Applying Theorem \ref{T:RS} to the two triplets, we deduce that
\[
\sigma\left(\frac{\Omega^I(\itPi)}{\Omega^I(\itPi\otimes\chi)}\right) = \frac{\Omega^{{}^\sigma\!I}({}^\sigma\!\itPi)}{\Omega^{{}^\sigma\!I}({}^\sigma\!\itPi \otimes {}^\sigma\!\chi)}
\]
for all $\sigma \in {\rm Aut}(\C)$.
This completes the proof.

\end{document}